\documentclass[10pt, oneside, reqno]{article}                   				

	\newcommand{\TheTitle}{A Smoothed Augmented Lagrangian Method for Mathematical Programs with Vertical Complementarity Constraints}
	\newcommand{\TheFunding}{%
		This work was supported by:
		Fond for Scientific Research Vlaanderen (FWO) projects G033822N, G081222N, G0A0920N;
		KU Leuven internal funding C14/24/103;
		Japan Society for the Promotion of Science KAKENHI grant n. JP24K20737.%
	}
	\newcommand{\TheKeywords}{%
		Mathematical programs with vertical complementarity constraints;
		Constraint qualifications;
		Smoothing method;
		Augmented Lagrangian method;
		Convergence analysis;
		Worst-case complexity%
	}

	\PassOptionsToPackage{nameinlink,capitalize}{cleveref}
	\PassOptionsToPackage{breakspaces}{clevethm}
	\usepackage{graphicx}
	\usepackage{amsmath}
	\usepackage{mathtools}

	\usepackage[%
		arxiv,
		alglinenumber,
		noload={tikz,pgfplots,myNotes},
		eqreset=section,
		thmreset=section,
	]{myPreamble}

	\graphicspath{{./Pics/Tikz/}}

	\AtBeginDocument{%
		\setcounter{tocdepth}{2}
		\setcounter{secnumdepth}{2}
		\hypersetup{bookmarksdepth = 4}
	}
	\Crefname{figure}{Figure}{Figures}
	\numberwithin{algorithm}{section}

	\newcommand{\y}{\bm{y}}
	\newcommand{\z}{\bm{z}}
	\newcommand{\s}{\bm{s}}
	\newcommand{\w}{\bm{w}}

	\newcommand{\be}{\bm{\beta}}

	\RenewDocumentCommand{\set}{s m o}{
		\IfBooleanTF{#1}{
			\left\{#2
				\IfValueT{#3}{
					\,\middle|\,#3
				}
			\right\}
		}{
			\{#2
				\IfValueT{#3}{
					\,|\,#3
				}
			\}
		}
	}

  \newcommandx{\vit}[3][{2=j},{3={}}]{
			#1^{#2\ifthenelse{\equal{#3}{}}{}{,#3}}
			}

  \newcommandx{\sit}[3][{2=j},{3={}}]{
    #1_{#2\ifthenelse{\equal{#3}{}}{}{,#3}}
    }

  \newcommandx{\tb}[1][{1={}}]{
    \mathcal{T}_{\beta}\ifthenelse{\equal{#1}{}}{}{^{#1}}%
  }

	\let\ncone\relax
	\DeclareMathOperator{\ncone}{N}               
	\def\Cncone_#1{\operatorname{N}^{\rm C}_{#1}} 
	\DeclareMathOperator{\Rncone}{\widehat{N}}    
	\DeclareMathOperator{\tcone}{T}               
	\DeclareMathOperator{\spa}{span}
	\DeclareMathOperator{\env}{env}
	\DeclareMathOperator*{\Lim}{Lim}
	\DeclareMathOperator*{\Limsup}{Lim\,sup}
	\DeclareMathOperator*{\Liminf}{Lim\,inf}

	\newcommand{\B}{\varmathbb B}

	\newcommand{\CCPM}{\hyperref[CCP_common]{MPVCC-AMR}}
	\newcommand{\CCPC}{\hyperref[CQ_Clarke]{MPVCC-ACR}}
	\newcommand{\MPVCC}{\hyperref[MPVCC]{MPVCC}}
	\newcommand{\RCPLD}{\hyperref[definition_MPVCC_RCPLD]{MPVCC-RCPLD}}
	\newcommand{\RCRCQ}{\hyperref[definition_MPVCC_RCRCQ]{MPVCC-RCRCQ}}
	\newcommand{\SOSC}{\hyperref[definition_MPVCC_SOSC]{MPVCC-SOSC}}
	\newcommand{\LICQ}{\hyperref[definition_MPVCC_LICQ]{MPVCC-LICQ}}
	\newcommand{\RCPLDi}{\hyperref[RCPLD_i]{MPVCC-RCPLD(i)}}
	\newcommand{\RCPLDii}{\hyperref[RCPLD_ii]{MPVCC-RCPLD(ii)}}
	\newcommand{\RCPLDiii}{\hyperref[RCPLD_iii]{MPVCC-RCPLD(iii)}}
	\newcommand{\SSC}{\hyperref[MPVCC_SSC]{MPVCC-SSC}}
\makeatother

	\title{\TheTitle\thanks{\TheFunding}}

	\author{%
		Jia Wang\thanks{\TheAddressKU.}%
	\and
		Andreas Themelis\thanks{%
			\TheAddressKUJ.
			\newline
			{\it E-mails:}
			\sf
			\{%
				\emailLink[jia.wang@kuleuven.be]{jia.wang},%
				\emailLink[panos.patrinos@kuleuven.be]{panos.patrinos}%
			\}%
			\emailLink[jia.wang@kuleuven.be,panos.patrinos@kuleuven.be]{@kuleuven.be},
			\emailLink{andreas.themelis@ees.kyushu-u.ac.jp}
		}%
	\and
		Panagiotis Patrinos\footnotemark[2]%
	}

	\date{}

\begin{document}

	\maketitle

	\begin{abstract}
		We propose a smoothed augmented Lagrangian method (ALM) for solving mathematical programs with vertical complementarity constraints (VCCs).
The augmented Lagrangian function involving the VCCs is smoothed by the Lasry--Lions double envelope, which is an extension of the Moreau envelope that enjoys enhanced smoothness properties.
This construction yields a smooth unconstrained subproblem within the ALM framework that can be efficiently solved by many fast optimization algorithms.
In the limiting setting, we establish global convergence to Mordukhovich- and Clarke-stationary points.
For the convergence rate, we show that, under appropriate assumptions, the smoothed ALM achieves Q-linear convergence.
We further provide a worst-case iteration complexity analysis for computing an approximate stationary point of the original problem or of an associated feasibility problem.
Preliminary numerical experiments demonstrate the effectiveness of the proposed method.

	\end{abstract}
	\keywords{\TheKeywords}

	\section{Introduction}\label{sec_intro}
		We consider the mathematical program with vertical complementarity constraints (MPVCC) of the form:
\begin{equation*}\label{MPVCC}
	\begin{split}
		\minimize_{x\in\R^n}
		\ \
		& f(x)
		\\
		\stt\ \
		& A(x)\in C
		\\
		& \min \{F_{1}^{j}(x),\ldots,F_{p_{j}}^{j}(x)\} = 0,\ j\in\{1,\ldots,r\},
	\end{split}\tag{MPVCC}
\end{equation*}
where \(f:\R^{n}\to\R\), \(A:\R^n\to\R^m\), and \(F_{i}^{j}:\R^n\to\R\) for \( i\in\{1,\ldots,p_j\}\) with \(p_j\geq2\).
We assume that all the functions involved are continuously differentiable.
The set \(C\subseteq\R^{m}\) is nonempty, closed and convex.
Let \(F^{j}\coloneqq(F_{1}^{j},\ldots,F_{p_j}^{j})\).
The vertical complementarity constraints (VCCs) can be equivalently expressed as \(F^{j}(x)\in D^{j}\), where
\begin{equation}\label{D_set}
D^{j}\coloneqq
\left\{
	z \in \R_+^{p_j}
	\,\middle|\,
	{\textstyle\prod_{i=1}^{p_j} z_i} = 0
\right\}.
\end{equation}
The set \(D^j\) is closed but nonconvex, and exhibits an inherently combinatorial structure.

\subsection{Examples}
	\MPVCC{} is a powerful modeling framework for a variety of challenging problems.
	First, consider the vertical linear complementarity problem (VLCP).
	Let \(x=(x^0,x^1,\ldots,x^\ell)\in\R^{r(\ell+1)}\) with
	\(x^0,x^1,\ldots,x^\ell\in\R^{r}\).
	Given matrices
	\(B_1,\ldots,B_\ell\in\R^{r\times r}\) and vectors
	\(b^1,\ldots,b^\ell\in\R^{r}\),
	the VLCP consists of finding \(x\) satisfying
	\begin{equation}\label{VLCP}
	\min
	\left\{
	x^{0}_{i}, x^{1}_{i},\ldots, x^{\ell}_{i}
	\right\}=0,\
	i\in\{1,\ldots,r\}
	\quad\text{and}\quad
	x^j=B_j x^0+ b^j,\ j\in\{1,\ldots,\ell\}.
	\end{equation}
	The VLCP was introduced in~\cite{cottle1970} and has found applications in areas such as nonlinear networks~\cite{fujisawa1972}, control theory~\cite{sun1989}, and economics~\cite{nagae2008}.
	However, solutions to the VLCP may not be unique.
	Consequently, the problem of finding a solution to~\eqref{VLCP} that also satisfies additional optimization criteria naturally fits within the framework of \MPVCC{}.
	Second, by fixing \(p_j=2\),
	\MPVCC{} reduces to the mathematical program with complementarity constraints (MPCC):
	\begin{equation}\label{MPCC}
	\begin{split}
	\minimize_{x\in\R^n}
	\ \
	& f(x)
	\\
	\stt\ \
	& A(x)\in C
	\\
	& \bm{0}\leq G(x)\perp H(x)\geq \bm{0},
	\end{split}
	\end{equation}
	where \(G,H:\R^n\to\R^r\).
	The notation \(\bm{0}\leq a\perp b\geq \bm{0}\) means that \(a,b\in\R_{+}^{r}\) and \(\langle a,b \rangle =0\),
	which corresponds to the complementarity constraints (CCs).
	In this case, \(F^j\) in \MPVCC{} is decomposed as \(F^j=(G_j,H_j):\R^n\to\R^2\).
	MPCC is an important class of optimization problems that arises in applications of Stackelberg games in economics.
	It also emerges from reformulations of bilevel programming, contact mechanics, and the optimal control of hybrid systems; see~\cite{ferris1997} for details.

	\MPVCC{} can also be used to reformulate a broad class of optimization problems with alternative structural features.
	For instance, we consider the constrained min-max-min problem in~\cite{xiong2010}:
	\begin{equation}\label{min_max_min}
	\begin{split}
	\minimize_{x\in\R^n}
	\ \
	&
	\left\{
	\phi(x)\coloneqq\max_{j\in\{1,\ldots,r\}}\ \ \min_{i\in\{1,\ldots,p_j\}}\ \ \phi_{i}^{j}(x)
	\right\}
	\\
	\stt\ \
	& A(x)\in C
	\\
	&
	\left\{
	g(x)\coloneqq\max_{j'\in\{1,\ldots,r'\}}\ \min_{i'\in\{1,\ldots,p_{j'}'\}} g_{i'}^{j'}(x)
	\right\}
	\leq 0,
	\end{split}
	\end{equation}
	where \(\phi_{i}^{j},g_{i'}^{j'}:\R^{n}\to\R\).
	Problem~\eqref{min_max_min} appears in a range of engineering applications, including the optimal design of electronic circuits under tolerancing and tuning provisions~\cite{bandler1976,polak2003,polak1979}, and the design of heat exchangers and chemical reactors~\cite{halemane1983,ostrovsky1994}; see~\cite{kirjner-neto1998} and references therein for further applications.
	To develop implementable algorithms for~\eqref{min_max_min}, we introduce the auxiliary variables \(t\in\R\),
	\(s\in\R^{r}_+\) and
	\(w\in\R^{r'}_+\).
	Following~\cite{liang2012}, problem~\eqref{min_max_min} admits the reformulation:
	\[
	\begin{aligned}
		\minimize_{x,t,s,w}
		\ \
		& t
		\\
		\stt\ \
		& A(x)\in C,\ s\geq\bm{0},\ w\geq\bm{0}
		\\
		&
		\min_{i\in\{1,\ldots,p_j\}}
		\begin{Bmatrix}
		\phi_{i}^{j}(x)-t+s_j
		\end{Bmatrix}
		=0,\ j\in\{1,\ldots,r\}
		\\
		&
		\min_{i'\in\{1,\ldots,p_{j'}'\}}
		\begin{Bmatrix}
		g_{i'}^{j'}(x)+w_{j'}
		\end{Bmatrix}
		=0,\ j'\in\{1,\ldots,r'\},
	\end{aligned}
	\]
	which can be addressed within the framework of \MPVCC{}.
	Moreover, we consider the mathematical program with set exclusion constraints:
	\begin{equation}\label{MPSEC}
	\begin{split}
	\minimize_{x\in\R^n}
	\ \
	& f(x)
	\\
	\stt\ \
	& A(x)\in C
	\\
	& x\notin O^j,\ j\in\{1,\ldots,r\},
	\end{split}
	\end{equation}
	where
	\(
	O^j
	\coloneqq
	\left\{
		x\in\R^n
		\,\middle|\,
		F^j(x)>\bm{0}
	\right\}
	\) and \(F^j:\R^n\to\R^{p_j}\).
	This type of constraint excludes certain open sets from the feasible region and arises in applications such as vehicle path planning, where set-exclusion constraints represent obstacles in space~\cite{hermans2021}, as well as in the automatic floorplanning of circuit layout design~\cite{moh1996}.
	Note that the constraint \(x\notin O^j\) is equivalent to at least one component of \(F^j(x)\) being negative.
	By introducing slack variables
	\(s^j\in\R^{p_j}\) for \(j\in\{1,\ldots,r\}\),
	problem~\eqref{MPSEC} reads as
	\[
	\begin{aligned}
	\minimize_{x,s^1,\ldots,s^r}
	\ \
	& f(x)
	\\
	\stt\ \
	& A(x)\in C,\ F^j(x)\leq s^j
	\\
	& \min_{i\in\{1,\ldots,p_j\}}
		\begin{Bmatrix}
		s_{i}^{j}
		\end{Bmatrix}
	=0,\ j\in\{1,\ldots,r\},
	\end{aligned}
	\]
	which again yields a reformulation of \MPVCC{}.
	These examples illustrate the strong modeling capability of \MPVCC{} and further motivate the development of efficient algorithms for solving it.

\subsection{Related Work}
	Although \MPVCC{} arises in a wide range of applications, the presence of VCCs makes the problem challenging from both theoretical and numerical perspectives.
	When the VCCs are written as standard nonlinear constraints, e.g.,
	\(F^j(x)\geq\bm{0}\) and \(\prod_{i=1}^{p_j} F^{j}_{i}(x)\leq0\),
	standard constraint qualifications (CQs), such as the linear independence (LICQ) and the Mangasarian--Fromovitz constraint qualification (MFCQ), are violated at every feasible point of a VCC{}.
	Consequently, the classical Karush--Kuhn--Tucker (KKT) optimality conditions are generally inapplicable for characterizing stationarity in \MPVCC{}.
	Moreover, the associated sets of Lagrange multipliers may be unbounded, which can lead to numerical instability and poor convergence behavior in standard primal-dual algorithms.

	Nevertheless, as a special case of \MPVCC{}, significant theoretical and algorithmic progress has been achieved for MPCCs.
	On the theoretical side, various stationarity concepts have been introduced, including strong (S)~\cite{scheel2000}, Mordukhovich (M)~\cite{outrata1999}, Clarke (C)~\cite{scheel2000}, alternative (A)~\cite{flegel2005c}, and weak (W)~\cite{scheel2000} stationarity.
	At the same time, a variety of MPCC-tailored CQs have been proposed,
	such as MPCC-LICQ~\cite[Definition 3(a)]{kanzow2015},
	MPCC-MFCQ~\cite[Definition 3(b)]{kanzow2015},
	MPCC-constant rank CQ (CRCQ)~\cite[Definition 3(c)]{kanzow2015},
	MPCC-constant positive linear dependence CQ (CPLD)~\cite[Definition 3(d)]{kanzow2015},
	MPCC-Abadie CQ~\cite[Definition 3.1]{flegel2005c},
	MPCC-Guignard CQ~\cite[Definition 3.1]{flegel2005}, and so on.
	A comprehensive survey of MPCC-tailored CQs can be found in~\cite{ye2005}, and a detailed study of stationarity concepts and optimality conditions is provided in the PhD thesis~\cite{flegel2005b}.
	On the algorithmic side, a broad range of methods for solving MPCCs has been extensively studied, including regularization methods~\cite{hoheisel2013,scholtes2001a}, penalty methods~\cite{hu2004}, interior-point methods~\cite{leyffer2006,pozharskiy2026}, active-set methods~\cite{izmailov2008}, and augmented Lagrangian methods (ALMs)~\cite{izmailov2012,jia2023}.
	Moreover, standard nonlinear programming (NLP) solvers have also been successfully applied to MPCCs~\cite{fletcher2006,izmailov2012}.

	In contrast, both the theory and algorithms for \MPVCC{} remain relatively underdeveloped.
	In~\cite{liang2012}, stationarity concepts for \MPVCC{} were investigated, and a constrained Levenberg--Marquardt method was proposed for solving the nonlinear equations arising from different stationarity systems.
	In~\cite{xu2023}, \MPVCC{} was addressed via a regularization method, where each subproblem was solved by a standard ALM{}.
	In addition,~\cite{jia2023} proposed an ALM framework that retains the VCCs explicitly in each subproblem and solves the resulting subproblems using projected gradient methods.
	However, most existing methods require the solution of constrained nonlinear subproblems, which reduces their practical attractiveness.
	Furthermore, most existing methods establish convergence only in an asymptotic sense and do not provide worst-case iteration complexity guarantees, despite the importance of such guarantees in many engineering applications.
	Motivated by the above observations, the goal of this paper is to develop an algorithm for \MPVCC{} that is easy to implement in practice
	and supported not only by rigorous asymptotic convergence theory but also by complexity guarantees.

	The iteration complexity of ALMs for nonconvex constrained optimization has recently attracted considerable attention; see, e.g.,~\cite{birgin2020,grapiglia2021,sahin2019}.
	In particular,~\cite{birgin2020} analyzes a safeguarded ALM based on a quadratic penalty of shifted constraints.
	In this paper, we leverage this framework to develop our algorithm for \MPVCC{} and analyze its worst-case iteration complexity.
	However, it is important to note that the quadratic penalty associated with a VCC corresponds to the Moreau envelope of its indicator function, which is nonsmooth.
	To address this difficulty, following the approach in~\cite{wang2025}, we employ the Lasry--Lions double envelope~\cite{attouch1993} to smooth the augmented Lagrangian function.
	This yields a smooth unconstrained subproblem that can be efficiently solved by many mature optimization solvers, such as \texttt{L-BFGS}~\cite{liu1989}.
	The use of the Lasry--Lions double envelope in constrained optimization has only recently been investigated.
	In~\cite{wang2025}, it was used to smooth the subproblems arising in a homotopy method for solving MPCCs.
	In~\cite{simoes2021}, it was applied to nonsmooth nonconvex optimization problems arising in signal processing.

\subsection{Contribution}
	First, we develop a smoothed ALM for \MPVCC{}, leveraging the Lasry--Lions double envelope to construct a smooth approximation of the augmented Lagrangian function associated with VCCs, which are particularly challenging due to their inherent nonsmoothness, nonconvexity, combinatorial structure, and the lack of standard CQs.
	Second, we introduce MPVCC-tailored CQs that are weaker than MPVCC-tailored relaxed constant positive linear dependence (RCPLD) and relaxed constant rank constraint qualification (RCRCQ). Under these conditions, we establish convergence to M- and C-stationary points.
	Third, under appropriate assumptions, we prove that the smoothed ALM{} exhibits a Q-linear convergence rate.
	Fourth, we show that the smoothed ALM achieves a prescribed accuracy tolerance in finitely many iterations for obtaining an approximate stationary point of
	either the original problem or an associated feasibility problem,
	and we derive worst-case iteration complexity bounds.
	Finally, we validate the effectiveness of the proposed method through numerical experiments.
	The results indicate that the smoothed ALM is competitive with the Lasry--Lions double-envelope-based homotopy method, the classical ALM, and the NLP solver \texttt{Algencan} in terms of computational efficiency.
	An open-source package is also provided.\footnote{\url{https://github.com/JiaaaWang/Lldo.jl}}

\subsection{Notation}
	The set of natural numbers is \(\N\coloneqq\set{0,1,\ldots}\).
	The \emph{extended real line} is denoted by \(\overline{\R}\coloneqq\R\cup\set{\infty}\).
	The set of positive and strictly positive reals are \(\R_+\coloneqq[0,\infty)\) and \(\R_{++}\coloneqq(0,\infty)\);
	analogously, \(\R_{-}\) and \(\R_{--}\) denote the sets of negative and strictly negative reals.
	We denote \({[\cdot]}_+\coloneqq\max\set{0,\cdot}\)
	and \({[\cdot]}_{-}\coloneqq\min\set{0,\cdot}\).
	The collection of subsequences of \(\N\) is \(\mathcal{N}_{\infty}^{\sharp}\).
	For a matrix \(A\in\R^{m\times n}\), we denote its null space by
	\(\nullspace A\coloneqq \set{x\in\R^{n}}[Ax=0]\).
	The smallest subspace and convex cone containing the set \(E\) are \(\spa E\) and \(\cone E\), respectively.
	The \emph{convex hull} of a set \(E\) is denoted by \(\conv E\).
	Moreover, the \emph{Euclidean distance} between a point \(x\) and the set \(E\) is \(\dist(x,E)\coloneqq\inf_{x'\in E}\|x'-x\|\)
	and the (possibly set-valued) \emph{Euclidean projection} onto the set \(E\) is \(\Pi_{E}(x)\coloneqq\arg\min_{x'\in E}\|x'-x\|\).
	The \emph{indicator function} of \(E\) is given by \(\delta_{E}(x)=0\) if \(x\in E\), and \(\delta_{E}(x)=+\infty\) otherwise.
	The closed ball with center at \(x\in\R^n\) and radius \(\xi\in\R_{+}\) is \(\B(x,\xi)\coloneqq\set{x'\in\R^n}[\|x'-x\|\leq\xi]\).
	Furthermore, for any \(z\in\R^p\), we define the index set \(\mathcal{I}_{=0}(z)\coloneqq\set{i\in\{1,\ldots,p\}}[z_i=0]\), and analogously the index sets \(\mathcal{I}_{\leq0}(z)\) and \(\mathcal{I}_{>0}(z)\).
	For a smooth mapping \(S:\R^n\to\R^m\), the \emph{Jacobian} of \(S\) at \(x\in\R^n\) is \(JS(x)\in\R^{m\times n}\).
	Given a vector \(a\), we define \(\R a\coloneqq\set{ta}[t\in\R]\) and \(\R_{+}a\coloneqq\set{ta}[t\in\R_{+}]\).
	We denote by \(e_i\) the vector of appropriate dimension whose \(i\)-th component is one and all other components are zero.
	Finally, we denote by \(\bm{0}\) and \(\bm{1}\) the all-zero and all-one vectors, respectively, of appropriate dimensions.

	\section{Preliminaries and Supporting Results}
		\subsection{Fundamentals of Variational Analysis}
	The \emph{epigraph} of \(\varphi:\R^{n}\rightarrow\overline{\R}\) is the set \(\epi \varphi\coloneqq\set{(x,\alpha)\in\R^{n+1}}[\varphi(x)\leq\alpha]\),
	and \(\varphi\) is said to be \emph{lower semicontinuous (lsc)} if \(\epi \varphi\) is closed,
	and \emph{proper} if \(\epi \varphi\) is nonempty and \(\varphi(x)>-\infty\) for all \(x\in\R^n\).
	For a sequence of sets \(\seq{E^\nu}[\nu\in\N]\) with \(E^\nu\subseteq\R^n\), we define the \emph{outer limit} as
	\[
	\Limsup_{\nu\to\infty} E^{\nu}\coloneqq\set{x}
	[\exists K\in \mathcal{N}_{\infty}^{\sharp},\
	x^\nu\to_{K}x:
	x^{\nu}\in E^{\nu}\ \ \forall\nu\in K]
	\]
	and the \emph{inner limit} as
	\[
	\Liminf_{\nu\to\infty} E^{\nu}\coloneqq\set{x}
	[\exists k\in \N,\
	x^\nu\to x:
	x^{\nu}\in E^{\nu}\ \ \forall\nu\geq k].
	\]
	We say that \(\seq{E^\nu}[\nu\in\N]\) \emph{set-converges} if its outer limit and inner limit are equal, i.e.,
	\[
	\Lim_{\nu\to\infty} E^{\nu}\coloneqq\Limsup_{\nu\to\infty} E^{\nu}=\Liminf_{\nu\to\infty} E^{\nu}.
	\]
	We say that a sequence \(\seq{\varphi^{\nu}}[\nu\in\N]\) of functions \(\varphi^{\nu}:\R^{n}\rightarrow\overline{\R}\) \emph{epi-converges} to \(\varphi:\R^{n}\rightarrow\overline{\R}\) if
	\[
	\Lim_{\nu\to\infty} \epi \varphi^{\nu}=\epi \varphi,
	\]
	which is denoted by \(\varphi^{\nu}\overset{\mathrm{e}}{\rightarrow}\varphi\).
	We next review some cone definitions for a nonempty and closed set \(E\subseteq \R^n\).
	The \emph{tangent cone} at a point \(x\in E\) is
	\[
		\tcone_{E}(x)\coloneqq\set{d\in\R^{n}}[\exists E\ni x^{\nu}\to x\ \text{and}\ t_{\nu}\searrow 0\ \text{such that}\ \tfrac{x^{\nu}-x}{t_{\nu}}\to d].
	\]
	The \emph{regular normal cone} at a point \(x\in E\) is
	\[
		\Rncone_{E}(x)\coloneqq\set{
	d\in\R^{n}}[\langle d,d'\rangle \leq0\ \ \forall d'\in \tcone_{E}(x)].
	\]
	The \emph{limiting normal cone} at a point \(x\in E\) is
	\[
		\ncone_{E}(x)\coloneqq\set{d\in\R^{n}}[\exists E \ni x^{\nu}\to x\ \text{and}\ d^{\nu}\in\Rncone_{E}(x^{\nu})\ \text{such that}\ d^{\nu}\to d].
	\]
	Note that the limiting normal cone is \emph{outer semicontinuous}, in the sense that \(\Limsup_{x^\nu\to x}\ncone_E(x^\nu)\subseteq \ncone_E(x)\).
	Consequently, it enjoys better calculus properties than the regular normal cone.
	For a convex set \(C\), we have \(\Rncone_{C}(x)=\ncone_{C}(x)\) for all \(x\in C\).

\subsection{Cones Related to the Vertical Complementarity Set}\label{sec_cones}
	In this subsection, we derive several important cones associated with the vertical complementarity set.
	For notational simplicity, we temporarily suppress the dependence on \(j\) in~\eqref{D_set}.
	That is, we write \(D\coloneqq D^j\) and \(p\coloneqq p_j\) in this subsection.
	We note that \(D\) can be expressed as
	\begin{equation}\label{convex_union}
		D=\bigcup_{i=1}^{p}D_{i}\quad\text{with}\quad
		D_{i}\coloneqq\R_+^{i-1}\times\{0\}\times\R_+^{p-i},
	\end{equation}
	which represents the so-called \emph{disjunctive constraint}; see, for instance,~\cite[Section 5.1]{jia2023} and~\cite{mehlitz2020c}.
	The Euclidean projection onto \(D\) can be characterized as
	\begin{equation}\label{projection_D}
		\Pi_{D}(z)=
		\set{\Pi_{D_i}(z)}[i\in\mathcal{I}(z)]
		\quad\text{with}\quad
		\mathcal{I}(z)\coloneqq\argmin_{i\in\{1,\ldots,p\}} \dist{(z,D_i)}.
	\end{equation}
	Each set \(D_i\) is closed and convex.
	For \(\bar z\in D_i\), its tangent and regular normal cones are respectively given by
	\begin{equation}\label{tangent_cone_D_i}
		\tcone_{D_i}(\bar z)
	=
		\left\{
			d \in \R^{p}
		\,\middle|\,
			\begin{array}{@{} l @{~} l @{}}
				d_i=0
			\\
				d_{i'}\geq0
				&
				\text{if}\ \bar z_{i'}=0\ \text{and}\ i'\neq i
			\\
				d_{i'}\in\R
				&
				\text{if}\ \bar z_{i'}>0\ \text{and}\ i'\neq i
			\end{array}
		\right\}
		\text{ and }
		\Rncone_{D_i}(\bar z)
	=
		\left\{
			d \in \R^{p}
			\,\middle|\,
			\begin{array}{@{} l @{~} l @{}}
				d_i\in\R
			\\
				d_{i'}\leq0
			&
				\text{if}\ \bar z_{i'}=0\ \text{and}\ i'\neq i
			\\
				d_{i'}=0
			&
				\text{if}\ \bar z_{i'}>0\ \text{and}\ i'\neq i
			\end{array}
		\right\}.
	\end{equation}
	Note that, for \(\bar z\in D\), the index set \(\mathcal{I}_{=0}(\bar z)\) coincides with \(\set{i\in\{1,\ldots,p\}}[\bar z\in D_i]\).
	We present the following result characterizing the cones associated with~\eqref{convex_union}.

	\begin{proposition}\label{prop:N_D}%
	Relative to the vertical complementarity set~\eqref{convex_union}, the tangent cone, the regular normal cone, and the limiting normal cone to \(D\) at \(\bar z\in D\) are given by
	\begin{align}
		\tcone_{D}(\bar z)&=
		\left\{
		d \in \R^{p}
		\,\middle|\,
		\min_{i\in\mathcal{I}_{=0}(\bar z)} d_i=0
		\right\}\label{tangent_cone_D}
		\\[5pt]
		\Rncone_{D}(\bar z)&=
		\begin{cases}
				\R e_i
			& \text{ if }
				\mathcal{I}_{=0}(\bar z)=\set{i}
			\\[5pt]
				\cone\set{-e_i}[i\in \mathcal{I}_{=0}(\bar z)]
			& \text{ if }
				|\mathcal{I}_{=0}(\bar z)|>1
			\end{cases}\label{regular_normal_cone}
		\\[5pt]
		\ncone_{D}(\bar z)&=
		\begin{pmatrix}
		\bigcup_{i\in\mathcal{I}_{=0}(\bar z)}\R e_i
		\end{pmatrix}
		\cup
		\cone\set{-e_i}[i\in\mathcal{I}_{=0}(\bar z)].\label{limiting_normal_cone}
	\end{align}
	\end{proposition}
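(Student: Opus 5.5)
The plan is to exploit the union structure~\eqref{convex_union} together with the explicit cones~\eqref{tangent_cone_D_i} of the convex pieces \(D_i\). Fix \(\bar z\in D\) and write \(I\coloneqq\mathcal{I}_{=0}(\bar z)\), which is nonempty and, as noted before the proposition, coincides with the set of indices \(i\) with \(\bar z\in D_i\). Since \(D\) is a finite union of closed sets, only those \(D_i\) containing \(\bar z\) matter locally. Indeed, for \(i\notin I\) we have \(\bar z_i>0\), hence \(\bar z\notin D_i\), and \(\dist(\bar z,D_i)>0\). So on a neighbourhood of \(\bar z\) the set \(D\) coincides with \(\bigcup_{i\in I}D_i\).

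\textbf{Tangent cone.} For a finite union of closed sets, \(\tcone_D(\bar z)=\bigcup_{i\in I}\tcone_{D_i}(\bar z)\). The inclusion \(\supseteq\) is immediate. For \(\subseteq\), take a sequence \(x^\nu\in D\) realizing a tangent direction; by the pigeonhole principle, infinitely many \(x^\nu\) lie in a single \(D_i\) with \(i\in I\), so the direction belongs to \(\tcone_{D_i}(\bar z)\). By~\eqref{tangent_cone_D_i}, \(d\in\tcone_{D_i}(\bar z)\) means \(d_i=0\) and \(d_{i'}\ge 0\) for \(i'\in I\setminus\{i\}\), with the remaining components free. Taking the union over \(i\in I\) gives exactly \(\min_{i\in I}d_i=0\), which is~\eqref{tangent_cone_D}.

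\textbf{Regular normal cone.} By definition \(\Rncone_D(\bar z)\) is the polar of \(\tcone_D(\bar z)\), and the polar of a union is the intersection of the polars. Hence \(\Rncone_D(\bar z)=\bigcap_{i\in I}\Rncone_{D_i}(\bar z)\), where each \(\Rncone_{D_i}(\bar z)\) is read off from~\eqref{tangent_cone_D_i}. Every element of the intersection has \(d_{i'}=0\) for \(i'\notin I\). There are two cases.
\begin{itemize}
\item If \(I=\{i\}\), the only constraint is the single set \(\Rncone_{D_i}(\bar z)\), which forces \(d_{i'}=0\) for \(i'\neq i\) and leaves \(d_i\) free; this gives \(\R e_i\).
\item If \(|I|>1\), each \(k\in I\) is constrained by some \(D_i\) with \(i\neq k\), giving \(d_k\le 0\). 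Hence the intersection is \(\cone\set{-e_i}[i\in I]\).
\end{itemize}

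\textbf{Limiting normal cone.} This is the step needing the most care. For \(\subseteq\), take \(z^\nu\to\bar z\) in \(D\) and \(d^\nu\in\Rncone_D(z^\nu)\) with \(d^\nu\to d\). For \(\nu\) large, \(\mathcal{I}_{=0}(z^\nu)\subseteq I\), because positive components of \(\bar z\) stay positive. Passing to a subsequence, we may assume \(\mathcal{I}_{=0}(z^\nu)\) is a constant set \(I'\subseteq I\).
\begin{itemize}
\item If \(I'=\{i\}\), then \(d^\nu\in\R e_i\), a closed set, so \(d\in\R e_i\).
\item If \(|I'|>1\), then \(d^\nu\in\cone\set{-e_k}[k\in I']\subseteq\cone\set{-e_k}[k\in I]\), which is closed, so \(d\) lies there as well.
\end{itemize}
For \(\supseteq\), the cone \(\cone\set{-e_k}[k\in I]\) contains \(\Rncone_D(\bar z)\) in both cases (when \(|I|=1\) it is contained in \(\R e_i\)), so it lies in \(\ncone_D(\bar z)\). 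To obtain \(\R e_i\) for a given \(i\in I\), build points \(z^\nu\in D\) with \(z^\nu_i=0\), \(z^\nu_k=\bar z_k+1/\nu\) for \(k\neq i\), so that \(\mathcal{I}_{=0}(z^\nu)=\{i\}\). Then \(\Rncone_D(z^\nu)=\R e_i\), and taking constant sequences \(d^\nu=te_i\) yields \(\R e_i\subseteq\ncone_D(\bar z)\). Combining the two inclusions gives~\eqref{limiting_normal_cone}.
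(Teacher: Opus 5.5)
Your proposal is correct and follows essentially the same route as the paper: the tangent cone as the union of the tangent cones of the pieces $D_i$ containing $\bar z$ (you prove this with a pigeonhole argument where the paper cites a lemma), the regular normal cone as the intersection of their polars, and the limiting normal cone via $\mathcal{I}_{=0}(z)\subseteq\mathcal{I}_{=0}(\bar z)$ near $\bar z$ together with the sequence $\bar z+\frac{1}{\nu}(\bm{1}-e_i)$. One wording slip in the $\supseteq$ step: $\cone\{-e_k\}_{k\in\mathcal{I}_{=0}(\bar z)}$ is \emph{contained in} $\Rncone_D(\bar z)$ rather than containing it, and that is the direction your parenthetical and your conclusion actually use.
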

	\begin{proof}
	By~\cite[Lemma 2.2]{mehlitz2020c}, \cref{tangent_cone_D} follows from
	\[
		\tcone_{D}(\bar z)=\bigcup_{i\in\mathcal{I}_{=0}(\bar z)}\tcone_{D_i}(\bar z)
		\overset{\eqref{tangent_cone_D_i}}{=}\bigcup_{i\in\mathcal{I}_{=0}(\bar z)}
		\set{d \in \R^{p} }[d_i=0,\ d_{i'}\geq0,\ i'\in\mathcal{I}_{=0}(\bar z)].
	\]
	\cref{regular_normal_cone} follows by applying~\eqref{tangent_cone_D_i} to
	\(
		\Rncone_{D}(\bar z)
		=
		\bigcap_{i\in\mathcal{I}_{=0}(\bar z)}\Rncone_{D_i}(\bar z)
	\).
	As for~\eqref{limiting_normal_cone}, observe that \(\mathcal{I}_{=0}(z)\subseteq\mathcal{I}_{=0}(\bar z)\) holds for any \(z\) close enough to \(\bar z\);
		more precisely, for those \(\frac{\varepsilon}{2}\)-close to \(\bar z\), where \(\varepsilon=\min\set{\bar z_\ell}[\bar z_\ell>0]\) (with the convention that \(\min\emptyset=\infty\)).
		In particular, any sequence \(z^\nu\) converging to \(\bar z\) from within \(D\) will eventually satisfy \(\mathcal{I}_{=0}(z^\nu)\subseteq\mathcal{I}_{=0}(\bar z)\);
		from \eqref{regular_normal_cone} we thus conclude that
		\begin{equation}\label{eq:N_Dsubseteq}
			\textstyle
			\ncone_D(\bar z)
		\subseteq
			\begin{pmatrix}
				\bigcup_{i\in\mathcal{I}_{=0}(\bar z)}\R e_i
			\end{pmatrix}
			\cup
			\cone\set{-e_i}[i\in\mathcal{I}_{=0}(\bar z)].
		\end{equation}
		Conversely, clearly \(\Rncone_D(\bar z)\subseteq \ncone_D(\bar z)\); hence, it remains to show that \(\R e_i\subseteq \ncone_D(\bar z)\) holds for any \(i\in\mathcal{I}_{=0}(\bar z)\).
		To see this, fix \(i\in\mathcal{I}_{=0}(\bar z)\) and consider \(z^\nu\coloneqq \bar z+\frac{1}{\nu}(\bm{1}-e_i)\) for \(\nu\geq1\).
		Then, \(z^\nu\in D\) and \(\mathcal{I}_{=0}(z^\nu)=\set{i}\), granting via~\eqref{regular_normal_cone} that \(\Rncone_D(z^\nu)=\R e_i\).
		By taking \(\nu\to\infty\), \(z^\nu\to\bar z\) and consequently \(\R e_i\subseteq \ncone_D(\bar z)\) as needed.
	\end{proof}

	Following~\cite[Section 2.1]{wang2025}, to characterize Clarke stationarity in \MPVCC\@, we introduce the \emph{Clarke limiting cone} to \(D\) at \(\bar z\in D\) as
	\begin{align}
		\Cncone_D(\bar z)
		&\coloneqq
		\conv
		\begin{pmatrix}
		\bigcup_{i\in\mathcal{I}_{=0}(\bar z)}\R_{+} e_i
		\end{pmatrix}
		\cup
		\cone\set{-e_i}[i\in\mathcal{I}_{=0}(\bar z)]
		\nonumber\\
		&=
		\cone\set{e_i}[i\in\mathcal{I}_{=0}(\bar z)]
		\cup
		\cone\set{-e_i}[i\in\mathcal{I}_{=0}(\bar z)]
		\nonumber\\
		&=
		\set{d\in\R_{+}^p}[d_\ell=0,\ \ell\notin\mathcal{I}_{=0}(\bar z)]
		\cup
		\set{d\in\R_{-}^p}[d_\ell=0,\ \ell\notin\mathcal{I}_{=0}(\bar z)]
		\nonumber\\
		&=
		\set{d\in\R^p}[d_\ell=0,\ \ell\notin\mathcal{I}_{=0}(\bar z),\ d_{i}d_{i'}\geq0,\ i,i'\in\mathcal{I}_{=0}(\bar z)].
		\label{Clarke_cone}
	\end{align}

	\begin{figure}[h]
		\centering
		\begin{subfigure}{\textwidth}
			\centering
			\includegraphics[width=0.8\textwidth]{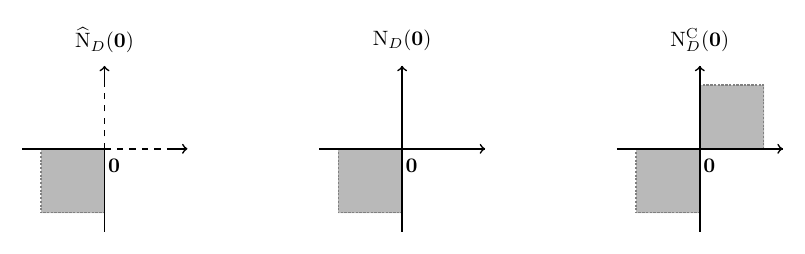}
			\caption{\(\Rncone_{D}\), \(\ncone_{D}\) and \(\Cncone_{D}\) at \(\bm{0}\) when \(p=2\).}
		\end{subfigure}

		\begin{subfigure}{\textwidth}
			\centering
			\includegraphics[width=0.8\textwidth]{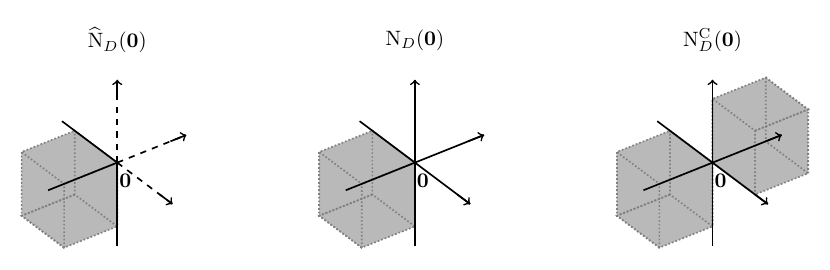}
			\caption{\(\Rncone_{D}\), \(\ncone_{D}\) and \(\Cncone_{D}\) at \(\bm{0}\) when \(p=3\).}
		\end{subfigure}

		\caption{Geometric illustrations of cones related to the vertical complementarity set.}\label{fig_normal_cones}
	\end{figure}

	The cones \(\Rncone_{D}\), \(\ncone_{D}\), and \(\ncone_{D}^{\mathrm{C}}\) are illustrated in \cref{fig_normal_cones}.
	Each of them corresponds to a specific problem-tailored stationarity concept for \MPVCC{}, as discussed in the next subsection.
	Note that \(\Rncone_{D}(\bar z)\subseteq\ncone_{D}(\bar z)\subseteq\ncone_{D}^{\mathrm{C}}(\bar z)\),
	and that \(\ncone_{D}^{\mathrm{C}}(\bar z)\) is closed but, in general, nonconvex.
	It should not be confused with the Clarke normal cone
	\(\conv\ncone_{D}(\bar z)=\set{d\in\R^p}[d_{\ell}=0,\ \ell\notin\mathcal{I}_{=0}(\bar z)]\),
	defined in~\cite[Eq. 6(19)]{rockafellar1998},
	which would lead to the notion of weak stationarity.

\subsection{Stationarity Concepts}
	To simplify the presentation, we define
	\begin{equation}\label{define_constraint}
		\bar p\coloneqq\sum_{j=1}^{r}p_j,
		\quad
		\Phi\coloneqq(A,F^{1},\ldots,F^{r}):\R^{n}\to\R^{m+\bar p}
		\quad\text{and}\quad
		\Gamma\coloneqq C \times D^{1} \times \cdots \times D^{r}\subset\R^{m+\bar p}.
	\end{equation}
	Moreover, we define the vectors
	\begin{equation}\label{define_vector}
		\y\coloneqq(y^0,y^1,\ldots,y^r)\in\R^{m+\bar p}
		\quad\text{and}\quad
		\z\coloneqq(z^0,z^1,\ldots,z^r)\in\Gamma,
	\end{equation}
	where \(y^0,z^0\in\R^m\) and \(y^j,z^j\in\R^{p_j}\).
	By~\cite[Proposition 6.41]{rockafellar1998},
	we have
	\begin{subequations}\label{define_cone}
		\begin{align}
		\Rncone_{\Gamma}(\z)&=\ncone_{C}(z^0)\times\Rncone_{D^1}(z^1)\times\cdots\times\Rncone_{D^r}(z^r)
		\\
		\ncone_{\Gamma}(\z)&=\ncone_{C}(z^0)\times \ncone_{D^1}(z^1)\times\cdots\times \ncone_{D^r}(z^r)
		\\
		\Cncone_{\Gamma}(\z)&\coloneqq\ncone_{C}(z^0)\times \Cncone_{D^1}(z^1)\times\cdots\times \Cncone_{D^r}(z^r).
		\end{align}
	\end{subequations}
	We introduce the following stationarity concepts for \MPVCC{}.

	\begin{definition}[Stationary point]\label{definition_stationary}
	Relative to \MPVCC{} with notation~\eqref{define_constraint}--\eqref{define_cone},
	a feasible point \(\bar x\in\R^{n}\) is said to be \emph{strongly (KKT) stationary (S-stationary)} if there exists \(\bar\y\in\R^{m+\bar p}\) such that
	\begin{equation*}\label{MPVCC_SSC}
	\begin{cases}
	\nabla f(\bar x)
	+
	J\Phi{(\bar x)}^{\top}\bar\y
	=
	\bm{0}
	\\[5pt]
	\bar\y\in \Rncone_{\Gamma}(\Phi(\bar x)).
	\end{cases}\tag{MPVCC-SSC}
	\end{equation*}
	If the above condition holds with \(\Rncone_{\Gamma}(\Phi(\bar x))\) being replaced by \(\ncone_{\Gamma}(\Phi(\bar x))\),
	then we say that \(\bar x\) is \emph{Mordukhovich (KKT) stationary (M-stationary)}.
	Furthermore, if \(\Rncone_{\Gamma}(\Phi(\bar x))\) is instead replaced by \(\Cncone_{\Gamma}(\Phi(\bar x))\),
	then \(\bar x\) is said to be \emph{Clarke (KKT) stationary (C-stationary)}.
	\end{definition}

	In the literature, \(\y\) serves as the multiplier of the \emph{MPVCC-Lagrangian} \(\mathcal{L}:\R^n\times\R^{m+\bar p}\to\R\), given by
	\begin{equation}\label{MPVCC_Lagrangian}
		\mathcal{L}(x,\y)
		\coloneqq
		f(x)
		+
		\langle \y,\Phi(x)\rangle.
	\end{equation}
	Each of these stationarity concepts treats differently the signs of the multipliers associated with the multiactive VCC,
	namely those satisfying \(|\mathcal{I}_{=0}(F^{j}(x))|>1\).
	If \(|\mathcal{I}_{=0}(F^{j}(x))|=1\) for all \(j\in\set{1,\ldots,r}\),
	then the three stationarity concepts coincide. In general, since
	\(\Rncone_{\Gamma}\subseteq\ncone_{\Gamma}\subseteq\ncone_{\Gamma}^{\mathrm{C}}\),
	S-stationarity implies M-stationarity, which in turn implies C-stationarity.
	We now introduce the asymptotic stationarity concepts for \MPVCC{}.

	\begin{definition}[Asymptotically stationary  point]\label{definition_asymptotic_stationary}%
	Relative to \MPVCC{} with notation~\eqref{define_constraint}--\eqref{define_cone},
	a feasible point \(\bar x\in\R^{n}\) is said to be \emph{asymptotically M-stationary (AM-stationary)} if there exist sequences
	\(\seq{x^\nu,\y^\nu,\z^\nu}[\nu\in\N]\) such that
	\[
		\begin{cases}
		\nabla f(x^\nu)
		+J\Phi{(x^\nu)}^\top {\y}^\nu
		\to
		\bm{0}
		\\[5pt]
		(x^\nu,{\z}^\nu)
		\to
		(\bar x,\Phi(\bar x))
		\\[5pt]
		{\z}^\nu\in\Gamma\ \text{and}\ {\y}^\nu\in \ncone_{\Gamma}({\z}^\nu).
		\end{cases}
	\]
	If the above condition holds with \(\ncone_{\Gamma}({\z}^\nu)\) being replaced by \(\Cncone_{\Gamma}({\z}^\nu)\),
	then we say that \(\bar x\) is \emph{asymptotically C-stationary (AC-stationary)}.
	\end{definition}

	AM-stationarity was introduced for general nonsmooth optimization problems in~\cite[Definition 3.1]{mehlitz2020b},
	and the notion of AC-stationarity in the context of MPCCs was introduced in~\cite[Definition 3.3]{andreani2019}.
	Asymptotic stationarity is a sequential optimality condition,
	whereas the stationarity concepts in \cref{definition_stationary} are pointwise optimality conditions.
	As shown in~\cite[Section 3.1.2]{birgin2014} and~\cite[Theorem 3.2]{mehlitz2020b},
	a local minimizer need not be a stationary point,
	but it is an asymptotically stationary point.

\subsection{Constraint Qualifications}
	To guarantee that a local minimizer of \MPVCC{}
	is stationary in the sense of \cref{definition_stationary},
	rather than asymptotically stationary,
	special MPVCC-tailored CQs are required.
	The main CQs employed in this paper are presented below.

	\begin{definition}\label{definition_MPVCC_CCP}
	Relative to \MPVCC{} with notation~\eqref{define_constraint}--\eqref{define_cone},
	let \(\bar x\in\R^{n}\) be a feasible point.
	\begin{enumerate}
		\item\label{CCP_common} \(\bar x\) is said to satisfy the \emph{MPVCC-asymptotic Mordukhovich regularity (MPVCC-AMR)} if the set-valued mapping
		\(
		S(x,\z)\coloneqq
		J\Phi{(x)}^\top \ncone_{\Gamma}(\z)
		\)
		is outer semicontinuous at \((\bar x,\Phi(\bar x))\).
		\hfill\textnormal{(MPVCC-AMR)}
		\item\label{CQ_Clarke} \(\bar x\) is said to satisfy the \emph{MPVCC-asymptotic Clarke regularity (MPVCC-ACR)} if the set-valued mapping
		\(
		S^{\rm C}(x,\z)\coloneqq
		J\Phi{(x)}^\top \Cncone_{\Gamma}(\z)
		\)
		is outer semicontinuous at \((\bar x,\Phi(\bar x))\).
		\hfill\textnormal{(MPVCC-ACR)}
	\end{enumerate}

	\end{definition}

	\CCPM{} and \CCPC{} extend the asymptotic Mordukhovich and Clarke regularity concepts for MPCC from~\cite[Definition 4.1]{andreani2019} to \MPVCC{}.
	The following result establishes conditions under which a feasible point is C- or M-stationary.

	\begin{theorem}\label{theorem_AM_is_M}
	Relative to \MPVCC{} with notation~\eqref{define_constraint}--\eqref{define_cone},
	let \(\bar x\in\R^n\) be a feasible point.
	The following hold:
	\begin{itemize}
		\item If \(\bar x\) is AC-stationary and satisfies \CCPC{},
	then \(\bar x\) is a C-stationary point.
		\item If \(\bar x\) is AM-stationary and satisfies \CCPM{},
	then \(\bar x\) is an M-stationary point.
	\end{itemize}
	\end{theorem}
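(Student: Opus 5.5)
The plan is to use the definition of outer semicontinuity directly, treating both items at once. For the M-case, AM-stationarity of \(\bar x\) supplies sequences \(\seq{x^\nu,\y^\nu,\z^\nu}[\nu\in\N]\) with \(\z^\nu\in\Gamma\), \(\y^\nu\in\ncone_\Gamma(\z^\nu)\), \((x^\nu,\z^\nu)\to(\bar x,\Phi(\bar x))\), and
\[
	\varepsilon^\nu\coloneqq\nabla f(x^\nu)+J\Phi(x^\nu)^\top\y^\nu\to\bm{0}.
\]
Set \(w^\nu\coloneqq J\Phi(x^\nu)^\top\y^\nu\). By construction \(w^\nu\in S(x^\nu,\z^\nu)\), and \(w^\nu=\varepsilon^\nu-\nabla f(x^\nu)\). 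Since \(f\) is continuously differentiable and \(x^\nu\to\bar x\), we get \(w^\nu\to-\nabla f(\bar x)\). Note that the possibly unbounded multipliers \(\y^\nu\) never need to converge; only their images \(w^\nu\) do, and those are controlled by the gradient of \(f\).

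Next, invoke \CCPM{}. Outer semicontinuity of \(S\) at \((\bar x,\Phi(\bar x))\) means
\[
	\Limsup_{(x,\z)\to(\bar x,\Phi(\bar x))}S(x,\z)\subseteq S(\bar x,\Phi(\bar x)).
\]
The outer limit here is taken along \(\z\in\Gamma\), which is where \(\ncone_\Gamma\) is defined, and \(\z^\nu\in\Gamma\) holds. Since \(w^\nu\in S(x^\nu,\z^\nu)\) and \(w^\nu\to-\nabla f(\bar x)\), it follows that \(-\nabla f(\bar x)\in S(\bar x,\Phi(\bar x))\). In other words, there is \(\bar\y\in\ncone_\Gamma(\Phi(\bar x))\) with \(\nabla f(\bar x)+J\Phi(\bar x)^\top\bar\y=\bm{0}\). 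Feasibility of \(\bar x\) is given, so \(\bar x\) is M-stationary in the sense of \cref{definition_stationary}.

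The C-case is identical word for word, with \(\ncone_\Gamma\) replaced by \(\Cncone_\Gamma\), \(S\) by \(S^{\rm C}\), and \CCPM{} by \CCPC{}. No property of the cones is used beyond the membership \(\y^\nu\in\) cone, so nothing changes. There is no real obstacle here. The only point needing care is to check that the paper's notion of outer semicontinuity is the graphical or outer-limit one with respect to the domain \(\R^n\times\Gamma\). This is what allows us to pass from the sequence \(w^\nu\) to the limit point \(-\nabla f(\bar x)\) without bounding \(\y^\nu\).
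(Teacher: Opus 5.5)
Your proposal is correct and follows essentially the same argument as the paper. You write $-\nabla f(x^\nu)$ as the vanishing residual minus $J\Phi(x^\nu)^\top$ applied to the multiplier, pass to the limit inside the outer limit of the cone images, and invoke outer semicontinuity, so the multipliers never need to be bounded. The only cosmetic difference is that the paper writes out the C-case and calls the M-case analogous, whereas you do the reverse.
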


	\begin{proof}
	Suppose that \(\bar x\) is an AC-stationary point of \MPVCC{}.
	By \cref{definition_asymptotic_stationary}, there exist sequences
	\(\seq{x^\nu,\y^\nu,\z^\nu}[\nu\in\N]\) such that
	\(x^\nu\to\bar x\),
	\(\Gamma\ni\z^\nu\to\Phi(\bar x)\),
	\({\y}^\nu\in \Cncone_{\Gamma}({\z}^\nu)\),
	and
	\(
		r^\nu\coloneqq
		\nabla f(x^\nu)
		+
		J\Phi{(x^\nu)}^\top \y^\nu
		\to
		\bm{0}
	\).
	Then, it is straightforward to see that
	\[
	\begin{aligned}
		-\nabla f(\bar x)
		=
		\lim_{\nu\to\infty}
		-\nabla f(x^\nu)
		&=
		\lim_{\nu\to\infty}
		\begin{pmatrix}
		-r^\nu
		+
		J\Phi{(x^\nu)}^\top \y^\nu
		\end{pmatrix}
		\\
		&
		\in
		\Limsup_{
		(x^\nu,\z^\nu)\to
		(\bar x,\Phi(\bar x))
		}
		J\Phi{(x^\nu)}^\top
		\Cncone_{\Gamma}(\z^\nu)
		\\
		\text{\scriptsize by \rm\CCPC}
		&
		\subseteq
		J\Phi{(\bar x)}^\top
		\Cncone_{\Gamma}(\Phi(\bar x)),
	\end{aligned}
	\]
	which implies that \(\bar x\) is a C-stationary point of \MPVCC{}.
	A similar argument applies to the second item.
	\end{proof}

	In the following, we show that \CCPM{} is weaker than the RCPLD-type CQ,
	which has been shown to be weaker than the problem-tailored versions of
	LICQ,
	MFCQ,
	no nonzero abnormal multiplier constraint qualification (NNAMCQ),
	CPLD and
	RCRCQ{};
	see~\cite{andreani2012,guo2013,guo2013a,jia2023,qi2000,ramos2021,ye2014}.
	To present RCPLD, we restrict our attention to the case in which the set \(C\) encodes the inequality and equality constraints.
	Specifically, let \(m_1,m_2\in\N\) be integers such that \(m=m_1+m_2\),
	and let \(A=(A_1,\ldots,A_m)\) with \(A_i:\R^n\to\R\),
	and \(C=\R_{-}^{m_1}\times{\{0\}}^{m_2}\).
	The constraint \(A(x)\in C\) can be equivalently written as
	\begin{equation}\label{A_eq_ineq}
		A_i(x)\leq0,\ i\in\{1,\ldots,m_1\}
		\quad\text{and}\quad
		A_i(x)=0,\ i\in\{m_1+1,\ldots,m\}.
	\end{equation}
	For any feasible point \(x\in\R^n\) of \MPVCC, let \(\z=\Phi(x)\).
	We introduce the index sets
	\begin{subequations}\label{index_set_uniq_mult}
	\begin{align}
		&\mathcal{A}(x)\coloneqq
		\set{i\in\{1,\ldots,m_1\}}[A_i(x)=0]
		\\
		&
		\mathcal{E}\coloneqq\{m_1+1,\ldots,m\}
		\\
		&\mathcal{I}_{\rm uniq}(\z)\coloneqq\set{(i,j)}[i\in\mathcal{I}_{=0}(z^j)\ \text{such that}\ |\mathcal{I}_{=0}(z^j)|=1]
		\\
		&\mathcal{I}_{\rm mult}(\z)\coloneqq\set{(i,j)}[i\in\mathcal{I}_{=0}(z^j)\ \text{such that}\ |\mathcal{I}_{=0}(z^j)|>1].
	\end{align}
	\end{subequations}
	The subscripts `uniq' and `mult' indicate that \(z^j\) has a unique zero component or multiple zero components, respectively.
	Furthermore, let
	\(\mathcal{I}_{1}\subseteq\{1,\ldots,m\}\) and
	\(\mathcal{I}_{2}\subseteq\set{(i,j)}[i\in\{1,\ldots,p_j\},\ j\in\{1,\ldots,r\}]\), we define
	\begin{equation}\label{define_G}
		\mathcal{G}(x;\mathcal{I}_{1},\mathcal{I}_{2})
		\coloneqq
		\set{\nabla A_\iota(x),\nabla F_{i}^{j}(x)}
		[\iota\in\mathcal{I}_{1},\ (i,j)\in\mathcal{I}_{2}].
	\end{equation}
	Given a vector \(\y\) as defined in~\eqref{define_vector},
	we say that \(\y\) is \emph{supported on} \((\mathcal{I}_{1},\mathcal{I}_{2})\),
	if \(y_{\ell}^{0}=0\) for \(\ell\notin\mathcal{I}_{1}\) and
	\(y_{i}^{j}=0\) for \((i,j)\notin\mathcal{I}_{2}\).
	Clearly, if \(\y\) is supported on \((\mathcal{I}_{1},\mathcal{I}_{2})\),
	then \(J\Phi{(x)}^\top\y\) is a linear combination of the gradients in
	\(\mathcal{G}(x;\mathcal{I}_{1},\mathcal{I}_{2})\),
	with coefficients given by the corresponding components of \(\y\).
	Based on which, MPVCC-RCPLD is introduced as follows, extending MPCC-RCPLD from~\cite[Definition 4.1]{ramos2021} to \MPVCC{}.

	\begin{definition}[MPVCC-RCPLD]\label{definition_MPVCC_RCPLD}
	Relative to \MPVCC{} with notation~\eqref{define_constraint}--\eqref{define_cone} and setting~\eqref{A_eq_ineq},
	a feasible point \(\bar x\in\R^{n}\), with \(\bar \z=\Phi(\bar x)\), is said to have the \emph{MPVCC-relaxed constant positive linear dependence (MPVCC-RCPLD)} property if
	\begin{enumerate}
		\item there exist
		\(\mathcal{E}'\subseteq\mathcal{E}\) and
		\(\mathcal{I}_{\rm uniq}'\subseteq\mathcal{I}_{\rm uniq}(\bar \z)\) such that
		\(\mathcal{G}(\bar x;\mathcal{E}',\mathcal{I}_{\rm uniq}')\) is a basis of
		\(\spa\mathcal{G}(\bar x;\mathcal{E},\mathcal{I}_{\rm uniq}(\bar \z))\),\label{RCPLD_i}
		\item \(\mathcal{G}(x;\mathcal{E},\mathcal{I}_{\rm uniq}(\bar \z))\) has constant rank in a neighborhood of \(\bar x\), and\label{RCPLD_ii}
		\item for each subset \(\mathcal{A}'\subseteq\mathcal{A}(\bar x)\) and \(\mathcal{I}_{\rm mult}'\subseteq\mathcal{I}_{\rm mult}(\bar \z)\),
		if there is a multiplier \(\y\),
		whose components indexed by \(\mathcal{E}'\cup\mathcal{A}'\) and \(\mathcal{I}_{\rm uniq}'\cup\mathcal{I}_{\rm mult}'\) are not all zero,
		satisfying
		\(y^{0}_{\ell}\geq0\) for \(\ell\in\mathcal{A}'\),
		\(y^{j}\in \ncone_{D^j}(\bar{z}^j)\) for all \(j\), and
		\begin{equation}\label{RCPLD_PLD}
		\sum_{\ell\in\mathcal{E}'\cup\mathcal{A}'}
		y^{0}_{\ell}\nabla A_{\ell}(\bar x)
		+
	\sum_{(i,j)\in\mathcal{I}_{\rm uniq}'\cup\mathcal{I}_{\rm mult}'}
		y^{j}_{i}\nabla F^{j}_{i}(\bar x)
		=
		\bm{0},
		\end{equation}
		then the gradients in the set
		\[
		\mathcal{G}
		\left(
			x;
			\mathcal{E}'\cup\mathcal{A}',
			\mathcal{I}_{\rm uniq}'\cup\mathcal{I}_{\rm mult}'
		\right)
		\]
		are linearly dependent in a neighborhood of \(\bar x\).\label{RCPLD_iii}
	\end{enumerate}
	\end{definition}

	We first present the following two results, which will be used to show that \CCPM{} is implied by \RCPLD{}.

	\begin{fact}[{\cite[Lemma 1]{andreani2012}}]\label{lemma_linearly_independent}
	Let \(w=\sum_{i=1}^{s_1+s_2}\alpha_i v_i\) with \(\alpha_i\in\R\), \(v_i\in\R^n\), the set \({\{v_i\}}_{i=1}^{s_1}\) is linearly independent, and \(\alpha_i\neq0\) for \(i\in\{s_1+1,\ldots,s_1+s_2\}\).
	Then, there exist a subset \(\mathcal{S} \subseteq\{s_1+1,\ldots,s_1+s_2\}\) and scalars \(\tilde \alpha_i\) for \(i\in\{1,\ldots,s_1\}\cup\mathcal{S}\) such that
		\(w=\sum_{i\in\{1,\ldots,s_1\}\cup\mathcal{S}}\tilde\alpha_i v_i\),
		\(\alpha_i\tilde\alpha_i>0\) for \(i\in\mathcal{S}\), and
		the set \({\{v_i\}}_{i\in\{1,\ldots,s_1\}\cup\mathcal{S}}\) is linearly independent.
	\end{fact}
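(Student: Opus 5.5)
The plan is to argue by induction on the number \(s_2\) of indices with nonzero coefficients, repeatedly removing one vector from a linearly dependent family. At each removal we keep the sign pattern of the coefficients on the surviving indices from \(\{s_1+1,\ldots,s_1+s_2\}\). This is the classical Carathéodory-type reduction, carried out with the independent block \(\{v_i\}_{i=1}^{s_1}\) frozen.

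\textbf{Base case and trivial case.} If \(\{v_i\}_{i=1}^{s_1+s_2}\) is already linearly independent, take \(\mathcal{S}=\{s_1+1,\ldots,s_1+s_2\}\) and \(\tilde\alpha=\alpha\). This also covers \(s_2=0\).

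\textbf{Dependent case.} Suppose the family is linearly dependent. Then there exist \(\beta_i\), not all zero, with \(\sum_{i=1}^{s_1+s_2}\beta_i v_i=0\). Since \(\{v_i\}_{i\le s_1}\) is independent, some \(\beta_i\) with \(i>s_1\) is nonzero. Replacing \(\beta\) by \(-\beta\) if needed, we may assume some \(\beta_i\) with \(i>s_1\) has the same sign as \(\alpha_i\), that is, \(\alpha_i\beta_i>0\). Set
\[
t^\ast \coloneqq \min\set{\alpha_i/\beta_i}[i>s_1,\ \alpha_i\beta_i>0]>0
\]
and \(\alpha_i'\coloneqq \alpha_i-t^\ast\beta_i\) for every \(i\). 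Then \(w=\sum_i\alpha_i' v_i\). For every \(i>s_1\), either \(\alpha_i'=0\) or \(\alpha_i'\) has the same sign as \(\alpha_i\). To see this, note:
\begin{itemize}
\item if \(\alpha_i\beta_i\le 0\), then \(|\alpha_i'|\ge|\alpha_i|\) with the same sign;
\item if \(\alpha_i\beta_i>0\), then \(t^\ast\le\alpha_i/\beta_i\) keeps the sign or produces zero.
\end{itemize}
At least one index achieving the minimum gives \(\alpha_i'=0\).

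\textbf{Induction step.} Discard all indices \(i>s_1\) with \(\alpha_i'=0\). This yields a representation of \(w\) with the same first block, strictly fewer extra indices, and nonzero coefficients sign-consistent with the original \(\alpha_i\). Applying the induction hypothesis to this shorter representation produces \(\mathcal{S}\) and \(\tilde\alpha\). The signs of the resulting \(\tilde\alpha_i\) agree with \(\alpha_i'\), and hence with \(\alpha_i\), by transitivity. This gives \(\alpha_i\tilde\alpha_i>0\) for \(i\in\mathcal{S}\).

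\textbf{Main obstacle.} The only delicate point is guaranteeing that some \(\beta_i\) with \(i>s_1\) is nonzero, so that an extra vector is actually removed. This is exactly where the independence of the first block is used. Note also that the coefficients on \(i\le s_1\) are unconstrained in sign, so they need no control.
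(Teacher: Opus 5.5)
Your proof is correct: the Carath\'eodory-type elimination works as you set it up. You shift the coefficients along a nontrivial dependence \(\beta\) and choose \(t^\ast\) as the smallest sign-compatible ratio, so at least one extra coefficient vanishes while the others keep their signs, and the independence of \(\{v_i\}_{i=1}^{s_1}\) guarantees that the removed index lies outside the first block. The paper gives no proof of its own and simply cites \cite[Lemma 1]{andreani2012}, whose proof is this same repeated reduction, so your approach matches it.
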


	\begin{lemma}\label{lemma_sign_preservation}
	Relative to \MPVCC{} with notation~\eqref{define_constraint}--\eqref{define_cone} and setting~\eqref{A_eq_ineq},
	let \(\bar x\in\R^{n}\) be a feasible point, with \(\bar \z=\Phi(\bar x)\).
	Suppose that there exist
	\(\mathcal{E}'\subseteq\mathcal{E}\) and
	\(\mathcal{I}_{\rm uniq}'\subseteq\mathcal{I}_{\rm uniq}(\bar \z)\)
	such that
	\(\mathcal{G}(x;\mathcal{E}',\mathcal{I}_{\rm uniq}')\) is a basis of
	\(\spa\mathcal{G}(x;\mathcal{E},\mathcal{I}_{\rm uniq}(\bar \z))\)
	for all \(x\) in a neighborhood of \(\bar x\).
	Let
	\(x^\nu\to\bar x\),
	\(\Gamma\ni\z^\nu\to\bar\z\),
	\(\y^\nu\in \Cncone_{\Gamma}(\z^\nu)\), and
	\(w^\nu=J\Phi{(x^\nu)}^\top\y^\nu\).
	Then, there exist
	\(K\in \mathcal{N}_{\infty}^{\sharp}\),
	\(\mathcal{A}'\subseteq\mathcal{A}(\bar x)\),
	\(\mathcal{I}_{\rm mult}'\subseteq\mathcal{I}_{\rm mult}(\bar\z)\),
	and \(\seq{\tilde \y^{\nu}}[\nu\in K]\),
	such that, for all \(\nu\in K\):
	\begin{enumerate}
		\item \label{lemma_sign_preservation_1}
		\(w^\nu=J\Phi{(x^\nu)}^\top\tilde\y^\nu\),
		and \(\tilde\y^\nu\) is supported on
		\((\mathcal{E}'\cup\mathcal{A}',\mathcal{I}_{\rm uniq}'\cup\mathcal{I}_{\rm mult}')\),
		\item \label{lemma_sign_preservation_2}
		the gradients in the set
		\(
		\mathcal{G}
		\left(
			x^{\nu};
			\mathcal{E}'\cup\mathcal{A}',
			\mathcal{I}_{\rm uniq}'\cup\mathcal{I}_{\rm mult}'
		\right)
		\)
		are linearly independent, and
		\item \label{lemma_sign_preservation_3}
		\(\tilde \y^{\nu}\in \Cncone_{\Gamma}(\bar\z)\);
		if, in addition,
		\(\y^\nu\in \ncone_{\Gamma}(\z^\nu)\) for all \(\nu\), then
		\(\tilde \y^{\nu}\in \ncone_{\Gamma}(\bar\z)\).
	\end{enumerate}

	\end{lemma}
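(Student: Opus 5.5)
We plan to first prune the support of \(\y^\nu\) using the local structure of \(\Gamma\) near \(\bar\z\), then apply \cref{lemma_linearly_independent} to the multipliers that are not attached to equality or unique-zero gradients, and finally pass to a subsequence so that the combinatorial data stabilize.

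\emph{Step 1: localize the support.} Since \(\z^\nu\to\bar\z\) within \(\Gamma\), for \(\nu\) large we have \(\mathcal{I}_{=0}(z^{j,\nu})\subseteq\mathcal{I}_{=0}(\bar z^j)\), exactly as in the proof of \cref{prop:N_D}. Similarly, for inequality indices \(\ell\notin\mathcal{A}(\bar x)\), \(z^{0,\nu}_\ell<0\) eventually. Hence \(y^{0,\nu}_\ell=0\) for \(\ell\in\{1,\ldots,m_1\}\setminus\mathcal{A}(\bar x)\), and \(y^{0,\nu}_\ell\ge0\) on \(\mathcal{A}(\bar x)\) because \(C\) is a box-type cone. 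By \eqref{Clarke_cone}, \(y^{j,\nu}\) vanishes outside \(\mathcal{I}_{=0}(\bar z^j)\). Moreover, for each multiactive \(j\) the entries \(y^{j,\nu}_i\), \(i\in\mathcal{I}_{=0}(\bar z^j)\), share a common sign. Passing to a subsequence, we fix for every \(j\) with \(|\mathcal{I}_{=0}(\bar z^j)|>1\) one of the two patterns: either \(y^{j,\nu}\in\R_+^{p_j}\) or \(y^{j,\nu}\in\R_-^{p_j}\). In the \(\ncone\) case, we additionally fix one of the finitely many pieces of \eqref{limiting_normal_cone}: either a single axis \(\R e_i\), or the negative orthant on \(\mathcal{I}_{=0}(z^{j,\nu})\). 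This is possible because \(\ncone_{D^j}(z^{j,\nu})\subseteq\ncone_{D^j}(\bar z^j)\) has finitely many pieces.

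\emph{Step 2: rewrite in the basis and apply \cref{lemma_linearly_independent}.} Write \(w^\nu\) as a combination of gradients indexed by \(\mathcal{E}\cup\mathcal{A}(\bar x)\) and \(\mathcal{I}_{\rm uniq}(\bar\z)\cup\mathcal{I}_{\rm mult}(\bar\z)\). The part indexed by \(\mathcal{E}\) and \(\mathcal{I}_{\rm uniq}(\bar\z)\) lies in \(\spa\mathcal{G}(x^\nu;\mathcal{E},\mathcal{I}_{\rm uniq}(\bar\z))\), which by hypothesis equals \(\spa\mathcal{G}(x^\nu;\mathcal{E}',\mathcal{I}_{\rm uniq}')\) for large \(\nu\). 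We re-express it with arbitrary-sign coefficients on the basis vectors; this is harmless because, for \((i,j)\in\mathcal{I}_{\rm uniq}\), the cones \(\ncone_{D^j}(\bar z^j)\) and \(\Cncone_{D^j}(\bar z^j)\) both contain \(\R e_i\), and no sign restriction is imposed on equalities. We then apply \cref{lemma_linearly_independent} with \(v_1,\ldots,v_{s_1}\) the linearly independent basis \(\mathcal{G}(x^\nu;\mathcal{E}',\mathcal{I}_{\rm uniq}')\), and \(v_{s_1+1},\ldots\) the gradients in \(\mathcal{A}(\bar x)\) and \(\mathcal{I}_{\rm mult}(\bar\z)\) with nonzero coefficients. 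This yields a subset \(\mathcal{S}^\nu\) and new coefficients \(\tilde\y^\nu\) with the following properties: \(w^\nu=J\Phi(x^\nu)^\top\tilde\y^\nu\), the retained gradients are linearly independent, and the retained coefficients keep their signs, since \(\alpha_i\tilde\alpha_i>0\). Since only finitely many subsets \(\mathcal{S}^\nu\) exist, a further subsequence \(K\) makes \(\mathcal{S}^\nu\) constant. Splitting this constant subset into \(\mathcal{A}'\) and \(\mathcal{I}'_{\rm mult}\) gives items \ref{lemma_sign_preservation_1} and \ref{lemma_sign_preservation_2}.

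\emph{Step 3: cone membership.} Sign preservation, together with the fixed pattern from Step 1, gives item \ref{lemma_sign_preservation_3}. In the Clarke case, all surviving entries of each multiactive block \(\tilde y^{j,\nu}\) keep a common sign and are supported in \(\mathcal{I}_{=0}(\bar z^j)\). The unique blocks are scalar multiples of \(e_i\), and the \(\mathcal{A}'\) entries are nonnegative, so \(\tilde\y^\nu\in\Cncone_\Gamma(\bar\z)\) by \eqref{Clarke_cone} and \eqref{define_cone}. In the Mordukhovich case, the fixed piece is either a single axis, where at most one entry survives and any sign is allowed, or the nonpositive orthant, which is preserved under sign preservation and removal of entries. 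Either way the block lies in \eqref{limiting_normal_cone} at \(\bar z^j\).

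The main obstacle is the bookkeeping in Step 2. The basis rewriting must not disturb the signs of the \(\mathcal{A}\) and \(\mathcal{I}_{\rm mult}\) coefficients. It must also be legitimate for a unique-zero index \((i,j)\) at \(\bar\z\) to have received the constraint ``\(y\le0\)'' at \(z^{j,\nu}\). This requires confirming that \(\mathcal{I}_{\rm uniq}(\bar\z)\) indices are unrestricted in the target cone at \(\bar\z\), which holds by \eqref{regular_normal_cone}.
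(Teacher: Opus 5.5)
Your proposal is correct and follows essentially the same route as the paper: you localize the support of \(\y^\nu\) to \(\mathcal{E}\cup\mathcal{A}(\bar x)\) and \(\mathcal{I}_{=0}(\bar z^j)\), rewrite the \(\mathcal{E}\)/\(\mathcal{I}_{\rm uniq}(\bar\z)\) part in the basis \(\mathcal{G}(x^\nu;\mathcal{E}',\mathcal{I}_{\rm uniq}')\), apply \cref{lemma_linearly_independent} to the nonzero \(\mathcal{A}(\bar x)\) and \(\mathcal{I}_{\rm mult}(\bar\z)\) terms, and pass to a subsequence on which \(\mathcal{S}^\nu\) is constant. The differences are cosmetic: your extra subsequence fixing sign patterns or pieces of \eqref{limiting_normal_cone} is not needed, because the paper notes directly that \(\y^\nu\in\Cncone_{\Gamma}(\bar\z)\) (resp.\ \(\ncone_{\Gamma}(\bar\z)\)) for every large \(\nu\), so sign preservation gives the third item termwise; and your closing worry is moot, since for large \(\nu\) one has \(\mathcal{I}_{=0}(z^{j,\nu})=\mathcal{I}_{=0}(\bar z^j)\) whenever the latter is a singleton.
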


	\begin{proof}
	Since \(\z^\nu\to\bar\z\), for \(\nu\) sufficiently large, we have
	\(\vit{z_{\ell}}[0][\nu]<0\) for every
	\(\ell\in\set{1,\ldots,m_{1}}\setminus\mathcal{A}(\bar x)\), and hence \(\vit{y_{\ell}}[0][\nu]=0\);
	moreover, \(\vit{y_{\ell}}[0][\nu]\geq0\) for \(\ell\in\mathcal{A}(\bar x)\),
	and \(\mathcal{I}_{=0}(\vit{z}[j][\nu])\subseteq\mathcal{I}_{=0}(\bar z^{j})\) for \(j\in\set{1,\ldots,r}\).
	By~\eqref{Clarke_cone}, \(\vit{y}[j][\nu]\) is supported on \(\mathcal{I}_{=0}(\vit{z}[j][\nu])\),
	and hence also on \(\mathcal{I}_{=0}(\bar z^{j})\);
	moreover, its nonzero components share a common sign.
	Therefore, \(\y^{\nu}\in \Cncone_{\Gamma}(\bar\z)\),
	Furthermore, if \(\y^\nu\in \ncone_{\Gamma}(\z^\nu)\),
	then by~\eqref{limiting_normal_cone},
	either \(\vit{y}[j][\nu]\) has at most one nonzero component,
	indexed by an element of \(\mathcal{I}_{=0}(\vit{z}[j][\nu])\),
	or all components indexed by \(\mathcal{I}_{=0}(\vit{z}[j][\nu])\) are negative.
	Consequently, by \(\mathcal{I}_{=0}(\vit{z}[j][\nu])\subseteq\mathcal{I}_{=0}(\bar z^{j})\), we obtain \(\y^{\nu}\in \ncone_{\Gamma}(\bar\z)\).
	Hence, for \(\nu\) sufficiently large,
	\[
		w^\nu
		=
		\sum_{\ell\in\mathcal{E}}
		\vit{y_{\ell}}[0][\nu]
		\nabla A_{\ell}(x^\nu)
		+
		\sum_{(i,j)\in\mathcal{I}_{\rm uniq}(\bar\z)}
		\vit{y_{i}}[j][\nu]
		\nabla F^{j}_{i}(x^\nu)
		+
		\sum_{\ell\in\mathcal{A}(\bar x)}
		\vit{y_{\ell}}[0][\nu]
		\nabla A_{\ell}(x^\nu)
		+
		\sum_{(i,j)\in\mathcal{I}_{\rm mult}(\bar\z)}
		\vit{y_{i}}[j][\nu]
		\nabla F^{j}_{i}(x^\nu).
	\]
	By the assumption of \cref{lemma_sign_preservation}, the first two sums can be expressed as a linear combination of the linearly independent vectors in
	\(\mathcal{G}(x^{\nu};\mathcal{E}',\mathcal{I}_{\rm uniq}')\).
	Discarding the terms with zero coefficients in the last two sums and invoking \cref{lemma_linearly_independent},
	we obtain a set \(\mathcal{S}^{\nu}\) of indices in \(\ell\in\mathcal{A}(\bar x)\) and pairs in \(\mathcal{I}_{\rm mult}(\bar\z)\),
	together with a vector \(\tilde \y^{\nu}\) supported on
	\(
		(
		\mathcal{E}'\cup(\mathcal{A}(\bar x)\cap\mathcal{S}^{\nu}),
		\mathcal{I}_{\rm uniq}'\cup(\mathcal{I}_{\rm mult}(\bar\z)\cap\mathcal{S}^{\nu})
		)
	\),
	such that
	\(w^\nu=J\Phi{(x^\nu)}^\top\tilde\y^\nu\).
	The corresponding set of gradients is linearly independent,
	and \(\vit{\tilde{y}_{\ell}}[0][\nu]\vit{y_{\ell}}[0][\nu]>0\),
	\(\vit{\tilde{y}_{i}}[j][\nu]\vit{y_{i}}[j][\nu]>0\) for all indices in \(\mathcal{S}^{\nu}\).
	Since there are finitely many possible sets \(\mathcal{S}^{\nu}\),
	there exists a subsequence \(K\in \mathcal{N}_{\infty}^{\sharp}\) such that \(\mathcal{S}^{\nu}=\mathcal{S}\) for \(\nu\in K\).
	Setting \(\mathcal{A}'\coloneqq\mathcal{A}(\bar x)\cap\mathcal{S}\) and
	\(\mathcal{I}_{\rm mult}'\coloneqq\mathcal{I}_{\rm mult}(\bar\z)\cap\mathcal{S}\) establishes \cref{lemma_sign_preservation_1,lemma_sign_preservation_2}.

	\cref{lemma_sign_preservation_3} follows from the fact that \(\y^{\nu}\in \Cncone_{\Gamma}(\bar\z)\) or \(\y^{\nu}\in \ncone_{\Gamma}(\bar\z)\), together with the sign preservation in \cref{lemma_linearly_independent}.
	\end{proof}

	\begin{theorem}\label{theorem_RCPLD_M}
	Relative to \MPVCC{} with notation~\eqref{define_constraint}--\eqref{define_cone} and setting~\eqref{A_eq_ineq},
	if a feasible point \(\bar x\in\R^{n}\), with \(\bar \z=\Phi(\bar x)\), satisfies \RCPLD{}, then \(\bar x\) also satisfies \CCPM{}.
	\end{theorem}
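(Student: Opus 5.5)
The plan is to verify outer semicontinuity of \(S(x,\z)=J\Phi(x)^\top\ncone_\Gamma(\z)\) at \((\bar x,\bar\z)\) directly from sequences. Take \(x^\nu\to\bar x\), \(\Gamma\ni\z^\nu\to\bar\z\), \(\y^\nu\in\ncone_\Gamma(\z^\nu)\), and \(w^\nu\coloneqq J\Phi(x^\nu)^\top\y^\nu\to\bar w\). We must show \(\bar w\in J\Phi(\bar x)^\top\ncone_\Gamma(\bar\z)\).

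The first step is to put ourselves in the setting of \cref{lemma_sign_preservation}. By \RCPLDi{} the set \(\mathcal{G}(\bar x;\mathcal{E}',\mathcal{I}_{\rm uniq}')\) is linearly independent at \(\bar x\). Linear independence is an open condition, so it stays independent for \(x\) near \(\bar x\). By \RCPLDii{} the full family \(\mathcal{G}(x;\mathcal{E},\mathcal{I}_{\rm uniq}(\bar\z))\) has constant rank, equal to \(|\mathcal{E}'|+|\mathcal{I}_{\rm uniq}'|\). Hence the subfamily is a basis of the span for all \(x\) near \(\bar x\), which is exactly the hypothesis of \cref{lemma_sign_preservation}.

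Applying the lemma yields \(K\), index sets \(\mathcal{A}'\) and \(\mathcal{I}_{\rm mult}'\), and vectors \(\tilde\y^\nu\in\ncone_\Gamma(\bar\z)\) with the following properties:
\begin{itemize}
\item each \(\tilde\y^\nu\) is supported on the fixed index set \((\mathcal{E}'\cup\mathcal{A}',\mathcal{I}_{\rm uniq}'\cup\mathcal{I}_{\rm mult}')\);
\item \(w^\nu=J\Phi(x^\nu)^\top\tilde\y^\nu\);
\item the gradients \(\mathcal{G}(x^\nu;\mathcal{E}'\cup\mathcal{A}',\mathcal{I}_{\rm uniq}'\cup\mathcal{I}_{\rm mult}')\) are linearly independent for \(\nu\in K\).
\end{itemize}

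The key step is to show that \(\seq{\tilde\y^\nu}[\nu\in K]\) is bounded. Suppose not, and pass to a further subsequence with \(\tilde\y^\nu/\|\tilde\y^\nu\|\to\hat\y\ne\bm 0\). Since \(\ncone_\Gamma(\bar\z)\) is a closed cone, \(\hat\y\in\ncone_\Gamma(\bar\z)\). In particular \(\hat y^0_\ell\ge0\) on \(\mathcal{A}'\) and \(\hat y^j\in\ncone_{D^j}(\bar z^j)\). Moreover \(\hat\y\) is supported on the same fixed index set, and dividing \(w^\nu\) by \(\|\tilde\y^\nu\|\) and letting \(\nu\to\infty\) gives \eqref{RCPLD_PLD}. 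Thus \(\hat\y\) is a multiplier as in \RCPLDiii{}, with nonzero components on that index set. Condition \RCPLDiii{} then says the corresponding gradients are linearly dependent in a neighborhood of \(\bar x\), hence at \(x^\nu\) for \(\nu\) large. This contradicts the linear independence above.

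With boundedness in hand, extract a convergent subsequence \(\tilde\y^\nu\to\bar\y\). Closedness of \(\ncone_\Gamma(\bar\z)\) gives \(\bar\y\in\ncone_\Gamma(\bar\z)\), and continuity of \(J\Phi\) gives \(\bar w=J\Phi(\bar x)^\top\bar\y\), concluding the proof.

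The main obstacle is the bookkeeping in the unboundedness step: checking that the limit multiplier \(\hat\y\) meets precisely the sign and normal-cone requirements of \RCPLDiii{} with the same \(\mathcal{A}'\) and \(\mathcal{I}_{\rm mult}'\) produced by the lemma. This relies on \cref{lemma_sign_preservation_3} placing \(\tilde\y^\nu\) in the fixed closed cone \(\ncone_\Gamma(\bar\z)\), not in the moving cones \(\ncone_\Gamma(\z^\nu)\).
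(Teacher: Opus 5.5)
Your proposal is correct and follows the paper's proof essentially step by step. You use parts (i)--(ii) of MPVCC-RCPLD to meet the hypothesis of the sign-preservation lemma, use its third item to place the reduced multipliers $\tilde\y^\nu$ in the fixed closed cone $\ncone_\Gamma(\bar\z)$, rule out unboundedness by normalizing and invoking part (iii) against the linear independence at $x^\nu$, and finish by taking a cluster point.
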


	\begin{proof}
	Let
	\(
	\bar w
	\in
	\Limsup_{
		(x^\nu,\z^\nu)\to
		(\bar x,\Phi(\bar x))
		}
		J\Phi{(x^\nu)}^\top \ncone_{\Gamma}(\z^\nu)
	\).
	Then, there exist sequences
	\(x^\nu\to\bar x\),
	\(\Gamma\ni\z^\nu\to\bar\z=\Phi(\bar x)\), and
	\(\y^\nu\in \ncone_{\Gamma}(\z^\nu)\) such that
	\(w^\nu=J\Phi{(x^\nu)}^\top\y^\nu\to\bar w\).
	Let \(\mathcal{E}'\) and \(\mathcal{I}_{\rm uniq}'\) be as in \RCPLDi.
	Since \(\mathcal{G}(\bar x;\mathcal{E}',\mathcal{I}_{\rm uniq}')\) is linearly independent,
	it follows from the continuity of the functions involved that \(\mathcal{G}(x;\mathcal{E}',\mathcal{I}_{\rm uniq}')\) is linearly independent for all \(x\) in a neighborhood of \(\bar x\).
	Then, by \RCPLDii, the number of vectors in
	\(\mathcal{G}(x;\mathcal{E}',\mathcal{I}_{\rm uniq}')\)
	equals the rank of
	\(\mathcal{G}(x;\mathcal{E},\mathcal{I}_{\rm uniq}(\bar \z))\),
	and thus the assumption of \cref{lemma_sign_preservation} holds.
	Let
	\(K\),
	\(\mathcal{A}'\),
	\(\mathcal{I}_{\rm mult}'\),
	and \(\seq{\tilde \y^{\nu}}[\nu\in K]\) be as in \cref{lemma_sign_preservation},
	and suppose, for contradiction, that \(\seq{\tilde \y^{\nu}}[\nu\in K]\) is unbounded.
	Dividing both sides of \(w^\nu=J\Phi{(x^\nu)}^\top\tilde\y^\nu\) by
	\(\|\tilde \y^\nu\|\)
	and passing to the limit along the subsequence \(K\), we obtain a unit vector \(\tilde\y\), supported on
	\(
	(
		\mathcal{E}'\cup\mathcal{A}',
		\mathcal{I}_{\rm uniq}'\cup\mathcal{I}_{\rm mult}'
	)
	\), such that
	\(J\Phi{(\bar x)}^\top\tilde\y=\bm{0}\), that is, \eqref{RCPLD_PLD} holds with \(\y=\tilde\y\).
	Since \(\ncone_{\Gamma}(\bar\z)\) is a closed cone, \cref{lemma_sign_preservation_3} implies that \(\tilde \y\in \ncone_{\Gamma}(\bar\z)\).
	By \RCPLDiii, the gradients in the set
	\(
		\mathcal{G}
		\left(
		x;
		\mathcal{E}'\cup\mathcal{A}',
		\mathcal{I}_{\rm uniq}'\cup\mathcal{I}_{\rm mult}'
		\right)
	\)
	are linearly dependent for all \(x\) in a neighborhood of \(\bar x\),
	contradicting \cref{lemma_sign_preservation_2} for \(\nu\) sufficiently large.
	Therefore, \(\seq{\tilde \y^{\nu}}[\nu\in K]\) is bounded,
	and any of its cluster point \(\tilde\y\) satisfies \(\tilde \y\in \ncone_{\Gamma}(\bar\z)\) and \(\bar w=J\Phi{(\bar x)}^\top\tilde\y\),
	which implies \CCPM{}.
	\end{proof}

	By~\cite[Examples 4--6]{andreani2019}, neither \CCPC{} nor \CCPM{} implies the other.
	Nevertheless, we next show that \CCPC{} is weaker than a variant of RCRCQ,
	which is known to be weaker than the problem-tailored versions of
	LICQ, linear CQ and CRCQ{};
	see~\cite{guo2013a,ramos2021}.
	MPVCC-RCRCQ is introduced as follows, extending MPCC-RCRCQ from~\cite[Definition 3.4(ii)]{guo2013a} to \MPVCC{}.

	\begin{definition}[MPVCC-RCRCQ]\label{definition_MPVCC_RCRCQ}
	Relative to \MPVCC{} with notation~\eqref{define_constraint}--\eqref{define_cone} and setting~\eqref{A_eq_ineq},
	a feasible point \(\bar x\in\R^{n}\), with \(\bar \z=\Phi(\bar x)\), is said to have the \emph{MPVCC-relaxed constant rank constraint qualification (MPVCC-RCRCQ)} property if,
	for each subset
	\(\mathcal{A}'\subseteq\mathcal{A}(\bar x)\) and
	\(\mathcal{I}_{\rm mult}'\subseteq\mathcal{I}_{\rm mult}(\bar \z)\),
	the gradients in the set
	\[
		\mathcal{G}
		\left(
		x;
		\mathcal{E}\cup\mathcal{A}',
		\mathcal{I}_{\rm uniq}(\bar \z)\cup\mathcal{I}_{\rm mult}'
		\right)
	\]
	have constant rank in a neighborhood of \(\bar x\).

	\end{definition}

	\begin{lemma}\label{lemma_linearly_dependent}
	Relative to \MPVCC{} with notation~\eqref{define_constraint}--\eqref{define_cone} and setting~\eqref{A_eq_ineq},
	let \(\bar x\in\R^{n}\) be a feasible point, with \(\bar \z=\Phi(\bar x)\).
	Suppose that \(\bar x\) satisfies \RCRCQ{}, and let
	\(\mathcal{E}'\subseteq\mathcal{E}\) and
	\(\mathcal{I}_{\rm uniq}'\subseteq\mathcal{I}_{\rm uniq}(\bar \z)\)
	be such that
	\(\mathcal{G}(\bar x;\mathcal{E}',\mathcal{I}_{\rm uniq}')\) is a basis of
	\(\spa\mathcal{G}(\bar x;\mathcal{E},\mathcal{I}_{\rm uniq}(\bar \z))\).
	Then,
	\(\mathcal{G}(x;\mathcal{E}',\mathcal{I}_{\rm uniq}')\) is a basis of
	\(\spa\mathcal{G}(x;\mathcal{E},\mathcal{I}_{\rm uniq}(\bar \z))\)
	for all \(x\) in a neighborhood of \(\bar x\).
	Moreover, for each
	\(\mathcal{A}'\subseteq\mathcal{A}(\bar x)\) and
	\(\mathcal{I}_{\rm mult}'\subseteq\mathcal{I}_{\rm mult}(\bar\z)\),
	if the gradients in
	\(
		\mathcal{G}
		\left(
		\bar x;
		\mathcal{E}'\cup\mathcal{A}',
		\mathcal{I}_{\rm uniq}'\cup\mathcal{I}_{\rm mult}'
		\right)
	\)
	are linearly dependent, then the gradients in
	\(
		\mathcal{G}
		\left(
		x;
		\mathcal{E}'\cup\mathcal{A}',
		\mathcal{I}_{\rm uniq}'\cup\mathcal{I}_{\rm mult}'
		\right)
	\)
	are also linearly dependent for all \(x\) in a neighborhood of \(\bar x\).
	\end{lemma}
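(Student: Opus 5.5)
The proof rests on two facts. Linear independence persists near \(\bar x\) by continuity. Constant rank in a neighborhood, which is what \RCRCQ{} grants, turns linear dependence at \(\bar x\) into linear dependence at nearby points.

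\textbf{First claim.} Take \(\mathcal{A}'=\emptyset\) and \(\mathcal{I}_{\rm mult}'=\emptyset\) in \RCRCQ{}. This says that \(\mathcal{G}(x;\mathcal{E},\mathcal{I}_{\rm uniq}(\bar\z))\) has constant rank, say \(k\), on a neighborhood \(U\) of \(\bar x\). The family \(\mathcal{G}(\bar x;\mathcal{E}',\mathcal{I}_{\rm uniq}')\) is linearly independent and has \(k\) elements. Linear independence is an open condition: a nonvanishing \(k\times k\) minor of the matrix of gradients stays nonzero under continuous perturbation. Hence, after shrinking \(U\), the family \(\mathcal{G}(x;\mathcal{E}',\mathcal{I}_{\rm uniq}')\) is linearly independent for every \(x\in U\). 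It is a subfamily of \(\mathcal{G}(x;\mathcal{E},\mathcal{I}_{\rm uniq}(\bar\z))\), whose span has dimension exactly \(k\). So these \(k\) independent vectors form a basis of that span, which proves the first claim.

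\textbf{Second claim.} Fix \(\mathcal{A}'\subseteq\mathcal{A}(\bar x)\) and \(\mathcal{I}_{\rm mult}'\subseteq\mathcal{I}_{\rm mult}(\bar\z)\), and suppose the gradients in \(\mathcal{G}(\bar x;\mathcal{E}'\cup\mathcal{A}',\mathcal{I}_{\rm uniq}'\cup\mathcal{I}_{\rm mult}')\) are linearly dependent. Set \(N\coloneqq|\mathcal{E}'|+|\mathcal{I}_{\rm uniq}'|+|\mathcal{A}'|+|\mathcal{I}_{\rm mult}'|\), the number of vectors in this family. Consider also the larger family \(\mathcal{G}(x;\mathcal{E}\cup\mathcal{A}',\mathcal{I}_{\rm uniq}(\bar\z)\cup\mathcal{I}_{\rm mult}')\). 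By the first claim, for every \(x\in U\) the extra vectors in \(\mathcal{E}\setminus\mathcal{E}'\) and \(\mathcal{I}_{\rm uniq}(\bar\z)\setminus\mathcal{I}_{\rm uniq}'\) lie in \(\spa\mathcal{G}(x;\mathcal{E}',\mathcal{I}_{\rm uniq}')\). Therefore, for all \(x\in U\),
\[
\spa\mathcal{G}(x;\mathcal{E}\cup\mathcal{A}',\mathcal{I}_{\rm uniq}(\bar\z)\cup\mathcal{I}_{\rm mult}')
=\spa\mathcal{G}(x;\mathcal{E}'\cup\mathcal{A}',\mathcal{I}_{\rm uniq}'\cup\mathcal{I}_{\rm mult}').
\]
The rank of the reduced family thus equals the rank of the larger family on \(U\). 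By \RCRCQ{} applied to this choice of \(\mathcal{A}'\) and \(\mathcal{I}_{\rm mult}'\), that rank is constant on a possibly smaller neighborhood of \(\bar x\). At \(\bar x\) the reduced family is dependent, so its rank is strictly less than \(N\). Hence its rank is strictly less than \(N\) at every nearby \(x\), which means the \(N\) vectors are linearly dependent there.

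\textbf{Finishing.} The neighborhood in the second claim may depend on \((\mathcal{A}',\mathcal{I}_{\rm mult}')\). There are only finitely many such pairs, so intersecting the corresponding neighborhoods gives one neighborhood that works for all of them at once.

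The main obstacle is the identification of spans above. The constant-rank hypothesis in \RCRCQ{} is stated for the full index sets \(\mathcal{E}\) and \(\mathcal{I}_{\rm uniq}(\bar\z)\), not for the primed subsets, so the argument needs the basis property from the first claim to hold at every point near \(\bar x\), not just at \(\bar x\). This is exactly why the first claim is proved first.
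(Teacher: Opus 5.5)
Your proof is correct and follows the paper's argument essentially step for step. The first claim comes from constant rank of \(\mathcal{G}(x;\mathcal{E},\mathcal{I}_{\rm uniq}(\bar\z))\) (RCRCQ with \(\mathcal{A}'=\mathcal{I}_{\rm mult}'=\emptyset\)) together with persistence of linear independence near \(\bar x\); the second comes from the span identity between the reduced and full families plus RCRCQ for the full family, giving \(\operatorname{rank}G_1(x)=\operatorname{rank}G_1(\bar x)<|G_1|\). Your extra step of intersecting finitely many neighborhoods is harmless but not needed, since the statement lets the neighborhood depend on \((\mathcal{A}',\mathcal{I}_{\rm mult}')\).
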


	\begin{proof}
	The first claim follows as in the proof of \cref{theorem_RCPLD_M},
	since \RCRCQ{} with \(\mathcal{A}'=\mathcal{I}_{\rm mult}'=\emptyset\)
	implies that \(\mathcal{G}(x;\mathcal{E},\mathcal{I}_{\rm uniq}(\bar\z))\) has constant rank in a neighborhood of \(\bar x\).
	As for the second claim, let
	\(
		G_{1}(x)\coloneqq
		\mathcal{G}
		\left(
		x;
		\mathcal{E}'\cup\mathcal{A}',
		\mathcal{I}_{\rm uniq}'\cup\mathcal{I}_{\rm mult}'
		\right)
	\) and
	\(
		G_{2}(x)\coloneqq
		\mathcal{G}
		\left(
		x;
		\mathcal{E}\cup\mathcal{A}',
		\mathcal{I}_{\rm uniq}(\bar\z)\cup\mathcal{I}_{\rm mult}'
		\right)
	\).
	By the first claim, \(\spa G_{1}(x)=\spa G_{2}(x)\) for all \(x\) in a neighborhood of \(\bar x\).
	By \RCRCQ{}, the rank of \(G_{2}(x)\) is constant near \(\bar x\).
	Therefore, \(\rank G_{1}(x)=\rank G_{2}(x)=\rank G_{2}(\bar x)=\rank G_{1}(\bar x)<|G_{1}(\bar x)|\)
	for all \(x\) in a neighborhood of \(\bar x\),
	which implies that
	the gradients in
	\(
		\mathcal{G}
		\left(
		x;
		\mathcal{E}'\cup\mathcal{A}',
		\mathcal{I}_{\rm uniq}'\cup\mathcal{I}_{\rm mult}'
		\right)
	\)
	are linearly dependent.
	\end{proof}

	\begin{theorem}
	Relative to \MPVCC{} with notation~\eqref{define_constraint}--\eqref{define_cone} and setting~\eqref{A_eq_ineq},
	if a feasible point \(\bar x\in\R^{n}\), with \(\bar \z=\Phi(\bar x)\), satisfies \RCRCQ{}, then \(\bar x\) also satisfies \CCPC{}.
	\end{theorem}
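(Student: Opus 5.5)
The plan is to mirror the proof of \cref{theorem_RCPLD_M}, with \cref{lemma_linearly_dependent} supplying the two properties that RCPLD assumed directly. Fix \(\bar w\in\Limsup_{(x^\nu,\z^\nu)\to(\bar x,\Phi(\bar x))}J\Phi{(x^\nu)}^\top\Cncone_{\Gamma}(\z^\nu)\). Then there are sequences \(x^\nu\to\bar x\), \(\Gamma\ni\z^\nu\to\bar\z\) and \(\y^\nu\in\Cncone_{\Gamma}(\z^\nu)\) with \(w^\nu=J\Phi{(x^\nu)}^\top\y^\nu\to\bar w\).

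Choose \(\mathcal{E}'\subseteq\mathcal{E}\) and \(\mathcal{I}_{\rm uniq}'\subseteq\mathcal{I}_{\rm uniq}(\bar\z)\) so that \(\mathcal{G}(\bar x;\mathcal{E}',\mathcal{I}_{\rm uniq}')\) is a basis of \(\spa\mathcal{G}(\bar x;\mathcal{E},\mathcal{I}_{\rm uniq}(\bar\z))\); such a choice always exists by extracting a basis from a finite spanning set. By the first claim of \cref{lemma_linearly_dependent}, this basis property persists near \(\bar x\). Hence the hypothesis of \cref{lemma_sign_preservation} holds. That lemma yields \(K\in\mathcal{N}_\infty^\sharp\), \(\mathcal{A}'\), \(\mathcal{I}_{\rm mult}'\) and \(\tilde\y^\nu\) with three properties:
\begin{itemize}
\item \(w^\nu=J\Phi{(x^\nu)}^\top\tilde\y^\nu\);
\item \(\tilde\y^\nu\) is supported on \((\mathcal{E}'\cup\mathcal{A}',\mathcal{I}_{\rm uniq}'\cup\mathcal{I}_{\rm mult}')\), and the corresponding gradients at \(x^\nu\) are linearly independent;
\item \(\tilde\y^\nu\in\Cncone_{\Gamma}(\bar\z)\), using the Clarke part of \cref{lemma_sign_preservation_3}.
\end{itemize}

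Next, I will show that \(\seq{\tilde\y^\nu}[\nu\in K]\) is bounded. Suppose it is not. Normalize by \(\|\tilde\y^\nu\|\) and pass to a further subsequence. This gives a unit vector \(\tilde\y\) with the same support and \(J\Phi(\bar x)^\top\tilde\y=\bm{0}\). The gradients in \(\mathcal{G}(\bar x;\mathcal{E}'\cup\mathcal{A}',\mathcal{I}_{\rm uniq}'\cup\mathcal{I}_{\rm mult}')\) are therefore linearly dependent. The second claim of \cref{lemma_linearly_dependent} then makes them linearly dependent at every \(x\) near \(\bar x\). In particular they are dependent at \(x^\nu\) for large \(\nu\in K\), contradicting linear independence.

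Unlike the RCPLD proof, no sign or cone condition on \(\tilde\y\) is needed to trigger the dependence. RCRCQ converts plain linear dependence at \(\bar x\) into dependence nearby, and this is exactly why the Clarke cone suffices here.

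Finally, boundedness gives a cluster point \(\tilde\y\) of \(\tilde\y^\nu\). Since \(\Cncone_{\Gamma}(\bar\z)\) is closed (noted after \eqref{Clarke_cone}, together with closedness of \(\ncone_C\)), we get \(\tilde\y\in\Cncone_{\Gamma}(\bar\z)\). Continuity of \(J\Phi\) gives \(\bar w=J\Phi(\bar x)^\top\tilde\y\). Hence \(\bar w\in S^{\rm C}(\bar x,\Phi(\bar x))\), which is the outer semicontinuity defining \CCPC{}.

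The step I expect to need the most care is the boundedness argument. One must make sure the extraction in \cref{lemma_sign_preservation} is applied with the \(\mathcal{E}',\mathcal{I}_{\rm uniq}'\) chosen at \(\bar x\). Only then is \cref{lemma_linearly_dependent} applicable to exactly the same index sets \(\mathcal{A}',\mathcal{I}_{\rm mult}'\) produced by the lemma.
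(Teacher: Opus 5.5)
Your proposal is correct and follows the paper's proof essentially step for step. You fix \(\mathcal{E}',\mathcal{I}_{\rm uniq}'\) at \(\bar x\) so that \cref{lemma_linearly_dependent} makes \cref{lemma_sign_preservation} applicable, rule out unboundedness of \(\tilde\y^\nu\) via the second claim of \cref{lemma_linearly_dependent} (no sign condition needed), and conclude by closedness of \(\Cncone_{\Gamma}(\bar\z)\); the only difference is that you spell out details the paper leaves implicit, namely existence of the basis and closedness of the product cone.
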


	\begin{proof}
	Let
	\(
	\bar w
	\in
	\Limsup_{
		(x^\nu,\z^\nu)\to
		(\bar x,\Phi(\bar x))
		}
		J\Phi{(x^\nu)}^\top \Cncone_{\Gamma}(\z^\nu)
	\).
	Then, there exist sequences
	\(x^\nu\to\bar x\),
	\(\Gamma\ni\z^\nu\to\bar\z=\Phi(\bar x)\), and
	\(\y^\nu\in \Cncone_{\Gamma}(\z^\nu)\) such that
	\(w^\nu=J\Phi{(x^\nu)}^\top\y^\nu\to\bar w\).
	Let \(\mathcal{E}'\) and \(\mathcal{I}_{\rm uniq}'\) be as in \cref{lemma_linearly_dependent}.
	Then, the assumption of \cref{lemma_sign_preservation} holds.
	Let
	\(K\),
	\(\mathcal{A}'\),
	\(\mathcal{I}_{\rm mult}'\),
	and \(\seq{\tilde \y^{\nu}}[\nu\in K]\) be as in \cref{lemma_sign_preservation},
	and suppose, for contradiction, that \(\seq{\tilde \y^{\nu}}[\nu\in K]\) is unbounded.
	Dividing both sides of \(w^\nu=J\Phi{(x^\nu)}^\top\tilde\y^\nu\) by
	\(\|\tilde \y^\nu\|\)
	and passing to the limit along the subsequence \(K\),
	we obtain a unit vector \(\tilde\y\), supported on
	\(
	(
		\mathcal{E}'\cup\mathcal{A}',
		\mathcal{I}_{\rm uniq}'\cup\mathcal{I}_{\rm mult}'
	)
	\), such that
	\(J\Phi{(\bar x)}^\top\tilde\y=\bm{0}\).
	By \cref{lemma_linearly_dependent},
	the gradients in
	\(
		\mathcal{G}
		\left(
		x^{\nu};
		\mathcal{E}'\cup\mathcal{A}',
		\mathcal{I}_{\rm uniq}'\cup\mathcal{I}_{\rm mult}'
		\right)
	\)
	are linearly dependent for \(\nu\) sufficiently large, contradicting \cref{lemma_sign_preservation_2}.
	Therefore, \(\seq{\tilde \y^{\nu}}[\nu\in K]\) is bounded.
	Since \(\Cncone_{\Gamma}(\bar\z)\) is a closed cone,
	any cluster point \(\tilde\y\) of \(\seq{\tilde \y^{\nu}}[\nu\in K]\) satisfies \(\tilde \y\in \Cncone_{\Gamma}(\bar\z)\) by \cref{lemma_sign_preservation_3},
	and \(\bar w=J\Phi{(\bar x)}^\top\tilde\y\).
	This establishes \CCPC{}.
	\end{proof}

	\begin{example}
		We use~\cite[Example 4.3]{ramos2021} to illustrate the implications involving \CCPM{} and \CCPC{}.
		Specifically, let \(A_{1}(x)=x_{1}+x_{2}\), \(A_{2}(x)=-x_{1}-x_{2}\), \(A_{3}(x)=x_{1}^{2}+x_{2}^{2}\), \(C=\R_{-}^{3}\), \(F^{1}(x)=(x_{1},x_{2})\) and \(D^{1}=\set{z\in\R_{+}^{2}}[z_{1}z_{2}=0]\).
		Taking \(\mathcal{A}'=\{3\}\) and \(\mathcal{I}_{\rm mult}'=\emptyset\),
		\RCPLD{} is violated at \(\bar x=(0,0)\),
		because \(\nabla A_{3}(x)\) is positive linearly dependent at \(\bar x\)
		but linearly independent in a neighborhood of \(\bar x\).
		In contrast, \CCPM{} holds at \(\bar x\),
		since \(J\Phi{(\bar x)}^\top \ncone_{\Gamma}(\Phi(\bar x))=\R^{2}\).
		Therefore, \CCPM{} does not imply \RCPLD{}.
		Similarly, \RCRCQ{} is violated at \(\bar x\),
		because \(\nabla A_{3}(x)\) has rank zero at \(\bar x\)
		but rank one in a neighborhood of \(\bar x\).
		In contrast, \CCPC{} holds at \(\bar x\),
		since \(J\Phi{(\bar x)}^\top \Cncone_{\Gamma}(\Phi(\bar x))=\R^{2}\).
		Consequently, \CCPC{} does not imply \RCRCQ{}.
	\end{example}

	\begin{example}
		\cref{lemma_linearly_dependent} shows that \RCRCQ{} implies \RCPLD.
		To illustrate that \RCPLD{} is strictly weaker than \RCRCQ{}, let
		\(A_{1}(x)=x_{1}-x_{2}^{2}\), \(A_{2}(x)=x_{1}+x_{2}^{3}\), \(C=\R_{-}^{2}\),
		\(F^{1}(x)=(x_{3},1+x_{3}^{2})\) and \(D^{1}=\set{z\in\R_{+}^{2}}[z_{1}z_{2}=0]\).
		\RCRCQ{} is violated at \(\bar x=(0,0,0)\),
		because the set of gradients \(\{\nabla A_{1}(x),\nabla A_{2}(x),\nabla F_{1}^{1}(x)\}\) has rank two at \(\bar x\)
		but rank three in a neighborhood of \(\bar x\).
		In contrast, \RCPLD{} holds at \(\bar x\),
		since \(\{\nabla F_{1}^{1}(x)\}\) has constant rank in a neighborhood of \(\bar x\),
		and no subset of \(\{\nabla A_{1}(x),\nabla A_{2}(x)\}\) satisfies~\eqref{RCPLD_PLD} together with \(\{\nabla F_{1}^{1}(x)\}\) at \(\bar x\).
	\end{example}

	To summarize, when the set \(C\) encodes the inequality and equality constraints,
	we obtain the following chain of implications:
	\[
			\text{\CCPM}
		\quad
			\raisebox{3pt}{\(
				\overset{
					\textstyle
					\mathrlap{\color{red}\hspace*{4pt}\not{}}
					{\Longrightarrow}
				}{
					\underset{\textstyle{\Longleftarrow}}{}
				}
			\)}
		\quad
			\text{\RCPLD}
		\quad
			\raisebox{3pt}{\(
				\overset{
					\textstyle
					\mathrlap{\color{red}\hspace*{4pt}\not{}}
					{\Longrightarrow}
				}{
					\underset{\textstyle{\Longleftarrow}}{}
				}
			\)}
		\quad
			\text{\RCRCQ}
		\quad
			\raisebox{3pt}{\(
				\overset{
					\textstyle
					\mathrlap{\color{red}\hspace*{6pt}\not{}}
					{\Longleftarrow}
				}{
					\underset{\textstyle{\Longrightarrow}}{}
				}
			\)}
			\quad
			\text{\CCPC}.
	\]
	Identifying sufficient conditions for \CCPM{} and \CCPC{}
	that can be verified directly from the problem data of \MPVCC{}
	remains an interesting topic for future research.

	\section{Lasry--Lions Double Envelope}
		We now introduce the concepts of the Moreau and Lasry--Lions double envelopes.

\begin{definition}[Moreau envelope]\label{definition_Moreau}
Given \(\lambda>0\), the \emph{Moreau envelope} of a function \(\varphi:\R^{n}\to\overline{\R}\) is \(\env_\lambda\varphi:\R^{n}\to\overline{\R}\) given by
\[
	\env_\lambda\varphi(x)
\coloneqq
	\inf_{x'\in\R^{n}}
	\set{
	\varphi(x')
	+\tfrac{1}{2\lambda}\|x'- x\|^2}.
\]
The set of minimizers is denoted by \(\prox_{\lambda\varphi}(x)\).
Furthermore, \(\varphi\) is \emph{prox-bounded} if there exists \(\lambda>0\) such that \(\env_\lambda\varphi(x)>-\infty\) for some \(x\in\R^{n}\).
The supremum of the set of all such \(\lambda\) is the threshold \(\lambda_\varphi\).
\end{definition}

\begin{definition}[Lasry--Lions double envelope]\label{definition_LL}
Given \(\lambda>\mu>0\), the \emph{Lasry--Lions double envelope} of \(\varphi:\R^{n}\to\overline{\R}\) is \(\env_{\lambda,\mu}\varphi:\R^{n}\to\overline{\R}\) given by
\begin{equation}\label{sup_LL_definition}
	\env_{\lambda,\mu}\varphi(x)\coloneqq-\env_{\mu}(-\env_\lambda\varphi)(x)=\sup_{x'\in\R^{n}}\set{\env_\lambda\varphi(x')-\tfrac{1}{2\mu}\|x'- x\|^2}.
\end{equation}
We denote the maximizer of~\eqref{sup_LL_definition} by \(P_{\lambda,\mu}\varphi(x)\coloneqq\prox_{-\mu\env_\lambda\varphi}(x)\).
\end{definition}

Note that when \(\varphi\) is lower bounded, such as in the case of an indicator function, we have \(\lambda_\varphi=\infty\).
We summarize below several important properties of the Lasry--Lions double envelope,
which are useful for the design and theoretical analysis of algorithms;
see~\cite[Section 2]{simoes2021} and~\cite[Proposition 3.3 and Lemma 3.4]{wang2025} for further properties.

\begin{fact}[Some properties of the Lasry--Lions double envelope]\label{proposition_properties_LL}
Let \(\varphi:\R^{n}\to\overline{\R}\) be proper, lsc, and prox-bounded with threshold \(\lambda_\varphi\).
	The following hold:%
	\begin{enumerate}
	\item
	\(\env_{\lambda,\mu}\varphi(x)\) with \(0<\mu<\lambda<\lambda_\varphi\) is continuously differentiable with \(\max\set{\frac{1}{\mu},\frac{1}{\lambda-\mu}}\)-Lipschitz continuous gradient.\label{proposition_Lipschitz}
	\item
			\(\env_\lambda\varphi(x)
	\leq
	\env_{\lambda,\mu}\varphi(x)
	\leq
	\env_{\lambda-\mu}\varphi(x)\leq\varphi(x)\) for all \(0<\mu<\lambda<\lambda_\varphi\).\label{proposition_sandwich}
		\item
			\(\inf\env_{\lambda,\mu}\varphi=\inf\env_{\lambda}\varphi=\inf\varphi\) and \(\arg\min\env_{\lambda,\mu}\varphi=\arg\min\env_{\lambda}\varphi=\arg\min\varphi\) for all \(0<\mu<\lambda<\lambda_\varphi\).\label{proposition_global_optimum}
	\item \(P_{\lambda,\mu}\varphi(x)=x+\mu\nabla\env_{\lambda,\mu}\varphi(x)\) for all \(0<\mu<\lambda<\lambda_\varphi\).\label{proposition_P_grad}
\end{enumerate}
\end{fact}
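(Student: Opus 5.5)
The four items are standard facts about the Lasry--Lions envelope. I would derive each one from the identity \(\env_{\lambda,\mu}\varphi=-\env_\mu(-\env_\lambda\varphi)\) together with the semigroup property \(\env_{\mu}(\env_{\lambda-\mu}\varphi)=\env_\lambda\varphi\). The latter follows by exchanging the two infima in the definition and using the elementary identity \(\min_{x''}\{\tfrac{1}{2(\lambda-\mu)}\|x'-x''\|^2+\tfrac{1}{2\mu}\|x''-x\|^2\}=\tfrac{1}{2\lambda}\|x'-x\|^2\). Throughout I would use that, for \(\lambda<\lambda_\varphi\), the envelope \(\env_\lambda\varphi\) is finite and that \(\env_\lambda\varphi-\tfrac{1}{2\lambda}\|\cdot\|^2\) is concave, being an infimum of affine functions.

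\textbf{Item~\ref{proposition_sandwich}.} Taking \(x'=x\) in~\eqref{sup_LL_definition} gives \(\env_\lambda\varphi\leq\env_{\lambda,\mu}\varphi\). For the upper bound, write \(\env_\lambda\varphi=\env_\mu(\psi)\) with \(\psi\coloneqq\env_{\lambda-\mu}\varphi\). Then for every \(x'\) we have \(\env_\mu\psi(x')\leq\psi(x)+\tfrac{1}{2\mu}\|x-x'\|^2\), so \(\env_\lambda\varphi(x')-\tfrac{1}{2\mu}\|x'-x\|^2\leq\psi(x)\). Taking the supremum over \(x'\) yields \(\env_{\lambda,\mu}\varphi\leq\env_{\lambda-\mu}\varphi\). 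The last inequality \(\env_{\lambda-\mu}\varphi\leq\varphi\) is the trivial choice \(x'=x\) in the Moreau envelope.

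\textbf{Item~\ref{proposition_global_optimum}.} From the sandwich, \(\inf\env_\lambda\varphi\leq\inf\env_{\lambda,\mu}\varphi\leq\inf\varphi\). The standard fact \(\inf\env_\lambda\varphi=\inf\varphi\) closes the chain. For the argmin sets, let \(x\in\arg\min\varphi\). Then \(\env_{\lambda,\mu}\varphi(x)\leq\varphi(x)=\inf\), so \(x\) minimizes \(\env_{\lambda,\mu}\varphi\). Conversely, let \(x\) minimize \(\env_{\lambda,\mu}\varphi\). Then \(\env_\lambda\varphi(x)\leq\inf\varphi\), so \(x\in\arg\min\env_\lambda\varphi\), and it is known that \(\arg\min\env_\lambda\varphi=\arg\min\varphi\). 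One detail needs a check here: a minimizer of \(\env_{\lambda,\mu}\varphi\) must actually attain \(\env_\lambda\varphi(x)=\inf\varphi\). I would argue that the supremum is attained at some \(x'\) and that \(\env_\lambda\varphi(x')\leq\inf\) forces \(x'=x\), since \(\env_\lambda\varphi(x')-\tfrac{1}{2\mu}\|x'-x\|^2=\inf\) together with \(\env_\lambda\varphi\geq\inf\) is impossible unless \(x'=x\).

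\textbf{Items~\ref{proposition_Lipschitz} and~\ref{proposition_P_grad}.} These carry the real content. Set \(g\coloneqq-\env_\lambda\varphi\). Since \(\env_\lambda\varphi-\tfrac{1}{2\lambda}\|\cdot\|^2\) is concave, \(g\) is \(\tfrac1\lambda\)-weakly convex, and \(\mu<\lambda\) makes \(g+\tfrac{1}{2\mu}\|\cdot-x\|^2\) strongly convex with modulus \(\tfrac1\mu-\tfrac1\lambda\). Hence \(\prox_{\mu g}\) is single-valued and Lipschitz, and \(\env_\mu g\) is \(C^1\) with \(\nabla\env_\mu g(x)=\tfrac1\mu(x-\prox_{\mu g}(x))\). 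Negating, \(\nabla\env_{\lambda,\mu}\varphi(x)=\tfrac1\mu(P_{\lambda,\mu}\varphi(x)-x)\), which is item~\ref{proposition_P_grad}.

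The \textbf{main obstacle} is the sharp Lipschitz constant \(\max\{\tfrac1\mu,\tfrac1{\lambda-\mu}\}\). For this I would show two-sided curvature bounds. First, \(\env_{\lambda,\mu}\varphi+\tfrac{1}{2\mu}\|\cdot\|^2\) is convex, since it is a supremum of functions affine in \(x\). Second, \(\env_{\lambda,\mu}\varphi-\tfrac{1}{2(\lambda-\mu)}\|\cdot\|^2\) is concave. For the second bound I would write \(\env_{\lambda,\mu}\varphi=\env_\mu(\env_{\lambda-\mu}\varphi)^{\text{dual}}\) and use the Lasry--Lions identity \(-\env_\mu(-\env_\mu h)\), with \(h=\env_{\lambda-\mu}\varphi\) being \(\tfrac{1}{\lambda-\mu}\)-semiconcave; applying \(-\env_\mu(-\cdot)\) preserves semiconcavity moduli, which I would verify through the prox characterization. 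A function whose Hessian lies, in the distributional sense, between \(-\tfrac1\mu I\) and \(\tfrac1{\lambda-\mu}I\) is \(C^{1,1}\) with gradient Lipschitz constant equal to the larger modulus, by the standard semiconvex/semiconcave lemma. That lemma gives item~\ref{proposition_Lipschitz}.
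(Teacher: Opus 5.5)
The paper states this fact without proof and defers to the Lasry--Lions literature, so your argument has to stand on its own. Most of it does. The following parts are all correct:
\begin{itemize}
\item the sandwich in item~\ref{proposition_sandwich}, via the semigroup identity;
\item the infimum and argmin identities in item~\ref{proposition_global_optimum};
\item differentiability and the gradient formula of item~\ref{proposition_P_grad}, via the $(\tfrac1\mu-\tfrac1\lambda)$-strong convexity of $x'\mapsto-\env_\lambda\varphi(x')+\tfrac1{2\mu}\|x'-x\|^2$;
\item the lower curvature bound;
\item the closing lemma that curvature bounds $-aI$ and $bI$ give a $\max\{a,b\}$-Lipschitz gradient.
\end{itemize}

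One small point concerns item~\ref{proposition_global_optimum}. The ``detail'' you raise there is unnecessary, since $\inf\varphi\le\env_\lambda\varphi(x)\le\env_{\lambda,\mu}\varphi(x)=\inf\varphi$ already settles it. The justification you sketch is also invalid. A maximizer $x'$ with value $\inf\varphi$ only gives $\env_\lambda\varphi(x')=\inf\varphi+\tfrac1{2\mu}\|x'-x\|^2$, which does not contradict $\env_\lambda\varphi\ge\inf\varphi$. So nothing forces $x'=x$.

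The real gap is the step you yourself call the main obstacle: the concavity of $\env_{\lambda,\mu}\varphi-\tfrac{1}{2(\lambda-\mu)}\|\cdot\|^2$.

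As written, the claim that $-\env_\mu(-\,\cdot\,)$ preserves semiconcavity moduli is false. For $\mu c<1$, it maps a $c$-semiconcave function to a $\tfrac{c}{1-\mu c}$-semiconcave one, and this is sharp: $q=\tfrac c2\|\cdot\|^2$ gives $-\env_\mu(-q)=\tfrac{c}{2(1-\mu c)}\|\cdot\|^2$. What is true is that the composite $h\mapsto-\env_\mu(-\env_\mu h)$ preserves the modulus, because $\env_\mu$ maps $c$ to $\tfrac{c}{1+\mu c}$. But that composite statement is exactly what you leave unproved (``which I would verify through the prox characterization'').

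The repair follows from the strong convexity you already use. Set $k\coloneqq\tfrac1{2\mu}\|\cdot\|^2-\env_\lambda\varphi$, which is finite and $(\tfrac1\mu-\tfrac1\lambda)$-strongly convex. Expanding the square in~\eqref{sup_LL_definition} yields
\[
	\env_{\lambda,\mu}\varphi(x)=k^*(x/\mu)-\tfrac{1}{2\mu}\|x\|^2 .
\]
Here $k^*$ is convex with $\tfrac{\lambda\mu}{\lambda-\mu}$-Lipschitz gradient, and $\nabla k^*(x/\mu)=P_{\lambda,\mu}\varphi(x)$. This gives three things:
\begin{itemize}
\item item~\ref{proposition_P_grad} directly;
\item $x\mapsto k^*(x/\mu)$ is convex with $\tfrac{\lambda}{\mu(\lambda-\mu)}$-Lipschitz gradient;
\item since $\tfrac{\lambda}{\mu(\lambda-\mu)}=\tfrac1\mu+\tfrac1{\lambda-\mu}$, the function $\env_{\lambda,\mu}\varphi-\tfrac1{2(\lambda-\mu)}\|\cdot\|^2$ is concave.
\end{itemize}
Your closing lemma then yields item~\ref{proposition_Lipschitz} with constant $\max\{\tfrac1\mu,\tfrac1{\lambda-\mu}\}$. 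This is just the correct modulus transfer $c\mapsto\tfrac{c}{1-\mu c}$ applied to $\env_\lambda\varphi$, whose modulus is $c=\tfrac1\lambda$.
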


We are now in a position to derive the Lasry--Lions double envelope of \(\delta_{D^j}\).
As in \cref{sec_cones}, we temporarily suppress the dependence on \(j\) in~\eqref{D_set}, writing \(D\coloneqq D^j\) and \(p\coloneqq p_j\).
In view of~\eqref{convex_union}, the Moreau envelope of \(\delta_{D}\) becomes
\begin{equation}\label{moreau_compl_DO}
	\env_\lambda\delta_{D}(z)=\tfrac{1}{2\lambda}\dist{(z,D)}^2=\tfrac{1}{2\lambda}\min\limits_{i\in\{1,\ldots,p\}}\dist{(z,D_i)}^2.
\end{equation}
Let \( z_{(1)}\leq z_{(2)}\leq\cdots\leq z_{(p)}\) denote the components of \(z\) sorted in nondecreasing order,
and define \(z_{(p+1)}=\infty\).
The explicit expression for \(\env_{\lambda,\mu}\delta_{D}(z)\) is then given by the following result.

\begin{theorem}[Lasry--Lions double envelope of \(\delta_D\)]\label{theorem_LL_env}
Relative to the vertical complementarity set \(D\), let \(z\in\R^p\), \(\lambda>\mu>0\) and \(\beta\coloneqq\nicefrac{\mu}{\lambda}\in(0,1)\).
The Lasry--Lions double envelope of the indicator function \(\delta_D\) is given by
\begin{equation}\label{LL_DO_env}
\env_{\lambda,\mu}\delta_D(z)=\tfrac{1}{\lambda}r_{\beta}(z),
\end{equation}
with
\begin{equation}\label{rb_2_cases}
r_{\beta}(z)=
\begin{cases}
	\tfrac{1}{2(1-\beta)}\sum_{i\in\mathcal{I}_{\leq0}(z)}z_i^2,\ \ \
	&\mbox{if } \mathcal{I}_{\leq0}(z)\neq\emptyset
	\\[5pt]
	\tfrac{1}{2\beta(k_{\beta}(z)-\beta)}
		\begin{pmatrix}
		\sum_{i=1}^{k_{\beta}(z)}z_{(i)}
		\end{pmatrix}^2-\tfrac{1}{2\beta}
		\sum_{i=1}^{k_{\beta}(z)}z_{(i)}^2,\ \ \
		&\mbox{if } \mathcal{I}_{\leq0}(z)=\emptyset,
\end{cases}
\end{equation}
where \(k_{\beta}(z)\in \{1,\ldots,p\}\) is the unique integer satisfying
\begin{equation}\label{M_set}
z_{(k_{\beta}(z))}\leq
\frac{\sum_{i=1}^{k_{\beta}(z)}z_{(i)} }{k_{\beta}(z)-\beta}
<z_{(k_{\beta}(z)+1)}.
\end{equation}
When \(\mathcal{I}_{\leq0}(z)\neq\emptyset\), set \(k_{\beta}(z)=1\).
\end{theorem}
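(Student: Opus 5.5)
We compute the envelope directly from \eqref{sup_LL_definition} and \eqref{moreau_compl_DO}. Multiplying by \(\lambda\) and using \(\beta=\nicefrac{\mu}{\lambda}\), we get
\[
\lambda\env_{\lambda,\mu}\delta_D(z)=\sup_{z'\in\R^p}\ \min_{i\in\{1,\ldots,p\}} q_i(z'),
\qquad
q_i(z')\coloneqq\tfrac12\dist(z',D_i)^2-\tfrac{1}{2\beta}\|z'-z\|^2 .
\]
By \eqref{convex_union}, \(\dist(z',D_i)^2=z_i'^2+\sum_{l\neq i}[z'_l]_-^2\), so each \(q_i\) is separable across coordinates. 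Each \(q_i\) is also strongly concave. Indeed, \(\tfrac12\dist(\cdot,D_i)^2\) is convex with \(1\)-Lipschitz gradient, and \(\beta<1\). More generally, for any \(\theta\) in the unit simplex \(\Delta\), the function \(\sum_i\theta_iq_i\) is strongly concave in \(z'\) and linear in \(\theta\). Since \(\Delta\) is compact, Sion's minimax theorem gives
\[
\sup_{z'}\min_i q_i(z')=\sup_{z'}\min_{\theta\in\Delta}\textstyle\sum_i\theta_iq_i(z')=\min_{\theta\in\Delta}\sup_{z'}\textstyle\sum_i\theta_iq_i(z').
\]
This reduces the claim to a separable inner supremum followed by a finite-dimensional convex minimization over the simplex.

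\emph{Case \(\mathcal{I}_{\le0}(z)\neq\emptyset\).} Here a direct sandwich suffices. For the upper bound, use \(\sup\min_i q_i\le\min_i\sup q_i\). By separability, \(\sup q_i=\tfrac{1}{2(1-\beta)}\bigl(z_i^2+\sum_{l\ne i}[z_l]_-^2\bigr)\): the one-dimensional sup of \(\tfrac12 t^2-\tfrac1{2\beta}(t-z_l)^2\) is \(\tfrac{z_l^2}{2(1-\beta)}\), and that of \(\tfrac12[t]_-^2-\tfrac1{2\beta}(t-z_l)^2\) is \(\tfrac{[z_l]_-^2}{2(1-\beta)}\). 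Choosing \(i\in\mathcal{I}_{\le0}(z)\) yields \(\tfrac{1}{2(1-\beta)}\sum_{l\in\mathcal{I}_{\le0}(z)}z_l^2\). For the lower bound, evaluate at the maximizer of that \(q_i\), namely \(z'_l=z_l/(1-\beta)\) if \(z_l\le0\) and \(z'_l=z_l\) otherwise. This \(z'\) has a nonpositive \(i\)-th entry, so \(\dist(z',D)=\dist(z',D_i)\), and the two bounds match.

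\emph{Case \(z>\bm 0\).} Here we use the minimax formula. Fix \(\theta\in\Delta\) and compute \(\sup_{z'}\sum_i\theta_iq_i\) coordinatewise.

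For \(t\ge0\), coordinate \(l\) contributes \(\tfrac{\theta_l}{2}t^2-\tfrac1{2\beta}(t-z_l)^2\), with maximizer \(z_l/(1-\beta\theta_l)>0\) and value \(\tfrac{\theta_l z_l^2}{2(1-\beta\theta_l)}\).

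For \(t<0\), the coefficient of \(\tfrac12 t^2\) becomes \(1\). The unconstrained maximizer \(z_l/(1-\beta)\) is then positive, so the sup over \(t\le0\) is attained at \(0\) and equals \(-z_l^2/(2\beta)\), which is worse.

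Hence
\[
\lambda\env_{\lambda,\mu}\delta_D(z)=\min_{\theta\in\Delta}\ \sum_{l=1}^p\frac{\theta_lz_l^2}{2(1-\beta\theta_l)},
\]
a smooth convex separable problem. Its KKT conditions read \(\tfrac{z_l^2}{2(1-\beta\theta_l)^2}=\nu\) if \(\theta_l>0\), and \(\tfrac{z_l^2}{2}\ge\nu\) if \(\theta_l=0\). Setting \(c\coloneqq\sqrt{2\nu}\), this is water-filling: \(\theta_l=\tfrac1\beta(1-z_l/c)\) for \(z_l<c\), and \(\theta_l=0\) for \(z_l\ge c\). The support is therefore the \(k\) smallest entries of \(z\). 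The constraint \(\sum\theta_l=1\) gives \(c=\sum_{i\le k}z_{(i)}/(k-\beta)\). Consistency of the support, namely \(z_{(k)}\le c<z_{(k+1)}\), is exactly \eqref{M_set}; the boundary equality case corresponds to \(\theta=0\) and is harmless.

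Substituting \(1-\beta\theta_l=z_l/c\) gives the value \(\sum_{i\le k}\tfrac{z_{(i)}(c-z_{(i)})}{2\beta}=\tfrac{c}{2\beta}\sum_{i\le k} z_{(i)}-\tfrac1{2\beta}\sum_{i\le k} z_{(i)}^2\). This is \eqref{rb_2_cases} once \(c\) is inserted.

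Existence and uniqueness of \(k_\beta(z)\) follow from uniqueness of the KKT solution of this strictly convex problem. Alternatively, one can show directly that \(k\mapsto c_k-z_{(k)}\) changes sign exactly once; for \(k=1\), \(c_1=z_{(1)}/(1-\beta)\ge z_{(1)}\).

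The main obstacle I expect is the second case: justifying the minimax exchange, which requires strong concavity in \(z'\) uniformly over \(\Delta\), and carefully handling the sign restriction in the inner supremum. The water-filling bookkeeping is the rest of the work. Dividing by \(\lambda\) then yields \eqref{LL_DO_env}.
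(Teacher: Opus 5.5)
Your proof is correct and follows essentially the same route as the paper: you write $\lambda\env_{\lambda,\mu}\delta_D(z)$ as a $\sup$--$\min$ over the convex pieces $D_i$, convexify the inner minimum over the unit simplex, exchange the order by a minimax theorem, solve the separable one-dimensional problems, and then solve the resulting convex problem over the simplex by a water-filling (multiplier) argument. The differences are minor. You treat the case $\mathcal{I}_{\leq0}(z)\neq\emptyset$ by a direct sandwich instead of the minimax and Lagrangian-duality machinery, you use primal KKT conditions where the paper maximizes the dual function $\psi(\tau)$, and you additionally justify the existence and uniqueness of $k_{\beta}(z)$, which the paper only asserts.
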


\begin{proof}
By~\eqref{sup_LL_definition}, we have
\begin{subequations}
\allowdisplaybreaks\@
\begin{align}
	\env_{\lambda,\mu}\delta_{D}(z)
	&=
	\sup_{z'\in\R^p}\{\env_\lambda\delta_{D}(z')-\tfrac{1}{2\mu}\|z'-z\|^2 \}
	\nonumber\\
	&=
	\sup_{z'\in\R^p}
	\begin{Bmatrix}
	\tfrac{1}{2\lambda}\min\limits_{i\in\{1,\ldots,p\}}\dist{(z',D_i)}^2-\tfrac{1}{2\mu}\|z'-z\|^2
	\end{Bmatrix}
	\nonumber\\
	&=
	\tfrac{1}{\lambda}
	\sup_{z'\in\R^p}
	\begin{Bmatrix}
	\min\limits_{i\in\{1,\ldots,p\}}\tfrac{1}{2}\dist{(z',D_i)}^2-\tfrac{1}{2\beta}\|z'-z\|^2
	\end{Bmatrix}
	\nonumber\\
	&=
	-\tfrac{1}{\lambda}
	\inf_{z'\in\R^p}
	\begin{Bmatrix}
	\tfrac{1}{2\beta}\|z'-z\|^2 -\min\limits_{i\in\{1,\ldots,p\}}\tfrac{1}{2}\dist{(z',D_i)}^2
	\end{Bmatrix}
	\nonumber\\
	&=
	-\tfrac{1}{\lambda}
	\inf_{z'\in\R^p}
	\begin{Bmatrix}
	\tfrac{1}{2\beta}\|z'-z\|^2 +\max\limits_{i\in\{1,\ldots,p\}}-\tfrac{1}{2}\dist{(z',D_i)}^2
	\end{Bmatrix}
	\nonumber\\
	&=
	-\tfrac{1}{\lambda}
	\inf_{z'\in\R^p}
	\begin{Bmatrix}
	\max\limits_{i\in\{1,\ldots,p\}}
	\begin{pmatrix}
		\tfrac{1}{2\beta}\|z'-z\|^2 -\tfrac{1}{2}\dist{(z',D_i)}^2
	\end{pmatrix}
	\end{Bmatrix}
	\label{p_either_or}
	\\
	&=
	-\tfrac{1}{\lambda}
	\inf_{z'\in\R^p}
	\begin{Bmatrix}
	\max\limits_{y\in\Delta_p}
	\sum_{i=1}^{p}y_i
	\begin{pmatrix}
		\tfrac{1}{2\beta}\|z'-z\|^2 -\tfrac{1}{2}\dist{(z',D_i)}^2
	\end{pmatrix}
	\end{Bmatrix},\label{p_1_simplex}
\end{align}
\end{subequations}
where \(y\coloneqq(y_1,\ldots,y_p)\in\R^p\) and \(\Delta_p\coloneqq\set{y\in{\{0,1\}}^p}[\sum_{i=1}^{p}y_i=1]\subset\R^p\).
In this way, the maximization in~\eqref{p_either_or} is equivalent to a maximization over the discrete set \(\Delta_p\) in~\eqref{p_1_simplex}.
Note that the inner maximization problem of~\eqref{p_1_simplex} corresponds to the \emph{support function} of the nonempty set \(\Delta_p\); see~\cite[Eq. 8(10)]{rockafellar1998}.
Therefore, by~\cite[Example 11.2]{bauschke2011}, it follows that~\eqref{p_1_simplex} is equivalent to
\begin{equation}\label{cl_conv_p_1_simplex}
	\env_{\lambda,\mu}\delta_{D}(z)
	=
	-\tfrac{1}{\lambda}
	\inf_{z'\in\R^p}
	\begin{Bmatrix}
	\max\limits_{y\in\conv\Delta_p}
	\sum_{i=1}^{p}y_i
	\begin{pmatrix}
		\tfrac{1}{2\beta}\|z'-z\|^2 -\tfrac{1}{2}\dist{(z',D_i)}^2
	\end{pmatrix}
	\end{Bmatrix},
\end{equation}
where \(\conv\Delta_p=\set{y\in\R_+^p}[\sum_{i=1}^{p}y_i=1]\subset\R^p\).
Note that the distance functions appearing in~\eqref{cl_conv_p_1_simplex} can be expressed explicitly.
Specifically, relative to~\eqref{convex_union}, we have \(\Pi_{D_i}(z)=(I-e_i e_i^\top){[z]}_+\), or equivalently
\[
	{[\Pi_{D_i}(z)]}_\ell=
	\left\{
	\begin{array}{l}
	{[z_\ell]}_+,\ \ \text{if}\ \ell\neq i
	\\[5pt]
	0,\ \ \ \ \ \ \ \text{if}\ \ell= i.
	\end{array}
	\right.
\]
Therefore, the distance function associated with \(D_i\) can be written as
\begin{align}
	\tfrac{1}{2}\dist{(z',D_i)}^2
	&=
	\tfrac{1}{2}\|z'-\Pi_{D_i}(z')\|^2
	\nonumber\\
	&=
	\tfrac{1}{2}\|z'\|^2-\langle z',\Pi_{D_i}(z')\rangle+\tfrac{1}{2}\|\Pi_{D_i}(z')\|^2
	\nonumber\\
	&=
	\tfrac{1}{2}\|z'\|^2-\sum_{\ell=1}^{p}z'_\ell {[\Pi_{D_i}(z')]}_\ell+\tfrac{1}{2}\sum_{\ell=1}^{p}{[\Pi_{D_i}(z')]}_\ell^2
	\nonumber\\
	&=
	\tfrac{1}{2}\|z'\|^2-\sum_{\ell\neq i}{[z'_\ell]}_+^2+\tfrac{1}{2}\sum_{\ell\neq i}{[z'_\ell]}_+^2
	\nonumber\\
	&=
	\tfrac{1}{2}\|z'\|^2-\tfrac{1}{2}
	\begin{pmatrix}
	\sum_{\ell=1}^{p}{[z'_\ell]}_+^2-{[z'_i]}_+^2
	\end{pmatrix}.\label{dist_D_i}
\end{align}
Substituting~\eqref{dist_D_i} into~\eqref{cl_conv_p_1_simplex} yields
\allowdisplaybreaks\@
\begin{align}
	&\env_{\lambda,\mu}\delta_{D}(z)
	\nonumber\\
	&=
	-\tfrac{1}{\lambda}
	\inf_{z'\in\R^p}
	\begin{Bmatrix}
	\max\limits_{y\in\conv\Delta_p}
	\sum_{i=1}^{p}y_i
	\begin{pmatrix}
		\tfrac{1}{2\beta}\|z'-z\|^2
		-
		\tfrac{1}{2}\|z'\|^2
		+
		\tfrac{1}{2}\sum_{\ell=1}^{p}{[z'_\ell]}_+^2
		-
		\tfrac{1}{2}{[z'_i]}_+^2
	\end{pmatrix}
	\end{Bmatrix}
	\nonumber\\
	&=
	-\tfrac{1}{\lambda}
	\max\limits_{y\in\conv\Delta_p}
	\begin{Bmatrix}
	\inf_{z'\in\R^p}
	\sum_{i=1}^{p}y_i
	\begin{pmatrix}
		\tfrac{1}{2\beta}\|z'-z\|^2
		-
		\tfrac{1}{2}\|z'\|^2
		+
		\tfrac{1}{2}\sum_{\ell=1}^{p}{[z'_\ell]}_+^2
		-
		\tfrac{1}{2}{[z'_i]}_+^2
	\end{pmatrix}
	\end{Bmatrix}
	\nonumber\\
	&=
	-\tfrac{1}{\lambda}
	\max\limits_{y\in\conv\Delta_p}
	\begin{Bmatrix}
	\inf_{z'\in\R^p}
	\begin{pmatrix}
		\tfrac{1}{2\beta}\|z'-z\|^2
		-
		\tfrac{1}{2}\|z'\|^2
		+
		\tfrac{1}{2}\sum_{\ell=1}^{p}{[z'_\ell]}_+^2
		-
		\tfrac{1}{2}\sum_{i=1}^{p}y_i{[z'_i]}_+^2
	\end{pmatrix}
	\end{Bmatrix}
	\nonumber\\
	&=
	-\tfrac{1}{\lambda}
	\max\limits_{y\in\conv\Delta_p}
	\begin{Bmatrix}
	\inf_{z'\in\R^p}
	\begin{pmatrix}
		\tfrac{1}{2\beta}\sum_{i=1}^{p}{(z'_i-z_i)}^2
		-
		\tfrac{1}{2}\sum_{i=1}^{p}{(z'_i)}^2
		+
		\tfrac{1}{2}\sum_{i=1}^{p}{[z'_i]}_+^2
		-
		\tfrac{1}{2}\sum_{i=1}^{p}y_i{[z'_i]}_+^2
	\end{pmatrix}
	\end{Bmatrix}
	\nonumber\\
	&=
	-\tfrac{1}{\lambda}
	\max\limits_{y\in\conv\Delta_p}
	\begin{Bmatrix}
	\sum_{i=1}^{p}
	\inf_{z'_i\in\R}
	\begin{pmatrix}
		\tfrac{1}{2\beta}{(z'_i-z_i)}^2
		-
		\tfrac{1}{2}{(z'_i)}^2
		+
		\tfrac{1}{2}{[z'_i]}_+^2
		-
		\tfrac{1}{2}y_i{[z'_i]}_+^2
	\end{pmatrix}
	\end{Bmatrix},\label{pointwise_min}
\end{align}
in which the second equality follows from the \emph{minimax theorem}, since the inner function is linear in \(y\) and convex in \(z'\), owing to the fact that \(\beta\in(0,1)\).
The third equality follows from the constraint \(\sum_{i=1}^{p}y_i=1\).
Consequently, we obtain \(p\) one-dimensional inner convex minimization problems in~\eqref{pointwise_min}, each of which admits the minimizer, together with the corresponding function value, given by
\[
{z'_i}^\star=
	\begin{cases}
	\tfrac{1}{1-\beta}z_i,\ \ &\mbox{if } z_i\leq0
	\\[5pt]
	\tfrac{1}{1-\beta y_i}z_i,\ \ &\mbox{if } z_i>0,
	\end{cases}
\]
and
\begin{equation}\label{p_1_obj}
	\tfrac{1}{2\beta}{({z'_i}^\star-z_i)}^2
	-
	\tfrac{1}{2}{({z'_i}^\star)}^2
	+
	\tfrac{1}{2}{[{z'_i}^\star]}_+^2
	-
	\tfrac{1}{2}y_i{[{z'_i}^\star]}_+^2
	=
	\begin{cases}
	-\tfrac{1}{2(1-\beta)}z_i^2,\ \ &\mbox{if } z_i\leq0
	\\[5pt]
	-\tfrac{y_i}{2(1-\beta y_i)}z_i^2,\ \ &\mbox{if } z_i>0.
	\end{cases}
\end{equation}
In this way, the Lasry--Lions double envelope in~\eqref{pointwise_min} can be formulated as
\begin{align}
	\env_{\lambda,\mu}\delta_{D}(z)
	&\overset{\eqref{p_1_obj}}{=}
	-\tfrac{1}{\lambda}
	\max\limits_{y\in\conv\Delta_p}
	\begin{Bmatrix}
	\sum_{i\in\mathcal{I}_{\leq0}(z)}
	\begin{pmatrix}
		-\tfrac{1}{2(1-\beta)}z_i^2
	\end{pmatrix}
	+
	\sum_{i\in\mathcal{I}_{>0}(z)}
	\begin{pmatrix}
		-\tfrac{y_i}{2(1-\beta y_i)}z_i^2
	\end{pmatrix}
	\end{Bmatrix}
	\nonumber\\
	&=
	\tfrac{1}{\lambda}
	\min\limits_{y\in\conv\Delta_p}
	\begin{Bmatrix}
	\sum_{i\in\mathcal{I}_{\leq0}(z)}
		\tfrac{1}{2(1-\beta)}z_i^2
	+
	\sum_{i\in\mathcal{I}_{>0}(z)}
		\tfrac{y_i}{2(1-\beta y_i)}z_i^2
	\end{Bmatrix}
	\nonumber\\
	&=
	\tfrac{1}{\lambda}
	\sum_{i\in\mathcal{I}_{\leq0}(z)}
	\tfrac{1}{2(1-\beta)}z_i^2
	+
	\tfrac{1}{\lambda}
	\min\limits_{y\in\conv\Delta_p}
	\begin{Bmatrix}
	\sum_{i\in\mathcal{I}_{>0}(z)}
		\tfrac{y_i}{2(1-\beta y_i)}z_i^2
	\end{Bmatrix}.\label{min_st_clconv}
\end{align}
Recalling that \(\conv\Delta_p=\set{y\in\R_+^p}[\sum_{i=1}^{p}y_i=1]\), the minimization problem in~\eqref{min_st_clconv} is equivalent to
\[
	\minimize_{y\in\R_+^p}
	\
	\sum_{i\in\mathcal{I}_{>0}(z)}
	\tfrac{y_i}{2(1-\beta y_i)}z_i^2
	\quad
	\stt\
	\sum_{i=1}^{p}y_i=1.
\]
Let \(\tau\in\R\) be the Lagrange multiplier associated with the constraint \(\sum_{i=1}^{p}y_i=1\).
Then, the Lagrangian is given by
\begin{align}
	\mathcal{L}(y,\tau)&=\sum_{i\in\mathcal{I}_{>0}(z)}
	\tfrac{y_i}{2(1-\beta y_i)}z_i^2
	+\tau
	\begin{pmatrix}
	1-\sum_{i=1}^{p}y_i
	\end{pmatrix}
	\nonumber\\
	&=
	\tau+
	\sum_{i\in\mathcal{I}_{>0}(z)}
	\begin{pmatrix}
	\tfrac{y_i}{2(1-\beta y_i)}z_i^2-\tau y_i
	\end{pmatrix}
	-
	\tau
	\sum_{i\in\mathcal{I}_{\leq0}(z)}y_i,\label{Lagrangian_negative_positive}
\end{align}
and the dual function is obtained as
\begin{equation}\label{dual_function}
	\psi(\tau)\coloneqq \inf_{y}\mathcal{L}(y,\tau)=
	\tau
	+
	\sum_{i\in\mathcal{I}_{\leq0}(z)}\inf_{y_i\in\R_{+}}\{-\tau y_i\}
	+
	\sum_{i\in\mathcal{I}_{>0}(z)}
	\inf_{y_i\in[0,\nicefrac{1}{\beta})}
	\begin{Bmatrix}
	\tfrac{y_i}{2(1-\beta y_i)}z_i^2-\tau y_i
	\end{Bmatrix},
\end{equation}
where we restrict the domain of \(y_i\) for \(i\in\mathcal{I}_{>0}(z)\) to \([0,\nicefrac{1}{\beta})\),
on which the objective is well defined and convex.
This does not change the primal problem, since
\(y\geq\bm{0}\) and \(\sum_{i=1}^{p}y_{i}=1\) already imply \(y_{i}\leq1<\nicefrac{1}{\beta}\).
To solve the dual problem \(\maximize_{\tau}\psi(\tau)\), we consider the following two cases.
\begin{itemize}
	\item If \(\mathcal{I}_{\leq0}(z)\neq\emptyset\), the first minimization problem in~\eqref{dual_function} implies that \(\tau\leq0\); otherwise \(\psi(\tau)=-\infty\).
	Thus, \(y_i(\tau)=0\) is a minimizer for \(i\in\mathcal{I}_{\leq0}(z)\).
	Considering the second minimization problem in~\eqref{dual_function}, the derivative with respect to \(y_i\) is given by
	\begin{equation}\label{partial_grad}
	\tfrac{\partial}{\partial y_i}
	\begin{pmatrix}
		\tfrac{y_i}{2(1-\beta y_i)}z_i^2-\tau y_i
	\end{pmatrix}
	=
	\tfrac{z_i^2}{2{(1-\beta y_i)}^2}-\tau,
	\end{equation}
	which is strictly positive.
	This implies that \(y_i(\tau)=0\) is a minimizer for \(i\in\mathcal{I}_{>0}(z)\).
	Consequently, \(y(\tau)=0\) is a minimizer in~\eqref{dual_function}, and the dual problem admits \(\max_{\tau\in\R_-}\psi(\tau)=\max_{\tau\in\R_-}\tau=0\).
	It is straightforward to verify that the minimization problem in~\eqref{min_st_clconv} is convex, and that \emph{strong duality} holds.
	The Lasry--Lions double envelope is then given by
	\begin{equation}\label{LL_DO_no_positive}
	\env_{\lambda,\mu}\delta_{D}(z)
	=
	\tfrac{1}{\lambda}
	\sum_{i\in\mathcal{I}_{\leq0}(z)}
	\tfrac{1}{2(1-\beta)}z_i^2,
	\end{equation}
	which corresponds to the first case in~\eqref{rb_2_cases}.
	\item If \(\mathcal{I}_{\leq0}(z)=\emptyset\), the Lagrangian in~\eqref{Lagrangian_negative_positive} and the associated dual function in~\eqref{dual_function} follow
	\begin{equation}\label{Lagrangian_dual}
	\mathcal{L}(y,\tau)
	=
	\tau+
	\sum_{i=1}^{p}
	\begin{pmatrix}
		\tfrac{y_i}{2(1-\beta y_i)}z_i^2-\tau y_i
	\end{pmatrix}
	\quad\text{and}\quad
	\psi(\tau)=
	\tau+
	\sum_{i=1}^{p}
	\inf_{y_i\in[0,\nicefrac{1}{\beta})}
	\begin{Bmatrix}
		\tfrac{y_i}{2(1-\beta y_i)}z_i^2-\tau y_i
	\end{Bmatrix}.
	\end{equation}
	The minimizers of the inner minimization problems in~\eqref{Lagrangian_dual} are given by
	\[
	y_i(\tau)=
	\begin{cases}
		\tfrac{1}{\beta}-\tfrac{z_i}{\beta\sqrt{2\tau}},\ \ &\mbox{if } \tau\geq\tfrac{z_i^2}{2}\\[5pt]
		0,\ \ &\mbox{if } \tau<\tfrac{z_i^2}{2}.
	\end{cases}
	\]
	In this way, we obtain the function \(\psi_i(\tau)\) and its gradient with respect to \(\tau\) as
	\begin{equation}\label{Lagrangian_dual_2}
	\psi_i(\tau)\coloneqq
	\inf_{y_i\in[0,\nicefrac{1}{\beta})}
	\begin{Bmatrix}
		\tfrac{y_i}{2(1-\beta y_i)}z_i^2-\tau y_i
	\end{Bmatrix}
	=
	\begin{cases}
		-\tfrac{{(\sqrt{2\tau}-z_i)}^2}{2\beta},\ \ &\mbox{if } \tau\geq\tfrac{z_i^2}{2}
		\\[5pt]
		0,\ \ &\mbox{if } \tau<\tfrac{z_i^2}{2}
	\end{cases}
	\ \ \text{and}\ \
	\nabla\psi_i(\tau)
	=
	\begin{cases}
		-\tfrac{\sqrt{2\tau}-z_i}{\beta\sqrt{2\tau}},\ \ &\mbox{if } \tau\geq\tfrac{z_i^2}{2}\\[5pt]
		0,\ \ &\mbox{if } \tau<\tfrac{z_i^2}{2}.
	\end{cases}
	\end{equation}
	By~\eqref{Lagrangian_dual_2}, for every \(\tau\geq\frac{z_{(1)}^2}{2}\) there exists a unique integer \(k(\tau)\in\{1,\ldots,p\}\) such that \(\tfrac{z_{(k(\tau))}^2}{2}\leq\tau<\tfrac{z_{(k(\tau)+1)}^2}{2}\) (with \(z_{(p+1)}=\infty\)).
	The uniqueness follows from the nondecreasing ordering of \(z\), since the intervals
	\([ \frac{z_{(i)}^2}{2}, \frac{z_{(i+1)}^2}{2} )\) for \(i\in\{1,\ldots,p\}\) are pairwise disjoint (with some possibly empty when \(z_{(i)}=z_{(i+1)})\).
	To solve the dual problem \(\maximize_{\tau}\psi(\tau)\),
	we identify such unique integer \(k(\tau)\).
	The gradient of \(\psi(\tau)\) in~\eqref{Lagrangian_dual_2} can then be expressed as
	\[
	\nabla\psi(\tau)
	=
	1+\sum_{i=1}^{p}\nabla\psi_i(\tau)
	=
	1+\sum_{i=1}^{k(\tau)}
	\begin{pmatrix}
		-\tfrac{\sqrt{2\tau}-z_{(i)}}{\beta\sqrt{2\tau}}
	\end{pmatrix}.
	\]
	Let \(k_{\beta}(z)\in \{1,\ldots,p\}\) be such that
	\begin{equation}\label{tau}
	z_{(k_{\beta}(z))}\leq
	\frac{\sum_{i=1}^{k_{\beta}(z)}z_{(i)} }{k_{\beta}(z)-\beta}
	<z_{(k_{\beta}(z)+1)}
	\quad\text{and}\quad
	\tau^\star(z) =\tfrac{1}{2}
	{\left(
		\frac{\sum_{i=1}^{k_{\beta}(z)}z_{(i)}}{k_{\beta}(z)-\beta}
	\right)}^2,
	\end{equation}
	we then have \(\nabla\psi(\tau^{\star}(z))=0\).
	The Lasry--Lions double envelope, in the case when \(\mathcal{I}_{\leq0}(z)=\emptyset\), becomes
	\begin{align}
	\env_{\lambda,\mu}\delta_{D}(z)
	&\overset{\eqref{min_st_clconv}}{=}
	\tfrac{1}{\lambda}
	\min\limits_{y\in\conv\Delta_p}
	\begin{Bmatrix}
		\sum_{i=1}^{p}
		\tfrac{y_i}{2(1-\beta y_i)}z_i^2
	\end{Bmatrix}
	\nonumber\\
	&=
	\tfrac{1}{\lambda}\max_{\tau}\psi(\tau)
	\nonumber\\
	\text{\scriptsize by~\eqref{Lagrangian_dual} and~\eqref{Lagrangian_dual_2}}
	&=
	\tfrac{1}{\lambda}
	\begin{pmatrix}
		\tau^\star(z)+\sum_{i=1}^{k_{\beta}(z)}-\tfrac{{(\sqrt{2\tau^\star(z)}-z_{(i)})}^2}{2\beta}
	\end{pmatrix}
	\nonumber\\
	\text{\scriptsize by~\eqref{tau}}
	&=
	\tfrac{1}{\lambda}
	\begin{bmatrix}
		\tfrac{1}{2\beta(k_{\beta}(z)-\beta)}
		\begin{pmatrix}
		\sum_{i=1}^{k_{\beta}(z)}z_{(i)}
		\end{pmatrix}^2
		-
		\tfrac{1}{2\beta}
		\sum_{i=1}^{k_{\beta}(z)}z_{(i)}^2
	\end{bmatrix},\nonumber
	\end{align}
	which corresponds to the second case in~\eqref{rb_2_cases}.
\qedhere
\end{itemize}
\end{proof}

\begin{proposition}[Gradient and proximal operators]\label{proposition_grad_prox}
Relative to the vertical complementarity set \(D\), let \(z\in\R^p\), \(\lambda>\mu>0\) and \(\beta=\nicefrac{\mu}{\lambda}\in(0,1)\).
The gradient of the Lasry--Lions double envelope and the proximal operator \(P_{\lambda,\mu}\delta_D\) in~\eqref{sup_LL_definition} are given by
\begin{equation}\label{LL_DO_grad}
	\nabla\env_{\lambda,\mu}\delta_D(z)=\tfrac{1}{\lambda}R_{\beta}(z)
	\quad\text{and}\quad
	P_{\lambda,\mu}\delta_D(z)=P_{\beta}(z),
\end{equation}
respectively, with
\begin{equation}\label{Rb_Pb_I}
	R_{\beta}(z)=
	\frac{1}{1-\beta}{[z]}_{-}
	\quad\text{and}\quad
	P_{\beta}(z)=
	z+
\frac{\beta}{1-\beta}{[z]}_{-},
\end{equation}
when \(\mathcal{I}_{\leq0}(z)\neq\emptyset\).
On the other hand, if \(\mathcal{I}_{\leq0}(z)=\emptyset\), we have
\begin{equation}\label{Pb_Mb}
	R_{\beta}(z)=
	\frac{1}{\beta}
	{\left[
	\frac{\sum_{i=1}^{k_{\beta}(z)}z_{(i)}}{k_{\beta}(z)-\beta}
		\bm{1}
		-
		z
	\right]}_{+}
	\quad\text{and}\quad
	P_{\beta}(z)=
	z+
	{\left[
	\frac{\sum_{i=1}^{k_{\beta}(z)}z_{(i)}}{k_{\beta}(z)-\beta}
		\bm{1}
		-
		z
	\right]}_{+},
\end{equation}
where \({[\cdot]}_{+}\) and \({[\cdot]}_{-}\) are applied componentwise.
\end{proposition}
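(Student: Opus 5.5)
The plan is to differentiate the closed form of \cref{theorem_LL_env} directly, region by region, and then read off the proximal operator from \cref{proposition_properties_LL}\cref{proposition_P_grad}. Since \(\delta_D\) is lower bounded, \(\lambda_{\delta_D}=\infty\), so by \cref{proposition_properties_LL}\cref{proposition_Lipschitz} the envelope \(\env_{\lambda,\mu}\delta_D=\tfrac1\lambda r_\beta\) is \(C^1\) on all of \(\R^p\). It therefore suffices to compute \(\nabla r_\beta\) on the open sets where the formula \eqref{rb_2_cases} has a fixed structure: fixed sign pattern, fixed ordering of components, and fixed \(k_\beta\). The resulting expressions must then extend continuously to the boundaries, because the gradient is continuous. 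Once \(\nabla\env_{\lambda,\mu}\delta_D=\tfrac1\lambda R_\beta\) is established, \cref{proposition_properties_LL}\cref{proposition_P_grad} gives \(P_{\lambda,\mu}\delta_D(z)=z+\mu\cdot\tfrac1\lambda R_\beta(z)=z+\beta R_\beta(z)\). This yields both formulas for \(P_\beta\) in \eqref{Rb_Pb_I} and \eqref{Pb_Mb}.

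\emph{Case \(\mathcal{I}_{\leq0}(z)\neq\emptyset\).} On the region where some component is negative, \(r_\beta(z)=\tfrac{1}{2(1-\beta)}\sum_i [z_i]_-^2\). This holds because components with \(z_i>0\) contribute nothing and zero components contribute \(0\). The function is differentiable with gradient \(\tfrac{1}{1-\beta}[z]_-\). At points where \(\mathcal{I}_{\leq0}(z)\neq\emptyset\) but no component is strictly negative (some \(z_i=0\), the rest \(\geq0\)), I will rely on continuity of \(\nabla r_\beta\) and check consistency. Approaching from the positive region with \(z_{(1)}\to0\) should give \(\nabla r_\beta\to 0=\tfrac1{1-\beta}[z]_-\). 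Alternatively, the first-case formula is itself valid on a neighborhood intersected with the closed set where \(\min z\le0\), and the gradient there is \(0\).

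\emph{Case \(\mathcal{I}_{\leq0}(z)=\emptyset\).} Fix \(k=k_\beta(z)\) and write \(S=\sum_{i\le k}z_{(i)}\), \(t=S/(k-\beta)\). On a region where the ordering and \(k\) are locally constant, I differentiate \(r_\beta=\tfrac{S^2}{2\beta(k-\beta)}-\tfrac1{2\beta}\sum_{i\le k}z_{(i)}^2\). For an index among the \(k\) smallest, the partial derivative is \(\tfrac{S}{\beta(k-\beta)}-\tfrac{z_{(i)}}{\beta}=\tfrac1\beta(t-z_{(i)})\). This is \(\geq0\) because \(z_{(i)}\le z_{(k)}\le t\) by \eqref{M_set}. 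For the remaining indices the derivative is \(0\), and \(t<z_{(i)}\) there, so \(\tfrac1\beta[t-z_i]_+=0\). Hence \(\nabla r_\beta=\tfrac1\beta[t\bm 1-z]_+\) on such regions. A cleaner way to organize this is to note that \(\tfrac1\beta[t\bm1-z]_+\) automatically selects exactly the indices with \(z_i\le t\), which by \eqref{M_set} are the first \(k\) sorted ones. Ties in the ordering are then harmless, since the formula is symmetric in tied coordinates.

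\textbf{Main obstacle.} The delicate part is justifying the formula on the boundaries between regions. These are the points where \(k_\beta(z)\) jumps, where components tie, or where \(z_{(1)}\to0^+\). At such points the piecewise formula is not obviously differentiable. My approach is to show that \(z\mapsto t(z)\) (with \(k=k_\beta(z)\)) is continuous. Concretely, at a transition \(t=z_{(k+1)}\) one has \(\tfrac{S+z_{(k+1)}}{k+1-\beta}=z_{(k+1)}=\tfrac{S}{k-\beta}\), so both choices of \(k\) give the same \(t\). It follows that the candidate map \(\tfrac1\beta[t\bm1-z]_+\) is continuous. Since \(\nabla r_\beta\) exists everywhere and is continuous, and agrees with this candidate on a dense open set, the two agree everywhere. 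The same density argument handles the interface with the first case. There, as \(z_{(1)}\downarrow0\), one needs \(t\to0\) (with \(k=1\), \(t=z_{(1)}/(1-\beta)\)) and hence the gradient tends to \(0\), matching \(\tfrac1{1-\beta}[z]_-=0\).
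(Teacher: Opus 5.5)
Your proposal is correct and follows the paper's route. The paper only says the formulas follow from \cref{theorem_LL_env} and \cref{proposition_P_grad}; your differentiation on open pieces, combined with the $C^1$ property from \cref{proposition_Lipschitz} and a density argument, just makes the boundary cases explicit. One small slip: near a point with several zero components, $k_\beta$ need not be $1$ (for instance $z=(\varepsilon,\varepsilon)$ gives $k_\beta(z)=2$), but $t\to0$ still holds because \eqref{M_set} yields $(1-\beta)z_{(k)}\leq z_{(1)}$, and in any case it suffices to approach such points from the open set where $\min_i z_i<0$.
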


\begin{proof}
The result follows directly from \cref{proposition_P_grad} and \cref{theorem_LL_env}.
\end{proof}

\begin{proposition}[Some properties of \(R_\beta\)]\label{proposition_R}
Relative to the operator \(R_\beta\) in~\eqref{LL_DO_grad}, let \(z\in\R^p\), \(\lambda>\mu>0\) and \(\beta=\nicefrac{\mu}{\lambda}\in(0,1)\). The following hold:
\begin{enumerate}
	\item \(R_{\beta}\) is globally Lipschitz continuous with modulus \(L_{\beta}=\max\{\tfrac{1}{\beta},\tfrac{1}{1-\beta}\}\).\label{proposition_Rb_smooth}
	\item\label{proposition_connect_Rb_Pi_2} Let \(z_{(1)}\leq0\).
	Then, we have
	\begin{equation}\label{connect_Rb_Pi_2}
	R_{\beta}(z)
	=
	\frac{1}{1-\beta}
	\left(
		z-\Pi_D(z)
	\right).
	\end{equation}
	\item\label{proposition_connect_Rb_Pi} Let \(z_{(1)}>0\) and define
	\begin{equation}\label{Ib}
	\mathcal{I}_{\beta}(z)\coloneqq\set{i\in\{1,\ldots,p\}}[z_i\leq z_{(k_{\beta}(z))}].
	\end{equation}
	Then, we have
	\begin{equation}\label{connect_Rb_Pi}
	R_{\beta}(z)
	=
	Q_{\beta}(z)
	\left(
		z-\Pi_D^{\rm C,\beta}(z)
	\right),
	\end{equation}
	where the matrix \(Q_{\beta}(z)\in\R^{p\times p}\) and the operator \(\Pi_D^{\rm C,\beta}(z)\) are given by
	\begin{equation}\label{Qb}
	Q_{\beta}(z)=
	\frac{1}{\beta(k_{\beta}(z)-\beta)}
	\left(
		\sum_{i\in\mathcal{I}_{\beta}(z)}\hspace*{-0.5em}e_i
	\right)
	{\left(
		\sum_{i\in\mathcal{I}_{\beta}(z)}\hspace*{-0.5em}e_i
	\right)}^\top
	-
	\frac{1}{\beta}
	\sum_{i\in\mathcal{I}_{\beta}(z)}\hspace*{-0.5em}e_{i}e_i^\top
	\ \ \text{and}\ \
	\Pi_D^{\rm C,\beta}(z)=
	\left(
		I-\sum_{i\in\mathcal{I}_{\beta}(z)}\hspace*{-0.5em}e_{i}e_i^\top
	\right)z.
	\end{equation}
	When \(|\mathcal{I}_{\beta}(z)|=1\), \(R_{\beta}\) in~\eqref{connect_Rb_Pi} reduces to~\eqref{connect_Rb_Pi_2}.
	\item\label{proposition_cone_Rb} \(R_{\beta}\) admits inclusions involving cones:
	\begin{equation}
		R_{\beta}(z)\in
		\begin{cases}
		\Cncone_D(\Pi_D^{\rm C,\beta}(z)),\ \ \ &\mbox{if } k_{\beta}(z)>1
		\\[5pt]
		\Rncone_D(\Pi_D(z)),\ \ \ &\mbox{otherwise}.
		\end{cases}
	\end{equation}
	\item\label{proposition_Rb_homogeneity} \(R_{\beta}\) is positively homogeneous: \(R_{\beta}(az)=aR_{\beta}(z)\) for any \(a>0\).
\end{enumerate}
\end{proposition}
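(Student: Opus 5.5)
We treat the five items in turn, working directly from the explicit formulas \eqref{Rb_Pb_I}--\eqref{Pb_Mb} of \cref{proposition_grad_prox}.

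\emph{Item 1.} Since \(R_\beta=\lambda\nabla\env_{\lambda,\mu}\delta_D\) by \eqref{LL_DO_grad}, we invoke \cref{proposition_Lipschitz} with \(\varphi=\delta_D\), which is prox-bounded with \(\lambda_\varphi=\infty\). This gives a Lipschitz modulus \(\lambda\max\{\frac1\mu,\frac1{\lambda-\mu}\}=\max\{\frac1\beta,\frac1{1-\beta}\}\).

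\emph{Item 5.} Positive homogeneity is a direct check. Sorting commutes with multiplication by \(a>0\), and the inequalities \eqref{M_set} scale, so \(k_\beta(az)=k_\beta(z)\). The sets \(\mathcal{I}_{\le0}\) are also invariant. Finally \([\cdot]_\pm\) are positively homogeneous, so both branches scale by \(a\).

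\emph{Item 2.} If \(z_{(1)}\le0\), then \(\mathcal{I}_{\le0}(z)\neq\emptyset\) and \(R_\beta(z)=\frac1{1-\beta}[z]_-\). We show \(z-\Pi_D(z)=[z]_-\), where \(\Pi_D(z)=[z]_+\) is the unique projection. Indeed, \([z]_+\in D\) because some component of it vanishes. Moreover, every point of \(D\) is nonnegative, so \(\dist(z,D)\ge\|[z]_-\|\), and \([z]_+\) attains this bound.

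\emph{Item 3.} Let \(z_{(1)}>0\), write \(k=k_\beta(z)\) and \(t=\frac{\sum_{i\le k}z_{(i)}}{k-\beta}\). By \eqref{M_set}, \(z_{(k)}\le t<z_{(k+1)}\), so \([t\bm1-z]_+\) is supported exactly on the indices with \(z_i\le z_{(k)}\), i.e.\ on \(\mathcal{I}_\beta(z)\). Ties must be handled here: if \(z_i=z_{(k)}\) and \(t=z_{(k)}\), the entry is zero, which is harmless, and \(|\mathcal{I}_\beta(z)|=k\) follows because \(z_{(k+1)}>t\ge z_{(k)}\). Let \(u=\sum_{i\in\mathcal{I}_\beta}e_i\). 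Then \(z-\Pi_D^{\rm C,\beta}(z)=\sum_{i\in\mathcal I_\beta}z_ie_i\), and
\[Q_\beta(z)\Big(\sum_{i\in\mathcal I_\beta}z_ie_i\Big)=\frac{\sum_{i\in\mathcal I_\beta} z_i}{\beta(k-\beta)}u-\frac1\beta\sum_{i\in\mathcal I_\beta}z_ie_i=\frac1\beta(tu-z)\text{ on }\mathcal I_\beta,\]
with zero elsewhere. This matches \eqref{Pb_Mb}. When \(k=1\), we have \(\mathcal I_\beta=\{i\}\) and \(Q_\beta=(\frac1{\beta(1-\beta)}-\frac1\beta)e_ie_i^\top=\frac1{1-\beta}e_ie_i^\top\). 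Also \(\Pi_D(z)\) zeroes the smallest component, so this reduces to \eqref{connect_Rb_Pi_2}. If the smallest value is tied, we use the identity above in place of \(\Pi_D\).

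\emph{Item 4.} This is the main bookkeeping step. If \(z_{(1)}\le0\), item 2 gives \(R_\beta(z)=\frac1{1-\beta}[z]_-\). Its support is \(\mathcal I_{<0}(z)\subseteq\mathcal I_{=0}([z]_+)\), and its entries are nonpositive. If \(|\mathcal I_{=0}([z]_+)|>1\), this lies in \(\cone\{-e_i\}\) by \eqref{regular_normal_cone}. If \(\mathcal I_{=0}([z]_+)=\{i\}\), then \(z_i\) is the only nonpositive entry, so \(R_\beta(z)\in\R e_i\).

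If \(z_{(1)}>0\), then \(R_\beta(z)=\frac1\beta[t\bm1-z]_+\ge0\), supported on \(\mathcal I_\beta(z)\). This is exactly the zero set of \(\Pi_D^{\rm C,\beta}(z)\), since all other components are positive. Hence \(R_\beta(z)\in\cone\{e_i:i\in\mathcal I_{=0}(\Pi_D^{\rm C,\beta}(z))\}\subseteq\Cncone_D(\Pi_D^{\rm C,\beta}(z))\) by \eqref{Clarke_cone}, and \(\Pi_D^{\rm C,\beta}(z)\in D\). When \(k=1\), the support is the single index \(i\) of a minimal component, and \(\Pi_D(z)\) can be taken as \(z-z_ie_i\), whose unique zero is \(i\). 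So \(R_\beta(z)\in\R e_i=\Rncone_D(\Pi_D(z))\).

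The delicate points throughout are the tie cases in the ordering. These need care so that the support of \([t\bm1-z]_+\) is correctly identified with \(\mathcal I_\beta(z)\), or with a subset of it, which suffices for the cone inclusions.
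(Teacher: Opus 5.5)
Your proof is correct. Items 1--3 and the case \(k_\beta(z)>1\) of item 4 follow the paper's argument. Your componentwise computation in item 3 is the paper's \(\Omega_\beta/\omega_\beta\) algebra written out, and it usefully makes explicit two facts the paper uses without comment: \(|\mathcal{I}_\beta(z)|=k_\beta(z)\) and \([t\bm{1}-z]_+=\sum_{i\in\mathcal{I}_\beta(z)}(t-z_i)e_i\). Likewise, in item 2 you justify that \([z]_+\) is the unique projection onto \(D\), which the paper takes for granted.

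The real differences are in item 5 and in the case \(k_\beta(z)=1\) of item 4.

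\emph{Item 5.} The paper argues structurally: since \(D\) is a cone, \(\dist(az,D)=a\dist(z,D)\). This gives \(\env_{\lambda,\mu}\delta_D(az)=a^2\env_{\lambda,\mu}\delta_D(z)\), and differentiating yields the claim. That argument needs no closed form and works for any closed cone. You instead read homogeneity off the formulas \eqref{Rb_Pb_I}--\eqref{Pb_Mb} via \(k_\beta(az)=k_\beta(z)\). This is more elementary, but it leans on the explicit expression and on the uniqueness of \(k_\beta\).

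\emph{Item 4 with \(k_\beta(z)=1\).} The paper observes that \(R_\beta(z)\) is a positive multiple of \(z-\Pi_D(z)\), i.e. a proximal normal. It is therefore a regular normal by \cite[Example 6.16]{rockafellar1998}, with no casework. You check membership directly against \eqref{regular_normal_cone}, at the cost of a small case split on \(|\mathcal{I}_{=0}(\Pi_D(z))|\).

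One simplification: your concern about a tied minimum when \(k_\beta(z)=1\) and \(z_{(1)}>0\) is vacuous. In that case \eqref{M_set} forces \(z_{(2)}>z_{(1)}/(1-\beta)>z_{(1)}\), so the minimizing index \(i\) is unique and \(\Pi_D(z)=z-z_ie_i\) is single-valued.
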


\begin{proof}
\cref{proposition_Rb_smooth} follows from \cref{proposition_Lipschitz}, and
\cref{proposition_connect_Rb_Pi_2} follows from~\eqref{Rb_Pb_I}.
To verify \cref{proposition_connect_Rb_Pi}, let \(\Omega_{\beta}(z)= \sum_{i\in\mathcal{I}_{\beta}(z)}e_{i}e_i^\top\) and \(\omega_{\beta}(z)=\sum_{i\in\mathcal{I}_{\beta}(z)}e_{i}\).
Then, \cref{Qb} can be written as
\[
	Q_{\beta}(z)=
	\frac{1}{\beta(k_{\beta}(z)-\beta)}
	\omega_{\beta}(z)
	{\omega_{\beta}(z)}^\top
	-
	\frac{1}{\beta}
	\Omega_{\beta}(z)
	\quad\text{and}\quad
	\Pi_D^{\rm C,\beta}(z)=
	\left(
	I-\Omega_{\beta}(z)
	\right)z,
\]
and \(R_{\beta}(z)\) becomes
\[
\begin{aligned}
	R_{\beta}(z)
	&\overset{\eqref{Pb_Mb}}{=}
	\frac{1}{\beta}
	\left(
	\frac{\langle\omega_{\beta}(z), z\rangle }{k_{\beta}(z)-\beta}
		\omega_{\beta}(z)
		-
		\Omega_{\beta}(z)z
	\right)
	\\
	&=
	\frac{1}{\beta}
	\left(
	\frac{1}{k_{\beta}(z)-\beta}
		\omega_{\beta}(z)\omega_{\beta}{(z)}^\top z
		-
		\Omega_{\beta}(z)z
	\right).
\end{aligned}
\]
Note that \(\Omega_{\beta}(z)\) has eigenvalues 0 and 1, with \(\omega_{\beta}(z)\) being an eigenvector corresponding to the eigenvalue 1,
and thus, \(\omega_{\beta}{(z)}^\top\Omega_{\beta}(z)=\omega_{\beta}{(z)}^\top\).
Furthermore, \(\Omega_{\beta}(z)\) is idempotent, i.e., \(\Omega_{\beta}{(z)}^2=\Omega_{\beta}(z)\).
Then, we have
\[
\begin{aligned}
	Q_{\beta}(z)
	\left(
	z-\Pi_D^{\rm C,\beta}(z)
	\right)
	&=
	Q_{\beta}(z)\Omega_{\beta}(z)z
	\\
	&=
	\left(
	\frac{1}{\beta(k_{\beta}(z)-\beta)}
	\omega_{\beta}(z)
	{\omega_{\beta}(z)}^\top\Omega_{\beta}(z)
	-
	\frac{1}{\beta}
	\Omega_{\beta}{(z)}^2
	\right)z
	\\
	&=
	\left(
	\frac{1}{\beta(k_{\beta}(z)-\beta)}
	\omega_{\beta}(z)
	{\omega_{\beta}(z)}^\top
	-
	\frac{1}{\beta}
	\Omega_{\beta}(z)
	\right)z
	=R_{\beta}(z).
\end{aligned}
\]
When \(|\mathcal{I}_{\beta}(z)|=1\), we have \(k_{\beta}(z)=1\),
and the unique element of \(\mathcal{I}_{\beta}(z)\) is \(\arg\min_{i}z_{i}\).
Therefore, the equivalence between~\eqref{connect_Rb_Pi_2} and~\eqref{connect_Rb_Pi} follows immediately.

We next establish \cref{proposition_cone_Rb}.
Suppose that \(k_{\beta}(z)>1\).
According to~\eqref{M_set} and~\eqref{Pb_Mb}, we have \({[R_{\beta}(z)]}_{i}\geq0\) for \(i\in\mathcal{I}_{\beta}(z)\),
and \({[R_{\beta}(z)]}_{i}=0\) for \(i\notin\mathcal{I}_{\beta}(z)\).
Moreover, from \(\Pi_D^{\rm C,\beta}(z)\) in~\eqref{Qb},
it follows that \({[\Pi_D^{\rm C,\beta}(z)]}_{i}=0\) if \(i\in\mathcal{I}_{\beta}(z)\).
Therefore, by~\eqref{Clarke_cone}, we obtain \(R_{\beta}(z)\in \Cncone_D(\Pi_D^{\rm C,\beta}(z))\).
If instead \(k_{\beta}(z)=1\), then by \cref{proposition_connect_Rb_Pi_2,proposition_connect_Rb_Pi},
\(R_{\beta}(z)\) is a proximal normal to \(D\) at \(\Pi_D(z)\),
and therefore \(R_{\beta}(z)\in \Rncone_D(\Pi_D(z))\) by~\cite[Example 6.16]{rockafellar1998}.

For \cref{proposition_Rb_homogeneity},
let \(a>0\).
Since \(D\) is a cone, the change of variables \(z'=a\tilde z\) yields
\[
	\dist(az,D)=\inf_{z'\in D}\|z'-az\|
	=\inf_{\tilde z\in D}\|a\tilde z-az\|
	=a\dist(z,D),
\]
and consequently,
\(\dist{(az,D)}^2=a^2\dist{(z,D)}^2\).
Based on this, we obtain
\[
\begin{aligned}
	\env_{\lambda,\mu}\delta_{D}(az)
	&=
	\sup_{z'\in\R^p}\{\tfrac{1}{2\lambda}\dist{(z',D)}^2-\tfrac{1}{2\mu}\|z'-az\|^2 \}
	\\
	&=
	\sup_{\tilde z\in\R^p}\{\tfrac{1}{2\lambda}\dist{(a\tilde z,D)}^2-\tfrac{1}{2\mu}\|a\tilde z-az\|^2 \}
	\\
	&=
	a^2\sup_{\tilde z\in\R^p}\{\tfrac{1}{2\lambda}\dist{(\tilde z,D)}^2-\tfrac{1}{2\mu}\|\tilde z-z\|^2 \}
	=a^2\env_{\lambda,\mu}\delta_{D}(z).
\end{aligned}
\]
Taking the gradient of the above equality with respect to \(z\) yields the positive homogeneity of \(R_{\beta}\).
\end{proof}

By~\eqref{projection_D}, the projection mapping \(\Pi_D\) is set-valued.
For convenience, when \(k_{\beta}(z)=1\), we use \(\Pi_D(z)\) to denote the selection \(\Pi_{D_i}(z)\), where \(i=\min\arg\min_{\ell} {[z]}_{\ell}\).

\subsection{Geometric Illustrations}

In this part, we illustrate the vertical complementarity set \(D\),
its projection,
and the Moreau and Lasry--Lions double envelopes.
For \(p=2\), the complementarity set is given by
\[
	D=D_1 \cup D_2
	\quad \text{with}\quad
	D_1=\{0\}\times\R_+\quad \text{and}\quad
	D_2=\R_+\times\{0\}.
\]
The projection onto \(D\) coincides with the projection onto a convex set, except in the case \(z_1=z_2>0\), where it becomes multi-valued, as illustrated in the left subfigure of \cref{fig_CC_proj_Moreau_LL}.
This multi-valuedness renders the Moreau envelope nonsmooth,
which is given by
\(
	\tfrac{1}{2\lambda}\min\{\dist{(z,D_1)}^2,\dist{(z,D_2)}^2\}.
\)
For \(\lambda=1\), the Moreau envelope is shown in the middle subfigure of \cref{fig_CC_proj_Moreau_LL}, where it is nonsmooth along the ray \(z_1=z_2>0\).
In contrast, for \(\lambda=1\) and \(\mu=0.5\), the Lasry--Lions double envelope is smooth everywhere,
as shown in the right subfigure of \cref{fig_CC_proj_Moreau_LL}.

\begin{figure}[H]
	\centering
	\begin{subfigure}{0.3\textwidth}
		\centering
		\includegraphics[width=0.8\linewidth]{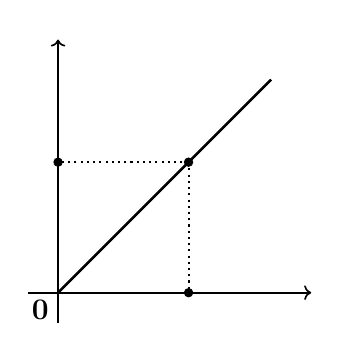}
	\end{subfigure}
	\hfill
	\begin{subfigure}{0.3\textwidth}
		\centering
		\includegraphics[width=0.9\linewidth]{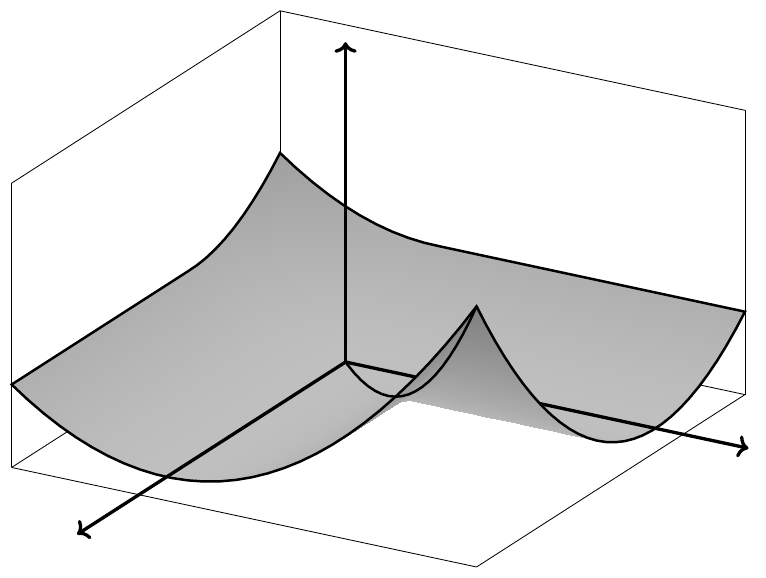}
	\end{subfigure}
	\hfill
	\begin{subfigure}{0.3\textwidth}
		\centering
		\includegraphics[width=0.9\linewidth]{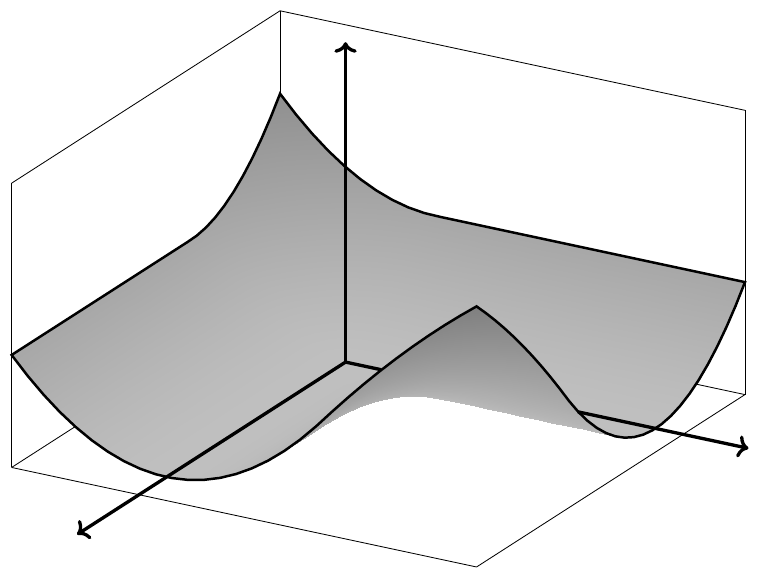}
	\end{subfigure}
	\caption{Geometric illustrations of the projection onto \(D\), the Moreau and Lasry--Lions double envelopes.}\label{fig_CC_proj_Moreau_LL}
\end{figure}

For \(p=3\), the vertical complementarity set is given by
\[
	D=D_1 \cup D_2\cup D_3
	\quad \text{with}\quad
	D_1=\{0\}\times\R_+^2,
	\quad
	D_2=\R_+\times\{0\}\times\R_+
	\quad \text{and}\quad
	D_3=\R_+^2\times\{0\}.
\]
The projection onto \(D\) is two-valued at points \(z\) satisfying \(z_1>z_2=z_3>0\), \(z_2>z_1=z_3>0\), or \(z_3>z_1=z_2>0\), corresponding to the three light gray triangular planes in the left subfigure of \cref{fig_VCC_proj_LL}.
Moreover, \(\Pi_D(z)\) is three-valued at points satisfying \(z_1=z_2=z_3>0\).
In all other cases, the projection is single-valued, as in the convex setting.
Recall that the multi-valuedness of the projection onto \(D\) leads to the nonsmoothness of the Moreau envelope,
and it can be smoothed by the Lasry--Lions double envelope.
For \(\lambda=1\) and \(\mu=0.5\), the smoothing regions induced by the Lasry--Lions double envelope are shown in the right subfigure of \cref{fig_VCC_proj_LL}.
The three light gray polyhedral regions correspond to points \(z\) satisfying \(k_{\beta}(z)=2\).
These regions are interpreted as smoothings of the three light gray triangular planes in the left subfigure.
Moreover, the dark gray polyhedral region corresponds to points \(z\) satisfying \(k_{\beta}(z)=3\), and smooths the ray \(z_1=z_2=z_3>0\).

\begin{figure}[H]
	\centering
	\begin{subfigure}{0.3\textwidth}
		\centering
		\includegraphics[width=0.9\linewidth]{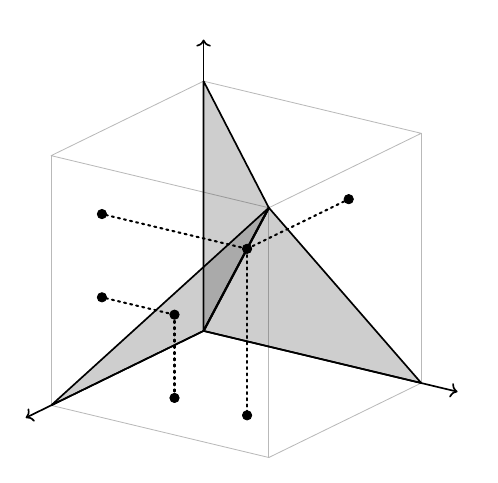}
	\end{subfigure}
	\hspace{0.1\textwidth}
	\begin{subfigure}{0.3\textwidth}
		\centering
		\includegraphics[width=0.9\linewidth]{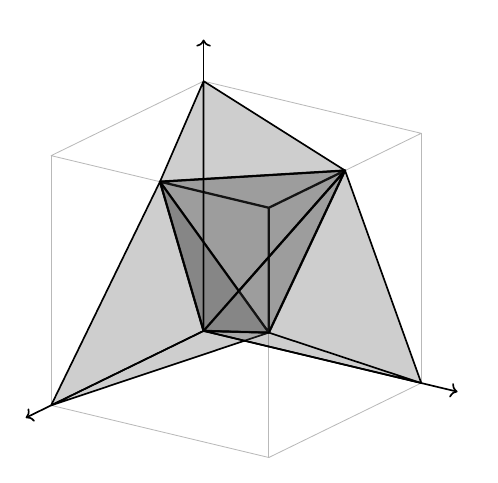}
	\end{subfigure}
	\caption{Geometric illustrations of the projection onto \(D\) and the smoothing regions.}\label{fig_VCC_proj_LL}
\end{figure}

The ratio \(\beta=\nicefrac{\mu}{\lambda}\) controls the size of the smoothing regions.
As shown in the left subfigure of \cref{fig_beta}, when \(\beta\) approaches one, the smoothing region expands and eventually covers the entire positive orthant.
In contrast, as \(\beta\) approaches zero, the smoothing region shrinks and eventually reduces to the ray \(z_1=z_2>0\);
that is, the Lasry--Lions double envelope approaches the Moreau envelope (see \cref{proposition_sandwich}).
A similar behavior is observed in the case \(p=3\), as shown in the middle and right subfigures of \cref{fig_beta}.
When \(\beta\) approaches one, they expand to cover the entire region \(\R_{+}^3\);
when \(\beta\) approaches zero, the smoothing regions shrink to the three light gray triangular planes.

\begin{figure}[h]
	\centering
	\begin{subfigure}{0.3\textwidth}
		\centering
		\includegraphics[width=\linewidth]{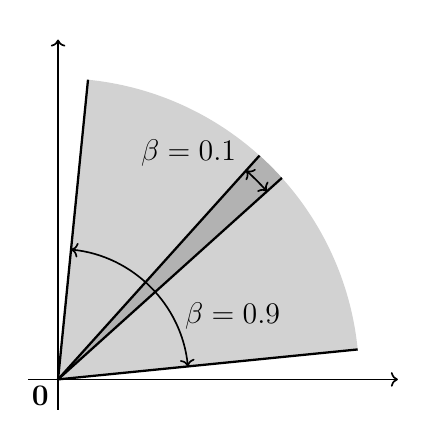}
	\end{subfigure}
	\hfill
	\begin{subfigure}{0.3\textwidth}
		\centering
		\includegraphics[width=\linewidth]{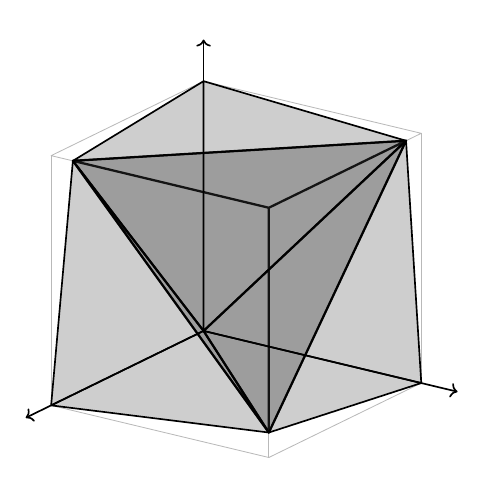}
	\end{subfigure}
	\hfill
	\begin{subfigure}{0.3\textwidth}
		\centering
		\includegraphics[width=\linewidth]{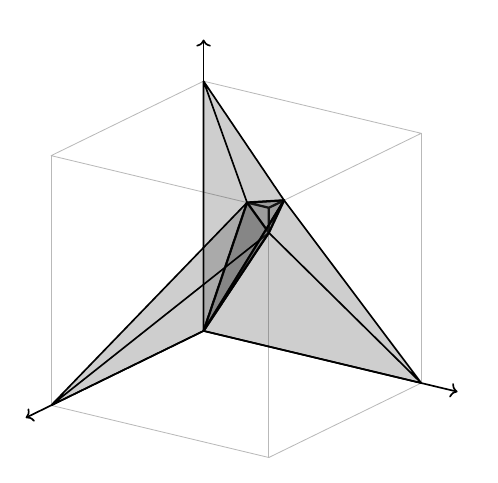}
	\end{subfigure}
	\caption{Geometric illustrations showing how the parameter \(\beta\) affects the smoothing regions.}\label{fig_beta}
\end{figure}

	\section{A Smoothed Augmented Lagrangian Method}
			In this section, we present a smoothed ALM for solving \MPVCC{},
	in which the nonsmooth subproblem in the standard ALM framework is replaced by a smooth surrogate constructed via the Lasry--Lions double envelope.
	Let
	\(\s=(s^0,s^1,\ldots,s^r)\in\R^{m+\bar p}\),
	where \(s^0\in\R^m\) and \(s^j\in\R^{p_j}\).
	Define
	\(\delta_{\Gamma}(\s)\coloneqq\delta_{C}(s^0)+\sum_{j=1}^{r}\delta_{D^j}(s^j)\).
	This yields the following reformulation of \MPVCC{}:
	\begin{equation}\label{MPVCC_reformulated}
		\begin{split}
		\minimize_{x,\s}
		\ \
		& f(x)+\delta_{\Gamma}(\s)
		\\
		\stt\ \
		& \Phi(x)=\s.
		\end{split}
	\end{equation}
	We define the \emph{Lagrangian}
	\(\mathcal{L}(x,\s,\y):\R^n\times\R^{m+\bar p}\times\R^{m+\bar p}\rightarrow\overline\R\) by
	\[
		\mathcal{L}(x,\s,\y)\coloneqq
		f(x)+
		\delta_{\Gamma}(\s)
		+
		\langle \y,\Phi(x)-\s\rangle.
	\]
	Let \(\lambda\in\R_{++}\), the \emph{augmented Lagrangian} (AL)
	\(\mathcal{L}_{\lambda}:\R^n\times\R^{m+\bar p}\times\R^{m+\bar p}\rightarrow\overline\R\) is defined as
	\begin{align}
		&
		\mathcal{L}_{\lambda}(x,\s,\y)\coloneqq
		\mathcal{L}(x,\s,\y)
		+\tfrac{1}{2\lambda}\|\Phi(x)-\s\|^2
		\nonumber\\
		&
		=f(x)
		+\delta_C(s^0)+\tfrac{1}{2\lambda}\|A(x)-s^0+\lambda y^0\|^2-\tfrac{\lambda}{2}\|y^0\|^2
		+
		\sum_{j=1}^{r}
		\begin{pmatrix}
		\delta_{D^j}(s^j)+\tfrac{1}{2\lambda}\|F^j(x)-s^j+\lambda y^j\|^2-\tfrac{\lambda}{2}\|y^j\|^2
		\end{pmatrix}.\label{AL_function_with_indicator}
	\end{align}
	The slack variable \(\s\) can be eliminated by minimizing \(\mathcal{L}_{\lambda}(x,\s,\y)\) with respect to \(\s\).
	This yields the \emph{implicit} AL function \(\mathcal{L}_{\lambda}:\R^n\times\R^{m+\bar p}\rightarrow\overline\R\), defined by
	\allowdisplaybreaks\@
	\begin{align}
	\mathcal{L}_{\lambda}(x,\y)
	&\coloneqq
	\inf_{\s\in\R^{m+\bar p}}\mathcal{L}_{\lambda}(x,\s,\y)
	\nonumber\\
	&=f(x)
	+\env_{\lambda}\delta_C(A(x)+\lambda y^0)-\tfrac{\lambda}{2}\|y^0\|^2
	+\sum_{j=1}^{r}
	\left(
	\env_{\lambda}\delta_{D^j}(F^j(x)+\lambda y^j)-\tfrac{\lambda}{2}\|y^j\|^2
	\right).\label{implicit_AL}
	\end{align}
	Let \(\be\coloneqq(\sit{\beta}[1],\ldots,\sit{\beta}[r])\in\R^r\),
	where \(\sit{\beta}[j]\in(0,1)\).
	The \emph{smoothed} implicit AL function is defined as
	\begin{equation}\label{smoothened_implicit_AL}
		\mathcal{L}_{\lambda,\be}(x,\y)
		\coloneqq
		f(x)
		+
		\env_{\lambda}\delta_C(A(x)+\lambda y^0)-\tfrac{\lambda}{2}\|y^0\|^2
		+
		\sum_{j=1}^{r}
		\left(
		\tfrac{1}{\lambda}
		r_{\sit{\beta}}(F^j(x)+\lambda y^j)
		-
		\tfrac{\lambda}{2}\|y^j\|^2
		\right).
	\end{equation}

	\begin{algorithm}[p]
	\caption{Smoothed ALM for \MPVCC{}.}
	\begin{algorithmic}[1]\label{algorithm_ALM}
	\itemsep=3pt%
	\renewcommand{\algorithmicindent}{0.3cm}%

	\item[\algfont{Data:}]

	Algorithm parameters \(\rho\), \(\theta\),
	\(\kappa\) and \(\beta_{\min}\) all belong to \((0,1)\).
	Initial penalty and smoothing parameters \(\lambda_0\in\R_{++}\) and \(\be_0\in{[\beta_{\min},1)}^r\).
	Initial point \(x^{-1}\in\R^{n}\) and multiplier \(\y^{-1}\in\R^{m+\bar p}\).
	Nonempty, convex, and compact set \(Y\subset\R^{m+\bar p}\).

	\For{\(\nu=0,1,2,\ldots\)}

	\If{\(\nu>0\) and a termination criterion (see \cref{sec_implementation_salm}) is satisfied}\label{alg_ALM_stop}
	\State\@ return \(x^{\nu-1}\)
	\EndIf\@

	\State\@\label{alg_ALM_safeguarded} Compute the safeguarded multiplier:
	\begin{equation}
	\hat \y^\nu=\Pi_Y(\y^{\nu-1})
	\end{equation}

	\State\@\label{alg_ALM_stationary} Choose \(\sit{\epsilon}[\rm{d}][\nu]\geq0\) and compute an \(\sit{\epsilon}[\rm{d}][\nu]\)-stationary point \(x^\nu\) to the problem:
	\begin{equation}
	\minimize_{x\in\R^n}\ \mathcal{L}_{\lambda_\nu,\be_\nu}(x,\hat \y^\nu),
	\end{equation}
	i.e.,
	\(
	\|
		\nabla_x
		\mathcal{L}_{\lambda_\nu,\be_\nu}(x^\nu,\hat\y^{\nu})
	\|
	\leq
	\sit{\epsilon}[\rm{d}][\nu]
	\),
	using \(x^{\nu-1}\) as the starting point,
	where \(\mathcal{L}_{\lambda_\nu,\be_\nu}\) is defined in~\eqref{smoothened_implicit_AL}

	\State\@ Compute the feasible approximation \(\z^\nu\):
	\begin{align}
	&
	\vit{z}[0][\nu]\leftarrow
	\Pi_C
	\left(
		A(x^\nu)+\lambda_\nu \vit{\hat y}[0][\nu]
	\right)\label{update_v_C}
	\\[5pt]
	&
	\vit{z}[j][\nu]\leftarrow
	\begin{cases}
		\Pi_{D^j}^{\mathrm{C},\sit{\beta}[j][\nu]}
		\left(
		F^j(x^\nu)+\lambda_\nu \vit{\hat y}[j][\nu]
		\right),
		&\quad\mbox{if }
		k_{\sit{\beta}[j][\nu]}
		\left(
		F^j(x^\nu)+\lambda_\nu \vit{\hat y}[j][\nu]
		\right)>1
		\\[5pt]
		\Pi_{D^j}
		\left(
		F^j(x^\nu)+\lambda_\nu \vit{\hat y}[j][\nu]
		\right),
		&\quad\mbox{otherwise,}
	\end{cases}
	\label{update_v_j}
	\end{align}
	for \(j\in\set{1,\ldots,r}\),
	where \(k_{\sit{\beta}[j][\nu]}\) and
	\(\Pi_{D^j}^{\mathrm{C},\sit{\beta}[j][\nu]}\) are defined in~\eqref{M_set} and~\eqref{Qb}, respectively

	\State\@ Compute the infeasibility measure:
	\begin{equation}\label{update_residual}
	V_\nu\leftarrow
	\begin{Vmatrix}
		\Phi(x^\nu)-\z^\nu
	\end{Vmatrix},
	\end{equation}
	where \(\Phi\) is defined in~\eqref{define_constraint}

	\State\@ Update the multiplier:
	\begin{align}
	\y^\nu\leftarrow \hat \y^\nu+\frac{1}{\lambda_\nu}(\Phi(x^\nu)-\z^\nu)
	\label{multiplier_update}
	\end{align}

	\State\@ Update the penalty parameter:
	\begin{align}
	\lambda_{\nu+1}
	\leftarrow
	\begin{cases}
		\rho\lambda_\nu,
		&\quad\mbox{if }
		\nu>0\ \ \text{and}\ \ V_\nu> \theta V_{\nu-1}
		\\[5pt]
		\lambda_\nu,
		&\quad\mbox{otherwise}
	\end{cases}
	\label{alg_ALM_update_lambda}
	\end{align}

	\State\@ Update the smoothing parameter:
	\begin{align}
	\sit{\beta}[j][\nu+1]
	\leftarrow
	\begin{cases}
		\max\{\kappa \sit{\beta}[j][\nu],\beta_{\min}\},
		&\quad\mbox{if }
		k_{\sit{\beta}[j][\nu]}
		\left(
		F^j(x^\nu)+\lambda_\nu\vit{\hat y}[j][\nu]
		\right)>1
		\\[5pt]
		\sit{\beta}[j][\nu],
		&\quad\mbox{otherwise,}
	\end{cases}
	\label{alg_ALM_reduce_beta}
	\end{align}
	for \(j\in\set{1,\ldots,r}\)

	\EndFor\@
	\end{algorithmic}
	\end{algorithm}

	By \cref{proposition_sandwich}, function~\eqref{smoothened_implicit_AL} provides an upper approximation of~\eqref{implicit_AL};
	consequently, employing these subproblems within the ALM framework generally yields iterates that differ from those of the (nonsmooth) standard ALM.
	Nevertheless, by~\cite[Example 1.46 and Proposition 7.4(c)]{rockafellar1998}, we obtain both pointwise and epi-convergence:
	\(
		\mathcal{L}_{\lambda,\be}(x,\y)
		\searrow
		\mathcal{L}_{\lambda}(x,\y)
	\)
	and
	\(
		\mathcal{L}_{\lambda,\be}(\cdot,\y)
		\overset{\mathrm{e}}{\rightarrow}
		\mathcal{L}_{\lambda}(\cdot,\y)
	\)
	as \(\be\to\bm{0}\).
	These results indicate that the approximation accuracy of~\eqref{implicit_AL} by~\eqref{smoothened_implicit_AL} can be improved by decreasing \(\be\).
	We also emphasize that, in our notation, \(\lambda\) is the reciprocal of the penalty parameter:
	a smaller \(\lambda\) corresponds to a larger penalty.

	The proposed method is summarized in \cref{algorithm_ALM}.
	Throughout this section, we assume that \cref{algorithm_ALM} does not stop after a finite number of iterations.
	The termination criterion in \cref{alg_ALM_stop} is discussed in \cref{sec_implementation_salm}.
	\cref{alg_ALM_safeguarded} corresponds to the safeguarded step
	obtained by projecting the multiplier \(\y^{\nu-1}\) onto the nonempty, convex, and compact set \(Y\).
	This safeguarding strategy provides improved global convergence properties;
	see~\cite{birgin2014}.
	In our implementation, the set \(Y\) is chosen as the box
	\({[y_{\min},y_{\max}]}^{m+\bar p}\).
	\cref{alg_ALM_stationary} constitutes the main computational burden of the algorithm,
	since an unconstrained nonlinear optimization problem must be solved approximately at each outer iteration.
	Thanks to the Lasry--Lions double envelope, this subproblem is smooth, thereby enabling the use of many off-the-shelf smooth optimization solvers.
	However, as shown in \cref{proposition_cone_Rb},
	the price paid for this additional smoothness is the possible convergence to a C- but not M-stationary point.
	This behavior can be tracked through the integer
	\(k_{\sit{\beta}[j][\nu]}(F^j(x^\nu)+\lambda_\nu\vit{\hat y}[j][\nu])\).
	Whenever this quantity exceeds one, for instance when
	\(F^j(x^\nu)+\lambda_\nu \vit{\hat y}[j][\nu]\)
	lies in the gray polyhedral regions shown in the right subfigure of \cref{fig_VCC_proj_LL},
	the feasible approximation in~\eqref{update_v_j} employs the operator
	\(\Pi_{D^j}^{\mathrm{C},\sit{\beta}[j][\nu]}\).
	We define the index set
	\begin{equation}\label{Tb_set}
		\tb[\nu]
		\coloneqq
		\set{j\in\{1,\ldots,r\}}
		[k_{\sit{\beta}[j][\nu]}(F^j(x^\nu)+\lambda_\nu\vit{\hat y}[j][\nu])>1].
	\end{equation}
	The set \(\tb[\nu]\) is used in \cref{theorem_AM_AC,corollary_stationarity} to determine whether a cluster point is M- or C-stationary.
	Moreover, if \(j\in\tb[\nu]\), the smoothing parameter \(\sit{\beta}[j][\nu]\) is reduced by~\eqref{alg_ALM_reduce_beta}.
	The purpose of this reduction is to shrink the smoothing regions,
	as illustrated in \cref{fig_beta},
	thereby increasing the likelihood that the sequence
	\( \seq{F^j(x^\nu)+\lambda_\nu\vit{\hat y}[j][\nu]}[\nu\in\N]\)
	eventually leaves these regions.
	To avoid potential numerical issues, each smoothing parameter is bounded below by \(\beta_{\min}\in(0,1)\).
	Hence, we have \(\be_{\nu}=\bar\be\) for \(\nu\) sufficiently large, where
	\begin{equation}\label{define_limit_beta}
		\bar\be\coloneqq(\bar\beta_{1},\ldots,\bar\beta_{r})
		\quad\text{with}\quad
		\bar\beta_j\coloneqq
		\lim_{\nu\to\infty}
		\sit{\beta}[j][\nu]\in[\beta_{\min},\sit{\beta}[j][0]]
		\quad\text{for}\quad
		j\in\set{1,\ldots,r}.
	\end{equation}

	\begin{figure}[b]\centering
	\includegraphics[width=\textwidth]{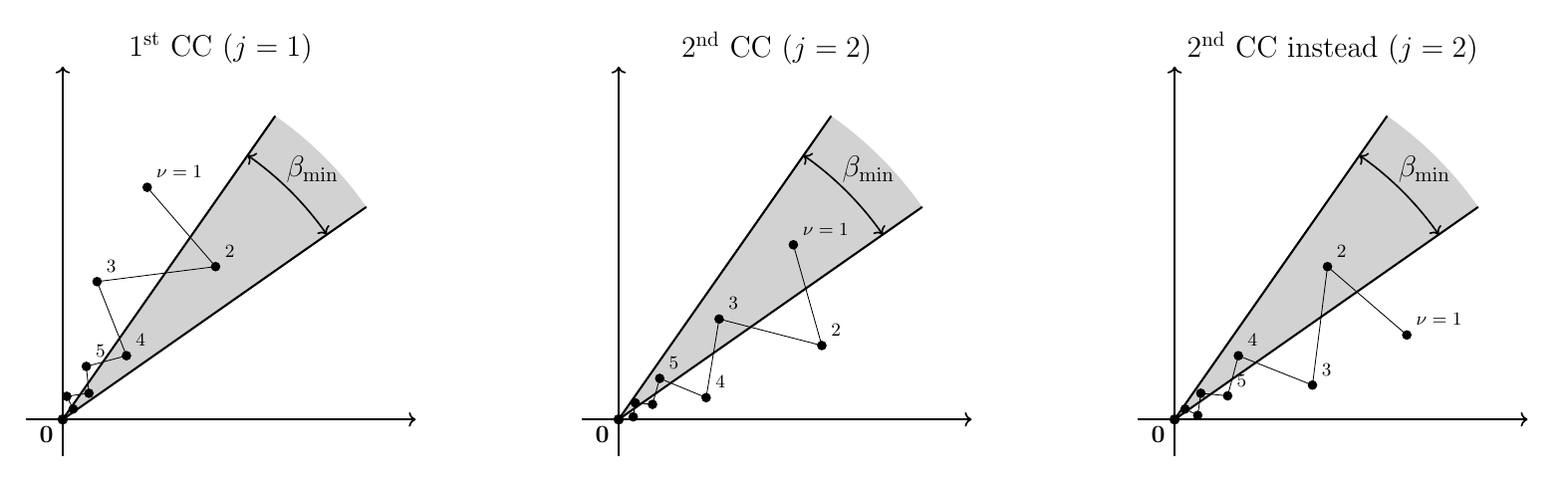}
	\caption{Illustrations of the index set \(\tb[\nu]\).}\label{fig_Tb}
	\end{figure}

	\begin{example}
	We use two CCs to illustrate the index set \(\tb[\nu]\).
	Suppose that the sequences
	\( \seq{F^j(x^\nu)+\lambda_\nu\vit{\hat y}[j][\nu]}[\nu\in\N]\) for \(j=1,2\)
	converge to \(\bm{0}\),
	as shown in the left and middle subfigures of \cref{fig_Tb}.
	Since both sequences admit subsequences satisfying
	\(k_{\sit{\beta}[j][\nu]}(F^j(x^\nu)+\lambda_\nu\vit{\hat y}[j][\nu])>1\),
	the corresponding smoothing parameters \(\sit{\beta}[j][\nu]\) for \(j=1,2\) are reduced to \(\beta_{\min}\) according to~\eqref{alg_ALM_reduce_beta}.
	By~\eqref{Tb_set}, for odd \(\nu\) we have \(\tb[\nu]=\{2\}\),
	whereas for even \(\nu\) we have \(\tb[\nu]=\{1\}\).
	Consequently,  \(\tb[\nu]\neq\emptyset\) at every iteration and, by \cref{theorem_AM_AC}, the cluster point \(\bar x\) is AC-stationary.
	On the other hand, if the CC shown in the right subfigure of \cref{fig_Tb} is used as the second CC, together with the first CC, then
	\(\tb[\nu]=\emptyset\) for odd \(\nu\),
	whereas \(\tb[\nu]=\{1,2\}\) for even \(\nu\).
	Consequently, by \cref{theorem_AM_AC}, the cluster point \(\bar x\) is AM-stationary,
	since there exists a subsequence
	\(K\in \mathcal{N}_{\infty}^{\sharp}\) such that \(x^\nu\to_K \bar x\) and \(\tb[\nu]=\emptyset\) for infinitely many \(\nu\in K\).
	In the present example, this corresponds to the subsequence of odd iterates shown in \cref{fig_Tb}.
	\end{example}

\subsection{Global Convergence Analysis}
	As with all penalty-type methods for nonconvex optimization problems, ALM has the drawback that iterates may have cluster points that are not feasible for the original problem.
	The following result provides conditions under which feasibility of cluster points is guaranteed.

	\begin{proposition}[Feasibility]\label{proposition_feasibility}
	Relative to \MPVCC{} with notation~\eqref{define_constraint}--\eqref{define_cone},
	consider a sequence \(\seq{x^\nu}[\nu\in\N]\) generated by \cref{algorithm_ALM}.
	Then, each cluster point \(\bar x\) of \(\seq{x^\nu}[\nu\in\N]\) is feasible if one of the following conditions holds:
	\begin{itemize}
		\item \(\seq{\lambda_\nu}[\nu\in\N]\) is bounded away from zero, or
		\item there exists some \(B\in\R\) such that \(\mathcal{L}_{\lambda_\nu,\be_\nu}(x^\nu,\hat \y^\nu)\leq B\) holds for all \(\nu\in\N\).
	\end{itemize}
	\end{proposition}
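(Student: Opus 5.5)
The plan is to handle the two conditions separately. In both cases the goal is to show that \(\dist(\Phi(x^\nu),\Gamma)\to0\) along a subsequence \(x^\nu\to_K\bar x\). Since \(\Gamma\) is closed and \(\Phi\) is continuous, this gives \(\Phi(\bar x)\in\Gamma\), i.e., feasibility of \(\bar x\).

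\emph{First condition.} I will use the update rule~\eqref{alg_ALM_update_lambda}, which only ever multiplies \(\lambda_\nu\) by \(\rho\in(0,1)\). If \(\seq{\lambda_\nu}[\nu\in\N]\) is bounded away from zero, the reduction can occur only finitely many times. Hence there is \(\nu_0\) with \(V_\nu\leq\theta V_{\nu-1}\) for all \(\nu>\nu_0\), so \(V_\nu\leq\theta^{\nu-\nu_0}V_{\nu_0}\to0\).

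It then remains to check that the feasible approximation satisfies \(\z^\nu\in\Gamma\), because then \(\dist(\Phi(x^\nu),\Gamma)\leq V_\nu\). For the individual blocks:
\begin{itemize}
\item \(z^{0,\nu}=\Pi_C(\cdot)\in C\) by construction.
\item When \(k_{\beta}\leq1\), \(z^{j,\nu}\) is (a selection of) \(\Pi_{D^j}(\cdot)\in D^j\).
\item When \(k_\beta>1\), the case \(\mathcal{I}_{\leq0}=\emptyset\) applies (by convention \(k_\beta=1\) otherwise), so all components of the argument \(w\) are positive. The operator \(\Pi^{\mathrm{C},\beta}_{D}\) in~\eqref{Qb} sets the components indexed by \(\mathcal{I}_\beta(w)\neq\emptyset\) to zero and leaves the remaining ones positive. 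The result therefore lies in \(\R^{p_j}_+\) with at least one zero entry, i.e., in \(D^j\).
\end{itemize}
This verification that \(\z^\nu\in\Gamma\), in particular the \(\Pi^{\mathrm{C},\beta}_D\) case, is the only step requiring care; everything else is routine.

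\emph{Second condition.} If \(\lambda_\nu\) is bounded away from zero, the first part already applies. Otherwise, since \(\lambda_\nu\) is nonincreasing, \(\lambda_\nu\to0\). I will bound the smoothed AL from below by the implicit AL using \cref{proposition_sandwich} blockwise together with~\eqref{LL_DO_env} and~\eqref{moreau_compl_DO}, namely \(\tfrac1\lambda r_{\beta}(w)\geq\env_\lambda\delta_{D^j}(w)=\tfrac1{2\lambda}\dist(w,D^j)^2\). With \(\env_\lambda\delta_C=\tfrac1{2\lambda}\dist(\cdot,C)^2\) and the separability of distances over the product \(\Gamma\), this gives
\[
B\geq\mathcal{L}_{\lambda_\nu,\be_\nu}(x^\nu,\hat\y^\nu)\geq f(x^\nu)+\tfrac{1}{2\lambda_\nu}\dist\bigl(\Phi(x^\nu)+\lambda_\nu\hat\y^\nu,\Gamma\bigr)^2-\tfrac{\lambda_\nu}{2}\|\hat\y^\nu\|^2 .
\]
Rearranging yields
\[
\dist\bigl(\Phi(x^\nu)+\lambda_\nu\hat\y^\nu,\Gamma\bigr)^2\leq2\lambda_\nu\bigl(B-f(x^\nu)\bigr)+\lambda_\nu^2\|\hat\y^\nu\|^2 .
\]
Along \(x^\nu\to_K\bar x\), the values \(f(x^\nu)\) are bounded by continuity, and \(\hat\y^\nu\in Y\) is bounded because \(Y\) is compact. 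Hence the right-hand side tends to zero. Finally, the triangle inequality gives
\[
\dist(\Phi(x^\nu),\Gamma)\leq\dist\bigl(\Phi(x^\nu)+\lambda_\nu\hat\y^\nu,\Gamma\bigr)+\lambda_\nu\|\hat\y^\nu\|\to0,
\]
which concludes the argument.
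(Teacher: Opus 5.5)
Your proof is correct and follows essentially the same route as the paper: in the first case, finitely many penalty reductions force \(V_\nu\to0\); in the second, the sandwich inequality \(\tfrac{1}{\lambda}r_\beta\geq\env_\lambda\delta_{D^j}\) bounds \(\dist(\Phi(x^\nu)+\lambda_\nu\hat\y^\nu,\Gamma)^2\) by a quantity of order \(\lambda_\nu\). The only difference is that you explicitly verify \(\z^\nu\in\Gamma\), including the \(\Pi^{\mathrm{C},\beta}_{D}\) branch, whereas the paper uses this fact without comment in its bound \(\dist(\Phi(x^\nu),\Gamma)\leq\|\Phi(x^\nu)-\z^\nu\|\).
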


	\begin{proof}
	Let \(K\in \mathcal{N}_{\infty}^{\sharp}\) be a subsequence such that \(x^\nu\to_K \bar x\).
	We consider the two cases as follows.
	\begin{itemize}
		\item
		If \(\seq{\lambda_\nu}[\nu\in\N]\) is bounded away from zero,
		then \(\lambda_\nu\) can be reduced only finitely many times,
		so that \(V_{\nu}\leq\theta V_{\nu-1}\) holds for \(\nu\) sufficiently large by the update rule~\eqref{alg_ALM_update_lambda};
		hence, \(V_\nu\to0\) as \(\nu\to\infty\).
		Note that we have the upper bound
		\begin{equation}\label{dist_upper_bound}
		\dist(\Phi(x^\nu),\Gamma)\leq \|\Phi(x^\nu)-\z^\nu\|.
		\end{equation}
		Therefore, by \(x^\nu\to_K \bar x\) and the continuity of \(\Phi\), we conclude \(\dist(\Phi(\bar x),\Gamma)=0\).
	\item Considering the case \(\lambda_\nu\searrow 0\), the upper bound assumption yields
	\[
	\begin{aligned}
		B\geq\mathcal{L}_{\lambda_\nu,\be_\nu}(x^\nu,\hat\y^\nu)=
		f(x^\nu)
		&+
		\env_{\lambda_\nu}\delta_C(A(x^\nu)+\lambda_\nu \vit{\hat y}[0][\nu])
		-\tfrac{\lambda_\nu}{2}\|\vit{\hat y}[0][\nu]\|^2
		\\
		&+
		\sum_{j=1}^{r}
		\begin{pmatrix}
		\tfrac{1}{\lambda_\nu}r_{\sit{\beta}[j][\nu]}(F^j(x^\nu)+\lambda_\nu \vit{\hat y}[j][\nu])
		-
		\tfrac{\lambda_\nu}{2}\|\vit{\hat y}[j][\nu]\|^2
		\end{pmatrix}.
		\end{aligned}
	\]
	In light of \cref{proposition_sandwich}, we have
	\[
	\begin{aligned}
		\tfrac{1}{2\lambda_\nu}\dist{(\Phi(x^\nu)+\lambda_\nu \hat \y^\nu,\Gamma)}^2
		&\leq
		\tfrac{1}{2\lambda_\nu}\dist{
		(
			A(x^\nu)+\lambda_\nu \vit{\hat y}[0][\nu], C
		)
		}^2
		+
		\sum_{j=1}^{r}
		\tfrac{1}{\lambda_\nu}r_{\sit{\beta}[j][\nu]}
		(F^j(x^\nu)+\lambda_\nu \vit{\hat y}[j][\nu])
		\\
		&\leq
		B-f(x^\nu)+
		\sum_{j=0}^{r}\tfrac{\lambda_\nu}{2}\|\vit{\hat y}[j][\nu]\|^2.
		\end{aligned}
	\]
	Multiplying both sides by \(2\lambda_\nu\), we have
	\[
		\dist{(\Phi(x^\nu)+\lambda_\nu \hat \y^\nu,\Gamma)}^2
		\leq
		2\lambda_\nu
		\begin{pmatrix}
		B-f(x^\nu)+
		\sum_{j=0}^{r}\tfrac{\lambda_\nu}{2}\|\vit{\hat y}[j][\nu]\|^2
		\end{pmatrix}
		\rightarrow_K0.
	\]
	Therefore, by \(x^\nu\to_K \bar x\), the continuity of \(\Phi\), the boundedness of \(\seq{\hat\y^{\nu}}[\nu\in\N]\), and \(\lambda_\nu\searrow 0\),
	we have \(\dist(\Phi(\bar x),\Gamma)=0\).
	\qedhere
	\end{itemize}
	\end{proof}

	The second condition in \cref{proposition_feasibility} holds, for example, when a feasible starting point \(x^{0}\) of \MPVCC{} is available
	and the inner solver is warm-started so that \(\mathcal{L}_{\lambda_\nu,\be_\nu}(x^\nu,\hat \y^\nu)\leq \mathcal{L}_{\lambda_\nu,\be_\nu}(x^0,\hat \y^\nu)\).
	Indeed, by \cref{proposition_sandwich},
	\[
		\tfrac{1}{\lambda_\nu}r_{\sit{\beta}[j][\nu]}(F^j(x^0)+\lambda_\nu \vit{\hat y}[j][\nu])
		\leq
		\env_{(1-\sit{\beta}[j][\nu])\lambda_\nu}\delta_{D^j}(F^{j}(x^0)+\lambda_\nu \vit{\hat y}[j][\nu])
		\leq
		\tfrac{\lambda_\nu}{2(1-\sit{\beta}[j][\nu])}
		\|\vit{\hat y}[j][\nu]\|^2,
	\]
	and an analogous bound holds for the set \(C\).
	Therefore, \(\mathcal{L}_{\lambda_\nu,\be_\nu}(x^0,\hat \y^\nu)\) is bounded above,
	since \(Y\) is bounded, \(\lambda_{\nu}\leq\lambda_{0}\), and \(\sit{\beta}[j][\nu]\leq\max_{j}\sit{\beta}[j][0]<1\).

	Let us consider the following \emph{feasibility problem}:
	\begin{equation}\label{feasibility_problem}
	\minimize_{x\in\R^n}
	\ \
	h_{\be}(x)\coloneqq
	\tfrac{1}{2}\dist{(A(x),C)}^2
	+
	\sum_{j=1}^{r}
	r_{\beta_j}(F^j(x)).
	\end{equation}
	Although parameter-dependent, problem~\eqref{feasibility_problem} serves as an infeasibility measure,
	since any feasible point is a global minimizer;
	otherwise the objective value is strictly positive due to~\eqref{moreau_compl_DO} and \cref{proposition_sandwich}.
	The next proposition implies that when a cluster point \(\bar x\) is not feasible,
	it nevertheless satisfies \(\nabla h_{\bar\be}(\bar x)=\bm{0}\).

	\begin{proposition}[Infeasibility]\label{proposition_infeasibility}
	Suppose that the sequence \(\seq{\sit{\epsilon}[\rm{d}][\nu]}[\nu\in\N]\) in \cref{algorithm_ALM} is chosen bounded, and \(\bar x\) is a cluster point of the sequence \(\seq{x^\nu}[\nu\in\N]\) generated by \cref{algorithm_ALM}.
	Then, \(\bar x\) is a stationary point of the feasibility problem~\eqref{feasibility_problem} with the smoothing parameters \(\bar\be\) given by~\eqref{define_limit_beta}.
	\end{proposition}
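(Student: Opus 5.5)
The plan is to write out the gradient of the smoothed augmented Lagrangian explicitly and then rescale it by \(\lambda_\nu\). By \cref{proposition_grad_prox} and the standard formula \(\nabla\env_\lambda\delta_C(u)=\tfrac1\lambda(u-\Pi_C(u))\), we have
\[
	\nabla_x\mathcal{L}_{\lambda_\nu,\be_\nu}(x^\nu,\hat\y^\nu)
	=
	\nabla f(x^\nu)
	+\tfrac{1}{\lambda_\nu}JA(x^\nu)^\top\bigl(u^{0,\nu}-\Pi_C(u^{0,\nu})\bigr)
	+\tfrac{1}{\lambda_\nu}\sum_{j=1}^{r}JF^j(x^\nu)^\top R_{\sit{\beta}[j][\nu]}(u^{j,\nu}),
\]
where \(u^{0,\nu}\coloneqq A(x^\nu)+\lambda_\nu\vit{\hat y}[0][\nu]\) and \(u^{j,\nu}\coloneqq F^j(x^\nu)+\lambda_\nu\vit{\hat y}[j][\nu]\). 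The target is \(\nabla h_{\bar\be}(\bar x)=JA(\bar x)^\top(A(\bar x)-\Pi_C(A(\bar x)))+\sum_j JF^j(\bar x)^\top R_{\bar\beta_j}(F^j(\bar x))\). This formula follows from \cref{theorem_LL_env}, \cref{proposition_grad_prox} with \(\lambda=1\), and the fact that \(\nabla r_\beta=R_\beta\).

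I would split into two cases according to whether \(\lambda_\nu\) is bounded away from zero. Fix \(K\) with \(x^\nu\to_K\bar x\); recall that \(\be_\nu=\bar\be\) for all large \(\nu\) by~\eqref{define_limit_beta}.

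\emph{Case \(\lambda_\nu\searrow0\).} Multiply the gradient identity by \(\lambda_\nu\). The left side has norm at most \(\lambda_\nu\sit{\epsilon}[\rm{d}][\nu]\to0\), because \(\sit{\epsilon}[\rm{d}][\nu]\) is bounded. The term \(\lambda_\nu\nabla f(x^\nu)\) vanishes along \(K\) by continuity. Since \(\hat\y^\nu\in Y\) is bounded, \(u^{j,\nu}\to_K F^j(\bar x)\) and \(u^{0,\nu}\to_K A(\bar x)\). The maps \(\Pi_C\) and \(R_{\bar\beta_j}\) are continuous; for \(R_{\bar\beta_j}\) this follows from \cref{proposition_Rb_smooth}. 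Hence the remaining terms converge to \(\nabla h_{\bar\be}(\bar x)\), which gives \(\nabla h_{\bar\be}(\bar x)=\bm 0\).

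\emph{Case \(\lambda_\nu\) bounded away from zero.} Then \(\lambda_\nu\) is eventually constant, and as in \cref{proposition_feasibility}, \(\bar x\) is feasible. Since any feasible point is a global minimizer of \(h_{\bar\be}\), which is \(C^1\), we get \(\nabla h_{\bar\be}(\bar x)=\bm 0\) directly.

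The main obstacle is making sure the rescaled limit is exactly \(\nabla h_{\bar\be}\). This needs two facts.
\begin{itemize}
\item \(\nabla r_\beta=R_\beta\): this follows from~\eqref{LL_DO_env} and~\eqref{LL_DO_grad} with \(\lambda=1\), \(\mu=\beta\).
\item Joint continuity of \((\beta,z)\mapsto R_\beta(z)\) is \emph{not} needed, because \(\beta_\nu\) is eventually constant.
\end{itemize}
If one wanted to avoid the eventual-constancy argument, one would instead invoke uniform Lipschitz continuity on \([\beta_{\min},1)\), but this is unnecessary here.
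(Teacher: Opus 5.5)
Your proposal is correct and follows the paper's proof essentially step by step. The case of \(\lambda_\nu\) bounded away from zero is settled by \cref{proposition_feasibility} together with the fact that feasible points are global minimizers of the \(C^1\) function \(h_{\bar\be}\). The case \(\lambda_\nu\searrow0\) is handled by multiplying the inner stationarity condition by \(\lambda_\nu\) and passing to the limit along \(K\), using the boundedness of \(Y\) and of \(\epsilon_{{\rm d},\nu}\), the continuity of \(\Pi_C\), the eventual constancy \(\be_\nu=\bar\be\), and the Lipschitz continuity of \(R_{\bar\beta_j}\). Only your closing aside is imprecise: \(L_\beta=\max\{1/\beta,1/(1-\beta)\}\) is unbounded as \(\beta\to1\), and uniform Lipschitz continuity in \(z\) would not by itself replace continuity in \(\beta\). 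That aside plays no role in the argument.
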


	\begin{proof}
	In light of \cref{proposition_feasibility}, if \(\seq{\lambda_\nu}[\nu\in\N]\) is bounded away from zero,
	then each cluster point of \(\seq{x^\nu}[\nu\in\N]\) is a global minimizer of~\eqref{feasibility_problem}.
	Therefore, it is sufficient to consider the case \(\lambda_\nu\searrow 0\).
	In view of \cref{alg_ALM_stationary} of \cref{algorithm_ALM} and~\cite[Corollary 12.30]{bauschke2011}, there exists a vector \(\eta^\nu\in\R^n\) such that \(\|\eta^\nu\|\leq\sit{\epsilon}[\rm{d}][\nu]\) and
	\begin{align}
		\eta^\nu
		&=
		\nabla f(x^\nu)
		+
		\nabla
		\left[
		\env_{\lambda_\nu}\delta_C(A(x^\nu)+\lambda_\nu \vit{\hat y}[0][\nu])
		\right]
		+
		\sum_{j=1}^{r}
		\nabla
		\left[
		\tfrac{1}{\lambda_\nu}r_{\sit{\beta}[j][\nu]}(F^j(x^\nu)+\lambda_\nu \vit{\hat y}[j][\nu])
		\right]
		\nonumber\\
		&
		=\nabla f(x^\nu)
		+
		\tfrac{1}{\lambda_\nu}
		JA{(x^\nu)}^\top
		\left[
		A(x^\nu)+\lambda_\nu \vit{\hat y}[0][\nu]
		-
		\Pi_C(A(x^\nu)+\lambda_\nu \vit{\hat y}[0][\nu])
		\right]
		\nonumber
		\\
		&\quad+
		\tfrac{1}{\lambda_\nu}
		\sum_{j=1}^{r}
		JF^j{(x^\nu)}^\top
		R_{\sit{\beta}[j][\nu]}(F^j(x^\nu)+\lambda_\nu \vit{\hat y}[j][\nu] ).
		\label{eta_1}
	\end{align}
	Let \(K\in \mathcal{N}_{\infty}^{\sharp}\) be such that \(x^\nu\to_K \bar x\).
	Then, the sequences \(\seq{\eta^\nu,\nabla f(x^\nu),\hat \y^\nu}[K]\) are bounded.
	By multiplying both sides of~\eqref{eta_1} by \(\lambda_\nu\),
	letting \(\nu\to\infty\),
	and using the continuity of \(\Pi_C\),
	the eventual constancy of \(\be_{\nu}=\bar\be\) for \(\nu\) sufficiently large,
	and the Lipschitz continuity of \(R_{\bar\beta_j}\) from \cref{proposition_Lipschitz},
	we obtain
	\[
		\bm{0}
		= JA{(\bar x)}^\top
		\left[
			A(\bar x)-\Pi_C(A(\bar x))
		\right]
		+
		\sum_{j=1}^{r}
		JF^j{(\bar x)}^\top
		R_{\bar\beta_j}(F^j(\bar x)).
	\]
	Consequently, it follows that \(\bar x\) satisfies \(\nabla h_{\bar\be}(\bar x)=\bm{0}\).
	\end{proof}

	The asymptotic stationarity of cluster points is established in the next theorem.
	To this end, for each \(\nu\in\N\) and \(j\in\set{1,\ldots,r}\),
	we define the \emph{scaled multiplier} associated with \(\y^{\nu}\) by
	\begin{equation}\label{scaled_y}
		\y_{\beta}^{\nu}
		\coloneqq
		(
		\vit{y}[0][\nu],
		\vit{y_{\beta}}[1][\nu],
		\ldots,
		\vit{y_{\beta}}[r][\nu]
		)
		\in\R^{m+\bar p}
		\quad \text{with}\quad
		\vit{y_{\beta}}[j][\nu]
		\coloneqq
		\begin{cases}
		Q_{\sit{\beta}[j][\nu]}(\vit{u}[j][\nu]) \vit{y}[j][\nu],\ \ \ &\mbox{if } j\in\tb[\nu]
		\\[5pt]
		\tfrac{1}{1-\sit{\beta}[j][\nu]} \vit{y}[j][\nu],\ \ \ &\mbox{otherwise,}
		\end{cases}
	\end{equation}
	where
	\begin{equation}\label{define_u}
		\vit{u}[j][\nu]
		\coloneqq
		F^j(x^\nu)+\lambda_\nu \vit{\hat y}[j][\nu],
	\end{equation}
	the matrix \(Q_{\beta}\) is defined in~\eqref{Qb}, and
	\(\y^{\nu}\) is the multiplier computed in~\eqref{multiplier_update}.

	\begin{theorem}[Asymptotic stationarity]\label{theorem_AM_AC}
	Relative to \MPVCC{} with notation~\eqref{define_constraint}--\eqref{define_cone},
	suppose that the sequence \(\seq{\sit{\epsilon}[\rm{d}][\nu]}[\nu\in\N]\) in \cref{algorithm_ALM} is chosen such that \(\sit{\epsilon}[\rm{d}][\nu]\searrow 0\),
	and that \(\bar x\) is a feasible cluster point of the sequence \(\seq{x^\nu}[\nu\in\N]\) generated by \cref{algorithm_ALM}.
	Then, \(\bar x\) is an AC-stationary point of \MPVCC{}.
	Let \(\tb[\nu]\) be defined in~\eqref{Tb_set}.
	If, in addition, there exists a subsequence
	\(K\in \mathcal{N}_{\infty}^{\sharp}\) such that \(x^\nu\to_K \bar x\) and such that \(\tb[\nu]=\emptyset\) for infinitely many \(\nu\in K\),
	then \(\bar x\) is an AM-stationary point.
	\end{theorem}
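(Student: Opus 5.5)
The approach is to exhibit, along the subsequence \(K\) with \(x^\nu\to_K\bar x\), the triple \((x^\nu,\y^\nu_\beta,\z^\nu)\) required by \cref{definition_asymptotic_stationary}. Here \(\z^\nu\) is the feasible approximation from~\eqref{update_v_C}--\eqref{update_v_j} and \(\y^\nu_\beta\) is the scaled multiplier~\eqref{scaled_y}. Three things must be checked: (a) the stationarity residual \(\nabla f(x^\nu)+J\Phi(x^\nu)^\top\y^\nu_\beta\) tends to zero; (b) \(\z^\nu\in\Gamma\) with \(\z^\nu\to_K\Phi(\bar x)\); (c) \(\y^\nu_\beta\in\Cncone_\Gamma(\z^\nu)\), and \(\y^\nu_\beta\in\ncone_\Gamma(\z^\nu)\) whenever \(\tb[\nu]=\emptyset\).

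\textbf{Step 1 (residual).} As in~\eqref{eta_1}, the inner stopping rule gives \(\eta^\nu\) with \(\|\eta^\nu\|\le\epsilon_{\rm d}^\nu\to0\) and
\[
\eta^\nu=\nabla f(x^\nu)+JA(x^\nu)^\top\tfrac{1}{\lambda_\nu}\bigl(u^{0,\nu}-\Pi_C(u^{0,\nu})\bigr)+\textstyle\sum_j JF^j(x^\nu)^\top\tfrac{1}{\lambda_\nu}R_{\beta_{j,\nu}}(u^{j,\nu}),
\]
where \(u^{0,\nu}=A(x^\nu)+\lambda_\nu\hat y^{0,\nu}\). By~\eqref{multiplier_update}, \(y^{0,\nu}=\frac{1}{\lambda_\nu}(u^{0,\nu}-z^{0,\nu})\) and \(y^{j,\nu}=\frac{1}{\lambda_\nu}(u^{j,\nu}-z^{j,\nu})\). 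Applying \cref{proposition_connect_Rb_Pi_2} when \(k=1\) (noting that \(\Pi_D\) is the chosen selection, and that the case \(z_{(1)}>0\), \(k=1\) also reduces to it via \cref{proposition_connect_Rb_Pi}), and \cref{proposition_connect_Rb_Pi} when \(j\in\tb[\nu]\), gives \(\frac1{\lambda_\nu}R_{\beta_{j,\nu}}(u^{j,\nu})=y^{j,\nu}_\beta\) exactly. Hence \(\eta^\nu=\nabla f(x^\nu)+J\Phi(x^\nu)^\top\y^\nu_\beta\to0\).

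\textbf{Step 2 (\(\z^\nu\to\Phi(\bar x)\)).} This is the main obstacle, because \(\z^\nu\) is built from \(u^\nu\), not from \(\Phi(x^\nu)\). Since \(\z^\nu\in\Gamma\) by construction (\(\Pi_D^{\rm C,\beta}(u)\) zeroes the coordinates in \(\mathcal I_\beta(u)\) and keeps the larger positive ones), it suffices to show \(\|\Phi(x^\nu)-\z^\nu\|\to_K0\). I split into two cases.

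If \(\lambda_\nu\) is bounded away from zero, then \(V_\nu\to0\), as in the proof of \cref{proposition_feasibility}, and we are done.

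If \(\lambda_\nu\searrow0\), I bound \(\|u^\nu-\z^\nu\|\) by a constant times \(\dist(u^\nu,\Gamma)\), uniformly since \(\beta\ge\beta_{\min}\).
\begin{itemize}
\item For \(k=1\) this is immediate.
\item For \(k>1\), the removed coordinates satisfy \(u_i\le u_{(k)}\le\frac{\sum_{i\le k}u_{(i)}}{k-\beta}\), by~\eqref{M_set}. I will use \(u_{(1)}\ge(1-\beta)\frac{\sum_{i\le k}u_{(i)}}{k-\beta}\), which follows from~\eqref{M_set}, to get \(u_{(k)}\le u_{(1)}/\beta_{\min}\)-type bounds. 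Thus \(\|u-\Pi^{\rm C,\beta}_D(u)\|\le c\,u_{(1)}=c\,\dist(u,D)\), with \(c\) depending only on \(p_j\) and \(\beta_{\min}\).
\end{itemize}
Then \(\dist(u^\nu,\Gamma)\le\dist(\Phi(x^\nu),\Gamma)+\lambda_\nu\|\hat\y^\nu\|\to_K0\), using feasibility of \(\bar x\) and compactness of \(Y\). Hence \(\|\Phi(x^\nu)-\z^\nu\|\le\lambda_\nu\|\hat\y^\nu\|+\|u^\nu-\z^\nu\|\to_K0\).

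\textbf{Step 3 (cones).} The \(C\)-block satisfies \(y^{0,\nu}\in\ncone_C(z^{0,\nu})\) by the projection characterization. For each \(j\), \(y^{j,\nu}_\beta=\frac1{\lambda_\nu}R_{\beta_{j,\nu}}(u^{j,\nu})\), so \cref{proposition_cone_Rb} together with positive homogeneity of the cones gives \(y^{j,\nu}_\beta\in\Cncone_{D^j}(z^{j,\nu})\) when \(j\in\tb[\nu]\), and \(y^{j,\nu}_\beta\in\Rncone_{D^j}(z^{j,\nu})\subseteq\ncone_{D^j}(z^{j,\nu})\subseteq\Cncone_{D^j}(z^{j,\nu})\) otherwise.

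Combining Steps 1--3 along \(K\) yields AC-stationarity. Along the further subsequence where \(\tb[\nu]=\emptyset\), every block lies in the limiting normal cone, which yields AM-stationarity.
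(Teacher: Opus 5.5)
Your proposal is correct and follows the paper's proof essentially step by step. It uses the same triple \((x^\nu,\y_\beta^\nu,\z^\nu)\), the same identity \(\eta^\nu=\nabla f(x^\nu)+J\Phi(x^\nu)^\top\y_\beta^\nu\) obtained via \cref{proposition_connect_Rb_Pi_2,proposition_connect_Rb_Pi}, the same cone inclusions from \cref{proposition_cone_Rb}, and the same two-case argument on \(\lambda_\nu\) for \(\z^\nu\to_K\Phi(\bar x)\), including the key estimate \((1-\beta)u_{(k)}\le u_{(1)}\).

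There is one slip in Step~2. That estimate gives \(\|u-\Pi_D^{\rm C,\beta}(u)\|\le\frac{\sqrt{p_j}}{1-\beta}\,u_{(1)}\), so the constant is controlled by \(\beta_{j,\nu}\le\beta_{j,0}<1\), or simply by the eventual constancy of \(\be_\nu\). It is not controlled by \(\beta\ge\beta_{\min}\), which plays no role here. Indeed, a bound like \(u_{(k)}\le u_{(1)}/\beta_{\min}\) is false in general: take \(\beta_{\min}=\tfrac12\), \(\beta=0.99\), \(p=2\), \(u=(1,100)\). Then \(k_\beta(u)=2\) and \(u_{(2)}=100\,u_{(1)}\).
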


	\begin{proof}
	Let \(K\in \mathcal{N}_{\infty}^{\sharp}\) be such that \(x^\nu\to_K \bar x\).
	Following~\eqref{eta_1}, for \(\nu\) sufficiently large, we have
	\begin{subequations}
	\allowdisplaybreaks\@
		\begin{align}
		\eta^\nu
		&=
		\nabla f(x^\nu)
		+
		JA{(x^\nu)}^\top
		\vit{y}[0][\nu]
		+
		\tfrac{1}{\lambda_\nu}
		\sum_{j=1}^{r}
		JF^j{(x^\nu)}^\top
		R_{\sit{\beta}[j][\nu]}(F^j(x^\nu)+\lambda_\nu \vit{\hat y}[j][\nu] )
		\label{connect_L_sta_C}\\
		&
		=
		\nabla f(x^\nu)+
		JA{(x^\nu)}^\top
		\vit{y}[0][\nu]
		\nonumber\\
		&\quad+
		\tfrac{1}{\lambda_\nu}
		\sum_{j\in\tb[\nu]}
		JF^j{(x^\nu)}^\top
		Q_{\sit{\beta}[j][\nu]}(F^j(x^\nu)+\lambda_\nu \vit{\hat y}[j][\nu] )
		\begin{bmatrix}
			F^j(x^\nu)+\lambda_\nu \vit{\hat y}[j][\nu]
			-
			\vit{z}[j][\nu]
		\end{bmatrix}
		\nonumber\\
		&\quad+
		\tfrac{1}{\lambda_\nu}
		\sum_{j\in\{1,\ldots,r\}\setminus\tb[\nu]}
		JF^j{(x^\nu)}^\top
		\tfrac{1}{1-\sit{\beta}[j][\nu]}
		\begin{bmatrix}
			F^j(x^\nu)+\lambda_\nu \vit{\hat y}[j][\nu]
			-
			\vit{z}[j][\nu]
		\end{bmatrix}
		\nonumber\\
		\text{\scriptsize by~\eqref{multiplier_update},~\eqref{define_u}}
		&=
		\nabla f(x^\nu)
		+
		JA{(x^\nu)}^\top
		\vit{y}[0][\nu]
		+
		\tfrac{1}{\lambda_\nu}
		\sum_{j\in\tb[\nu]}
		JF^j{(x^\nu)}^\top
		Q_{\sit{\beta}[j][\nu]}(\vit{u}[j][\nu])
		\begin{bmatrix}
			\lambda_\nu \vit{y}[j][\nu]
		\end{bmatrix}
		\nonumber\\
		&\quad+
		\tfrac{1}{\lambda_\nu}
		\sum_{j\in\{1,\ldots,r\}\setminus\tb[\nu]}
		JF^j{(x^\nu)}^\top
		\tfrac{1}{1-\sit{\beta}[j][\nu]}
		\begin{bmatrix}
			\lambda_\nu \vit{y}[j][\nu]
		\end{bmatrix}
		\nonumber\\
		&=
		\nabla f(x^\nu)
		+
		JA{(x^\nu)}^\top
		\vit{y}[0][\nu]
		+
		\sum_{j\in\tb[\nu]}
		JF^j{(x^\nu)}^\top
		Q_{\sit{\beta}[j][\nu]}(\vit{u}[j][\nu])
		\vit{y}[j][\nu]
		+
		\sum_{j\in\{1,\ldots,r\}\setminus\tb[\nu]}
		\hspace*{-1em}
		JF^j{(x^\nu)}^\top
		\tfrac{1}{1-\sit{\beta}[j][\nu]}
		\vit{y}[j][\nu]
		\nonumber\\
		\text{\scriptsize by~\eqref{scaled_y}}
		&=
		\nabla f(x^\nu)
		+
		J\Phi{(x^\nu)}^\top
		\y_{\beta}^\nu,\label{connect_L_sta}
		\end{align}
		\end{subequations}
		where the second equality follows from \cref{proposition_connect_Rb_Pi,proposition_connect_Rb_Pi_2},
		and the fourth equality follows from \cref{proposition_Rb_homogeneity}.
		By \cref{alg_ALM_stationary} in \cref{algorithm_ALM},
		\(
		\|\nabla f(x^\nu)
		+
		J\Phi{(x^\nu)}^\top
		\y_{\beta}^\nu\|\leq\sit{\epsilon}[\rm{d}][\nu]\searrow0
		\).
		Moreover, in view of \cref{proposition_cone_Rb}, the scaled multiplier defined in~\eqref{scaled_y} satisfies
		\begin{equation}\label{multiplier_cone}
		\vit{y}[0][\nu]
		\in
		\ncone_C
		\left(
			\vit{z}[0][\nu]
		\right)
		\quad\text{and}\quad
		\vit{y_{\beta}}[j][\nu]
		\in
		\begin{cases}
			\Cncone_{D^j}
			\left(
			\vit{z}[j][\nu]
			\right),\ \ \ &\mbox{if } j\in\tb[\nu]
			\\[5pt]
			\Rncone_{D^j}
			\left(
			\vit{z}[j][\nu]
			\right),\ \ \ &\mbox{otherwise.}
		\end{cases}
		\end{equation}
		It remains to show \(\z^\nu\to_K \Phi(\bar x)\).
		To this end, we distinguish two cases, depending on whether the sequence
		\(\seq{\lambda_\nu}[\nu\in\N]\) vanishes or not.

		In the first case, let \(\lambda_\nu\searrow0\).
		Then, \(A(x^\nu)+\lambda_\nu \vit{\hat y}[0][\nu]\to_{K}A(\bar x)\in C\),
		and by~\eqref{define_u},
		\(\vit{u}[j][\nu]\to_{K}F^{j}(\bar x)\in D^{j}\),
		since \(\hat \y^\nu\in Y\) is bounded and \(\bar x\) is feasible.
		By the continuity of \(\Pi_{C}\), it follows that \(\vit{z}[0][\nu]\to_{K}\Pi_{C}(A(\bar x))=A(\bar x)\).
		For \(j\in\tb[\nu]\), let \(u=\vit{u}[j][\nu]\), \(\beta=\sit{\beta}[j][\nu]\), and \(k=k_{\beta}(u)\).
		By~\eqref{M_set},
		\[
		(k-\beta)u_{(k)}
		\leq
		\sum_{i=1}^{k}u_{(i)}
		\leq
		u_{(1)}+(k-1)u_{(k)},
		\]
		and hence \((1-\beta)u_{(k)}\leq u_{(1)}\).
		Combining this estimate with~\eqref{Qb} yields
		\[
		\left\|
			\vit{z}[j][\nu]-\vit{u}[j][\nu]
		\right\|
		=
		\left\|
			\Pi_{D^j}^{\mathrm{C},\beta}
			\left(
			u
			\right)
			-
			u
		\right\|
		=
		{\Biggl(
			\sum_{i\in\mathcal{I}_{\beta}(u)}
			u_{i}^{2}
		\Biggr)}^{\nicefrac{1}{2}}
		\leq
		\sqrt{k}u_{(k)}
		\leq
		\frac{\sqrt{p_{j}}}{1-\beta}
		u_{(1)}
		=
		\frac{\sqrt{p_{j}}}{1-\beta}
		\dist(u,D^{j}),
		\]
		where \(\mathcal{I}_{\beta}\) is defined in~\eqref{Ib}, and
		the last equality holds because \(u>\bm{0}\).
		Note that \(j\in\tb[\nu]\) implies \(k>1\).
		By the convention in~\eqref{M_set},
		\(k=1\) whenever \(\mathcal{I}_{\leq0}(u)\neq\emptyset\); hence \(u>\bm{0}\).
		On the other hand, for \(j\in\{1,\ldots,r\}\setminus\tb[\nu]\), we have \(\|\vit{z}[j][\nu]-\vit{u}[j][\nu]\|=\dist(\vit{u}[j][\nu],D^{j})\).
		Therefore, since \(\bar x\) is feasible, \(\sit{\beta}[j][\nu]\leq\max_{j}\sit{\beta}[j][0]<1\), and \(\dist(\vit{u}[j][\nu],D^{j})\to_{K}0\), it follows that
		\(\vit{z}[j][\nu]\to_K F^j(\bar x)\) for all \(j\in\set{1,\ldots,r}\).

		In the second case, let \(\lambda_\nu\) be bounded away from zero.
		\cref{proposition_feasibility} implies that \(\|\Phi(x^\nu)-\z^\nu\|\to_{K}0\).
		Then, we obtain the desired convergence directly.
		Therefore, we conclude that \(\bar x\) is an AC-stationary point of \MPVCC{}.
		By~\eqref{multiplier_cone} and the fact that \(\ncone_{D^{j}}\) is the outer limit of the regular normal cone,
		we conclude that \(\bar x\) is AM-stationary whenever \(\tb[\nu]=\emptyset\) for infinitely many \(\nu\in K\).
	\end{proof}

	\cref{theorem_AM_is_M} implies that every AC- or AM-stationary point of \MPVCC{} is, respectively, a C- or M-stationary point under \CCPC{} or \CCPM{}.
	The following corollary is immediate.

	\begin{corollary}[Stationarity]\label{corollary_stationarity}
	Suppose that the sequence \(\seq{\sit{\epsilon}[\rm{d}][\nu]}[\nu\in\N]\) in \cref{algorithm_ALM} is chosen such that \(\sit{\epsilon}[\rm{d}][\nu]\searrow 0\),
	and \(\bar x\) is a feasible cluster point of the sequence
	\(\seq{x^\nu}[\nu\in\N]\) generated by \cref{algorithm_ALM}.
	Let \(\tb[\nu]\) be defined in~\eqref{Tb_set}.
	The following hold:
	\begin{itemize}
		\item If \CCPC{} holds at \(\bar x\),
	then \(\bar x\) is a C-stationary point of \MPVCC{}.
		\item If there exists a subsequence
	\(K\in \mathcal{N}_{\infty}^{\sharp}\) such that \(x^\nu\to_K \bar x\) and such that \(\tb[\nu]=\emptyset\) for infinitely many \(\nu\in K\),
	and if \CCPM{} holds at \(\bar x\),
	then \(\bar x\) is an M-stationary point of \MPVCC{}.
	\end{itemize}
	\end{corollary}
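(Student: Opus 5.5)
The corollary follows by chaining \cref{theorem_AM_AC} with \cref{theorem_AM_is_M}; no new estimates are needed. Throughout, let \(\bar x\) be a feasible cluster point of \(\seq{x^\nu}[\nu\in\N]\), and assume \(\sit{\epsilon}[\rm{d}][\nu]\searrow0\). These are exactly the standing hypotheses of \cref{theorem_AM_AC}.

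For the first item, \cref{theorem_AM_AC} gives that \(\bar x\) is AC-stationary. Concretely, the sequences \(x^\nu\to_K\bar x\), \(\z^\nu\to_K\Phi(\bar x)\) with \(\z^\nu\in\Gamma\), and the scaled multipliers \(\y_\beta^\nu\in\Cncone_\Gamma(\z^\nu)\) from \eqref{scaled_y}--\eqref{multiplier_cone} satisfy \(\nabla f(x^\nu)+J\Phi(x^\nu)^\top\y_\beta^\nu\to_K\bm{0}\). Since \(\bar x\) is feasible and satisfies \CCPC{}, the first bullet of \cref{theorem_AM_is_M} applies directly and yields C-stationarity.

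For the second item, suppose there is a subsequence \(K\) with \(x^\nu\to_K\bar x\) and \(\tb[\nu]=\emptyset\) for infinitely many \(\nu\in K\). The second part of \cref{theorem_AM_AC} then gives AM-stationarity. The reason is that along the further subsequence \(K'\subseteq K\) of indices with \(\tb[\nu]=\emptyset\), \eqref{multiplier_cone} places each \(\vit{y_\beta}[j][\nu]\) in \(\Rncone_{D^j}(\vit{z}[j][\nu])\subseteq\ncone_{D^j}(\vit{z}[j][\nu])\). Hence \(\y_\beta^\nu\in\ncone_\Gamma(\z^\nu)\) for \(\nu\in K'\), while the residual and the convergence \(\z^\nu\to\Phi(\bar x)\) are inherited from \(K\). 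Combined with \CCPM{}, the second bullet of \cref{theorem_AM_is_M} gives M-stationarity.

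The only point needing care is that \cref{definition_asymptotic_stationary} is stated for full sequences indexed by \(\N\), whereas here everything lives on the subsequence \(K'\). This is handled by reindexing \(K'\) as \(\N\); \cref{theorem_AM_is_M} uses only outer semicontinuity along sequences, so it is unaffected. I therefore expect no real obstacle; the corollary is an immediate composition of the two earlier theorems.
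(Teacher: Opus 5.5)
Your proposal is correct and is exactly the paper's argument: the paper declares the corollary immediate from combining \cref{theorem_AM_AC} (feasible cluster points are AC-stationary, and AM-stationary when \(\mathcal{T}_{\beta}^{\nu}=\emptyset\) infinitely often along \(K\)) with \cref{theorem_AM_is_M} under \CCPC{} or \CCPM{}. Your remarks on passing to the subsequence where \(\mathcal{T}_{\beta}^{\nu}=\emptyset\) and using \(\Rncone_{D^j}\subseteq\ncone_{D^j}\) only unpack what the proof of \cref{theorem_AM_AC} already does.
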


	\begin{remark}\label{remark_scale}
	By~\eqref{eta_1} and~\eqref{connect_L_sta_C}, the terms in
	\(\nabla_x\mathcal{L}_{\lambda_\nu,\be_\nu}\) associated with the convex set \(C\) can be expressed directly through the updated multiplier \(\vit{y}[0][\nu]\),
	exactly as in the standard ALM.
	In contrast, for the nonconvex set \(D^j\), the gradient involves the scaled multiplier \(\vit{y_{\beta}}[j][\nu]\), that is, the updated multiplier \(\vit{y}[j][\nu]\) premultiplied by \(Q_{\sit{\beta}[j][\nu]}\) or \(\tfrac{1}{1-\sit{\beta}[j][\nu]}\).
	It is therefore \(\y_{\beta}^{\nu}\), rather than \(\y^{\nu}\), that approximates a multiplier of \MPVCC{}.
	\end{remark}

\subsection{Convergence Rate Analysis}
	In this part, we discuss the local convergence properties of \cref{algorithm_ALM} with setting~\eqref{A_eq_ineq}.
	Let us define the \emph{critical cone} of \MPVCC{} at a feasible point \(\bar x\in\R^n\) as
	\begin{equation}\label{critical_cone}
		\mathcal{C}(\bar x)
		\coloneqq
		\left\{
		d \in \R^{n}
		\,\middle|\,
			\langle \nabla f(\bar x),d\rangle\leq0
		\right\}
		\cap
		\left\{
		d \in \R^{n}
		\,\middle|\,
		JA(\bar x)d\in \tcone_C(A(\bar x)),\
		JF^j(\bar x)d\in \tcone_{D^j}(F^j(\bar x))
		\ \text{for all}\ j
		\right\}.
	\end{equation}
	Throughout this subsection, we assume that \(f\) and \(\Phi\) are twice continuously differentiable near \(\bar x\).
	The second-order sufficient condition for \MPVCC{} is introduced below.

	\begin{definition}[MPVCC-SOSC]\label{definition_MPVCC_SOSC}
	Relative to \MPVCC{} with notation~\eqref{define_constraint}--\eqref{define_cone} and setting~\eqref{A_eq_ineq},
	a feasible point \(\bar x\) with the multiplier \(\bar\y\) is said to satisfy the \emph{MPVCC-second-order sufficient condition (MPVCC-SOSC)} if
	\[
		\left\langle
		\nabla_{xx}^{2}\mathcal{L}(\bar x,\bar\y)d,
		d
		\right\rangle
		>0,\quad
		\forall d\in\mathcal{C}(\bar x)\setminus\{\bm{0}\},
	\]
	where the MPVCC-Lagrangian \(\mathcal{L}\) is given by~\eqref{MPVCC_Lagrangian}.
	\end{definition}

	\SOSC{} is important for establishing the local error bounds.
	We further assume that \(\bar x\) is S-stationary.
	Then, following~\cite[Theorem 3.2]{guo2013a}, the point \(\bar x\) satisfying \SOSC{} is a strong local minimizer of \MPVCC{},
	which leads to the primal error bound stated below.

	\begin{corollary}[Quadratic growth]
	Relative to \MPVCC{} with notation~\eqref{define_constraint}--\eqref{define_cone} and setting~\eqref{A_eq_ineq},
	let \(\bar x\) be a feasible point with the multiplier \(\bar\y\) satisfying \SSC{} and \SOSC{}.
	Then, there exist \(c,\delta>0\) such that
	\[
		f(x)\geq f(\bar x)+c\|x-\bar x\|^2,\quad
		\forall x\in \Phi^{-1}(\Gamma)\cap \B(\bar x,\delta).
	\]
	\end{corollary}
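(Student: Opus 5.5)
The plan is to argue by contradiction, in the standard way used for second-order sufficiency results for disjunctive programs such as~\cite[Theorem 3.2]{guo2013a}. Suppose no such \(c,\delta\) exist. Then there is a sequence of feasible points \(x^k\to\bar x\), \(x^k\neq\bar x\), with \(f(x^k)<f(\bar x)+\tfrac1k\|x^k-\bar x\|^2\). Set \(t_k\coloneqq\|x^k-\bar x\|\) and \(d^k\coloneqq(x^k-\bar x)/t_k\). Passing to a subsequence, \(d^k\to d\) with \(\|d\|=1\).

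The first step is to show \(d\in\mathcal{C}(\bar x)\). Since \(\Phi(x^k)\in\Gamma\), we have \(\Phi(x^k)=\Phi(\bar x)+t_kJ\Phi(\bar x)d^k+o(t_k)\), so \(J\Phi(\bar x)d\in\tcone_\Gamma(\Phi(\bar x))\) by the definition of the tangent cone. This gives \(JA(\bar x)d\in\tcone_C(A(\bar x))\) and \(JF^j(\bar x)d\in\tcone_{D^j}(F^j(\bar x))\); for the product of closed sets the tangent cone is contained in the product of the tangent cones, and this inclusion suffices. From the growth failure, \(t_k\langle\nabla f(\bar x),d^k\rangle+o(t_k)<t_k^2/k\), and dividing by \(t_k\) gives \(\langle\nabla f(\bar x),d\rangle\le0\).

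The second step uses S-stationarity. Because \(\bar\y\in\Rncone_\Gamma(\Phi(\bar x))\), we need the stronger fact that \(\langle\bar\y,\z-\Phi(\bar x)\rangle\le0\) for all \(\z\in\Gamma\) near \(\Phi(\bar x)\), not just a first-order \(o(\cdot)\) inequality. This holds by polyhedrality. In setting~\eqref{A_eq_ineq}, \(C\) is polyhedral, and by~\eqref{convex_union} each \(D^j\) is a finite union of polyhedral convex cones \(D_i\). Near \(\bar z^j\) only the pieces \(D_i\) with \(i\in\mathcal{I}_{=0}(\bar z^j)\) are present, and \(\Rncone_{D^j}(\bar z^j)=\bigcap_i\Rncone_{D_i}(\bar z^j)\). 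For a convex polyhedral piece, \(y\in\ncone_{D_i}(\bar z)\) means \(\langle y,z-\bar z\rangle\le0\) for all \(z\in D_i\), and this is exact, with no remainder.

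Expanding the Lagrangian to second order then gives
\[
\mathcal{L}(x^k,\bar\y)=f(x^k)+\langle\bar\y,\Phi(x^k)\rangle\le f(x^k)+\langle\bar\y,\Phi(\bar x)\rangle<\mathcal{L}(\bar x,\bar\y)+t_k^2/k .
\]
On the other hand, since \(\nabla_x\mathcal{L}(\bar x,\bar\y)=\bm 0\) by~\eqref{MPVCC_SSC},
\[
\mathcal{L}(x^k,\bar\y)=\mathcal{L}(\bar x,\bar\y)+\tfrac{t_k^2}{2}\langle\nabla^2_{xx}\mathcal{L}(\bar x,\bar\y)d^k,d^k\rangle+o(t_k^2).
\]
Subtracting, dividing by \(t_k^2\) and letting \(k\to\infty\) yields \(\langle\nabla^2_{xx}\mathcal{L}(\bar x,\bar\y)d,d\rangle\le0\). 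Since \(d\in\mathcal{C}(\bar x)\setminus\{\bm0\}\), this contradicts \SOSC{}.

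The main obstacle is the exact inequality \(\langle\bar\y,\Phi(x^k)-\Phi(\bar x)\rangle\le0\) in the second step. It must hold exactly, not up to \(o(t_k)\), because an error of order \(o(t_k)\) would destroy the second-order comparison. I would verify it piece by piece. For each \(j\), \(F^j(x^k)\) lies in some \(D_i\) with \(i\in\mathcal{I}_{=0}(\bar z^j)\) once \(k\) is large, and \(\bar y^j\in\Rncone_{D_i}(\bar z^j)\) is a normal to the convex set \(D_i\) at \(\bar z^j\). For \(C\), I would use \(\bar y^0\in\ncone_C(A(\bar x))\) together with convexity of \(C\).
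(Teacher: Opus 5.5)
Your argument is correct and is essentially the standard contradiction proof behind \cite[Theorem 3.2]{guo2013a}, which the paper simply cites from the MPCC setting rather than writing out. Your version also makes explicit the one VCC-specific step the citation leaves implicit: for every \(i\in\mathcal{I}_{=0}(\bar z^j)\) one has \(\bar y^j\in\Rncone_{D^j}(\bar z^j)\subseteq\ncone_{D_i}(\bar z^j)\), which gives the exact inequality \(\langle\bar y^j,z-\bar z^j\rangle\le0\) for all \(z\in D^j\) near \(\bar z^j\). Note that this step, like the one for \(C\), uses only convexity of \(C\) and of the pieces \(D_i\), not polyhedrality.
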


	However, establishing the convergence rate requires a primal-dual error bound.
	Consequently, a stronger CQ at \(\bar x\) is needed.
	The following condition corresponds to the LICQ variant for \MPVCC{} proposed in~\cite{scheel2000}.

	\begin{definition}[MPVCC-LICQ]\label{definition_MPVCC_LICQ}
	Relative to \MPVCC{} with notation~\eqref{define_constraint}--\eqref{define_cone} and setting~\eqref{A_eq_ineq},
	let \(\bar x\) be a feasible point, and \(\bar\z=\Phi(\bar x)\).
	Then, \(\bar x\) is said to satisfy the \emph{MPVCC-LICQ} if the gradients in the set
	\begin{equation}\label{definition_LICQ_grad_set}
		\mathcal{G}
		\left(
		\bar x;
		\mathcal{E}\cup\mathcal{A}(\bar x),
		\mathcal{I}_{\rm uniq}(\bar\z)\cup\mathcal{I}_{\rm mult}(\bar\z)
		\right)
	\end{equation}
	are linearly independent,
	where the sets \(\mathcal{E}\), \(\mathcal{A}(\bar x)\), \(\mathcal{I}_{\rm uniq}(\bar \z)\), \(\mathcal{I}_{\rm mult}(\bar \z)\), and \(\mathcal{G}\) are defined in~\eqref{index_set_uniq_mult} and~\eqref{define_G}.
	\end{definition}

	Combining \SSC{}, \LICQ{} and \SOSC{} enables us to leverage classical results from~\cite{birgin2012,birgin2014,steck2018} to analyze the local convergence of \cref{algorithm_ALM}.
	To this end, under notation~\eqref{define_constraint}--\eqref{define_cone},
	for any \(\bar\z\in\Gamma\), we define
	\begin{equation}\label{convexify_A}
		\mathcal{A}^{\rm vcc}(\bar\z)
		\coloneqq
		\prod_{j=1}^{r}
		\set{S_{j}\subseteq\set{1,\ldots,p_{j}}}
		[
		\emptyset\neq S_{j}
		\subseteq\mathcal{I}_{=0}(\bar z^j)
		]
		\quad\text{and}\quad
		\alpha\coloneqq(S_{1},\ldots,S_{r})\in\mathcal{A}^{\rm vcc}(\bar\z).
	\end{equation}
	For each \(\alpha\in\mathcal{A}^{\rm vcc}(\bar\z)\),
	we introduce the convex sets
	\begin{equation}\label{convexify_D}
		D^{j}_{S}
		\coloneqq
		\set{z\in\R_{+}^{p_{j}}}
		[z_{i}=0,\ i\in S_{j}]
		\quad\text{and}\quad
		\Gamma_{\alpha}
		\coloneqq
		C\times
		\prod_{j=1}^{r}
		D^{j}_{S}.
	\end{equation}

	\begin{example}
		We illustrate the constructions in~\eqref{convexify_A} and~\eqref{convexify_D}.
		By~\eqref{convex_union}, \(D^{j}\) admits the decomposition \(D^{j}=\bigcup_{i=1}^{p_j}D^{j}_{i}\), where each \(D^{j}_{i}\) is convex.
		Consider VCCs with \(r=2\), \(p_{1}=2\) and \(p_{2}=3\), namely,
		\(D^{1}=\set{z\in\R_{+}^{2}}[z_{1}z_{2}=0]\) and \(D^{2}=\set{z\in\R_{+}^{3}}[z_{1}z_{2}z_{3}=0]\).
		Let \(\bar\z=(\bar z^{1},\bar z^{2})\) with \(\bar z^{1}=(0,0)\) and \(\bar z^{2}=(0,0,0)\).
		Then, \(S_{1}\) may be chosen as \(\{1\}\), \(\{2\}\), or \(\{1,2\}\),
		yielding the corresponding sets \(D^{1}_{S}\) as \(D^{1}_{1}\), \(D^{1}_{2}\), and \(D^{1}_{1}\cap D^{1}_{2}\).
		Similarly, \(S_{2}\) may be chosen as \(\{1\}\), \(\{2\}\), \(\{3\}\), \(\{1,2\}\), \(\{1,3\}\), \(\{2,3\}\), or \(\{1,2,3\}\),
		yielding the corresponding sets \(D^{2}_{S}\) as \(D^{2}_{1}\), \(D^{2}_{2}\), \(D^{2}_{3}\), \(D^{2}_{1}\cap D^{2}_{2}\), \(D^{2}_{1}\cap D^{2}_{3}\), \(D^{2}_{2}\cap D^{2}_{3}\),
		and \(D^{2}_{1}\cap D^{2}_{2}\cap D^{2}_{3}\).
		The set \(\mathcal{A}^{\rm vcc}(\bar\z)\) is obtained by combining all admissible choices of \(S_{1}\) and \(S_{2}\),
		where each \(\alpha\in\mathcal{A}^{\rm vcc}(\bar\z)\) corresponds to one such combination.
		Consequently, because each \(D^{j}_{i}\) is convex, the associated set \(\Gamma_{\alpha}\) is also convex.
	\end{example}

	The constructions in~\eqref{convexify_A} and~\eqref{convexify_D} isolate the (possibly nonunique) active convex set \(D^{j}_{S}\) associated with each \(D^{j}\).
	The following result shows that \SSC{}, \LICQ{} and \SOSC{} imply corresponding properties for the problem constrained by the active convex set \(\Gamma_{\alpha}\).

	\begin{lemma}\label{lemma_active_branch}
	Relative to \MPVCC{} with notation~\eqref{define_constraint}--\eqref{define_cone} and setting~\eqref{A_eq_ineq},
	let \(\bar x\) be a feasible point with the multiplier \(\bar\y\) satisfying \SSC{}, \LICQ{} and \SOSC{}.
	Let \(\bar\z=\Phi(\bar x)\), and consider the problem
	\begin{equation}\label{active_branch}
	\begin{aligned}
	&\minimize_{x\in\R^{n}} \ \ f(x)
	\\
	&\stt \ \ \Phi(x)\in \Gamma_{\alpha}.
	\end{aligned}
	\end{equation}
	Then, for every \(\alpha\in\mathcal{A}^{\rm vcc}(\bar \z)\), the pair \((\bar x,\bar\y)\) satisfies the S-stationarity condition, LICQ and SOSC for~\eqref{active_branch}.

	\end{lemma}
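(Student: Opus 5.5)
The plan is to check the three properties one at a time for a fixed \(\alpha=(S_1,\ldots,S_r)\in\mathcal{A}^{\rm vcc}(\bar\z)\). The common starting point is that, near \(\bar\z\), problem~\eqref{active_branch} is a standard NLP. Its constraints are \(A(x)\in C\) as in~\eqref{A_eq_ineq}, equalities \(F_i^j(x)=0\) for \(i\in S_j\), and inequalities \(F_i^j(x)\geq0\) for \(i\notin S_j\). Feasibility of \(\bar x\) holds because \(S_j\subseteq\mathcal{I}_{=0}(\bar z^j)\) gives \(\bar z^j\in D^j_S\). At \(\bar x\), the active inequalities among the \(F\)-constraints are exactly those with \(i\in\mathcal{I}_{=0}(\bar z^j)\setminus S_j\).

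\textbf{S-stationarity.} First compute \(\ncone_{\Gamma_\alpha}(\bar\z)\), which is the regular normal cone of a convex polyhedral product. Its \(j\)-th block consists of vectors \(d\) with \(d_i\in\R\) for \(i\in S_j\), \(d_i\leq0\) for \(i\in\mathcal{I}_{=0}(\bar z^j)\setminus S_j\), and \(d_i=0\) otherwise. Compare this with \(\Rncone_{D^j}(\bar z^j)\) from \cref{prop:N_D}. If \(|\mathcal{I}_{=0}(\bar z^j)|=1\), then \(S_j=\mathcal{I}_{=0}(\bar z^j)=\{i\}\) and both cones equal \(\R e_i\). If \(|\mathcal{I}_{=0}(\bar z^j)|>1\), then \(\Rncone_{D^j}(\bar z^j)=\cone\{-e_i\}\subseteq\ncone_{D^j_S}(\bar z^j)\). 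In both cases \(\Rncone_\Gamma(\bar\z)\subseteq\ncone_{\Gamma_\alpha}(\bar\z)\). Hence \(\bar\y\) from \SSC{} satisfies the KKT system of~\eqref{active_branch} with the same Lagrangian~\eqref{MPVCC_Lagrangian}.

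\textbf{LICQ.} The gradients of the active constraints of~\eqref{active_branch} at \(\bar x\) are \(\nabla A_\ell\) for \(\ell\in\mathcal{E}\cup\mathcal{A}(\bar x)\) and \(\nabla F_i^j\) for \(i\in\mathcal{I}_{=0}(\bar z^j)\), since both the equalities \(i\in S_j\) and the active inequalities fall in this range. This is exactly the set~\eqref{definition_LICQ_grad_set}, which is linearly independent by \LICQ{}.

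\textbf{SOSC.} The Lagrangian and \(\bar\y\) are unchanged, so it suffices to show that the critical cone of~\eqref{active_branch} is contained in \(\mathcal{C}(\bar x)\) from~\eqref{critical_cone}. That critical cone is \(\{d:\langle\nabla f(\bar x),d\rangle\leq0,\ J\Phi(\bar x)d\in\tcone_{\Gamma_\alpha}(\bar\z)\}\), and \(\Gamma_\alpha\subseteq\Gamma\) gives \(\tcone_{\Gamma_\alpha}(\bar\z)\subseteq\tcone_\Gamma(\bar\z)\). This can also be read off from \eqref{tangent_cone_D}: \(d_i=0\) on \(S_j\neq\emptyset\) together with \(d_i\geq0\) on the rest of \(\mathcal{I}_{=0}\) gives \(\min_{\mathcal{I}_{=0}}d_i=0\). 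One bookkeeping point needs care. The standard NLP critical cone is sometimes written as \(\nabla f\cdot d=0\) together with the sign conditions, but it is in any case a subset of the "\(\leq0\)" version used here, so the inclusion still holds. Then \(\langle\nabla^2_{xx}\mathcal{L}(\bar x,\bar\y)d,d\rangle>0\) on the smaller cone minus the origin.

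The main (mild) obstacle is getting the normal cone comparison right in the unique-zero case versus the multiple-zero case. The rest is direct inclusion of cones.
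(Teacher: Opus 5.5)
Your proposal is correct and follows the paper's proof. The LICQ step (the active gradients at \(\bar x\) are exactly those in~\eqref{definition_LICQ_grad_set}) and the SOSC step (critical-cone inclusion via \(\tcone_{D^j_S}(\bar z^j)\subseteq\tcone_{D^j}(\bar z^j)\)) are identical to the paper's. The only difference is in the S-stationarity step: the paper uses that \(\Gamma\) and \(\bigcup_{\alpha}\Gamma_\alpha\) coincide near \(\bar\z\) to get \(\Rncone_\Gamma(\bar\z)=\bigcap_\alpha\ncone_{\Gamma_\alpha}(\bar\z)\), whereas you compare the cones blockwise, and both arguments are valid. In fact the needed inclusion \(\Rncone_\Gamma(\bar\z)\subseteq\ncone_{\Gamma_\alpha}(\bar\z)\) follows at once from \(\Gamma_\alpha\subseteq\Gamma\) by taking polars of the tangent cones.
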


	\begin{proof}
	First observe that \(\Gamma\) and \(\cup_{\alpha\in\mathcal{A}^{\rm vcc}(\bar \z)}\Gamma_{\alpha}\) coincide in a neighborhood of \(\bar\z\),
	since \(\mathcal{I}_{=0}(z^j)\subseteq\mathcal{I}_{=0}(\bar z^{j})\) for all \(z^j\in D^{j}\) close to \(\bar z^{j}\).
	Since regular normal cones depend only on the local geometry, at an S-stationary point of \MPVCC{} it holds that
	\[
		\bar\y\in\Rncone_{\Gamma}(\bar \z)
		=
		\Rncone_{\cup_{\alpha\in\mathcal{A}^{\rm vcc}(\bar \z)}\Gamma_{\alpha}}(\bar \z)
		=
		\bigcap_{\alpha\in\mathcal{A}^{\rm vcc}(\bar \z)}
		\ncone_{\Gamma_{\alpha}}(\bar \z),
	\]
	where the second equality follows from~\cite[Lemma 2.2]{mehlitz2020c} and the convexity of \(\Gamma_{\alpha}\).
	Consequently, \(\bar\y\in\ncone_{\Gamma_{\alpha}}(\bar \z)\) for all \(\alpha\in\mathcal{A}^{\rm vcc}(\bar \z)\),
	which establishes the S-stationarity of~\eqref{active_branch}.

	As for LICQ, fix \(\alpha=(S_{1},\ldots,S_{r})\in\mathcal{A}^{\rm vcc}(\bar \z)\)
	and write problem~\eqref{active_branch} as a standard NLP with constraints~\eqref{A_eq_ineq},
	together with \(F_{i}^{j}(x)=0\) for \(i\in S_{j}\)
	and \(F_{i}^{j}(x)\geq0\) for \(i\notin S_{j}\).
	The constraints active at \(\bar x\) are those indexed by \(\mathcal{A}(\bar x)\cup\mathcal{E}\)
	and, for each \(j\), by \(S_{j}\cup(\mathcal{I}_{=0}(\bar z^{j})\setminus S_{j})=\mathcal{I}_{=0}(\bar z^{j})\).
	Therefore, the gradients of the active constraints of~\eqref{active_branch} at \(\bar x\) coincide with those appearing in~\eqref{definition_LICQ_grad_set}.
	Hence, LICQ for~\eqref{active_branch} is equivalent to \LICQ.

	As for SOSC, for each \(\alpha\in\mathcal{A}^{\rm vcc}(\bar \z)\), define the critical cone at \(\bar x\) by
	\[
		\mathcal{C}_{\alpha}(\bar x)
		\coloneqq
		\left\{
		d \in \R^{n}
		\,\middle|\,
			\langle \nabla f(\bar x),d\rangle\leq0
		\right\}
		\cap
		\left\{
		d \in \R^{n}
		\,\middle|\,
			JA(\bar x)d\in \tcone_C(\bar z^{0}),\
			JF^j(\bar x)d\in \tcone_{D^{j}_{S}}(\bar z^j)
			\ \text{for all}\ j
		\right\}.
	\]
	Since \(\tcone_{D^{j}_{S}}(\bar z^j)\subseteq\tcone_{D^{j}}(\bar z^j)\),
	it follows that \(\mathcal{C}_{\alpha}(\bar x)\subseteq\mathcal{C}(\bar x)\),
	where \(\mathcal{C}(\bar x)\) is defined in~\eqref{critical_cone}.
	Therefore, \SOSC{} at \((\bar x,\bar\y)\) yields
	\[
		\left\langle
		\nabla_{xx}^{2}\mathcal{L}(\bar x,\bar\y)d,
		d
		\right\rangle
		>0,\quad
		\forall d\in\mathcal{C}_{\alpha}(\bar x)\setminus\{\bm{0}\}
	\]
	for every \(\alpha\in\mathcal{A}^{\rm vcc}(\bar \z)\).
	Hence, SOSC for~\eqref{active_branch} holds.
	\end{proof}

	Note that \LICQ{} guarantees the uniqueness of the multiplier \(\bar\y\) for \MPVCC{} and,
	by \cref{lemma_active_branch}, for problem~\eqref{active_branch} for every \(\alpha\in\mathcal{A}^{\rm vcc}(\bar \z)\).
	Moreover, the S-stationarity condition and LICQ for problem~\eqref{active_branch} imply the \emph{strict Robinson condition}~\cite[Eq. (11)]{ding2017} at \((\bar x,\bar\y)\); that is, in dual form,
	\[
		\nullspace J\Phi{(\bar x)}^\top
		\cap
		(
		\ncone_{\Gamma_{\alpha}}(\bar\z)+\R\bar\y
		)
		=\set{\bm{0}},
	\]
	because \(\ncone_{\Gamma_{\alpha}}(\bar\z)+\R\bar\y\subseteq\spa\ncone_{\Gamma_{\alpha}}(\bar\z)\).
	Therefore, the following primal-dual error bound can be established using results from~\cite{ding2017}.

	\begin{lemma}[Uniform primal-dual error bound]\label{lemma_uniform_primal_dual_EB}
	Relative to \MPVCC{} with notation~\eqref{define_constraint}--\eqref{define_cone} and setting~\eqref{A_eq_ineq},
	let \(\bar x\) be a feasible point with the multiplier \(\bar\y\) satisfying \SSC{}, \LICQ{} and \SOSC{}.
	Let \(\bar\z=\Phi(\bar x)\), and define the mapping
	\begin{equation}\label{Theta_residual_mapping}
		\Theta_{\alpha}(x,\y)
		\coloneqq
		\left(
		\begin{aligned}
			&\nabla_x
			\mathcal{L}(x,\y)
			\\
			\Phi(x)&-\Pi_{\Gamma_{\alpha}}(\Phi(x)+\y)
		\end{aligned}
		\right).
	\end{equation}
	Then, there exist \(c_{1},c_{2},\delta>0\) such that, for every \(\alpha\in\mathcal{A}^{\rm vcc}(\bar \z)\), the uniform primal-dual error bound holds:
	\begin{equation}\label{uniform_primal_dual_EB}
		c_{1}
		\|\Theta_{\alpha}(x,\y)\|
		\leq
		\|x-\bar x\|+\|\y-\bar\y\|
		\leq
		c_{2}
		\|\Theta_{\alpha}(x,\y)\|,\quad
		\forall (x,\y)\in \B((\bar x,\bar\y),\delta).
	\end{equation}
	\end{lemma}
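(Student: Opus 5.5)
The plan is to reduce the statement to finitely many convex problems and then take the worst constants. Since \(\mathcal{A}^{\rm vcc}(\bar\z)\) is a finite set, it suffices to prove, for each fixed \(\alpha\), an error bound of the form~\eqref{uniform_primal_dual_EB} with constants \(c_1^\alpha,c_2^\alpha,\delta^\alpha>0\). We then set \(c_1\coloneqq\min_\alpha c_1^\alpha\), \(c_2\coloneqq\max_\alpha c_2^\alpha\) and \(\delta\coloneqq\min_\alpha\delta^\alpha\), which gives uniformity at no cost. For fixed \(\alpha\), problem~\eqref{active_branch} is a nonlinear program with a \(C^2\) objective and a \(C^2\) constraint mapping \(\Phi\), taking values in the closed convex polyhedral set \(\Gamma_\alpha\). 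Moreover, \(\Theta_\alpha\) is exactly its KKT residual map: \(\Phi(x)-\Pi_{\Gamma_\alpha}(\Phi(x)+\y)=0\) is equivalent to \(\y\in\ncone_{\Gamma_\alpha}(\Phi(x))\).

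The left inequality (upper bound on the residual) is routine. We have \(\Theta_\alpha(\bar x,\bar\y)=0\), because \(\bar\y\in\ncone_{\Gamma_\alpha}(\bar\z)\) by \cref{lemma_active_branch}. In addition, \(\Theta_\alpha\) is locally Lipschitz near \((\bar x,\bar\y)\), since \(\nabla_x\mathcal{L}\) is \(C^1\), \(\Phi\) is \(C^1\), and \(\Pi_{\Gamma_\alpha}\) is nonexpansive. Hence \(\|\Theta_\alpha(x,\y)\|\le L_\alpha(\|x-\bar x\|+\|\y-\bar\y\|)\) on a ball, and we take \(c_1^\alpha=1/L_\alpha\).

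The right inequality is the substantive part. By \cref{lemma_active_branch}, \((\bar x,\bar\y)\) is a KKT pair of~\eqref{active_branch} satisfying LICQ and SOSC. As noted right before the statement, S-stationarity and LICQ give the strict Robinson condition at \((\bar x,\bar\y)\), and LICQ makes \(\bar\y\) the unique multiplier. I would then invoke the results of~\cite{ding2017}. For problems with \(C^2\)-cone reducible convex constraint sets (polyhedral sets such as \(\Gamma_\alpha\) qualify), the strict Robinson condition together with SOSC implies that the KKT solution mapping is isolated calm, equivalently that the KKT residual has a local error bound around \((\bar x,\bar\y)\). This yields \(\|x-\bar x\|+\|\y-\bar\y\|\le c_2^\alpha\|\Theta_\alpha(x,\y)\|\) on \(\B((\bar x,\bar\y),\delta^\alpha)\).

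The main obstacle is this last step. One has to check that the residual in~\cite{ding2017} coincides with \(\Theta_\alpha\) as written, with the projection argument \(\Phi(x)+\y\) and unit scaling, or differs only by a Lipschitz-equivalent natural map. One also has to check that the SOSC used there, which includes a sigma-term, reduces to the SOSC of \cref{lemma_active_branch}. The sigma-term vanishes because \(\Gamma_\alpha\) is polyhedral, and the critical cone \(\mathcal{C}_\alpha(\bar x)\) matches the one in~\cite{ding2017}. If the direct citation is not available, the fallback is a standard contradiction argument. Assume \((x^k,\y^k)\to(\bar x,\bar\y)\) with \(\|\Theta_\alpha\|=o(\|x^k-\bar x\|+\|\y^k-\bar\y\|)\). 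Normalize the differences and pass to a limit direction \((d,\eta)\ne0\). Polyhedrality gives that \(d\in\mathcal{C}_\alpha(\bar x)\) and that \(\eta\) lies in the span of the normal cone. Then SOSC forces \(d=0\), after which LICQ forces \(\eta=0\), a contradiction.
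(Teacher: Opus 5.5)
Your proposal is correct and follows the paper's proof essentially verbatim. You use \cref{lemma_active_branch} to obtain S-stationarity, LICQ and SOSC for each branch problem~\eqref{active_branch}, get the two-sided bound for each fixed \(\alpha\) from the Lipschitz continuity of \(\Theta_\alpha\) together with~\cite[Theorem 24 and Remark 5]{ding2017}, and take the minimum/maximum of the constants over the finite set \(\mathcal{A}^{\rm vcc}(\bar\z)\). Your fallback contradiction argument is not needed. If you did rely on it, note that SOSC only becomes applicable once you know \(\langle\eta,J\Phi(\bar x)d\rangle=0\). That follows from the polyhedral reduction \(J\Phi(\bar x)d\in\mathcal{K}\), \(\eta\in\mathcal{K}^{\circ}\) with \(\mathcal{K}=\tcone_{\Gamma_\alpha}(\bar\z)\cap\bar\y^{\perp}\), and not merely from \(\eta\) lying in the span of the normal cone.
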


	\begin{proof}
	By \cref{lemma_active_branch}, the S-stationarity condition, LICQ and SOSC hold for problem~\eqref{active_branch}.
	Therefore, by the Lipschitz continuity of \(\Theta_{\alpha}(x,\y)\) and~\cite[Theorem 24 and Remark 5]{ding2017}, it follows that,
	for every \(\alpha\in\mathcal{A}^{\rm vcc}(\bar\z)\),
	there exist constants \(c_{1,\alpha},c_{2,\alpha},\delta_{\alpha}>0\) such that
	\[
		c_{1,\alpha} \|\Theta_{\alpha}(x,\y)\|
		\leq
		\|x-\bar x\|+\|\y-\bar\y\|
		\leq
		c_{2,\alpha} \|\Theta_{\alpha}(x,\y)\|,\quad
		\forall (x,\y)\in \B((\bar x,\bar\y),\delta_{\alpha}).
	\]
	Since \(\mathcal{A}^{\rm vcc}(\bar\z)\) is finite,
	we choose
	\(c_{1}\coloneqq\min_{\alpha\in\mathcal{A}^{\rm vcc}(\bar\z)}c_{1,\alpha}>0\),
	\(c_{2}\coloneqq\max_{\alpha\in\mathcal{A}^{\rm vcc}(\bar\z)}c_{2,\alpha}<\infty\) and
	\(\delta\coloneqq\min_{\alpha\in\mathcal{A}^{\rm vcc}(\bar\z)}\delta_{\alpha}>0\).
	Then, the uniform primal-dual error bound holds.
	\end{proof}

	Recall that the scaled multiplier \(\y_{\beta}^{\nu}\) is defined in~\eqref{scaled_y}.
	The next result shows that, for the sequences generated by \cref{algorithm_ALM},
	there exists a sequence of convex sets \(\seq{\Gamma_{\alpha^\nu}}[\nu\in\N]\) such that, at each iteration \(\nu\),
	the projections in~\eqref{update_v_C}--\eqref{update_v_j} are taken onto \(\Gamma_{\alpha^\nu}\),
	and \(\y_{\beta}^{\nu}\) belongs to the regular normal cone \(\ncone_{\Gamma_{\alpha^\nu}}(\z^\nu)\).

	\begin{lemma}\label{lemma_yb_cone_inclusion}
	Relative to \MPVCC{} with notation~\eqref{define_constraint}--\eqref{define_cone},
	consider the sequences generated by \cref{algorithm_ALM}.
	Let \(\mathcal{I}_{\sit{\beta}[j][\nu]}\), \(\tb[\nu]\), \(\y_{\beta}^{\nu}\), and \(\vit{u}[j][\nu]\) be defined in~\eqref{Ib},~\eqref{Tb_set},~\eqref{scaled_y}, and~\eqref{define_u}, respectively.
	Choose
	\begin{equation}\label{define_alpha_nu}
		\alpha^{\nu}=(S_{1}^{\nu},\ldots,S_{r}^{\nu})
		\quad\text{with}\quad
		S_{j}^{\nu}=
		\begin{cases}
		\mathcal{I}_{\sit{\beta}[j][\nu]}(\vit{u}[j][\nu])
		,\ \ \ &\mbox{if } j\in\tb[\nu]
		\\[5pt]
		\set{\min\arg\min_{\ell} {[\vit{u}[j][\nu]]}_{\ell} }
		,\ \ \ &\mbox{otherwise}
		\end{cases}
	\end{equation}
	for \(j\in\{1,\ldots,r\}\).
	Then, the following hold:
	\begin{equation}\label{define_yb_inclusion}
		\z^{\nu}=\Pi_{\Gamma_{\alpha^{\nu}}}(\Phi(x^{\nu})+\lambda_{\nu} \hat\y^{\nu})
		\quad\text{and}\quad
		\y_{\beta}^{\nu}\in\ncone_{\Gamma_{\alpha^{\nu}}}(\z^{\nu}).
	\end{equation}
	\end{lemma}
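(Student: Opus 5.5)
The proof splits along the product structure. Since \(\Gamma_{\alpha^\nu}=C\times\prod_{j}D^{j}_{S^\nu}\), both the projection and the normal cone factor componentwise:
\[
\Pi_{\Gamma_{\alpha^\nu}}(\Phi(x^\nu)+\lambda_\nu\hat\y^\nu)=\bigl(\Pi_C(A(x^\nu)+\lambda_\nu\hat y^{0,\nu}),\Pi_{D^1_{S^\nu}}(u^{1,\nu}),\ldots,\Pi_{D^r_{S^\nu}}(u^{r,\nu})\bigr),
\]
and \(\ncone_{\Gamma_{\alpha^\nu}}(\z^\nu)\) is the corresponding product of normal cones. The \(C\)-block is immediate from~\eqref{update_v_C} and~\eqref{multiplier_update}. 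Indeed,
\[
y^{0,\nu}=\tfrac{1}{\lambda_\nu}\bigl(u^0-\Pi_C(u^0)\bigr), \qquad u^0\coloneqq A(x^\nu)+\lambda_\nu\hat y^{0,\nu},
\]
which lies in \(\ncone_C(z^{0,\nu})\) by the projection characterization for closed convex sets. It therefore remains to treat each \(j\), where the projection onto \(D^j_{S}\) has the explicit form: it zeroes the coordinates in \(S\) and applies \([\cdot]_+\) to the others.

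\emph{Case \(j\notin\tb[\nu]\).} Write \(u=u^{j,\nu}\) and \(S=\{i_0\}\) with \(i_0=\min\arg\min_\ell u_\ell\). Then \(D^j_S=D^j_{i_0}\), and by the selection convention \(z^{j,\nu}=\Pi_{D^j}(u)=\Pi_{D^j_{i_0}}(u)\), which gives the projection claim. For the cone, \(y^{j,\nu}=\frac{1}{\lambda_\nu}(u-z^{j,\nu})\), and by \cref{proposition_connect_Rb_Pi_2} (or its reduction when \(k=1\)) the scaled multiplier is
\[
y^{j,\nu}_\beta=\tfrac{1}{1-\beta}\,y^{j,\nu}=\tfrac{1}{\lambda_\nu}R_\beta(u).
\]
This is a positive multiple of \(u-\Pi_{D^j_{i_0}}(u)\), hence lies in \(\ncone_{D^j_{i_0}}(z^{j,\nu})\), again by convexity of \(D^j_{i_0}\).

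\emph{Case \(j\in\tb[\nu]\).} Here \(k_\beta(u)>1\), which forces \(u>\bm 0\) by the convention in \cref{theorem_LL_env}. With \(S=\mathcal I_\beta(u)\), the projection onto \(D^j_S\) zeroes the coordinates in \(S\) and keeps \([u_i]_+=u_i\) elsewhere. This is exactly \(\Pi^{\rm C,\beta}_{D^j}(u)\) from~\eqref{Qb}, so the first identity holds. For the cone, combine \cref{proposition_connect_Rb_Pi} with \(y^{j,\nu}=\frac1{\lambda_\nu}(u-z^{j,\nu})\) to get
\[
y^{j,\nu}_\beta=Q_\beta(u)\,y^{j,\nu}=\tfrac1{\lambda_\nu}R_\beta(u).
\]
From~\eqref{Pb_Mb}, \(R_\beta(u)\) is supported on \(\mathcal I_\beta(u)=S\). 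The normal cone to \(D^j_S\) at \(z^{j,\nu}\) contains every vector supported on \(S\), since those coordinates are equality-constrained and therefore free, while the remaining components must be \(0\) wherever \(z_i>0\) and \(\le0\) wherever \(z_i=0\). Hence \(y^{j,\nu}_\beta\in\ncone_{D^j_S}(z^{j,\nu})\).

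The main obstacle is the bookkeeping in the last case. One must check that \(\mathcal I_\beta(u)\) is precisely the support of \(R_\beta(u)\), or at least contains it; this requires \(u_i<\tau\) implying \(i\in\mathcal I_\beta(u)\), which follows from~\eqref{M_set}. Ties, where \(u_i=u_{(k)}\) but \(i\) lies beyond \(k\) in the ordering, need care: there \([\tau-u_i]_+\) may be positive while the sorted index exceeds \(k\). I expect~\eqref{M_set} to rule this out, since \(\tau<u_{(k+1)}\) forces all coordinates equal to \(u_{(k)}\) to be among the first \(k\). Beyond this, the argument needs only the convex-set normal characterization and the product rule for normal cones.
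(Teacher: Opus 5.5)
Your proof is correct and follows the same structure as the paper's. The projection identity is checked blockwise from the explicit form of $\Pi_{D^j_S}$, and the normal-cone inclusion is split according to whether $j\in\tb[\nu]$. The only differences are in how you justify the inclusions. For $j\notin\tb[\nu]$ you use that $\tfrac{1}{\lambda_\nu}R_{\beta}(u^{j,\nu})$ is a positive multiple of a proximal normal to the convex piece $D^j_{i_0}$, whereas the paper invokes \eqref{multiplier_cone} together with $\Rncone_{D^j}(z)\subseteq\ncone_{D^j_S}(z)$, which follows from $D^j_S\subseteq D^j$. For $j\in\tb[\nu]$ you read off from \eqref{Pb_Mb} that $R_\beta(u^{j,\nu})$ vanishes outside $S_j^\nu$, whereas the paper uses the Clarke-cone inclusion of \cref{proposition_cone_Rb}. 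Both routes are equally valid.
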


	\begin{proof}
		By~\eqref{Ib}, \eqref{convexify_D}, and~\eqref{define_alpha_nu},
		the components of \(\vit{z}[j][\nu]\) vanish on \(S_{j}^{\nu}\),
		coincide with those of \(\vit{u}[j][\nu]\) on the strictly positive components of \(\vit{u}[j][\nu]\) outside \(S_{j}^{\nu}\),
		and vanish on its negative components.
		Therefore, by~\eqref{define_u}, we obtain
		\(\z^{\nu}=\Pi_{\Gamma_{\alpha^{\nu}}}(\Phi(x^{\nu})+\lambda_{\nu} \hat\y^{\nu})\).

		As for the regular normal cone inclusion,
		for each \(j\in\{1,\ldots,r\}\), let \(\vit{D_S}[j][\nu]\) denote the set \(D^{j}_{S}\) corresponding to the index set \(S_{j}^{\nu}\).
		First consider \(j\in\tb[\nu]\).
		By \cref{proposition_connect_Rb_Pi},
		we have \({[\vit{z}[j][\nu]]}_{i}>0\) for every
		\(i\notin S_{j}^{\nu}\), thus,
		\[
		\ncone_{\vit{D_S}[j][\nu]}(\vit{z}[j][\nu])=
		\set{d\in\R^{p_j}}
		[
			d_{i}=0,\ i\notin
			S_{j}^{\nu}
		].
		\]
		By~\eqref{Clarke_cone}, \cref{proposition_cone_Rb}, and the fact that \(\Cncone_{D^j}\) is a cone, we have
		\[
		\vit{y_{\beta}}[j][\nu]=\lambda_{\nu}^{-1}R_{\sit{\beta}[j][\nu]}(\vit{u}[j][\nu])
		\in \Cncone_{D^j}(\vit{z}[j][\nu])
		\subseteq
		\ncone_{\vit{D_S}[j][\nu]}(\vit{z}[j][\nu]).
		\]
		Now consider \(j\notin\tb[\nu]\).
		In this case, \(\vit{D_S}[j][\nu]=\vit{D_i}[j]\), where
		\(i=\min\arg\min_{\ell} {[\vit{u}[j][\nu]]}_{\ell}\).
		By~\eqref{multiplier_cone} and the inclusion \(\vit{D_S}[j][\nu]\subseteq D^{j}\),
		we have
		\(
		\vit{y_{\beta}}[j][\nu]\in
		\Rncone_{D^{j}}(\vit{z}[j][\nu])
		\subseteq
		\ncone_{\vit{D_S}[j][\nu]}(\vit{z}[j][\nu])
		\).
		Thus, in either case, \(\y_{\beta}^{\nu}\in\ncone_{\Gamma_{\alpha^{\nu}}}(\z^{\nu})\).
	\end{proof}

	For \(\alpha^\nu\) given by~\eqref{define_alpha_nu} and \(\Theta_{\alpha}\) defined in~\eqref{Theta_residual_mapping}, we introduce
	\begin{equation}\label{define_theta_E}
		\vartheta_{\nu}\coloneqq\|\Theta_{\alpha^{\nu}}(x^{\nu},\y_{\beta}^{\nu})\|
		\quad\text{and}\quad
		E_{\nu}\coloneqq\|x^{\nu}-\bar x\|+\|\y_{\beta}^{\nu}-\bar\y\|,
	\end{equation}
	where \(\bar\y\) denotes the multiplier associated with \(\bar x\) as in \cref{assumption_ssc_licq_sosc}, unique by \LICQ{}.
	The main result of this subsection is presented below.

	\begin{theorem}[Convergence rate]
	Relative to \MPVCC{} with notation~\eqref{define_constraint}--\eqref{define_cone} and setting~\eqref{A_eq_ineq},
	consider the sequences generated by \cref{algorithm_ALM},
	with \(\seq{\sit{\epsilon}[\rm{d}][\nu]}[\nu\in\N]\) chosen such that
	\(\sit{\epsilon}[\rm{d}][\nu]\searrow 0\) and \(\sit{\epsilon}[\rm{d}][\nu]=o(\vartheta_{\nu-1})\).
	Suppose that the primal sequence \(\seq{x^\nu}[\nu\in\N]\) admits \(x^\nu\to\bar x\),
	and that \(\y^{\nu}\in Y\) eventually always holds.
	Suppose further that
	\begin{enumerate}
		\item the functions \(f\) and \(\Phi\) are twice continuously differentiable near \(\bar x\), and
		\item the primal-dual pair \((\bar x,\bar\y)\) satisfies \SSC{}, \LICQ{} and \SOSC{}.\label{assumption_ssc_licq_sosc}
	\end{enumerate}
	Let the scaled multiplier \(\y_{\beta}^{\nu}\) be defined in~\eqref{scaled_y}.
	Then, there exists \(\lambda_{c}>0\) such that,
	if \(\lambda_\nu<\lambda_{c}\) for some \(\nu\), the sequence
	\(\seq{x^\nu,\y_{\beta}^\nu}[\nu\in\N]\) converges to \((\bar x,\bar\y)\) Q-linearly.
	\end{theorem}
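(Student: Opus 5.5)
The plan follows the classical local analysis of safeguarded ALMs (Birgin--Martínez, Steck), applied branchwise through \cref{lemma_yb_cone_inclusion} and \cref{lemma_uniform_primal_dual_EB}. The key quantity is $\vartheta_\nu=\|\Theta_{\alpha^\nu}(x^\nu,\y_\beta^\nu)\|$, and the aim is the recursion
\[
E_\nu\le c_2\vartheta_\nu\le c\,\big(\sit{\epsilon}[\rm{d}][\nu]+\lambda_\nu\|\y_\beta^{\nu-1}-\bar\y\|\big),
\]
which yields Q-linear decay of $E_\nu$ once $\lambda_\nu$ is small.

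First I will show that $(x^\nu,\y^\nu_\beta)\to(\bar x,\bar\y)$, so that \cref{lemma_uniform_primal_dual_EB} applies for large $\nu$.
\begin{itemize}
\item Since $\y^\nu\in Y$ eventually, we have $\hat\y^{\nu+1}=\y^\nu$, so no safeguarding occurs and the multipliers stay bounded.
\item Hence $\y^\nu_\beta$ is bounded: the matrices $Q_\beta$ are bounded because $\beta\ge\beta_{\min}$ and $k_\beta-\beta\ge1-\beta$, with $\beta$ bounded away from $1$.
\item By \eqref{connect_L_sta} and $\sit{\epsilon}[\rm{d}][\nu]\to0$, every cluster point of $\y^\nu_\beta$ is a KKT multiplier for some branch problem \eqref{active_branch}. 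By LICQ (uniqueness) this multiplier is $\bar\y$.
\end{itemize}

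Next I will estimate $\vartheta_\nu$. Its first block is $\nabla_x\mathcal{L}(x^\nu,\y_\beta^\nu)$, which has norm at most $\sit{\epsilon}[\rm{d}][\nu]$ by \eqref{connect_L_sta}. For the second block, \cref{lemma_yb_cone_inclusion} gives $\y^\nu_\beta\in\ncone_{\Gamma_{\alpha^\nu}}(\z^\nu)$ with $\z^\nu\in\Gamma_{\alpha^\nu}$, so $\z^\nu=\Pi_{\Gamma_{\alpha^\nu}}(\z^\nu+\y_\beta^\nu)$. Nonexpansiveness of the convex projection then gives
\[
\|\Phi(x^\nu)-\Pi_{\Gamma_{\alpha^\nu}}(\Phi(x^\nu)+\y^\nu_\beta)\|\le 2\|\Phi(x^\nu)-\z^\nu\|.
\]
By \eqref{multiplier_update}, $\Phi(x^\nu)-\z^\nu=\lambda_\nu(\y^\nu-\hat\y^\nu)=\lambda_\nu(\y^\nu-\y^{\nu-1})$.

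The difficulty is that this involves the unscaled $\y^\nu$, whereas the error bound is in terms of $\y_\beta$. Two facts are needed here.
\begin{itemize}
\item On the relevant components, $\y^\nu$ is recovered from $\y^\nu_\beta$ through a uniformly bounded inverse. When $j\notin\tb[\nu]$ the scaling is $1-\beta$. When $j\in\tb[\nu]$, $Q_\beta$ restricted to $\mathcal I_\beta$ is $-\frac1\beta(I-\frac{1}{k-\beta}\bm1\bm1^\top)$. Its eigenvalues are $-\frac1\beta$ and $\frac{1}{k-\beta}$, so it is invertible with bounded inverse for $\beta\in[\beta_{\min},\max_j\sit{\beta}[j][0]]$.
\item The scaling at iterations $\nu$ and $\nu-1$ agrees eventually. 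Since $\be_\nu=\bar\be$ eventually, this reduces to showing that the index sets $\tb[\nu]$ and $\mathcal I_{\beta}(u^{j,\nu})$ stabilize.
\end{itemize}

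I expect the second point to be the main obstacle. My fallback is to work with a reference multiplier $\bar\y^{(\nu)}$ defined by inverse-scaling $\bar\y$ under the current branch, and to bound $\|\y^\nu-\bar\y^{(\nu)}\|$ and $\|\y^{\nu-1}-\bar\y^{(\nu-1)}\|$ separately. The mismatch $\bar\y^{(\nu)}-\bar\y^{(\nu-1)}$ should vanish because $\beta$ is frozen eventually. It also vanishes because, under SSC, LICQ and convergence, the branch cannot keep switching on components carrying nonzero multipliers.

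Granting this, the estimate becomes
\[
\vartheta_\nu\le \sit{\epsilon}[\rm{d}][\nu]+c\lambda_\nu\big(\|\y^\nu_\beta-\bar\y\|+\|\y^{\nu-1}_\beta-\bar\y\|\big).
\]
Combining with \eqref{uniform_primal_dual_EB} gives $E_\nu\le c_2\sit{\epsilon}[\rm{d}][\nu]+c\lambda_\nu(E_\nu+E_{\nu-1})$.

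I would then choose $\lambda_c$ with $c\lambda_c<\tfrac13$. Absorbing $E_\nu$ on the left and using $\sit{\epsilon}[\rm{d}][\nu]=o(\vartheta_{\nu-1})=o(E_{\nu-1})$ (by the left inequality of \eqref{uniform_primal_dual_EB}) gives $E_\nu\le q E_{\nu-1}$ with $q<1$ for all large $\nu$.

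Finally, once $\lambda_\nu<\lambda_c$, Q-linear decay of $E_\nu$ implies $\vartheta_\nu$ decays, and hence so does $V_\nu=\|\Phi(x^\nu)-\z^\nu\|$. This makes $V_\nu\le\theta V_{\nu-1}$ hold eventually, consistent with the penalty update \eqref{alg_ALM_update_lambda}; in any case $\lambda_\nu$ is nonincreasing, so it stays below $\lambda_c$. This gives the claimed Q-linear convergence of $(x^\nu,\y_\beta^\nu)$.
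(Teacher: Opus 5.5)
Your outline matches the paper's proof in its main steps. These are the cluster-point/LICQ argument for $\y^\nu_\beta\to\bar\y$, and the bound on the second block of $\Theta_{\alpha^\nu}$ via $\z^\nu=\Pi_{\Gamma_{\alpha^\nu}}(\z^\nu+\y^\nu_\beta)$ and $\Phi(x^\nu)-\z^\nu=\lambda_\nu(\y^\nu-\y^{\nu-1})$; your factor $2$ in place of the paper's $1$ is harmless. They also include the bounded inverse of the principal block of $Q_\beta$ and the final absorption using $\epsilon_{\mathrm{d},\nu}=o(E_{\nu-1})$. However, the step you grant, $\lambda_\nu\|\y^\nu-\y^{\nu-1}\|\le c\lambda_\nu(E_\nu+E_{\nu-1})$, is the non-routine core of the proof, and it is not established. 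Your primary route, stabilization of $\mathcal{T}_{\beta}^{\nu}$ and of $\mathcal{I}_\beta(u^{j,\nu})$, is neither available nor needed. When $\bar z^j$ is multi-active, nothing in the hypotheses prevents $u^{j,\nu}$ from entering and leaving the smoothing region infinitely often (compare the example illustrated in \cref{fig_Tb}). Your fallback points the right way, but ``under SSC, LICQ and convergence the branch cannot keep switching'' lists ingredients rather than giving a mechanism.

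The missing argument is this. Take the fixed reference $\check{\y}\coloneqq(\bar y^0,(1-\bar\beta_1)\bar y^1,\ldots,(1-\bar\beta_r)\bar y^r)$ and split according to whether $\bar y^j$ vanishes.

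If $\bar y^j=\bm{0}$, then $\check y^j=\bm{0}$. Your uniformly bounded inverse then gives $\|y^{j,\nu}-\check y^j\|\le p_j\|y^{j,\nu}_\beta-\bar y^j\|$ whichever branch is active; this uses that $y^{j,\nu}=\lambda_\nu^{-1}(u^{j,\nu}-z^{j,\nu})$ is supported on $S^\nu_j$ when $j\in\mathcal{T}_{\beta}^{\nu}$. So switching on these components is harmless.

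If $\bar y^j\neq\bm{0}$, you must show $j\notin\mathcal{T}_{\beta}^{\nu}$ for all large $\nu$, and this comes from a sign conflict. Suppose $j\in\mathcal{T}_{\beta}^{\nu}$ infinitely often. Then $u^{j,\nu}\in\R^{p_j}_{++}$ by the convention accompanying \eqref{M_set}, so $y^{j,\nu}_\beta=\lambda_\nu^{-1}R_{\beta_{j,\nu}}(u^{j,\nu})\geq\bm{0}$ by \eqref{Pb_Mb}. On the other hand, $|S^\nu_j|>1$ and $S^\nu_j\subseteq\mathcal{I}_{=0}(\bar z^j)$ make $\bar z^j$ multi-active, so S-stationarity and \eqref{regular_normal_cone} give $\bar y^j\leq\bm{0}$. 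Letting $\nu\to\infty$ yields $\bar y^j=\bm{0}$, a contradiction. Hence eventually $y^{j,\nu}=(1-\bar\beta_j)y^{j,\nu}_\beta$ and $\|y^{j,\nu}-\check y^j\|=(1-\bar\beta_j)\|y^{j,\nu}_\beta-\bar y^j\|$.

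Together, $\|\y^\nu-\check{\y}\|\le c_3\|\y^\nu_\beta-\bar\y\|$ for a single fixed $\check{\y}$, and the triangle inequality through $\check{\y}$ gives the granted estimate.

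Separately, you never show $\z^\nu\to\bar\z$. You need it to place cluster points of $\y^\nu_\beta$ in $\ncone_{\Gamma_\alpha}(\bar\z)$, and to guarantee $\alpha^\nu\in\mathcal{A}^{\rm vcc}(\bar\z)$, without which \eqref{uniform_primal_dual_EB} cannot be invoked. It follows from $V_\nu\to0$ if $\inf_\nu\lambda_\nu>0$, and from $\z^\nu=\Phi(x^\nu)+\lambda_\nu(\hat{\y}^\nu-\y^\nu)$ with bounded multipliers if $\lambda_\nu\searrow0$. Your closing claim that $V_\nu\le\theta V_{\nu-1}$ eventually holds is unjustified, but it is unnecessary, since monotonicity of $\lambda_\nu$ suffices.
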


	\begin{proof}
	Let \(\alpha^{\nu}\) be given by~\eqref{define_alpha_nu}, let \(\bar\z=\Phi(\bar x)\), and let \(\mathcal{A}^{\rm vcc}(\bar\z)\) be defined in~\eqref{convexify_A}.
	We begin by proving that \(\alpha^{\nu}\in\mathcal{A}^{\rm vcc}(\bar\z)\) eventually holds.
	By the boundedness of \(Y\) and the assumption that \(\y^{\nu}\in Y\) eventually holds,
	the sequence \(\seq{\y^{\nu}}[\nu\in\N]\) is bounded for \(\nu\) sufficiently large.
	\cref{scaled_y}, together with \(0<\beta_{\min}\leq\sit{\beta}[j][\nu]\leq\max_{j}\sit{\beta}[j][0]<1\),
	implies that \(\seq{\y_{\beta}^{\nu}}[\nu\in\N]\) is bounded.
	Moreover, the penalty update rule in \cref{algorithm_ALM} ensures that \(\seq{\lambda_{\nu}}[\nu\in\N]\) is nonincreasing.
	If \(\inf_{\nu}\lambda_{\nu}>0\), then \(V_{\nu}\to0\), which implies \(\z^\nu\to \bar\z\).
	Otherwise, if \(\lambda_{\nu}\searrow0\), then
	\(\z^{\nu}=\Phi(x^{\nu})+\lambda_{\nu}(\hat \y^\nu-\y^\nu)\to\bar\z\),
	where the convergence follows from \(x^\nu\to\bar x\), the continuity of \(\Phi\), and the boundedness of \(\seq{\y^\nu,\hat\y^\nu}[\nu\in\N]\).
	Consequently, since
	\(
		S_{j}^{\nu}\subseteq\mathcal{I}_{=0}(\vit{z}[j][\nu])\subseteq\mathcal{I}_{=0}(\bar z^{j})
	\)
	for all \(\vit{z}[j][\nu]\in D^{j}\) close to \(\bar z^{j}\),
	we have
	\(\alpha^{\nu}\in\mathcal{A}^{\rm vcc}(\bar\z)\) for \(\nu\) sufficiently large,
	thereby enabling the application of the error bound~\eqref{uniform_primal_dual_EB}.

	We next prove the convergence of the scaled multiplier
	\(\y_{\beta}^{\nu}\to\bar\y\).
	By~\eqref{MPVCC_Lagrangian} and~\eqref{connect_L_sta}, we obtain
	\(
		\nabla_x
		\mathcal{L}_{\lambda_\nu,\be_\nu}(x^\nu,\hat\y^{\nu})
		=
		\nabla_x
		\mathcal{L}(x^\nu,\y_{\beta}^\nu),
	\)
	so that
	\(
		\|\nabla_x\mathcal{L}(x^{\nu},\y_{\beta}^{\nu})\|\leq\sit{\epsilon}[\rm{d}][\nu]\searrow0
	\).
	Let \(\y_{\beta}^{*}\) be any cluster point of \(\seq{\y_{\beta}^\nu}[\nu\in\N]\),
	and let \(K\in \mathcal{N}_{\infty}^{\sharp}\) be a subsequence along which \(\alpha^{\nu}=\alpha\) is constant.
	Then, \(\nabla f(\bar x)+J\Phi{(\bar x)}^{\top}\y_{\beta}^{*}=\bm{0}\), and,
	by \cref{lemma_yb_cone_inclusion} and the fact that \(\ncone_{\Gamma_{\alpha}}\) is a closed cone,
	\(\y_{\beta}^{*}\in\ncone_{\Gamma_{\alpha}}(\bar\z)\).
	Since \LICQ{} implies multiplier uniqueness, it follows that \(\y_{\beta}^{*}=\bar\y\).
	Because every cluster point of \(\seq{\y_{\beta}^{\nu}}[\nu\in\N]\) coincides with \(\bar\y\), we conclude that \(\y_{\beta}^{\nu}\to\bar\y\).

	Recall that \(\vit{u}[j][\nu]\) is defined in~\eqref{define_u}.
	We next show that the limiting scaled multiplier satisfies \(\vit{\bar y}[j]=\bm{0}\) whenever \(\vit{z}[j][\nu]\to\bar z^j\) with \(|\mathcal{I}_{=0}(\bar z^j)|>1\),
	and \(\vit{u}[j][\nu]\in\R_{++}^{p_{j}}\) for infinitely many \(\nu\).
	Let \(K\in \mathcal{N}_{\infty}^{\sharp}\) be a subsequence along which
	\(\vit{u}[j][\nu]\in\R_{++}^{p_{j}}\).
	By~\eqref{Pb_Mb} and~\eqref{connect_L_sta_C}, we have \(\vit{y_{\beta}}[j][\nu]=\lambda_{\nu}^{-1}R_{\sit{\beta}[j][\nu]}(\vit{u}[j][\nu])\geq\bm{0}\).
	On the other hand, by~\eqref{regular_normal_cone} and the assumption that \((\bar x,\bar\y)\) satisfies \SSC{}, we have \(\vit{\bar y}[j]\leq\bm{0}\).
	Since \(\y_{\beta}^{\nu}\to\bar\y\), we conclude that \(\vit{\bar y}[j]=\bm{0}\) in this case.
	Define \(\check\y\coloneqq(\bar y^{0},(1-\bar\beta_{1})\bar y^{1},\ldots,(1-\bar\beta_{r})\bar y^{r})\),
	where \(\bar\beta_{j}\) is defined in~\eqref{define_limit_beta},
	so that \(\check y^{j}=\bm{0}\) in the same case.

	We then derive the norm bound \(\|\y^{\nu}-\check\y\|\leq c_3\|\y_{\beta}^{\nu}-\bar\y\|\) for some constant \(c_{3}>0\).
	First consider \(\vit{\bar y}[j]=\bm{0}\).
	Suppose that \(|S_{j}^{\nu}|>1\).
	By~\eqref{Qb} and~\eqref{multiplier_update},
	the matrix \(Q_{\sit{\beta}[j][\nu]}\) has zero rows and columns corresponding to the zero components of \(\vit{y}[j][\nu]\),
	that is, for indices \(i\notin S_{j}^{\nu}\).
	Therefore, extracting the principal submatrix associated with the nonzero components of \(\vit{y}[j][\nu]\),
	and letting \(\beta=\sit{\beta}[j][\nu]\), \(S=S_{j}^{\nu}\), and \(k=|S|\), we obtain
	\[
		Q_{\beta,S}=
		\frac{1}{\beta}
		\left(
		\frac{\bm{1}\bm{1}^\top}{k-\beta}-I
		\right).
	\]
	This matrix is nonsingular and has two distinct eigenvalues,
	namely, \(\nicefrac{1}{(k-\beta)}\) and \(\nicefrac{-1}{\beta}\).
	Then, we have \(\|Q_{\beta,S}^{-1}\|=k-\beta\leq p_{j}\).
	Since \(\vit{\bar y}[j]=\bm{0}\), we also have \(\vit{\check y}[j]=\bm{0}\).
	Consequently, \(\|\vit{y}[j][\nu]\|\leq p_{j}\|\vit{y_{\beta}}[j][\nu]\|\).
	If instead \(|S|=1\),
	\cref{proposition_connect_Rb_Pi,proposition_connect_Rb_Pi_2} imply that
	\(\|\vit{y}[j][\nu]\|= (1-\bar\beta_{j})\|\vit{y_{\beta}}[j][\nu]\|\).
	Now consider \(\vit{\bar y}[j]\neq\bm{0}\).
	Suppose that \(|S_{j}^{\nu}|>1\) for infinitely many \(\nu\).
	Then \(j\in\tb[\nu]\) for those \(\nu\), and hence \(\mathcal{I}_{\leq0}(\vit{u}[j][\nu])=\emptyset\) by the convention in~\eqref{M_set},
	that is, \(\vit{u}[j][\nu]\in\R_{++}^{p_{j}}\);
	moreover
	\(
		S_{j}^{\nu}\subseteq\mathcal{I}_{=0}(\bar z^{j})
	\)
	gives
	\(|\mathcal{I}_{=0}(\bar z^j)|>1\).
	The preceding paragraph then yields \(\vit{\bar y}[j]=\bm{0}\), a contradiction.
	Hence \(|S_{j}^{\nu}|=1\), i.e., \(k_{\sit{\beta}[j][\nu]}(\vit{u}[j][\nu])=1\), for \(\nu\) sufficiently large.
	Then, by \cref{proposition_connect_Rb_Pi,proposition_connect_Rb_Pi_2}, we have
	\(\|\vit{y}[j][\nu]-\vit{\check y}[j]\|= (1-\bar\beta_{j})\|\vit{y_{\beta}}[j][\nu]-\vit{\bar y}[j]\|\).
	Therefore, there exists a constant \(c_3\coloneqq\max\set{1,p_{1},\ldots,p_{r}}\) such that
	\(\|\y^{\nu}-\check\y\|\leq c_3\|\y_{\beta}^{\nu}-\bar\y\|\).

	We are now in a position to establish the convergence rate of \cref{algorithm_ALM}.
	Define \(T^{\nu}(s)\coloneqq s-\Pi_{\Gamma_{\alpha^{\nu}}}(s+\y_{\beta}^\nu)\), which is nonexpansive by~\cite[Lemma 2.11]{steck2018}. Therefore,
	\begin{align}
		\begin{Vmatrix}
		\Phi(x^\nu)-\Pi_{\Gamma_{\alpha^{\nu}}}(\Phi(x^\nu)+\y_{\beta}^\nu)
		\end{Vmatrix}
		=
		\begin{Vmatrix}
		T^{\nu}(\Phi(x^\nu))
		\end{Vmatrix}
		&\leq
		\begin{Vmatrix}
		T^{\nu}(\Phi(x^\nu))-T^{\nu}(\z^{\nu})
		\end{Vmatrix}
		+
		\begin{Vmatrix}
		T^{\nu}(\z^{\nu})
		\end{Vmatrix}
		\nonumber\\
		&\leq
		\begin{Vmatrix}
		\Phi(x^\nu)-\z^{\nu}
		\end{Vmatrix}
		+
		\begin{Vmatrix}
		\z^{\nu}-\Pi_{\Gamma_{\alpha^{\nu}}}(\z^{\nu}+\y_{\beta}^\nu)
		\end{Vmatrix}
		\nonumber\\
		\text{\scriptsize by \cref{lemma_yb_cone_inclusion} and~\cite[Proposition 2.36]{steck2018}}
		&=
		\begin{Vmatrix}
		\Phi(x^\nu)-\z^{\nu}
		\end{Vmatrix}
		\nonumber\\
		\text{\scriptsize by~\eqref{multiplier_update}}
		&=
		\lambda_\nu
		\begin{Vmatrix}
		\y^{\nu}-\y^{\nu-1}
		\end{Vmatrix},\label{safeguarded_redundant}
	\end{align}
	where the last equality additionally relies on the assumption that \(\y^{\nu}\in Y\) for \(\nu\) sufficiently large.
	Under the notation in~\eqref{define_theta_E}, we have
	\allowdisplaybreaks\@
	\[
	\begin{aligned}
		\vartheta_\nu
		&\leq
		\begin{Vmatrix}
		\nabla_x
		\mathcal{L}(x^\nu,\y_{\beta}^\nu)
		\end{Vmatrix}
		+
		\begin{Vmatrix}
		\Phi(x^\nu)-\Pi_{\Gamma_{\alpha^{\nu}}}(\Phi(x^\nu)+\y_{\beta}^\nu)
		\end{Vmatrix}
		\\
		\text{\scriptsize by~\eqref{safeguarded_redundant}}
		&\leq
		\sit{\epsilon}[\rm{d}][\nu]
		+
		\lambda_\nu
		\begin{Vmatrix}
		\y^{\nu}-\y^{\nu-1}
		\end{Vmatrix}
		\\
		&\leq
		\sit{\epsilon}[\rm{d}][\nu]
		+
		\lambda_\nu
		\begin{Vmatrix}
		\y^{\nu}-\check\y
		\end{Vmatrix}
		+
		\lambda_\nu
		\begin{Vmatrix}
		\check\y-\y^{\nu-1}
		\end{Vmatrix}
		\\
		&\leq
		\sit{\epsilon}[\rm{d}][\nu]
		+
		c_3\lambda_\nu
		\begin{Vmatrix}
		\y_{\beta}^{\nu}-\bar\y
		\end{Vmatrix}
		+
		c_3\lambda_\nu
		\begin{Vmatrix}
		\bar\y-\y_{\beta}^{\nu-1}
		\end{Vmatrix}
		\\
		&\leq
		\sit{\epsilon}[\rm{d}][\nu]
		+
		c_3\lambda_\nu
		E_{\nu}
		+
		c_3\lambda_\nu
		E_{\nu-1}.
	\end{aligned}
	\]
	By \(\alpha^{\nu}\in\mathcal{A}^{\rm vcc}(\bar\z)\) and \cref{lemma_uniform_primal_dual_EB},
	it follows that \(E_{\nu}\leq c_{2}\vartheta_{\nu}\) and \(c_{1}\vartheta_{\nu-1}\leq E_{\nu-1}\).
	Combining these estimates yields
	\[
		\frac{E_{\nu}}{E_{\nu-1}}
		\leq
		\frac{c_{2}\sit{\epsilon}[\rm{d}][\nu]}{(1-c_2c_3\lambda_\nu)c_{1}\vartheta_{\nu-1}}
		+
		\frac{c_2c_3\lambda_\nu}{1-c_2c_3\lambda_\nu}
		=
		o(1)
		+
		\frac{c_2c_3}{\frac{1}{\lambda_\nu}-c_2c_3},
	\]
	where the last equality follows from
	\(\sit{\epsilon}[\rm{d}][\nu]\searrow 0\) and \(\sit{\epsilon}[\rm{d}][\nu]=o(\vartheta_{\nu-1})\).
	Let \(\lambda_{c}\coloneqq\nicefrac{1}{(2c_2c_3)}\).
	If \(\lambda_\nu<\lambda_{c}\) for \(\nu\) sufficiently large,
	the sequence \(\seq{E_{\nu}}[\nu\in\N]\) converges Q-linearly to zero.
	\end{proof}

	\section{Complexity Analysis}\label{sec_complexity_analysis}
		The previous section analyzes the asymptotic behavior of \cref{algorithm_ALM} as \(\nu\to\infty\).
In practice, however, the algorithm must terminate after a finite number of iterations while satisfying an accuracy tolerance.
In this section, we study the iteration complexity of \cref{algorithm_ALM} for obtaining an approximate stationary point, defined as follows.

\begin{definition}[Approximate stationary point]\label{definition_approximate_stationary}
Relative to \MPVCC{} with notation~\eqref{define_constraint}--\eqref{define_cone},
and given constants \(\epsilon_{\rm p},\epsilon_{\rm d}>0\),
a point \(x\in\R^{n}\) is said to be \emph{\((\epsilon_{\rm p},\epsilon_{\rm d})\)-approximate M-stationary} if there exist \(\y\in\R^{m+\bar p}\) and \(\z\in\Gamma\) such that
\[
\begin{cases}
	\begin{Vmatrix}
	\nabla f(x)
	+
	J\Phi{(x)}^{\top}\y
	\end{Vmatrix}
	\leq\epsilon_{\rm d}
	\\[5pt]
	\begin{Vmatrix}
	\Phi(x)-\z
	\end{Vmatrix}
	\leq\epsilon_{\rm p}
	\\[5pt]
	\y\in \ncone_{\Gamma}(\z).
\end{cases}
\]
If the above condition holds with
\(\ncone_{\Gamma}(\z)\)
being replaced by
\(\Cncone_{\Gamma}(\z)\),
then \(x\) is said to be \emph{\((\epsilon_{\rm p},\epsilon_{\rm d})\)-approximate C-stationary}.
If \(\epsilon_{\rm p}=\epsilon_{\rm d}=\epsilon\), then \(x\) is simply called an \emph{\(\epsilon\)-approximate M- or C-stationary point}.
\end{definition}

\subsection{Outer Complexity}
	In this subsection, we establish an upper bound on the number of outer iterations in \cref{algorithm_ALM}.
	To this end, we first present three key lemmas.

	\begin{lemma}\label{lemma_V}
	Relative to \MPVCC{} with notation~\eqref{define_constraint}--\eqref{define_cone},
	consider the sequences generated by \cref{algorithm_ALM}.
	Let \(V_\nu\), \(\tb[\nu]\), and \(\y_{\beta}^{\nu}\) be defined in~\eqref{update_residual},~\eqref{Tb_set}, and~\eqref{scaled_y}, respectively.
	Given \(\epsilon_{\rm p}>0\), then \(V_\nu\leq\epsilon_{\rm p}\) implies that
	\allowdisplaybreaks\@
	\[
		\begin{cases}
		\dist(\Phi(x^\nu),\Gamma)\leq\epsilon_{\rm p}
		\\[5pt]
		\begin{Vmatrix}
			\Phi(x^\nu)-\z^\nu
		\end{Vmatrix}
		\leq\epsilon_{\rm p}
		\\[5pt]
		\y_{\beta}^\nu\in
		\begin{cases}
			\Rncone_{\Gamma}(\z^\nu),\ \ &\mbox{if } \tb[\nu]=\emptyset
			\\[5pt]
			\Cncone_{\Gamma}(\z^\nu),\ \ &\mbox{otherwise.}
		\end{cases}
		\end{cases}
		\]
	\end{lemma}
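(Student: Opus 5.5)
The plan is to obtain the three claims in turn directly from the definitions in \cref{algorithm_ALM}, with almost all work already done in earlier results. The second claim is immediate: by~\eqref{update_residual}, \(V_\nu=\|\Phi(x^\nu)-\z^\nu\|\), so \(V_\nu\leq\epsilon_{\rm p}\) is literally the second line.

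For the first claim, we first check that \(\z^\nu\in\Gamma\) and then apply the bound~\eqref{dist_upper_bound}, namely \(\dist(\Phi(x^\nu),\Gamma)\leq\|\Phi(x^\nu)-\z^\nu\|=V_\nu\leq\epsilon_{\rm p}\). For \(\z^\nu\in\Gamma\), note that \(\vit{z}[0][\nu]\in C\) by~\eqref{update_v_C}. For \(j\notin\tb[\nu]\), \(\vit{z}[j][\nu]\) is a (selected) projection onto \(D^j\), hence lies in \(D^j\). For \(j\in\tb[\nu]\), we use that \(k_{\sit{\beta}[j][\nu]}(\vit{u}[j][\nu])>1\) forces \(\vit{u}[j][\nu]>\bm{0}\) by the convention in~\eqref{M_set}. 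Then \(\Pi_{D^j}^{\mathrm{C},\beta}(\vit{u}[j][\nu])\) from~\eqref{Qb} zeroes the components in the nonempty set \(\mathcal{I}_{\beta}(\vit{u}[j][\nu])\) and keeps the others, which are positive. So this point is in \(\R_+^{p_j}\) with at least one zero component, i.e., in \(D^j\).

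For the third claim we reuse~\eqref{multiplier_cone}, established in the proof of \cref{theorem_AM_AC} from \cref{proposition_cone_Rb} together with the identity \(\vit{y_\beta}[j][\nu]=\lambda_\nu^{-1}R_{\sit{\beta}[j][\nu]}(\vit{u}[j][\nu])\) and the cone property. This identity follows from~\eqref{multiplier_update}, \cref{proposition_connect_Rb_Pi,proposition_connect_Rb_Pi_2} and homogeneity (\cref{proposition_Rb_homogeneity}); we note that it is purely algebraic and does not need \(\sit{\epsilon}[\rm d][\nu]\to0\). It gives the following inclusions:
\begin{itemize}
\item \(\vit{y}[0][\nu]=\lambda_\nu^{-1}\bigl(A(x^\nu)+\lambda_\nu\vit{\hat y}[0][\nu]-\Pi_C(\cdot)\bigr)\in\ncone_C(\vit{z}[0][\nu])\), since it is a proximal normal of a convex set;
\item \(\vit{y_\beta}[j][\nu]\in\Rncone_{D^j}(\vit{z}[j][\nu])\) for \(j\notin\tb[\nu]\);
\item \(\vit{y_\beta}[j][\nu]\in\Cncone_{D^j}(\vit{z}[j][\nu])\) for \(j\in\tb[\nu]\).
\end{itemize}
Using the product formulas~\eqref{define_cone}, if \(\tb[\nu]=\emptyset\) all blocks are regular normals, giving \(\y_\beta^\nu\in\Rncone_\Gamma(\z^\nu)\). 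Otherwise, the inclusions \(\Rncone_{D^j}\subseteq\ncone_{D^j}\subseteq\Cncone_{D^j}\) put every block into the Clarke limiting cone, giving \(\y_\beta^\nu\in\Cncone_\Gamma(\z^\nu)\).

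The only delicate point is the membership \(\z^\nu\in\Gamma\) and the matching of base points in the case \(j\in\tb[\nu]\). We must be sure the cone from \cref{proposition_cone_Rb} is taken at \(\Pi_{D^j}^{\mathrm{C},\beta}(\vit{u}[j][\nu])\), which equals \(\vit{z}[j][\nu]\) by~\eqref{update_v_j}. We also need the selection convention for \(\Pi_D\) when \(k_\beta=1\) to agree with the one used in~\eqref{update_v_j}; both use \(\min\arg\min\), so they agree.
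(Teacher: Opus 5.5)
Your proposal is correct and takes the same route as the paper, whose proof simply cites \eqref{update_v_C}, \eqref{update_v_j}, \eqref{update_residual}, \eqref{dist_upper_bound} and \eqref{multiplier_cone}. You also spell out two points the paper leaves implicit: that \(\z^\nu\in\Gamma\) (using that \(k_\beta>1\) forces a strictly positive argument), and that \eqref{multiplier_cone} is purely algebraic and does not need the inner tolerances to vanish, which matters because the lemma is later applied with a constant inner tolerance.
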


	\begin{proof}
	The result follows from~\eqref{update_v_C},~\eqref{update_v_j},~\eqref{update_residual},~\eqref{dist_upper_bound} and~\eqref{multiplier_cone}.
	\end{proof}

	\begin{lemma}\label{lemma_boundedness}
	Suppose that the sequence \(\seq{x^\nu}[\nu\in\N]\) generated by \cref{algorithm_ALM} is bounded.
	Then the following hold for all \(\nu\in\N\):
	\begin{enumerate}
		\item There exists a constant \(\bar V>0\) such that \(V_\nu\leq\bar V\).\label{lemma_V_upper_bound}
		\item There exist constants \(\bar G>0\) and \(\bar J>0\) such that
	\[
		\begin{Vmatrix}
		\nabla f(x^\nu)
		\end{Vmatrix}
		\leq
		\bar G
		\quad\text{and}\quad
		\begin{Vmatrix}
		JA(x^\nu)
		\end{Vmatrix}
		\begin{Vmatrix}
		\vit{\hat y}[0][\nu]
		\end{Vmatrix}
		+
		\sum_{j=1}^{r}
		\begin{Vmatrix}
		JF^j(x^\nu)
		\end{Vmatrix}
		\begin{Vmatrix}
		\vit{\hat y}[j][\nu]
		\end{Vmatrix}
		\leq
		\bar J.
	\]\label{lemma_jacobian_upper_bound}
	\end{enumerate}
	\end{lemma}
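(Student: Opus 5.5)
The plan is to use compactness and continuity. Let \(X\) be a compact set containing \(\seq{x^\nu}[\nu\in\N]\), for instance the closure of the bounded sequence. All functions involved are continuously differentiable, so \(\Phi\), \(\nabla f\), \(JA\) and \(JF^j\) are bounded on \(X\). The safeguarded multipliers \(\hat\y^\nu=\Pi_Y(\y^{\nu-1})\) lie in the compact set \(Y\). Hence \(\|\hat\y^\nu\|\le \bar Y\coloneqq\max_{\y\in Y}\|\y\|\). The penalty parameters satisfy \(0<\lambda_\nu\le\lambda_0\), since \eqref{alg_ALM_update_lambda} only ever multiplies \(\lambda_\nu\) by \(\rho\in(0,1)\). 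The smoothing parameters satisfy \(\beta_{\min}\le\sit{\beta}[j][\nu]\le\max_j\sit{\beta}[j][0]<1\).

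\cref{lemma_jacobian_upper_bound} is then immediate. Set \(\bar G\coloneqq\max_{x\in X}\|\nabla f(x)\|\) and
\[
\bar J\coloneqq \bar Y\Bigl(\max_{x\in X}\|JA(x)\|+\sum_{j=1}^r\max_{x\in X}\|JF^j(x)\|\Bigr).
\]
Both are finite because continuous functions are bounded on compact sets. If either maximum vanishes, replace it by a positive constant so that \(\bar G,\bar J>0\).

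For \cref{lemma_V_upper_bound}, I would bound \(V_\nu=\|\Phi(x^\nu)-\z^\nu\|\) componentwise by comparing with \(\vit{u}[j][\nu]=F^j(x^\nu)+\lambda_\nu\vit{\hat y}[j][\nu]\) and \(u^{0,\nu}\coloneqq A(x^\nu)+\lambda_\nu\vit{\hat y}[0][\nu]\). By the triangle inequality,
\[
V_\nu\le \|\Phi(x^\nu)+\lambda_\nu\hat\y^\nu-\z^\nu\|+\lambda_0\bar Y.
\]
It therefore suffices to bound \(\|u^{j,\nu}-z^{j,\nu}\|\) for each block \(j=0,1,\ldots,r\).

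\begin{itemize}
\item For \(j=0\), the bound is \(\|u^{0,\nu}-\Pi_C(u^{0,\nu})\|=\dist(u^{0,\nu},C)\le\|u^{0,\nu}-c\|\) for any fixed \(c\in C\). This is bounded because \(u^{0,\nu}\) is bounded.
\item For \(j\notin\tb[\nu]\), the bound is \(\|u^{j,\nu}-z^{j,\nu}\|=\dist(u^{j,\nu},D^j)\le\|u^{j,\nu}\|\), since \(\bm{0}\in D^j\).
\item For \(j\in\tb[\nu]\), the bound is \(\|u^{j,\nu}-z^{j,\nu}\|=\bigl(\sum_{i\in\mathcal{I}_{\beta}(u)}u_i^2\bigr)^{1/2}\le\|u^{j,\nu}\|\), directly from the definition of \(\Pi_D^{\mathrm{C},\beta}\) in \eqref{Qb}. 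Alternatively, one can reuse the estimate from the proof of \cref{theorem_AM_AC}, which gives at most \(\frac{\sqrt{p_j}}{1-\beta}\dist(u,D^j)\) with \(1-\beta\ge1-\max_j\sit{\beta}[j][0]\).
\end{itemize}

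Since \(\|u^{j,\nu}\|\le\max_{x\in X}\|F^j(x)\|+\lambda_0\bar Y\), summing over the blocks yields a uniform constant \(\bar V\). Taking \(\bar V>0\) loses nothing.

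There is no real obstacle here. The only point needing care is that the Clarke-type projection \(\Pi_D^{\mathrm{C},\beta}\) does not move \(u\) farther than \(\|u\|\). This holds because it merely zeroes a subset of the coordinates.
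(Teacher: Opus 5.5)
Your proof is correct and follows essentially the paper's argument: \(u^{j,\nu}\) is bounded because \(x^\nu\) is bounded, \(\hat\y^\nu\in Y\), and \(\lambda_\nu\le\lambda_0\); both selections in \eqref{update_v_j} only clip or zero out coordinates of \(u^{j,\nu}\); and item 2 follows from continuity on a compact set. The only cosmetic difference is that the paper bounds \(\|z^{j,\nu}\|\le\|u^{j,\nu}\|\) and \(\|z^{0,\nu}\|\) directly and then uses \(V_\nu\le\|\Phi(x^\nu)\|+\|\z^\nu\|\), whereas you first apply a triangle inequality and then bound the blockwise residuals \(\|u^{j,\nu}-z^{j,\nu}\|\).
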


	\begin{proof}
	Recall that \(\vit{u}[j][\nu]\) is defined in~\eqref{define_u}.
	By~\eqref{update_v_j}, we have
	\(\|\vit{z}[j][\nu]\|\leq\|\vit{u}[j][\nu]\|\).
	Moreover, \(\seq{\vit{u}[j][\nu]}[\nu\in\N]\) is bounded because
	\(\seq{x^\nu,\hat \y^\nu}[\nu\in\N]\) are bounded and \(\lambda_{\nu}\leq\lambda_{0}\).
	Hence, \(\seq{\vit{z}[j][\nu]}[\nu\in\N]\) is also bounded.
	Furthermore, the continuity of \(\Pi_{C}\) implies that
	\(\seq{\vit{z}[0][\nu]}[\nu\in\N]\) is bounded.
	Therefore, \(\seq{V_{\nu}}[\nu\in\N]\) is bounded.
	The results of \cref{lemma_jacobian_upper_bound} follow directly from the continuity of the functions involved,
	and the boundedness of \(\seq{x^\nu,\hat \y^\nu}[\nu\in\N]\).
	\end{proof}

	\cref{lemma_boundedness} assumes that the sequence \(\seq{x^\nu}[\nu\in\N]\) generated by \cref{algorithm_ALM} is bounded.
	In practice, if \(\seq{x^\nu}[\nu\in\N]\) is unbounded,
	the algorithm could be terminated through an appropriate safeguard condition, such as \(\|x^\nu\|>c_{\max}\).
	In such a case, the algorithm would be regarded as having failed on that instance.

	Given \(\epsilon<\bar V\) and a threshold \(\underline{\lambda}>0\), we define the integers
	\begin{equation}
		N_{\rm V}\coloneqq
		\left\lceil
		\frac{\log(\epsilon/\bar V)}{\log(\theta)}
		\right\rceil\geq1
		\quad\text{and}\quad
		N_{\rm R}(\underline{\lambda})\coloneqq
		\max
		\left\{
		0,
		\left\lceil
		\frac{\log(\underline{\lambda}/\lambda_0)}{\log(\rho)}
		\right\rceil
		\right\},
	\end{equation}
	where \(\rho,\theta\in(0,1)\) are the parameters specified in \cref{algorithm_ALM}.

	\begin{lemma}\label{lemma_outer_counting}
	Given \(\epsilon>0\) with \(\epsilon<\bar V\) and \(\underline{\lambda}>0\),
	set \(\sit{\epsilon}[\rm{p}]=\sit{\epsilon}[\rm{d}]=\epsilon\).
	Suppose that the sequence \(\seq{\sit{\epsilon}[\rm{d}][\nu]}[\nu\in\N]\) in \cref{algorithm_ALM} is chosen as \(\sit{\epsilon}[\rm{d}][\nu]=\epsilon\) for all \(\nu\in\N\),
	and that the sequence
	\(\seq{x^\nu}[\nu\in\N]\) generated by \cref{algorithm_ALM} is bounded.
	Let \(N\coloneqq(1+N_{\rm R}(\underline{\lambda}))N_{\rm V}+1\).
	Then, there exists \(\nu\in\set{0,\ldots,N}\) such that
	\(V_{\nu}\leq\epsilon\) or \(\lambda_{\nu}\leq\underline{\lambda}\).
	\end{lemma}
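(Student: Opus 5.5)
We argue by contradiction: assume that \(V_\nu>\epsilon\) and \(\lambda_\nu>\underline{\lambda}\) for every \(\nu\in\set{0,\ldots,N}\). The argument is purely combinatorial on the update rule~\eqref{alg_ALM_update_lambda}; the only analytic input is the bound \(V_\nu\leq\bar V\) from \cref{lemma_V_upper_bound}, which holds because \(\seq{x^\nu}[\nu\in\N]\) is bounded.

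First, we count penalty reductions. Since \(\lambda_{\nu+1}\in\set{\lambda_\nu,\rho\lambda_\nu}\), after \(q\) reductions we have \(\lambda=\rho^{q}\lambda_0\). If \(q\geq N_{\rm R}(\underline{\lambda})\), then \(\rho^{q}\lambda_0\leq\underline{\lambda}\) by the definition of \(N_{\rm R}\) (when \(\lambda_0\leq\underline\lambda\) the claim already holds at \(\nu=0\)). Hence, under the contradiction hypothesis, at most \(N_{\rm R}(\underline{\lambda})-1\) reductions can occur with their effect visible within \(\set{0,\ldots,N}\). Be careful with the indexing here: a reduction triggered at iteration \(\nu\) affects \(\lambda_{\nu+1}\), and this is where the final ``\(+1\)'' in \(N\) is absorbed. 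In any case, at most \(N_{\rm R}(\underline\lambda)\) reductions are triggered at iterations \(\nu\in\set{1,\ldots,N-1}\).

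Second, the reduction indices split \(\set{1,\ldots,N}\) into at most \(1+N_{\rm R}(\underline{\lambda})\) maximal blocks of consecutive iterations at which no reduction is triggered. At each iteration \(\nu>0\) of such a block, \(V_\nu\leq\theta V_{\nu-1}\). A block starting right after an iterate \(\nu_0\) with \(V_{\nu_0}\leq\bar V\) and of length \(L\) therefore gives \(V_{\nu_0+L}\leq\theta^{L}\bar V\). Once \(L\geq N_{\rm V}\), we get \(\theta^{L}\bar V\leq\epsilon\) by the definition of \(N_{\rm V}\), contradicting \(V_\nu>\epsilon\). Thus every block has length at most \(N_{\rm V}-1\), and the reduction iterations themselves number at most \(N_{\rm R}\). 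The total count is then at most \((1+N_{\rm R})(N_{\rm V}-1)+N_{\rm R}+1\), which is smaller than \(N+1=(1+N_{\rm R})N_{\rm V}+2\), the number of indices in \(\set{0,\ldots,N}\). This is the desired contradiction.

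The main obstacle is the off-by-one bookkeeping: iteration \(\nu=0\) never triggers a reduction, a reduction at \(\nu\) only changes \(\lambda_{\nu+1}\), and the geometric decrease must be restarted from the bound \(\bar V\) after each reduction. We will write the block decomposition explicitly, with reduction indices \(\nu_1<\cdots<\nu_q\) and blocks \((\nu_i,\nu_{i+1})\), and check that the counting fits within the stated \(N\).
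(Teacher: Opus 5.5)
Your proposal is correct and follows essentially the same route as the paper. Both arguments go by contradiction: they bound the number of penalty reductions through \(N_{\rm R}(\underline{\lambda})\), use \(V_\nu\le\bar V\) to cap each run of non-reduction iterations at \(N_{\rm V}-1\), and close with a counting argument. The paper counts \(\set{1,\ldots,N}\) and reaches \(N\le N-1\), while you count \(\set{0,\ldots,N}\). Your final tally also has enough slack to absorb a reduction triggered at iteration \(N\) itself, which your bound of \(N_{\rm R}\) reductions over \(\set{1,\ldots,N-1}\) does not explicitly cover. So the off-by-one bookkeeping you flag causes no real difficulty.
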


	\begin{proof}
	We proceed by contradiction.
	Suppose that \(V_{\nu}>\epsilon\) and \(\lambda_{\nu}>\underline{\lambda}\) for all \(\nu\in\set{0,\ldots,N}\).
	Since \(\lambda_{\nu}=\rho^{t_{\nu}}\lambda_{0}\), where \(t_{\nu}\) denotes the number of \(\lambda\) reductions performed before iteration \(\nu\),
	the condition \(\lambda_{\nu}>\underline{\lambda}\) implies that \(t_{N}\leq N_{\rm R}(\underline{\lambda})\).
	Equivalently, among the iterations \(\nu\in\set{1,\ldots,N-1}\), at most \(N_{\rm R}(\underline{\lambda})\) satisfy \(V_\nu>\theta V_{\nu-1}\).
	Between any two successive such iterations,
	the sequence \(\seq{V_\nu}[\nu\in\N]\) decreases geometrically for \(L\) iterations,
	satisfying \(V_\nu\leq\theta V_{\nu-1}\).
	In view of \cref{lemma_V_upper_bound},
	it follows that \(L\leq N_{\rm V}-1\); otherwise, \(V_{\nu}\leq\epsilon\) would hold.
	Counting the iterations in \(\set{1,\ldots,N}\), we obtain
	\(N\leq(N_{\rm R}(\underline{\lambda})+1)+(N_{\rm R}(\underline{\lambda})+1)(N_{\rm V}-1)=N-1\),
	where the first term accounts for at most \(N_{\rm R}(\underline{\lambda})\) reductions in \(\set{1,\ldots,N-1}\) together with the final iteration \(N\).
	Hence, a contradiction is reached.
	\end{proof}

	\begin{theorem}[Outer complexity with nonvanishing \(\lambda\)]\label{theorem_complexity_bounded}
	Given \(\epsilon>0\) with \(\epsilon<\bar V\),
	set \(\sit{\epsilon}[\rm{p}]=\sit{\epsilon}[\rm{d}]=\epsilon\).
	Suppose that the sequence \(\seq{\sit{\epsilon}[\rm{d}][\nu]}[\nu\in\N]\) in \cref{algorithm_ALM} is chosen as \(\sit{\epsilon}[\rm{d}][\nu]=\epsilon\) for all \(\nu\in\N\),
	that the sequence
	\(\seq{x^\nu}[\nu\in\N]\) generated by \cref{algorithm_ALM} is bounded,
	and that \(\lambda_\nu>\underline{\lambda}>0\) for all \(\nu\in\N\).
	Then, within the first
	\[
		\bar\nu=
		\left(
		1
		+
		N_{\rm R}(\underline{\lambda})
		\right)
		N_{\rm V}+1
	\]
	outer iterations, there exists an index \(\nu\leq\bar\nu\) such that \(x^{\nu}\) is an \(\epsilon\)-approximate M-stationary point of \MPVCC\@ if \(\tb[\nu]=\emptyset\),
	and an \(\epsilon\)-approximate C-stationary point otherwise.
	\end{theorem}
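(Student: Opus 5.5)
The plan is to combine \cref{lemma_outer_counting} with \cref{lemma_V} and the stationarity identity~\eqref{connect_L_sta}. Since \(\lambda_\nu>\underline{\lambda}\) for every \(\nu\in\N\), the second alternative in \cref{lemma_outer_counting} never holds. Its hypotheses are exactly those assumed here: \(\epsilon<\bar V\), \(\sit{\epsilon}[\rm{d}][\nu]=\epsilon\), and boundedness of \(\seq{x^\nu}[\nu\in\N]\), which gives \(\bar V\) through \cref{lemma_V_upper_bound}. The lemma therefore yields an index \(\nu\in\set{0,\ldots,\bar\nu}\) with \(V_\nu\leq\epsilon\), where \(\bar\nu=(1+N_{\rm R}(\underline{\lambda}))N_{\rm V}+1\) coincides with the \(N\) of that lemma.

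At this index I would exhibit the certificate required by \cref{definition_approximate_stationary}, taking \(\z\coloneqq\z^\nu\) and \(\y\coloneqq\y_{\beta}^\nu\).
\begin{enumerate}
\item \emph{Feasibility.} By construction~\eqref{update_v_C}--\eqref{update_v_j}, \(\z^\nu\in\Gamma\). The component \(\vit{z}[0][\nu]\) lies in \(C\), and each \(\vit{z}[j][\nu]\) lies in \(D^j\): it is either a projection onto \(D^j\), or it has zeros on \(\mathcal{I}_{\beta}\) and nonnegative entries elsewhere, since \(u>\bm{0}\) in that case.
\item \emph{Primal residual.} \cref{lemma_V} gives \(\|\Phi(x^\nu)-\z^\nu\|\leq\epsilon=\epsilon_{\rm p}\).
\item \emph{Dual residual.} By~\eqref{connect_L_sta}, \(\nabla_x\mathcal{L}_{\lambda_\nu,\be_\nu}(x^\nu,\hat\y^\nu)=\nabla f(x^\nu)+J\Phi(x^\nu)^\top\y_{\beta}^\nu\). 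The inner stopping rule in \cref{alg_ALM_stationary} then gives \(\|\nabla f(x^\nu)+J\Phi(x^\nu)^\top\y_{\beta}^\nu\|\leq\sit{\epsilon}[\rm{d}][\nu]=\epsilon=\epsilon_{\rm d}\).
\end{enumerate}

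The remaining condition is the cone inclusion, which I would split on \(\tb[\nu]\).
\begin{itemize}
\item If \(\tb[\nu]=\emptyset\), \cref{lemma_V} (equivalently~\eqref{multiplier_cone}) gives \(\y_{\beta}^\nu\in\Rncone_\Gamma(\z^\nu)\). Since \(\Rncone_\Gamma\subseteq\ncone_\Gamma\), the point \(x^\nu\) is \(\epsilon\)-approximate M-stationary.
\item Otherwise, \cref{lemma_V} gives \(\y_{\beta}^\nu\in\Cncone_\Gamma(\z^\nu)\). This relies on \(\Rncone_{D^j}\subseteq\Cncone_{D^j}\) for the components with \(j\notin\tb[\nu]\), so that the product cone~\eqref{define_cone} is respected. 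Hence \(x^\nu\) is \(\epsilon\)-approximate C-stationary.
\end{itemize}

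The only delicate step is the index bookkeeping. The algorithm's iterate \(x^\nu\) and its associated quantities \(V_\nu\), \(\z^\nu\), \(\y_{\beta}^\nu\) must all refer to the same \(\nu\), and the range \(\set{0,\ldots,N}\) of \(\nu\) in \cref{lemma_outer_counting} must match ``within the first \(\bar\nu\) outer iterations.'' Both facts follow directly from the definitions, so once \cref{lemma_outer_counting} is in hand the proof is essentially an assembly of earlier results.
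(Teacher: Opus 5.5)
Your proposal is correct and follows the paper's own argument: \cref{lemma_outer_counting}, with the \(\lambda\)-alternative ruled out, gives an index \(\nu\leq\bar\nu\) with \(V_\nu\leq\epsilon\); the inner stopping rule together with~\eqref{connect_L_sta} gives the dual residual for \(\y_{\beta}^{\nu}\); and \cref{lemma_V} supplies the primal residual and the cone inclusion. Your explicit checks that \(\z^\nu\in\Gamma\), and that \(\Rncone_{D^j}\subseteq\Cncone_{D^j}\) covers the components with \(j\notin\tb[\nu]\), only spell out what the paper compresses into its citation of \cref{lemma_V}.
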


	\begin{proof}
	By \cref{lemma_outer_counting}, there exists \(\nu\leq\bar\nu\) with \(V_{\nu}\leq\epsilon\).
	Moreover, \cref{alg_ALM_stationary} in \cref{algorithm_ALM} yields
	\(
		\|
		\nabla f(x^{\nu})
		+
		J\Phi{(x^{\nu})}^{\top}\y_{\beta}^{\nu}
		\|
		\leq\epsilon
	\).
	Therefore, the claim follows from \cref{lemma_V}.
	\end{proof}

	The outer iteration complexity established in \cref{theorem_complexity_bounded} matches the result for standard constrained optimization; see~\cite[Theorem 3.1]{birgin2020}.
	However, in general, no strategy guarantees that \( \seq{\lambda_\nu}[\nu\in\N]\) remains bounded away from zero.
	This situation may arise, for instance, when the feasible set of \MPVCC\@ is empty.
	In the following result, we consider the case where the iterates fail to be approximately stationary in the sense of \cref{definition_approximate_stationary},
	but are nevertheless approximately stationary for the feasibility problem~\eqref{feasibility_problem} associated with the current smoothing parameters.

	\begin{theorem}[Outer complexity with vanishing \(\lambda\)]\label{theorem_complexity_unbounded}
	Given \(\epsilon>0\) with \(\epsilon<\bar V\) and \(\beta_{\min}\in(0,\min\{\nicefrac{1}{2},1-\max_{j}\sit{\beta}[j][0]\})\),
	set \(\sit{\epsilon}[\rm{p}]=\sit{\epsilon}[\rm{d}]=\epsilon\).
	Suppose that the sequence \(\seq{\sit{\epsilon}[\rm{d}][\nu]}[\nu\in\N]\) in \cref{algorithm_ALM} is chosen as \(\sit{\epsilon}[\rm{d}][\nu]=\epsilon\) for all \(\nu\in\N\),
	and that the sequence
	\(\seq{x^\nu}[\nu\in\N]\) generated by \cref{algorithm_ALM} is bounded.
	Let
	\begin{equation}\label{lambda_min}
		\lambda_{\min}\coloneqq\min
		\left\{
		1,\
		\tfrac{\epsilon}{2\bar G},\
		\tfrac{\beta_{\min}\epsilon}{2\bar J}
		\right\}.
	\end{equation}
	Then, within the first
	\[
		\bar\nu=
		\left(
		1+
		N_{\rm R}(\lambda_{\min})
		\right)
		N_{\rm V}+1
	\]
	outer iterations, there exists an index \(\nu\leq\bar\nu\) such that \(x^{\nu}\) is an \(\epsilon\)-approximate M- or C-stationary point of \MPVCC{}, or
	\begin{equation}\label{2e_sta_fea}
		\begin{Vmatrix}
		\nabla
		h_{\be_{\nu}}
		(x^{\nu})
		\end{Vmatrix}
		\leq2\epsilon
		\quad\text{and}\quad
		V_{\nu}>\epsilon,
	\end{equation}
	where \(h_{\be}\) is defined in~\eqref{feasibility_problem}.
	\end{theorem}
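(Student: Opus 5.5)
By \cref{lemma_outer_counting} with \(\underline{\lambda}=\lambda_{\min}\), there is an index \(\nu\leq\bar\nu\) such that \(V_\nu\leq\epsilon\) or \(\lambda_\nu\leq\lambda_{\min}\). The proof then splits according to which of these occurs.

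If \(V_\nu\leq\epsilon\), the argument of \cref{theorem_complexity_bounded} applies verbatim. \cref{alg_ALM_stationary} together with \eqref{connect_L_sta} gives
\(\|\nabla f(x^\nu)+J\Phi(x^\nu)^\top\y_{\beta}^\nu\|\leq\epsilon\). \cref{lemma_V} then supplies \(\z^\nu\in\Gamma\) with \(\|\Phi(x^\nu)-\z^\nu\|\leq\epsilon\) and the required cone inclusion. Hence \(x^\nu\) is \(\epsilon\)-approximate M-stationary if \(\tb[\nu]=\emptyset\) and \(\epsilon\)-approximate C-stationary otherwise.

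If \(V_\nu>\epsilon\), then \(\lambda_\nu\leq\lambda_{\min}\), and we aim to prove \(\|\nabla h_{\be_\nu}(x^\nu)\|\leq2\epsilon\). We start from \eqref{eta_1} multiplied by \(\lambda_\nu\):
\[
\lambda_\nu\eta^\nu-\lambda_\nu\nabla f(x^\nu)
=JA(x^\nu)^\top\bigl[u^0-\Pi_C(u^0)\bigr]+\sum_{j=1}^r JF^j(x^\nu)^\top R_{\beta_{j,\nu}}(u^{j,\nu}),
\]
where \(u^0\coloneqq A(x^\nu)+\lambda_\nu\hat y^{0,\nu}\). Since
\(\nabla h_{\be_\nu}(x^\nu)=JA(x^\nu)^\top[A(x^\nu)-\Pi_C(A(x^\nu))]+\sum_j JF^j(x^\nu)^\top R_{\beta_{j,\nu}}(F^j(x^\nu))\),
we subtract the two expressions and apply the triangle inequality. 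This yields
\[
\|\nabla h_{\be_\nu}(x^\nu)\|\leq\lambda_\nu\epsilon+\lambda_\nu\bar G+\|JA\|\,\lambda_\nu\|\hat y^{0,\nu}\|+\sum_j\|JF^j\|\,L_{\beta_{j,\nu}}\lambda_\nu\|\hat y^{j,\nu}\|.
\]
Here we used two Lipschitz facts. First, \(I-\Pi_C\) is 1-Lipschitz. Second, \(R_\beta\) is \(L_\beta\)-Lipschitz by \cref{proposition_Rb_smooth}, with \(L_\beta=\max\{1/\beta,1/(1-\beta)\}\).

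The step I expect to be the main obstacle is bounding \(L_{\beta_{j,\nu}}\) uniformly by \(1/\beta_{\min}\). Since \(\beta_{\min}\leq\beta_{j,\nu}\leq\max_j\beta_{j,0}\), we get \(1/\beta_{j,\nu}\leq1/\beta_{\min}\). The hypothesis \(\beta_{\min}<1-\max_j\beta_{j,0}\) gives \(1/(1-\beta_{j,\nu})\leq 1/(1-\max_j\beta_{j,0})<1/\beta_{\min}\). The hypothesis \(\beta_{\min}<1/2\) ensures \(1/\beta_{\min}\geq1\), so the same factor also dominates the \(C\)-term constant 1.

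With this bound, \cref{lemma_jacobian_upper_bound} gives
\(\|\nabla h_{\be_\nu}(x^\nu)\|\leq\lambda_\nu\epsilon+\lambda_\nu\bar G+\lambda_\nu\bar J/\beta_{\min}\).
The definition \eqref{lambda_min} of \(\lambda_{\min}\) makes each term small: \(\lambda_\nu\epsilon\leq\epsilon\) since \(\lambda_{\min}\leq1\), while \(\lambda_\nu\bar G\leq\epsilon/2\) and \(\lambda_\nu\bar J/\beta_{\min}\leq\epsilon/2\). Summing, \(\|\nabla h_{\be_\nu}(x^\nu)\|\leq2\epsilon\), which together with \(V_\nu>\epsilon\) establishes \eqref{2e_sta_fea}.
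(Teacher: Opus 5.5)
Your proposal is correct and follows the paper's proof essentially step by step. You invoke \cref{lemma_outer_counting} with $\underline{\lambda}=\lambda_{\min}$, reuse the argument of \cref{theorem_complexity_bounded} when $V_\nu\le\epsilon$, and otherwise compare $\lambda_\nu$ times \eqref{eta_1} with $\nabla h_{\be_\nu}(x^\nu)$, using the Lipschitz bounds and \eqref{lambda_min} to obtain $\epsilon+\tfrac12\epsilon+\tfrac12\epsilon$. The only difference is minor: you use the $1$-Lipschitz continuity of $I-\Pi_C$, whereas the paper bounds the $C$-term by $2\lambda_\nu\|JA(x^\nu)\|\|\hat y^{0,\nu}\|$ via the triangle inequality and nonexpansiveness of $\Pi_C$, so your version never needs $\beta_{\min}<1/2$ (only $1/\beta_{\min}\ge 1$).
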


	\begin{proof}
	By \cref{lemma_outer_counting} with \(\underline{\lambda}=\lambda_{\min}\), there exists \(\nu\leq\bar\nu\) such that \(V_{\nu}\leq\epsilon\) or \(\lambda_{\nu}\leq\lambda_{\min}\).
	In the former case, \(x^{\nu}\) is an \(\epsilon\)-approximate M- or C-stationary point of \MPVCC{}, as shown in the proof of \cref{theorem_complexity_bounded}.
	In the latter case, we may assume \(V_{\nu}>\epsilon\) and show that the first inequality in~\eqref{2e_sta_fea} holds.
	Since \(\lambda_{\nu}\leq\lambda_{\min}\), we have
	\begin{equation}\label{lambda_min_2}
		\lambda_\nu\leq1,
		\quad
		\lambda_\nu\leq\tfrac{\epsilon}{2\bar G}
		\quad\text{and}\quad
		\lambda_\nu\leq\tfrac{\beta_{\min}\epsilon}{2\bar J}.
	\end{equation}
	By \cref{alg_ALM_stationary} in \cref{algorithm_ALM} and~\eqref{eta_1}, we obtain
	\begin{equation}\label{first_order_unbounded_lambda}
		\begin{Vmatrix}
		\nabla f(x^\nu)
		+
		\tfrac{1}{\lambda_\nu}
		JA{(x^\nu)}^\top
		\begin{bmatrix}
			A(x^\nu)+\lambda_\nu
			\vit{\hat y}[0][\nu]
			-
			\vit{z}[0][\nu]
		\end{bmatrix}
		+
		\tfrac{1}{\lambda_\nu}
		\sum_{j=1}^{r}JF^j{(x^\nu)}^\top
		R_{\sit{\beta}[j][\nu]}
		(
		F^j(x^\nu)
		+
		\lambda_\nu \vit{\hat y}[j][\nu]
		)
		\end{Vmatrix}
		\leq\epsilon.
	\end{equation}
	Multiplying both sides by \(\lambda_\nu\) and applying the triangle inequality, we obtain
	\begin{align}
		\begin{Vmatrix}
		JA{(x^\nu)}^\top
		\begin{bmatrix}
			A(x^\nu)+\lambda_\nu
			\vit{\hat y}[0][\nu]
			-
			\vit{z}[0][\nu]
		\end{bmatrix}
		+
		\sum_{j=1}^{r}JF^j{(x^\nu)}^\top
		R_{\sit{\beta}[j][\nu]}
		(
			F^j(x^\nu)+\lambda_\nu
			\vit{\hat y}[j][\nu]
		)
		\end{Vmatrix}
		&\leq
		\lambda_\nu\epsilon+\lambda_\nu\|\nabla f(x^\nu)\|
		\nonumber
		\\
		\text{\scriptsize by \cref{lemma_jacobian_upper_bound}}
		&\leq
		\lambda_\nu\epsilon+\lambda_\nu\bar G
		\nonumber
		\\
		\text{\scriptsize by~\eqref{lambda_min_2}}
		&\leq\epsilon+\tfrac{1}{2}\epsilon.\label{1_5_e}
	\end{align}
	Since \(\beta_{\min}\leq\sit{\beta}[j][\nu]\leq\sit{\beta}[j][0]\) by~\eqref{alg_ALM_reduce_beta}, and
	\(\beta_{\min}\in(0,\min\{\nicefrac{1}{2},1-\max_{j}\sit{\beta}[j][0]\})\),
	the Lipschitz moduli in \cref{proposition_Rb_smooth} satisfy
	\begin{equation}\label{b_min}
		\max\{2,L_{\sit{\beta}[1][\nu]},\ldots,L_{\sit{\beta}[r][\nu]}\}
		=
		\max
		\left\{
		2,\max\{\tfrac{1}{\sit{\beta}[1][\nu]},\tfrac{1}{1-\sit{\beta}[1][\nu]}\},\ldots,\max\{\tfrac{1}{\sit{\beta}[r][\nu]},\tfrac{1}{1-\sit{\beta}[r][\nu]}\}
		\right\}
		\leq
		\tfrac{1}{\beta_{\min}}.
	\end{equation}
	Using the nonexpansiveness of the projection \(\Pi_C\),
	the global Lipschitz continuity of \(R_\beta\), and the triangle inequality,
	we derive the following upper bound:
	\allowdisplaybreaks\@
	\begin{align}
		&\!\!\!\!\!\!\!\!\!\!\!\!\!\!\!\!\!\!\!\!\!\!\!\!
		\begin{Vmatrix}
		\nabla
		\begin{bmatrix}
			\tfrac{1}{2}\dist{(A(x^\nu),C)}^2
			+
			\sum_{j=1}^{r}
			r_{\sit{\beta}[j][\nu]}(F^j(x^\nu))
		\end{bmatrix}
		\end{Vmatrix}
		-
		\begin{Vmatrix}
		\begin{aligned}
		&
		JA{(x^\nu)}^\top
		\begin{bmatrix}
			A(x^\nu)+\lambda_\nu
			\vit{\hat y}[0][\nu]
			-
			\vit{z}[0][\nu]
		\end{bmatrix}
		\\
		&\quad+
		\sum_{j=1}^{r}JF^j{(x^\nu)}^\top
		R_{\sit{\beta}[j][\nu]}
		(
			F^j(x^\nu)+\lambda_\nu
			\vit{\hat y}[j][\nu]
		)
		\end{aligned}
		\end{Vmatrix}
		\nonumber\\
		&\!\leq
		\begin{Vmatrix}
		\begin{aligned}
		&
		JA{(x^\nu)}^\top
		\begin{bmatrix}
			A(x^\nu)+\lambda_\nu
			\vit{\hat y}[0][\nu]
			-
			\Pi_C
			(
			A(x^\nu)+\lambda_\nu
			\vit{\hat y}[0][\nu]
			)
			-
			A(x^\nu)
			+
			\Pi_C(A(x^\nu))
		\end{bmatrix}
		\\
		&\quad+
		\sum_{j=1}^{r}JF^j{(x^\nu)}^\top
		\begin{bmatrix}
			R_{\sit{\beta}[j][\nu]}
			(
			F^j(x^\nu)+\lambda_\nu
			\vit{\hat y}[j][\nu]
			)
			-
			R_{\sit{\beta}[j][\nu]}(F^j(x^\nu))
		\end{bmatrix}
		\end{aligned}
		\end{Vmatrix}
		\nonumber\\
		&
		\begin{aligned}
		\leq
		\|JA{(x^\nu)}\| \|\lambda_\nu \vit{\hat y}[0][\nu]\|
		&+
		\|JA{(x^\nu)}\|\|\Pi_C(A(x^\nu)+\lambda_\nu \vit{\hat y}[0][\nu])-\Pi_C(A(x^\nu))\|
		\\
		&+
		\sum_{j=1}^{r}\|JF^j{(x^\nu)}\|
		\|R_{\sit{\beta}[j][\nu]}(F^j(x^\nu)+\lambda_\nu\vit{\hat y}[j][\nu])
		-
		R_{\sit{\beta}[j][\nu]}(F^j(x^\nu))\|
		\end{aligned}
		\nonumber\\
		\text{\scriptsize by \cref{proposition_Rb_smooth}}
		&\leq
		2\lambda_\nu\|JA{(x^\nu)}\| \|\vit{\hat y}[0][\nu]\|
		+
		\sum_{j=1}^{r}L_{\sit{\beta}[j][\nu]}\lambda_\nu\|JF^j{(x^\nu)}\|
		\|\vit{\hat y}[j][\nu]\|
		\nonumber\\
		\text{\scriptsize by \cref{lemma_jacobian_upper_bound}}
		&\leq
		\lambda_\nu\max\{2,L_{\sit{\beta}[1][\nu]},\ldots,L_{\sit{\beta}[r][\nu]}\}\bar J
		\nonumber\\
		\text{\scriptsize by~\eqref{b_min}}
		&\leq
		\lambda_\nu\tfrac{\bar J}{\beta_{\min}}
		\nonumber\\
		\text{\scriptsize by~\eqref{lambda_min_2}}
		&\leq
		\tfrac{1}{2}\epsilon.\label{0_5_e}
	\end{align}
	Combining~\eqref{1_5_e} and~\eqref{0_5_e} yields the first inequality of~\eqref{2e_sta_fea}.
	\end{proof}

	\cref{theorem_complexity_unbounded} matches the result in~\cite[Theorem 3.5]{birgin2020} for standard constrained optimization.
	It is important to note that, even when \(V_{\nu}>\epsilon\),
	the iterate \(x^\nu\) may still be approximately feasible in the sense that
	\(\dist(\Phi(x^\nu),\Gamma)\leq\epsilon\);
	see~\eqref{dist_upper_bound}.
	However, in this situation, \(x^\nu\) does not satisfy the conditions required to characterize the approximate stationarity in \cref{lemma_V}.
	On the other hand, the \(2\epsilon\) bound in~\eqref{2e_sta_fea} can be further tightened by increasing the denominator factors in~\eqref{lambda_min},
	at the expense of requiring a larger number of outer iterations.

\subsection{Total Complexity}
	At each outer iteration, the subproblem in \cref{alg_ALM_stationary} of \cref{algorithm_ALM} is solved approximately
	using an iterative method for unconstrained and smooth optimization.
	Accordingly, the total complexity of \cref{algorithm_ALM} is analyzed in this subsection
	by taking into account the iteration complexity of the inner solver.

	\begin{theorem}[Total complexity]\label{theorem_complexity_total}
	Given \(\epsilon>0\) with \(\epsilon<\bar V\),
	set \(\sit{\epsilon}[\rm{p}]=\sit{\epsilon}[\rm{d}]=\epsilon\).
	Suppose that the sequence \(\seq{\sit{\epsilon}[\rm{d}][\nu]}[\nu\in\N]\) in \cref{algorithm_ALM} is chosen as \(\sit{\epsilon}[\rm{d}][\nu]=\epsilon\) for all \(\nu\in\N\),
	and that the sequence
	\(\seq{x^\nu}[\nu\in\N]\) generated by \cref{algorithm_ALM} is bounded.
	Furthermore, assume that there exist \(a_1>0\), \(a_2\geq0\), \(a_3\geq0\), and \(q>0\)
	such that the number of gradient evaluations of the inner solver to obtain an \(\epsilon\)-stationary point of the subproblem in \cref{alg_ALM_stationary}
	is bounded above by
	\begin{equation}\label{complexity_inner_solver}
		\frac{a_1\epsilon^{-q}}{{(\lambda_\nu)}^{a_2}{(\min_{j}\{\sit{\beta}[j][\nu]\})}^{a_3}}.
	\end{equation}
	The following hold:
	\begin{enumerate}
		\item If \(\lambda_\nu>\underline{\lambda}>0\) for all \(\nu\in\N\), then after at most
	\[
		\frac{a_1\epsilon^{-q}}{{(\underline{\lambda})}^{a_2}{(\beta_{\min})}^{a_3}}
		\left[
		\left(
		1
		+
		N_{\rm R}(\underline{\lambda})
		\right)
		N_{\rm V}
		+2
		\right]
	\]
	gradient evaluations,
	we obtain an \(\epsilon\)-approximate M- or C-stationary point of \MPVCC{}.
		\item Let \(\tilde\lambda_{\min}\coloneqq\min\{\lambda_{0},\rho\lambda_{\min}\}\).
		If \(\beta_{\min}\in(0,\min\{\nicefrac{1}{2},1-\max_{j}\sit{\beta}[j][0]\})\), then after at most
	\[
		\frac{a_1\epsilon^{-q}}{{(\tilde\lambda_{\min})}^{a_2}{(\beta_{\min})}^{a_3}}
		\left[
		\left(
		1
		+
		N_{\rm R}(\lambda_{\min})
		\right)
		N_{\rm V}
		+2
		\right]
	\]
	gradient evaluations,
	we either obtain an \(\epsilon\)-approximate M- or C-stationary point of \MPVCC{},
	or a point satisfying~\eqref{2e_sta_fea}, that is, a \(2\epsilon\)-approximate stationary point of the feasibility problem~\eqref{feasibility_problem} with the current smoothing parameters, while \(V_{\nu}>\epsilon\).
	\end{enumerate}
	\end{theorem}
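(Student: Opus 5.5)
The argument multiplies an outer-iteration count by a uniform per-subproblem cost. The outer count comes from \cref{theorem_complexity_bounded} or \cref{theorem_complexity_unbounded}. The per-subproblem cost comes from the inner bound \eqref{complexity_inner_solver}, with \(\lambda_\nu\) and \(\min_j\sit{\beta}[j][\nu]\) replaced by lower bounds valid over all the outer iterations that are actually executed.

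\emph{Smoothing parameters.} By the update rule \eqref{alg_ALM_reduce_beta}, \(\sit{\beta}[j][\nu]\geq\beta_{\min}\) for all \(\nu\) and \(j\). Hence \({(\min_j\sit{\beta}[j][\nu])}^{-a_3}\leq\beta_{\min}^{-a_3}\), since \(a_3\geq0\).

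\emph{Item 1.} Here \(\lambda_\nu>\underline{\lambda}\) for every \(\nu\), so each subproblem costs at most \(a_1\epsilon^{-q}\underline{\lambda}^{-a_2}\beta_{\min}^{-a_3}\) gradient evaluations. \cref{theorem_complexity_bounded} gives an index \(\nu\leq\bar\nu=(1+N_{\rm R}(\underline{\lambda}))N_{\rm V}+1\) at which \(x^\nu\) is \(\epsilon\)-approximate M- or C-stationary. At most the subproblems for \(\nu=0,\ldots,\bar\nu\) must be solved. That is \(\bar\nu+1=(1+N_{\rm R}(\underline{\lambda}))N_{\rm V}+2\) subproblems, which accounts for the ``\(+2\)''. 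Multiplying the two bounds gives the claim.

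\emph{Item 2.} We run the algorithm only until the first index \(\nu^\star\) provided by \cref{lemma_outer_counting} with \(\underline{\lambda}=\lambda_{\min}\), at which \(V_{\nu^\star}\leq\epsilon\) or \(\lambda_{\nu^\star}\leq\lambda_{\min}\). The proof of \cref{theorem_complexity_unbounded} shows that, at this index, one of the two stated alternatives holds.

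The main obstacle is bounding \(\lambda_\nu\) from below for \(\nu\leq\nu^\star\), because \(\lambda_{\nu^\star}\) itself may fall below \(\lambda_{\min}\). I would argue as follows.
\begin{itemize}
\item For \(\nu<\nu^\star\), minimality of \(\nu^\star\) gives \(\lambda_\nu>\lambda_{\min}\).
\item At \(\nu^\star\) itself, \(\lambda_{\nu^\star}\) is either \(\lambda_0\) (if \(\nu^\star=0\)) or \(\lambda_{\nu^\star}\geq\rho\lambda_{\nu^\star-1}>\rho\lambda_{\min}\), since each outer step reduces \(\lambda\) by at most the factor \(\rho\) in \eqref{alg_ALM_update_lambda}.
\end{itemize}
Hence \(\lambda_\nu\geq\tilde\lambda_{\min}=\min\{\lambda_0,\rho\lambda_{\min}\}\) for all \(\nu\leq\nu^\star\). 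This also covers the trivial case \(\lambda_0\leq\lambda_{\min}\). Since \(\rho\lambda_{\min}<\lambda_{\min}\), the bound holds for the earlier iterates as well.

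Each of the at most \(\nu^\star+1\leq(1+N_{\rm R}(\lambda_{\min}))N_{\rm V}+2\) subproblems therefore costs at most \(a_1\epsilon^{-q}\tilde\lambda_{\min}^{-a_2}\beta_{\min}^{-a_3}\) gradient evaluations. The hypothesis on \(\beta_{\min}\) is needed only to invoke \cref{theorem_complexity_unbounded}, which identifies the outcome at \(\nu^\star\) as either approximate stationarity or \eqref{2e_sta_fea}. Multiplying again yields the stated bound.
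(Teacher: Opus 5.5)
Your proposal is correct and follows essentially the same route as the paper. You obtain the outer-iteration count from \cref{theorem_complexity_bounded} or \cref{theorem_complexity_unbounded}. In the vanishing case you take the first index with \(V_\nu\leq\epsilon\) or \(\lambda_\nu\leq\lambda_{\min}\), which gives \(\lambda_\nu\geq\min\{\lambda_0,\rho\lambda_{\min}\}\) up to that index. You use \(\min_j\beta_{j,\nu}\geq\beta_{\min}\), and then sum the inner bound \eqref{complexity_inner_solver} over the at most \(\bar\nu+1\) subproblems.
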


	\begin{proof}
	By \cref{theorem_complexity_bounded,theorem_complexity_unbounded}, the desired point is found among the first \(\bar\nu+1\) iterates, namely \(x^{0},\ldots,x^{\bar\nu}\).
	In the first case, \(\lambda_\nu>\underline{\lambda}>0\) for all of these iterations;
	hence, \(\underline{\lambda}\) can be used in~\eqref{complexity_inner_solver}.
	In the second case, the proof of \cref{theorem_complexity_unbounded} considers the first index \(\nu\) for which \(V_{\nu}\leq\epsilon\) or \(\lambda_{\nu}\leq\lambda_{\min}\).
	Prior to this index, \(\lambda_{\nu}>\lambda_{\min}\), and at this index,
	\(\lambda_{\nu}\geq\min\{\lambda_{0},\rho\lambda_{\min}\}\), since \(\lambda\) is reduced by a factor of \(\rho\) at a time.
	Therefore, \(\tilde\lambda_{\min}\) serves as a valid lower bound in~\eqref{complexity_inner_solver}.
	In both cases, \(\min_{j}\sit{\beta}[j][\nu]\geq\beta_{\min}\),
	and thus, \(\beta_{\min}\) can be substituted into~\eqref{complexity_inner_solver}.
	The claim then follows by summing~\eqref{complexity_inner_solver} over these outer iterations.
	\end{proof}

	\cref{theorem_complexity_total} requires only oracles for function values and gradients,
	and its assumption~\eqref{complexity_inner_solver} is satisfied by first-order methods
	with known worst-case guarantees for nonconvex smooth problems, such as gradient descent with Armijo line search, for which \(q=2\); see \cite{cartis2010}.
	In practice, quasi-Newton methods are well known for their strong empirical performance,
	although they do not enjoy such worst-case guarantees in the nonconvex setting.
	In the implementation presented in the next section, we employ the off-the-shelf \texttt{L-BFGS} solver~\cite{liu1989}.

	\section{Implementation of Smoothed ALM and Comparison Methods}\label{sec_implementation}
		In this section, we present the parameter choices for \cref{algorithm_ALM}, as implemented in the Julia package
\(\mathtt{Lldo.jl}\),\footnote{\url{https://github.com/JiaaaWang/Lldo.jl}}
whose name stands for \emph{\underline{L}asry--\underline{L}ions double envelope-based methods for \underline{d}isjunctive \underline{o}ptimization}.
\(\mathtt{Lldo.jl}\) considers the convex set \(C\) defined in~\eqref{A_eq_ineq}.
Additionally, \(\mathtt{Lldo.jl}\) includes several other methods for solving \MPVCC{}, which are used for comparison in the numerical experiments.

\subsection{Smoothed ALM}\label{sec_implementation_salm}
	The initial Lasry--Lions double envelope parameters are chosen as \(\lambda_0=1\) and \(\be_0=0.9\cdot\bm{1}\),
	while the initial multiplier is set to \(\y^{-1}=\bm{0}\).
	The maximum number of outer iterations is fixed at 200.
	To solve the subproblems in \cref{alg_ALM_stationary},
	we employ the \texttt{L-BFGS} solver,
	using the implementation provided in \(\mathtt{Optim.jl}\)~\cite{mogensen2018}.
	The maximum number of inner iterations is set to \(10^4\),
	and the memory parameter is set to 10.
	The accuracy tolerances are set to \(\sit{\epsilon}[\rm{p}]=\sit{\epsilon}[\rm{d}]=\epsilon=10^{-4}\).
	The bounded multiplier set \(Y\) is defined as
	\(Y\coloneqq{[y_{\min},y_{\max}]}^{m+\bar p}\),
	where \(y_{\min}=-10^{20}\) and \(y_{\max}=10^{20}\).
	The remaining parameters of \cref{algorithm_ALM} are: \(\rho=0.9\), \(\theta=0.9\), \(\kappa=0.9\), and \(\beta_{\min}=10^{-3}\).
	In accordance with \cref{sec_complexity_analysis}, we use the constant inner tolerance \(\sit{\epsilon}[\rm{d}][\nu]=\epsilon\).

	\cref{algorithm_ALM} is terminated if either
	(i) the infeasibility measure \(V_{\nu}\) in~\eqref{update_residual} satisfies \(V_{\nu}\leq\epsilon\) and
	the stationarity condition in \cref{alg_ALM_stationary} satisfies
	\(\|\nabla_x\mathcal{L}_{\lambda_\nu,\be_\nu}(x^\nu,\hat\y^{\nu})\|\leq\epsilon\),
	or (ii) the maximum number of outer iterations is reached.
	The problem is considered solved by \cref{algorithm_ALM} if the algorithm terminates according to criterion (i).

\subsection{Homotopy Method}
	The Lasry--Lions double envelope-based homotopy method for solving MPCC was proposed in~\cite{wang2025},
	and it can be extended to \MPVCC{}.
	Specifically, the following subproblem is solved at each outer iteration:
	\[
		\minimize_{x\in\R^n}
		\ \
		f(x)
		+
		\env_{\lambda}\delta_C(A(x))
		+
		\sum_{j=1}^{r}
		\tfrac{1}{\lambda}
		r_{\beta_j}(F^j(x)).
	\]
	The homotopy method can therefore be implemented by setting the multipliers to zero
	and updating the penalty parameter by \(\lambda_{\nu+1}\leftarrow\rho\lambda_\nu\) at each outer iteration.
	All other aspects remain unchanged from \cref{algorithm_ALM}.

\subsection{Classical ALM}\label{subsec_class_alm}
	The classical ALM for solving \MPVCC{} was presented in~\cite[Algorithm 1]{guo2022} and~\cite[Algorithm 4.1]{jia2023}.
	The key idea of the classical ALM is to keep the VCCs as explicit constraints in the subproblems,
	while incorporating the remaining constraints into the AL function.
	Specifically, let \(\w=(w^1,\ldots,w^r)\in\R^{\bar p}\) with \(w^j\in\R^{p_j}\) denote the slack variables.
	The \emph{classical} augmented Lagrangian for \MPVCC\@ is defined as
	\begin{align}
	&\mathcal{L}_{\lambda}^{\rm class}(x,\w,\y)\coloneqq
	f(x)
	+\env_{\lambda}\delta_C(A(x)+\lambda y^0)-\tfrac{\lambda}{2}\|y^0\|^2
	+\sum_{j=1}^{r}
	\begin{bmatrix}
		\env_{\lambda}\delta_{\{\bm{0}\}}(F^j(x)-w^j+\lambda y^j)-\tfrac{\lambda}{2}\|y^j\|^2
	\end{bmatrix}
	\nonumber\\
	&=
	f(x)
	+\tfrac{1}{2\lambda}\dist{(A(x)+\lambda y^0,C)}^2-\tfrac{\lambda}{2}\|y^0\|^2
	+\tfrac{1}{2\lambda}\sum_{j=1}^{r}
	\begin{bmatrix}
		\begin{Vmatrix}
		F^j(x)-w^j+\lambda y^j
		\end{Vmatrix}^2
		-\lambda^2\|y^j\|^2
	\end{bmatrix},\label{classical_AL}
	\end{align}
	which is similar to~\eqref{AL_function_with_indicator},
	but without the indicator functions \(\delta_C\) and \(\delta_{D^j}\).
	Under this formulation, the subproblem in the classical ALM framework becomes
	\begin{equation}\label{MPVCC_class_ALM_subproblem}
	\minimize_{x,\w}
	\ \
	\mathcal{L}_{\lambda}^{\rm class}(x,\w,\y)
	\ \
	\stt\ \
	w^j\in D^j,\ j\in\{1,\ldots,r\},
	\end{equation}
	whose objective function is continuously differentiable, but whose constraints are nonconvex.
	Nevertheless, it can be solved using the nonmonotone projected gradient (NPG) method,
	provided that the projection \(\Pi_{D^j}\) is computationally tractable, as discussed in~\cite[Section 4]{guo2022} and~\cite[Section 3]{jia2023}.
	The NPG method implemented in \(\mathtt{Lldo.jl}\) is described in \cref{algorithm_class_ALM_innersolver}.
	Note that the superscript \(k\) denotes the iteration index of the NPG method,
	while the iteration index of the outer ALM loop, \(\nu\), remains fixed.

	\begin{algorithm}[h]
	\caption{Nonmonotone Projected Gradient Method for~\eqref{MPVCC_class_ALM_subproblem}.}
	\begin{algorithmic}[1]\label{algorithm_class_ALM_innersolver}
	\itemsep=3pt%
	\renewcommand{\algorithmicindent}{0.3cm}%

	\item[\algfont{Data:}]

	Algorithm parameters \(\tau>1\), \(0<\sigma<1\), \(0<\gamma_{\min}\leq\gamma_{\max}<\infty\) and \(M\in\N\).
	Tolerance \(\epsilon\geq0\).
	Initial parameter \(\gamma_{\min}\leq\gamma_{-1}\leq\gamma_{\max}\).
	Initial points \(x^0\in\R^n\) and \(\vit{w}[j][0]\in D^j\).
	\For{\(k=0,1,2,\ldots\)}

	\State\@ Choose \(\gamma_{k}\leftarrow\gamma_{k-1}\), and if \(k\geq1\), update \(\gamma_{k}\) by
	\begin{equation}\label{inner_solver_gamma_update}
	\gamma_{k}
	\leftarrow
	\max
	\left\{
		\gamma_{\min},\
		\min
		\left\{
		\gamma_{\max},\
		\frac{
		\substack{
			{(\nabla_{x}\mathcal{L}_{\lambda}^{\rm class}(x^{k},\w^{k},\y)
			-\nabla_{x}\mathcal{L}_{\lambda}^{\rm class}(x^{k-1},\w^{k-1},\y))}^\top(x^{k}-x^{k-1}) \\
			+
			{(\nabla_{\w}\mathcal{L}_{\lambda}^{\rm class}(x^{k},\w^{k},\y)
			-\nabla_{\w}\mathcal{L}_{\lambda}^{\rm class}(x^{k-1},\w^{k-1},\y))}^\top(\w^{k}-\w^{k-1})
		}
		}{
			\|x^{k}-x^{k-1}\|^2+\|\w^{k}-\w^{k-1}\|^2
		}
		\right\}
	\right\}.
	\end{equation}
	\State\@ Update \((x^{k+1},\w^{k+1})\) by the projected gradient step:\label{innersolver_pg_step}
	\begin{align}
	&
	x^{k+1}\leftarrow x^{k}-\frac{1}{\gamma_{k}}\nabla_{x}\mathcal{L}_{\lambda}^{\rm class}(x^{k},\w^{k},\y)
	\\[5pt]
	&
	\vit{w}[j][k+1]\leftarrow \Pi_{D^j}
	\left(
		\vit{w}[j][k]
		-
		\frac{1}{\gamma_{k}}\nabla_{w^j}\mathcal{L}_{\lambda}^{\rm class}(x^{k},\w^{k},\y)
	\right).
	\end{align}

	\State\@ Return \((x^{k+1},\w^{k+1})\) if the stopping criterion is satisfied:
	\begin{equation}\label{innersolver_stop_cond}
	\left\|
	\left(
	\begin{aligned}
		&
		\left\|
		\gamma_{k}(x^k-x^{k+1})
		+
		\nabla_{x}\mathcal{L}_{\lambda}^{\rm class}(x^{k+1},\w^{k+1},\y)
		-
		\nabla_{x}\mathcal{L}_{\lambda}^{\rm class}(x^{k},\w^{k},\y)
		\right\|
		\\
		&
		\left\|
		\gamma_{k}(\w^k-\w^{k+1})
		+
		\nabla_{\w}\mathcal{L}_{\lambda}^{\rm class}(x^{k+1},\w^{k+1},\y)
		-
		\nabla_{\w}\mathcal{L}_{\lambda}^{\rm class}(x^{k},\w^{k},\y)
		\right\|
	\end{aligned}
	\right)
	\right\|
	\leq
	\epsilon.
	\end{equation}

	\State\@ Go to \cref{innersolver_pg_step} and update \(\gamma_{k}\leftarrow\tau\gamma_{k}\),
	if \((x^{k+1},\w^{k+1})\) does not satisfy the criterion:
	\begin{equation}\label{innersolver_linesearch}
	\mathcal{L}_{\lambda}^{\rm class}(x^{k+1},\w^{k+1},\y)
	\leq
	\max\limits_{{[k-M]}_{+}\leq l\leq k}
	\mathcal{L}_{\lambda}^{\rm class}(x^{l},\w^{l},\y)
	-
	\frac{\sigma\gamma_{k}}{2}
	\left\|
		\left(
		\begin{aligned}
			&x^{k+1}-x^{k}
			\\
			&\w^{k+1}-\w^{k}
		\end{aligned}
		\right)
	\right\|^2.
	\end{equation}

	\EndFor\@
	\end{algorithmic}
	\end{algorithm}

	\cref{algorithm_class_ALM_innersolver} follows~\cite[Algorithm 2]{guo2022} and~\cite[Algorithm 3.1]{jia2023}.
	Specifically, the step-size rule~\eqref{inner_solver_gamma_update} follows that of~\cite[Section 4]{guo2022} and is also adopted in~\cite{birgin2000}.
	The stopping criterion~\eqref{innersolver_stop_cond} coincides with the one used in~\cite[Eq. (3.4)]{jia2023},
	while the nonmonotone line search condition~\eqref{innersolver_linesearch} is taken from~\cite[Eq. (30)]{guo2022}.
	The convergence of \cref{algorithm_class_ALM_innersolver} to an M-stationary point of~\eqref{MPVCC_class_ALM_subproblem} is established in~\cite[Theorem 3.4]{jia2023}.
	The algorithm parameters are adopted from~\cite[Section 5]{guo2022}:
	\(\tau=5\), \(\sigma=10^{-4}\), \(\gamma_{\min}=1\), \(\gamma_{\max}=10^{20}\), \(\gamma_{-1}=1\) and \(M=10\).
	The maximum number of iterations is set to \(10^4\).

	Since the VCCs are explicitly enforced in the subproblem~\eqref{MPVCC_class_ALM_subproblem},
	only the equality constraints \(F^j(x)-w^j=\bm{0}\) remain in the outer loop of the classical ALM\@.
	Accordingly, the multiplier update in~\eqref{multiplier_update} becomes
	\[
	\vit{y}[j][\nu]
	\leftarrow
	\vit{\hat y}[j][\nu]
	+
	\frac{1}{\lambda_\nu}
	(F^j(x^\nu)-\vit{w}[j][\nu])
	\]
	for \(j\in\{1,\ldots,r\}\),
	and the infeasibility measure in~\eqref{update_residual} reduces to
	\[
	V_\nu\leftarrow
	\begin{Vmatrix}
		\Phi(x^\nu)-( \vit{z}[0][\nu] ,\w^\nu )
	\end{Vmatrix}.
	\]

	Consistent with~\cite[Eq.~(4.5) and Section 6]{jia2023},
	the classical ALM is terminated if either
	(i) \(V_\nu\leq\epsilon\) and the condition~\eqref{innersolver_stop_cond} is satisfied,
	or (ii) the maximum number of outer iterations is reached.
	The problem is considered solved by the classical ALM if the algorithm terminates according to criterion (i).
	Moreover, whenever criterion~(i) is satisfied, the resulting point is an approximate M-stationary point of \MPVCC.

\subsection{The \texttt{Algencan} Solver}
	\texttt{Algencan} is an open-source NLP solver that implements the ALM in Fortran~\cite{andreani2008}.
	\(\mathtt{Lldo.jl}\) accesses it via the interface \(\mathtt{NLPModelsAlgencan.jl}\).
	The \texttt{Algencan} solver has been shown to exhibit strong numerical performance on MPCCs~\cite{izmailov2012};
	therefore, it is of interest to investigate its performance on \MPVCC{}.
	We equivalently reformulate \MPVCC{} as the following NLP{}:
	\[
		\begin{aligned}
		\minimize_{x\in\R^n}
		\ \
		& f(x)
		\\
		\stt\ \
		& A(x)\in C
		\\
		& F^j(x)\geq\bm{0},\ \prod_{i=1}^{p_j} F^{j}_{i}(x)\leq0,
		\ j\in\{1,\ldots,r\},
		\end{aligned}
	\]
	which is then passed directly to \texttt{Algencan}.
	We use \texttt{Algencan} version 3.1.1.
	Only function values and gradients are provided.
	The resulting subproblems are solved using a truncated Newton method
	with an approximated Hessian-vector product.
	Accordingly, the \texttt{Algencan} parameters are:
	\texttt{firstde=T},
	\texttt{seconde=F},
	\texttt{truehpr=F},
	\texttt{innslvr=TN} and
	\texttt{hptype in TN=HAPPRO}.
	Moreover, the optimality and feasibility tolerances are set as
	\texttt{epsopt=}\(\epsilon\) and
	\texttt{epsfeas=}\(\epsilon\).
	All remaining parameters are kept at their default values.

	\section{Numerical Results}
		\subsection{Illustrative Examples}
	We begin by presenting three examples from the literature~\cite{liang2012,xiong2010,xu2023}.

	\begin{example}
	We consider a Stackelberg game with equilibrium constraints as studied in~\cite[Example 6.1]{xu2023}.
	Let \(\hat x=(x_1,x_2)\), \(\check x=(x_3,x_4)\), \(\tilde x=(x_5,x_6)\) and \(\rm{e}=(1,1)\), and define
	\[
		A_1=
		\begin{pmatrix}
		2 & 1
		\\
		2 & 4
		\end{pmatrix},\
		A_2=
		\begin{pmatrix}
		1 & 3
		\\
		3 & 2
		\end{pmatrix},\
		A_3=
		\begin{pmatrix}
		1 & 2
		\\
		3 & 1
		\end{pmatrix},\
		A_4=
		\begin{pmatrix}
		5 & 1
		\\
		2 & 4
		\end{pmatrix}.
	\]
	Following the reformulation procedure described in \cref{sec_intro},
	the Stackelberg game given in~\cite[Eq. (37)]{xu2023} can be reformulated as the following \MPVCC{}:
	\[
		\begin{aligned}
		\minimize_{x\in\R^6}
		\ \
		& x_1^2+{(x_5-1)}^2+{(x_2-x_6+1)}^2+\exp(x_3)+x_4
		\\
		\stt\ \
		&
		x_5^2+2x_6-2=0,\
		x_5+x_6-2\leq0,
		\\
		&\min
		\begin{Bmatrix}
		x_1+x_5,\
		{[A_1 \check x - \rm{e}]}_1,\
		{[A_2 \check x - \rm{e}]}_1
		\end{Bmatrix}=0,
		\\
		&\min
		\begin{Bmatrix}
		x_2+x_6,\
		{[A_1 \check x - \rm{e}]}_2,\
		{[A_2 \check x - \rm{e}]}_2
		\end{Bmatrix}=0,
		\\
		&\min
		\begin{Bmatrix}
		x_3,\
		{[A_3^\top (\hat x+\tilde x) - \rm{e}]}_1,\
		{[A_4^\top (\hat x+\tilde x) - \rm{e}]}_1
		\end{Bmatrix}=0,
		\\
		&\min
		\begin{Bmatrix}
		x_4,\
		{[A_3^\top (\hat x+\tilde x) - \rm{e}]}_2,\
		{[A_4^\top (\hat x+\tilde x) - \rm{e}]}_2
		\end{Bmatrix}=0.
		\end{aligned}
	\]
	The starting point is chosen as \((1,1,1,0,1,1)\), following~\cite{xu2023}.
	\end{example}

	\begin{example}
	We consider the min-max-min problem presented in~\cite[Example 6.3]{xu2023},
	which was also studied in~\cite[Example 5.2]{liang2012}.
	By applying an analogous reformulation procedure, we obtain an \MPVCC{} as follows:
	\[
		\begin{aligned}
		\minimize_{x\in\R^5}
		\ \
		& x_3
		\\
		\stt\ \
		&
		{(x_1-1)}^2+x_2^2\leq x_3,\
		{(x_1-0.5)}^2+x_2^2\leq x_3,\
		x_4\geq0,\
		x_5\geq0,
		\\
		&\min
		\begin{Bmatrix}
		1.5+0.5x_1^2-x_2+x_4,\
		0.5+0.5x_1^2-x_2+x_4,
		\\
		-1.5+{(x_2-1)}^2/2-x_1+x_4,\
		-1.5+{(x_2-1)}^2/2+x_1+x_4
		\end{Bmatrix}=0,
		\\[5pt]
		&\min
		\begin{Bmatrix}
		-5+x_1^2+x_2+x_5,\
		2+2x_1^2+2x_1-x_2+x_5,\
		-4+2x_1^2-2x_1+x_2+x_5
		\end{Bmatrix}=0,
		\end{aligned}
	\]
	where the starting point is chosen as \((0.9,0.9,0,0,0)\).
	\end{example}

	\begin{example}
	Another example of a min-max-min problem is provided in~\cite[Example 2]{xiong2010} and~\cite[Example 6.4]{xu2023}, which can be stated as follows:
	\[
		\begin{aligned}
		\minimize_{x\in\R^4}
		\ \
		& 100{(x_2-x_1^2)}^2+{(1-x_1)}^2
		\\
		\stt\ \
		&
		x_3\geq0,\
		x_4\geq0,
		\\
		&\min
		\begin{Bmatrix}
		0.75+{(x_1-1)}^2-(x_2-1)+x_3,\ \
		-0.25-{(x_1-1)}^2-(x_2-1)+x_3,
		\\
		0.75-{(x_2-1.5)}^2-(x_1-1)+x_3,\ \
		0.75-{(x_2-1.5)}^2+(x_1-1)+x_3
		\end{Bmatrix}=0,
		\\[5pt]
		&\min
		\begin{Bmatrix}
		0.5+2{(x_1-1)}^2+(x_2-1)+x_4,\ \
		1-4{(x_1-1)}^2-2(x_1-1)-(x_2-1)+x_4,
		\\
		1-4{(x_1-1)}^2+2(x_1-1)-(x_2-1)+x_4
		\end{Bmatrix}=0.
		\end{aligned}
	\]
	The starting point is \((2,2,0,0)\), the same as in~\cite{xu2023}.
	\end{example}

	The numerical results for the three examples obtained using the smoothed ALM, the classical ALM, the homotopy method, and \texttt{Algencan} are summarized in \cref{table_MPVCC},
	where the constraint violation is measured by
	\(
		\left\|
		\left(
		A(x)-\Pi_{C}(A(x)),
		\min_{i}F_{i}^{1}(x),\ldots,
		\min_{i}F_{i}^{r}(x)
		\right)
		\right\|_{\infty}
	\).
	For Examples 7.1 and 7.2, all four methods converge to the same minimizer, consistent with the results reported in~\cite{liang2012,xiong2010,xu2023}.
	For Example 7.3, the classical ALM terminates at a point with a higher objective value,
	and \texttt{Algencan} terminates at an infeasible point,
	whereas the other two methods converge to the same minimizer reported in~\cite{xu2023}.
	Across these three examples, the smoothed ALM requires the fewest inner iterations and no more outer iterations than the classical ALM and the homotopy method.
	Moreover, it achieves the shortest computation time among all the methods tested.
	On the other hand, recall that a smaller final value of \(\lambda\) corresponds to a larger penalty parameter.
	The results show that the smoothed and classical ALMs maintain considerably smaller penalty parameters than the homotopy method.

	\begin{table}[H]
	\centering
	\caption{Numerical results for the three examples}
	\renewcommand{\arraystretch}{1.1}
	\setlength{\tabcolsep}{5pt}
	\begin{tabular}{llrrrrrr}
		\hline
		Example & Method & Objective & Violation & Time (s) & Outer & Inner & Final $\lambda$ \\
		\hline
		7.1 & Smoothed ALM  & 1.69 & $9.89\cdot10^{-6}$ & $\bm{1.74\cdot10^{-3}}$ & \textbf{4}  & \textbf{39}   & \textbf{1.00} \\
			& Classical ALM & 1.69 & $5.93\cdot10^{-5}$ & $9.01\cdot10^{-1}$ & 10 & 1872 & \textbf{1.00} \\
			& Homotopy      & 1.69 & $9.11\cdot10^{-5}$ & $1.37\cdot10^{-2}$ & 64 & 237  & $1.31\cdot10^{-3}$ \\
			& \texttt{Algencan} & 1.69 & $\bm{1.76\cdot10^{-6}}$ & $4.67\cdot10^{-2}$ & -- & --  & -- \\
		\hline
		7.2 & Smoothed ALM  & $6.25\cdot10^{-2}$ & $\bm{2.02\cdot10^{-5}}$ & $\bm{4.50\cdot10^{-4}}$ & \textbf{2}  & \textbf{29}   & \textbf{1.00} \\
			& Classical ALM & $6.25\cdot10^{-2}$ & $6.49\cdot10^{-5}$ & $1.97\cdot10^{-1}$ & \textbf{2}  & 505  & \textbf{1.00} \\
			& Homotopy      & $6.25\cdot10^{-2}$ & $6.45\cdot10^{-5}$ & $1.16\cdot10^{-2}$ & 86 & 197  & $1.29\cdot10^{-4}$ \\
			& \texttt{Algencan} & $6.25\cdot10^{-2}$ & $2.62\cdot10^{-5}$ & $2.10\cdot10^{-2}$ & -- & --  & -- \\
		\hline
		7.3 & Smoothed ALM  & $\bm{4.00\cdot10^{-2}}$ & $\bm{3.55\cdot10^{-6}}$ & $\bm{7.06\cdot10^{-4}}$ & \textbf{5}  & \textbf{37}   & $\bm{9.00\cdot10^{-1}}$ \\
			& Classical ALM & $2.00\cdot10^{-1}$  & $5.95\cdot10^{-6}$ & $3.27\cdot10^{-1}$ & 9  & 7355 & $6.56\cdot10^{-1}$ \\
			& Homotopy      & $\bm{4.00\cdot10^{-2}}$ & $9.70\cdot10^{-5}$ & $6.07\cdot10^{-3}$ & 63 & 344  & $1.46\cdot10^{-3}$ \\
			& \texttt{Algencan} & $2.23\cdot10^{1}$ & $6.39\cdot10^{-1}$ & $8.35$ & -- & --  & -- \\
		\hline
	\end{tabular}\label{table_MPVCC}
	\end{table}

\subsection{\texttt{MacMPEC} Benchmark Problems}
	We benchmark the methods in \(\mathtt{Lldo.jl}\) on the \texttt{MacMPEC},\footnote{\url{https://wiki.mcs.anl.gov/leyffer/index.php/MacMPEC}}
	which is a widely used collection of MPCCs in the literature.
	In~\cite{nurkanovic2025b}, the \texttt{MacMPEC} problems were translated from \(\mathtt{AMPL}\) to \(\mathtt{CasADi}\)~\cite{andersson2019},
	and made available in the repository,\footnote{\url{https://cloud.syscop.de/s/rBnTMocFoLcNLWG}}
	which we use in our implementation.
	As a preliminary result, we consider 102 problems from the collection
	with fewer than 120 variables,
	each initialized at its default starting point.

	\begin{figure}[h]
		\centering
		\begin{subfigure}[t]{0.64\textwidth}
			\centering
			\includegraphics[width=\linewidth]{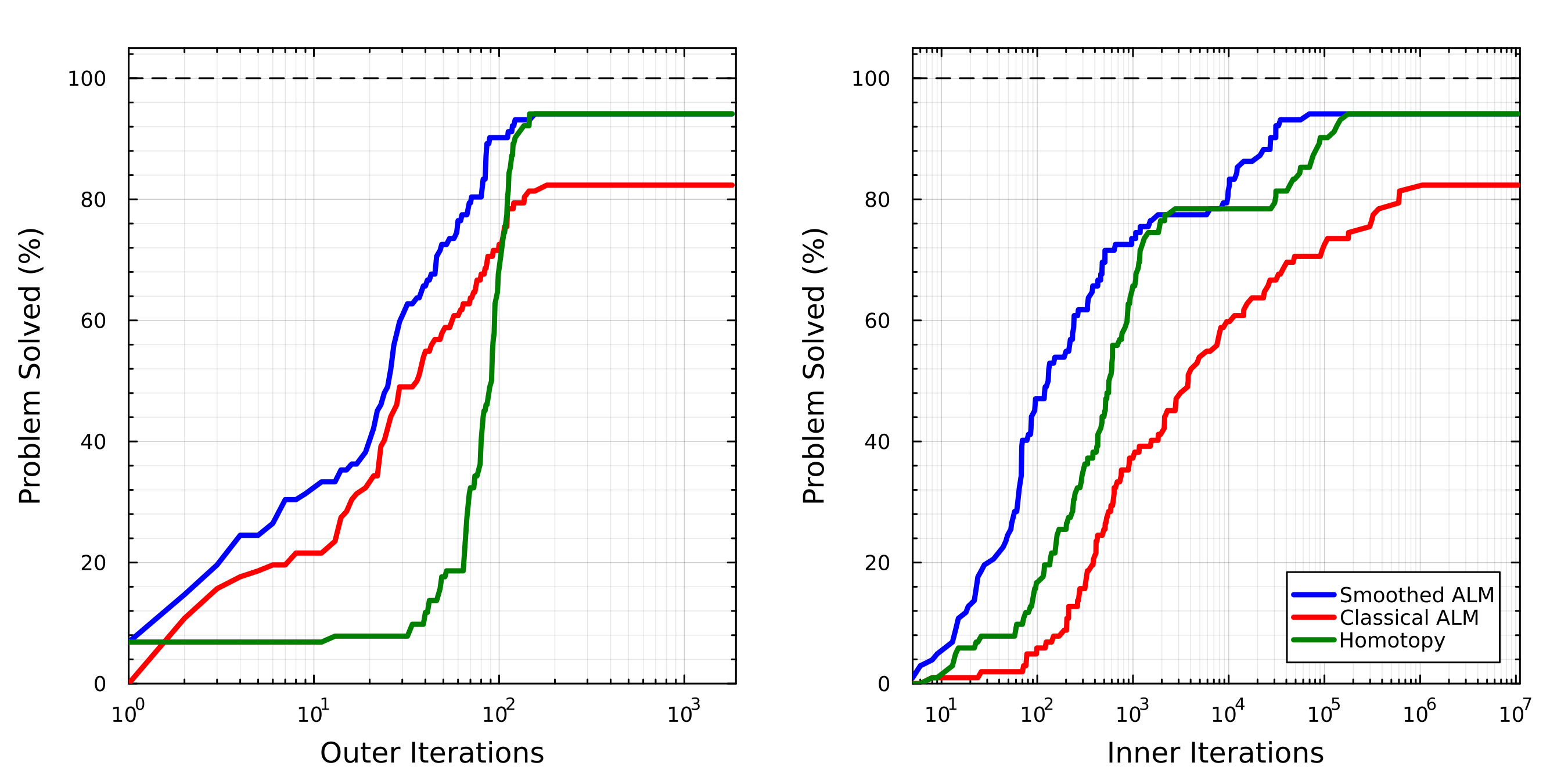}
		\end{subfigure}
		\hspace{-0.025\textwidth}%
		\begin{subfigure}[t]{0.34\textwidth}
			\centering
			\vspace{-52.5mm}
			\includegraphics[width=0.96\linewidth]{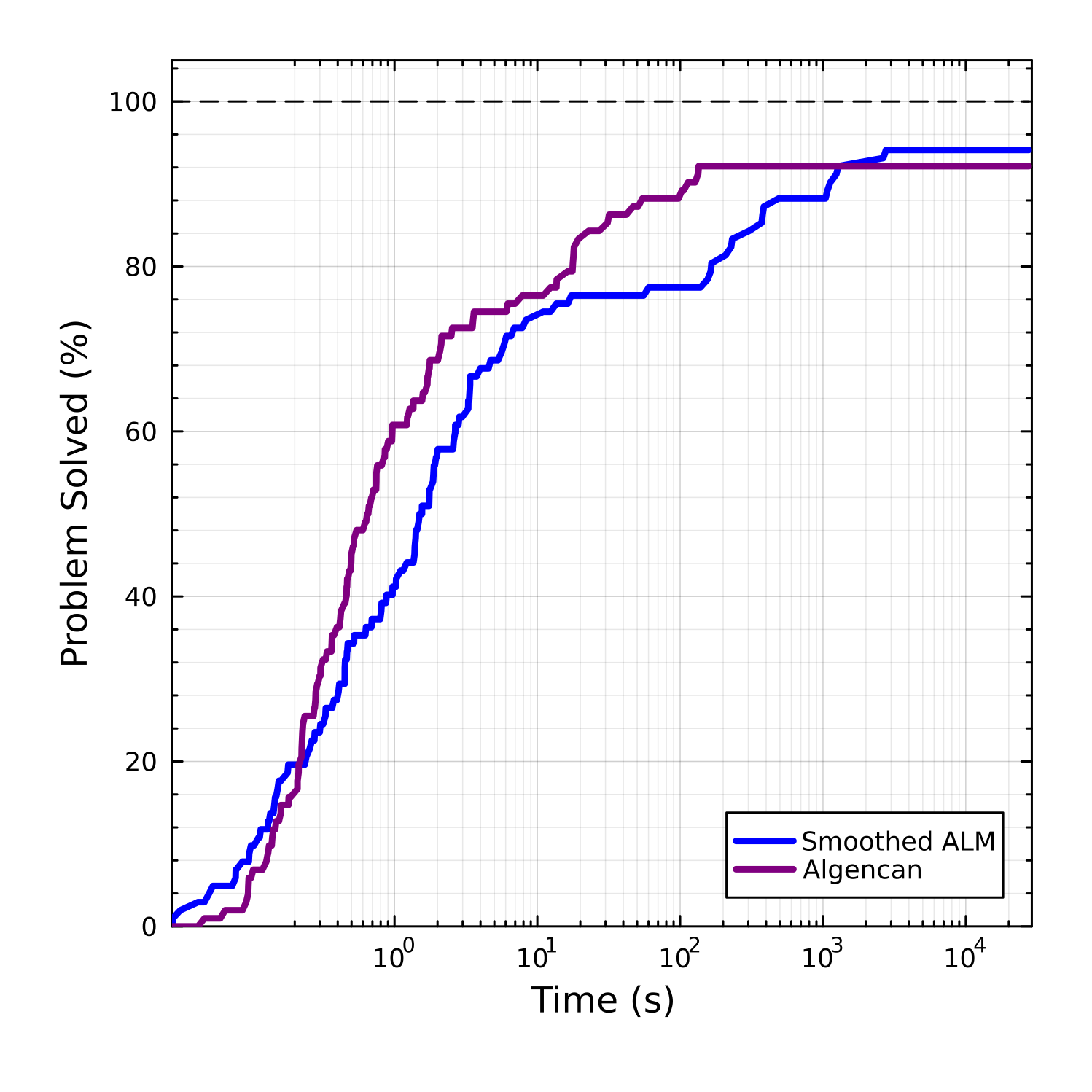}
		\end{subfigure}
		\caption{Outer and inner iterations, and computational time versus the proportion of problems solved on the \texttt{MacMPEC} benchmark.}\label{fig_data_profile_macmpec}
	\end{figure}

	The data profiles are presented in \cref{fig_data_profile_macmpec}.
	Compared with the classical ALM and the homotopy method,
	the smoothed ALM solves more problems for most budgets of outer and inner iterations,
	and the homotopy method eventually attains a similar number of solved problems.
	Since \texttt{Algencan} is an external package, it is more appropriate to compare performance in terms of computational time.
	The right subfigure shows that \texttt{Algencan} is faster than the smoothed ALM on the selected test set,
	whereas the smoothed ALM solves a slightly larger fraction of the instances overall.

	The numbers of unsolved problems and instances of convergence to suboptimal solutions under the four methods are reported in \cref{fig_unsolved_subopt_macmpec}.
	The count of unsolved problems includes both convergence to infeasible points and  failures of the \texttt{L-BFGS} and \texttt{Algencan} solvers.
	Regarding solution quality on the solved problems,
	a returned objective value is regarded as optimal if its relative suboptimality with respect to the best known objective value,
	\(\nicefrac{(\text{obj}-\text{obj}^{\star})}{\max\{1,|\text{obj}^{\star}|\}}\),
	does not exceed \(0.01\).
	\cref{fig_unsolved_subopt_macmpec} shows that the smoothed ALM, the homotopy method, and \texttt{Algencan} exhibit similar performance with respect to the numbers of unsolved and suboptimal instances,
	whereas the classical ALM has considerably more unsolved problems.

	\begin{figure}[h]\centering
	\includegraphics[width=0.6\textwidth]{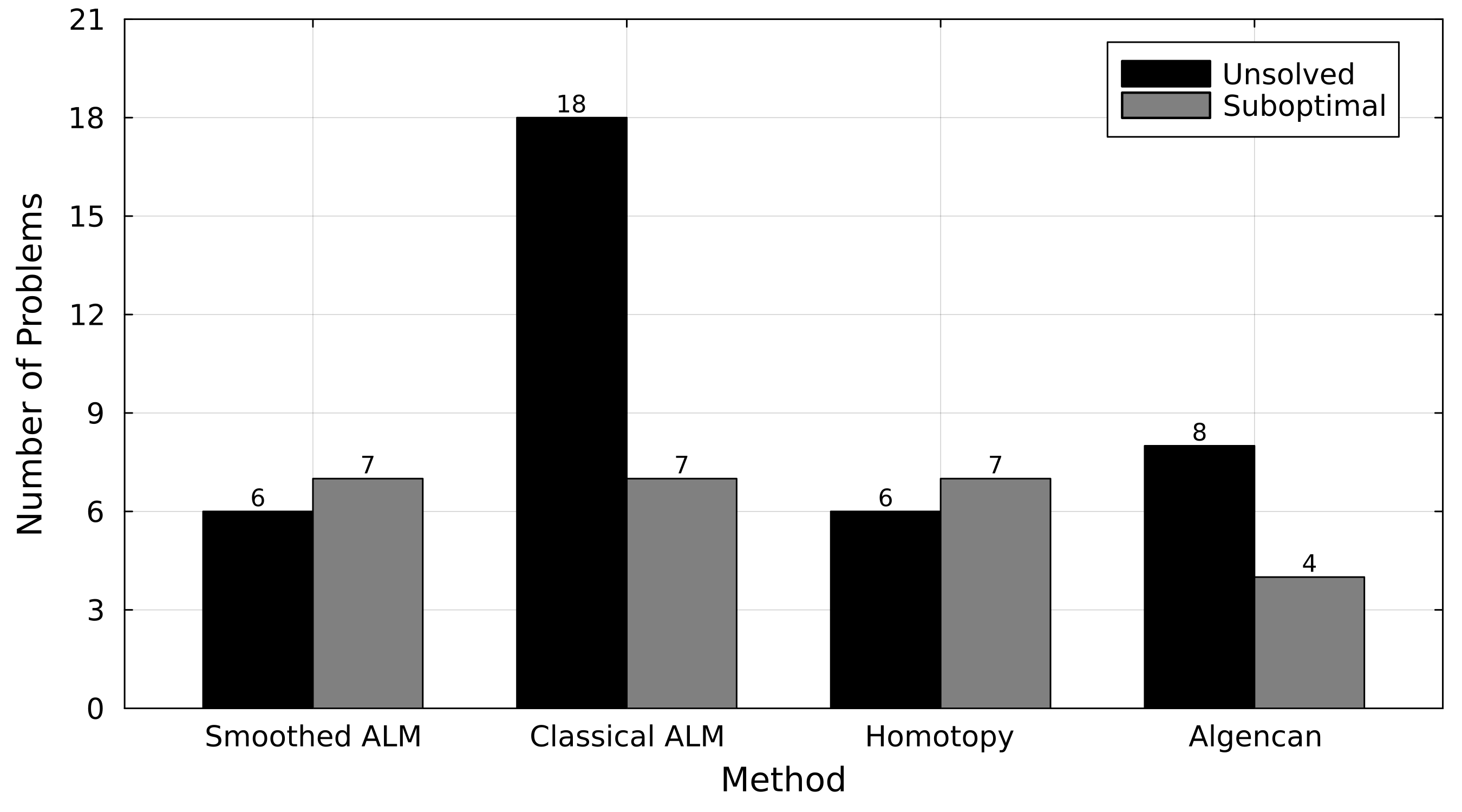}
	\caption{Number of unsolved problems and instances of convergence to suboptimal values on the \texttt{MacMPEC} benchmark.}\label{fig_unsolved_subopt_macmpec}
	\end{figure}

	\begin{figure}[h]\centering
	\includegraphics[width=0.8\textwidth]{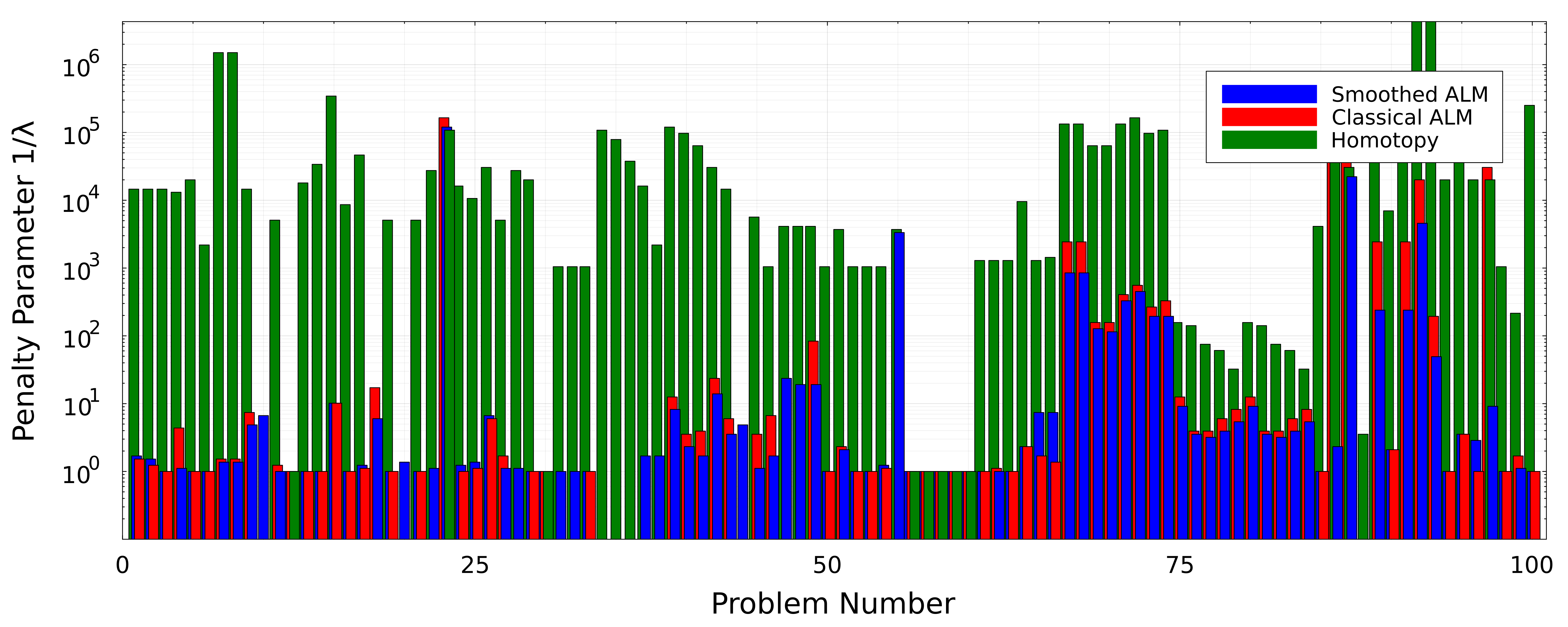}
	\caption{Penalty parameters on the \texttt{MacMPEC} benchmark.}\label{fig_param_macmpec}
	\end{figure}

	The final values of the penalty parameter \(\nicefrac{1}{\lambda}\) for the solved problems are reported in \cref{fig_param_macmpec}.
	It is evident that the smoothed and classical ALMs control the growth of the penalty parameter more effectively than the homotopy method,
	in which the penalty parameter is increased constantly at each outer iteration without monitoring the infeasibility measure,
	unlike the update rule in~\eqref{alg_ALM_update_lambda} of \cref{algorithm_ALM}.

\subsection{\texttt{QPECgen} Benchmark Problems}
	In this subsection, we consider larger MPCCs with quadratic objective and linearly constrained variables:
	\begin{equation}\label{QPCC_general}
	\begin{split}
	\minimize_{x\in \R^n}
	\ \
	& \tfrac{1}{2} x^\top Qx+g^\top x
	\\
	\stt\ \
	& Ax^0+a\leq \bm{0}
	\\
	& \bm{0}\leq x^1\ \bot\ Nx^0+Mx^1+q\geq\bm{0},
	\end{split}
	\end{equation}
	where \(x=(x^0,x^1)\in\R^{n}\), \(x^0\in\R^{n_0}\) and \(x^1\in\R^r\).
	We consider a range of problem sizes:
	\((n_0,r)=\)
	(10,10), (20,20), (30,30), (40,40), (50,50), (100,100), (200,200), (500,500), (1000,1000).
	For each size, we randomly generate five instances by using the code in~\cite{jiang1999},\footnote{\url{https://www3.eng.cam.ac.uk/~dr241/Papers/qpecgen.m}}
	with the random seed set from one to five.
	Each instance is initialized at the zero starting point.

	\begin{figure}[h]
		\centering
		\begin{subfigure}[t]{0.64\textwidth}
			\centering
			\includegraphics[width=\linewidth]{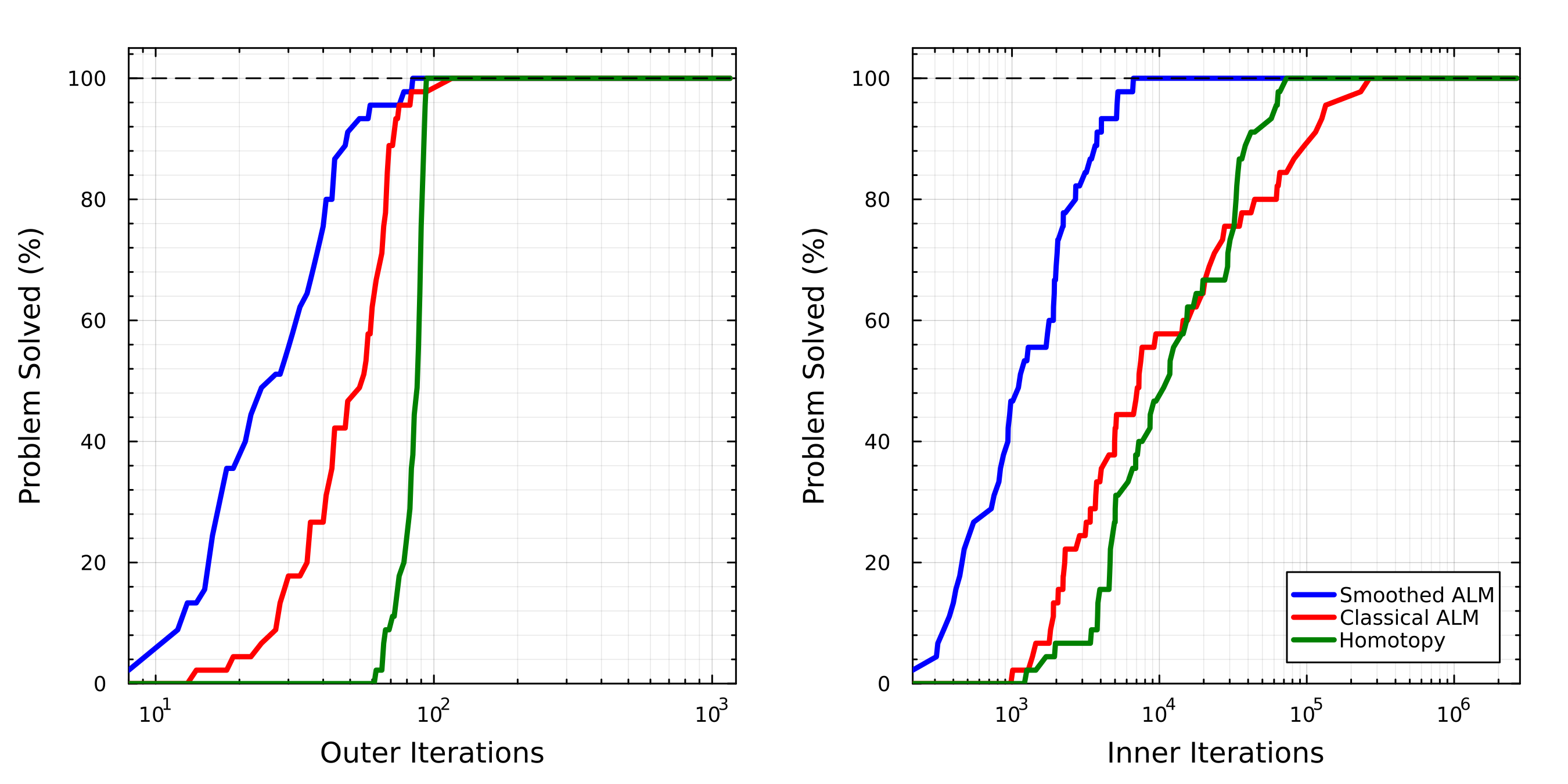}
		\end{subfigure}
		\hspace{-0.025\textwidth}%
		\begin{subfigure}[t]{0.34\textwidth}
			\centering
			\vspace{-52.5mm}
			\includegraphics[width=0.96\linewidth]{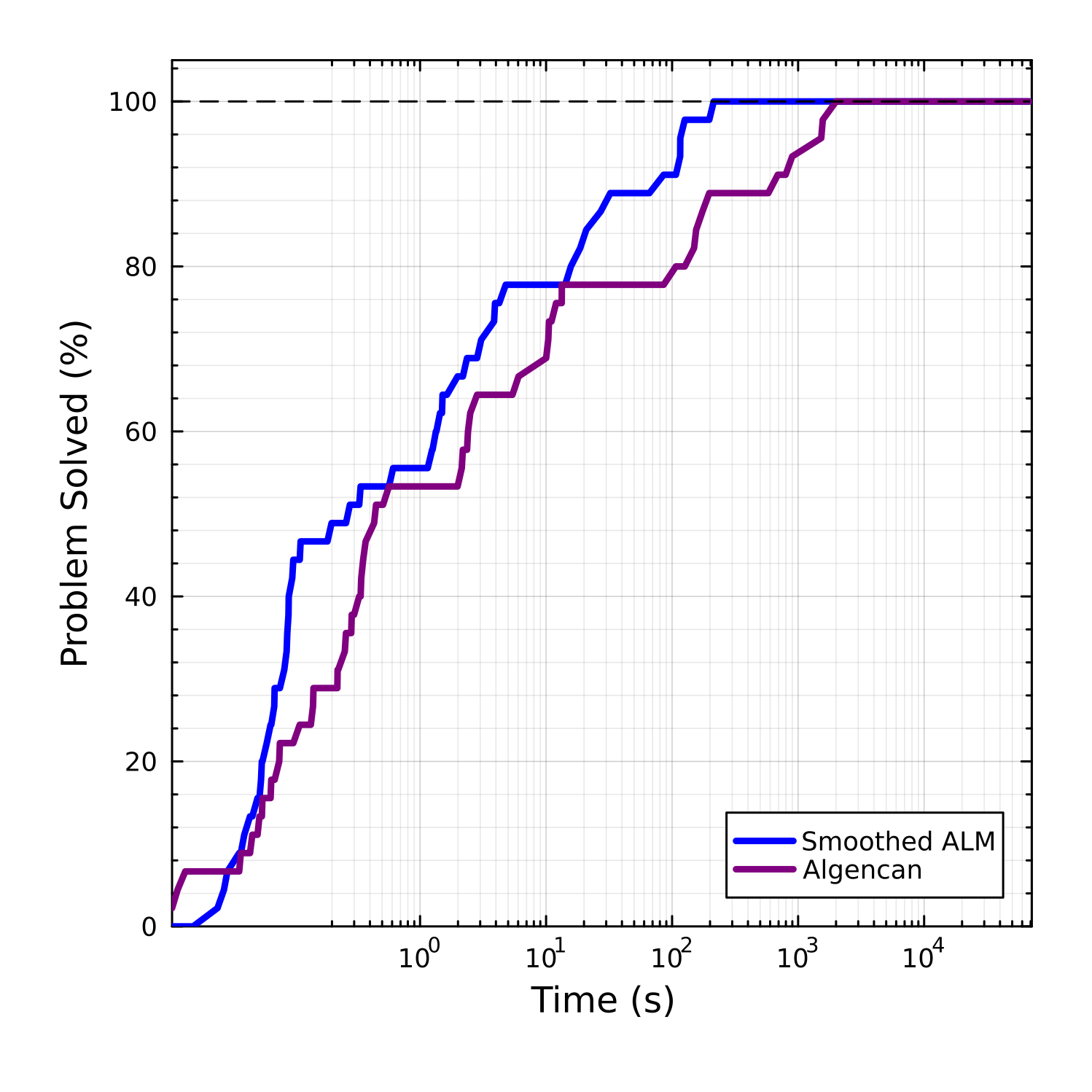}
		\end{subfigure}
		\caption{Outer and inner iterations, and computational time versus the proportion of problems solved on the \texttt{QPECgen} benchmark.}\label{fig_data_profile_qpecgen}
	\end{figure}

	For the problem data, we set the following: the matrix \(Q\) is positive definite with a condition number \(10\), the number of rows of \(A\) is \(\nicefrac{1}{2}\times n_0\),
	and the matrix \(M\) is nonsymmetric and nonmonotone (\(M+M^\top\) is not positive semidefinite) with a condition number \(20\).
	The primal-dual feasibility and complementary slackness in the optimality conditions of~\eqref{QPCC_general} are
	\begin{align*}
	&
	{[Ax^0+a]}_i\leq 0,\
	\alpha_i\geq0\ \text{and}\
	\alpha_i{[Ax^0+a]}_i=0,\
	i\in\mathcal{I}
	\\
	&
	x^1\geq0,\ Nx^0+Mx^1+q\geq0,\
	y_i^G x^1_i=0\ \text{and}\
	y_i^H {[Nx^0+Mx^1+q]}_{i}=0,\
	i\in\mathcal{P},
	\end{align*}
	where \(\mathcal{I},\mathcal{P}\) are index sets of inequality and complementarity constraints, respectively.
	By~\cite[Definition 1]{jiang1999}, an index \(i\in\mathcal{I}\) is called \emph{first-level degenerate} if \(\alpha_i={[Ax^0+a]}_i=0\),
	an index \(i\in\mathcal{P}\) is called \emph{second-level degenerate} if \(i\in I_{00}\) and an index \(i\in I_{00}\) is called \emph{mixed degenerate} if either \(y_i^G=0\) or \(y_i^H=0\),
	in which the set \(I_{00}\) is the bi-active index set of complementarity constraints.
	The cardinality of the first-level degeneracy index set is chosen to be \(\lceil\nicefrac{3}{10}\times n_0\rceil\).
	The cardinalities of second-level and mixed degeneracy index sets are chosen to be \(\lceil\nicefrac{3}{4}\times r\rceil\) and \(\lceil\nicefrac{1}{2}\times r\rceil\), respectively.

	The data profiles are shown in \cref{fig_data_profile_qpecgen}.
	All 45 instances are solved by the four methods,
	and the smoothed ALM requires the fewest outer and inner iterations, as well as the shortest computational time.
	Regarding solution quality, \texttt{Algencan} finds solutions that are better or equally as good as those obtained by the other methods.
	The smoothed ALM terminates at suboptimal solutions on 5 instances, the classical ALM on 11 instances, and the homotopy method on 14 instances.
	The final values of the penalty parameter \(\nicefrac{1}{\lambda}\) are presented in \cref{fig_param_qpecgen},
	which consistently indicate that the smoothed and classical ALMs provide better control over the growth of the penalty parameter than the homotopy method.

	\begin{figure}[htbp]\centering
	\includegraphics[width=0.8\textwidth]{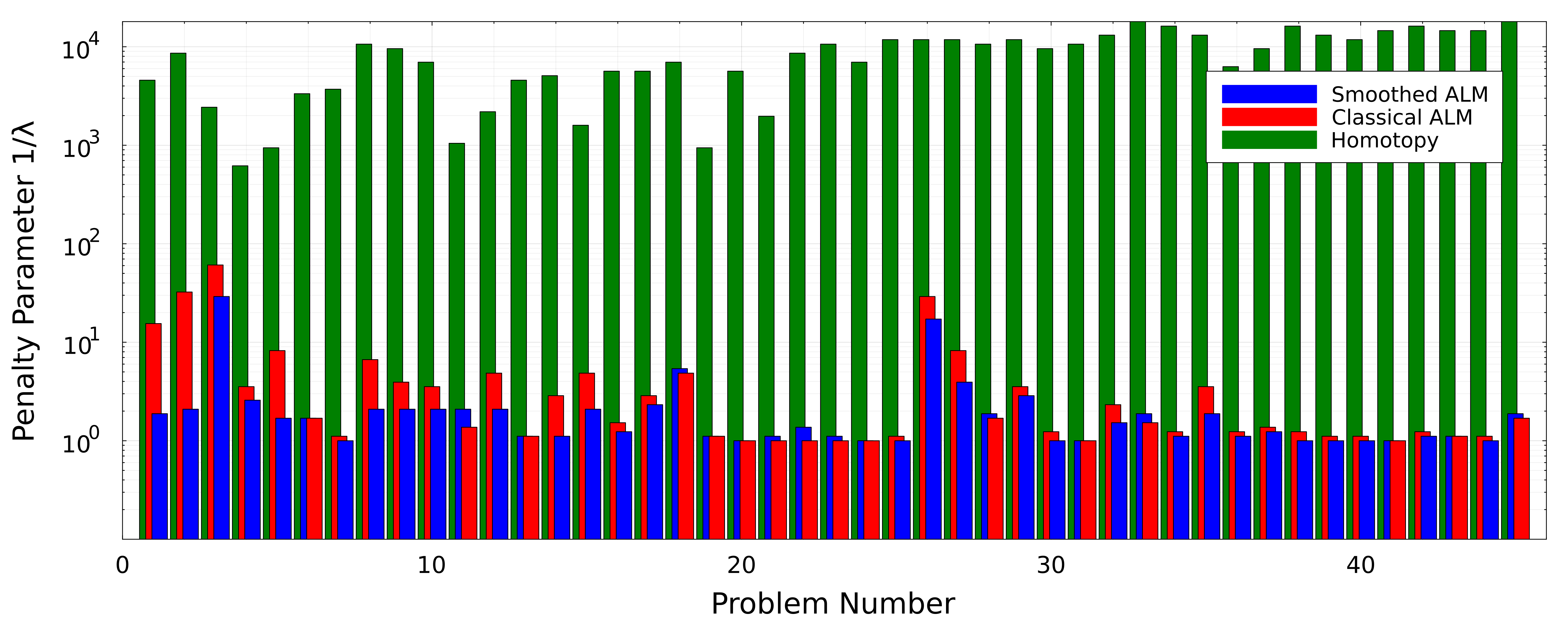}
	\caption{Penalty parameters on the \texttt{QPECgen} benchmark.}\label{fig_param_qpecgen}
	\end{figure}

\subsection{Comparison Between MPVCC and Its MPCC Reformulation}
	It is well known that VCCs can be reformulated as CCs by introducing auxiliary variables, which may make \MPVCC{} appear less attractive than MPCC{}.
	However, as shown in this subsection, the smoothed ALM can solve \MPVCC{} more efficiently than its MPCC reformulation.
	We consider the VLCP described in~\eqref{VLCP} as an illustrative example.
	As discussed in \cref{sec_intro}, the solution of the VLCP may not be unique;
	therefore, we adopt the objective function \(f(x)=\tfrac{1}{2}\|x\|^2\).
	Let \(x=(x^0,x^1,\ldots,x^\ell)\in\R^{r(\ell+1)}\) with
	\(x^0,x^1,\ldots,x^\ell\in\R^{r}\),
	\(B_1,\ldots,B_\ell\in\R^{r\times r}\), and
	\(b^1,\ldots,b^\ell\in\R^{r}\).
	Then, \MPVCC{} with objective \(f(x)\) and VCCs~\eqref{VLCP} can be formulated as
	\begin{equation}\label{VLCP_original}
		\begin{split}
		\minimize_{x\in \R^{r(\ell+1)}}
		\ \
		& \tfrac{1}{2}\|x\|^2
		\\
		\stt\ \
		&
		x^j=B_j x^0+ b^j,\ j\in\{1,\ldots,\ell\}
		\\
		&
		\min
		\left\{
		x^{0}_{i}, x^{1}_{i},\ldots, x^{\ell}_{i}
		\right\}=0,\
		i\in\{1,\ldots,r\}.
	\end{split}
	\end{equation}
	Following~\cite[Section 2.3]{ferris1997}, we introduce the variable
	\(\s=(s^{1},\ldots,s^{\ell})\in\R^{r\ell}\) with \(s^k\in\R^{r}\).
	Problem~\eqref{VLCP_original} admits the following CC reformulation:

	\begin{equation}\label{VLCP_reformulation}
		\begin{split}
		\minimize_{x,\s}
		\ \
		& \tfrac{1}{2}\|x\|^2
		\\
		\stt\ \
		&
		x^0=\sum_{k=1}^{\ell}s^k
		\quad\text{and}\quad
		x^{j'}=B_{j'} x^0+ b^{j'},\ j'\in\{1,\ldots,\ell\}
		\\
		&
		\bm{0}\leq
		s^\ell
		\perp
		x^\ell
		\geq\bm{0}
		\quad\text{and}\quad
		\bm{0}\leq
		s^j
		\perp
		x^j-\sum_{k=j+1}^{\ell}s^k
		\geq\bm{0},\ j\in\{1,\ldots,\ell-1\}.
		\end{split}
	\end{equation}
	Note that the number of variables in~\eqref{VLCP_original} is \(r(\ell+1)\),
	and the number of VCCs is \(r\), each of dimension \(\ell+1\).
	In contrast, the reformulated problem~\eqref{VLCP_reformulation} has \(r(\ell+1)+r\ell\) variables and \(r\ell\) CCs.
	We set \(r=2\) and consider \(\ell=5\text{--}15, 50, 100, 150, 200, 250, 300, 350, 400, 450, 500\).
	For each size, five instances of~\eqref{VLCP} are generated randomly following the procedure described in~\cite{mezzadri2022}.
	Specifically, we first generate a feasible solution based on~\cite[Eq.~(40)]{mezzadri2022}.
	We then construct random matrices \(B_1,\ldots,B_\ell\) with nonzero entries sampled from \([-1,1]\).
	Using the generated solution together with the equality constraints \(x^j = B_j x^0 + b^j\), we compute \(b^1,\ldots,b^\ell\).
	For all instances, the starting point is chosen randomly from the interval \([-10,10]\).

	\begin{figure}[htbp]\centering
	\includegraphics[width=0.45\textwidth]{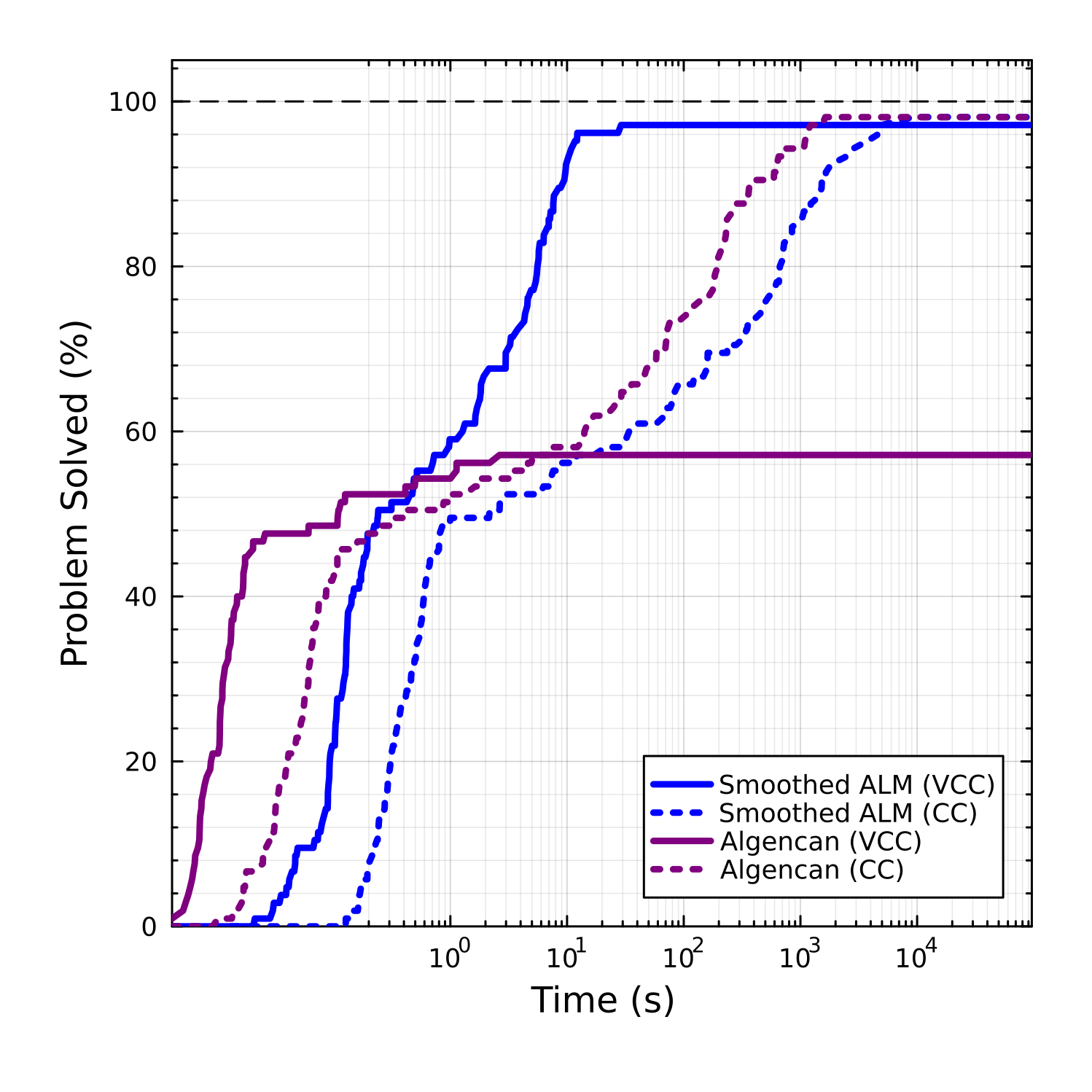}
	\caption{Computational time versus the proportion of problems solved for MPVCC and its MPCC reformulation on the VLCP benchmark.}\label{fig_data_profile_vlcp}
	\end{figure}

	\begin{figure}[h]\centering
	\includegraphics[width=0.6\textwidth]{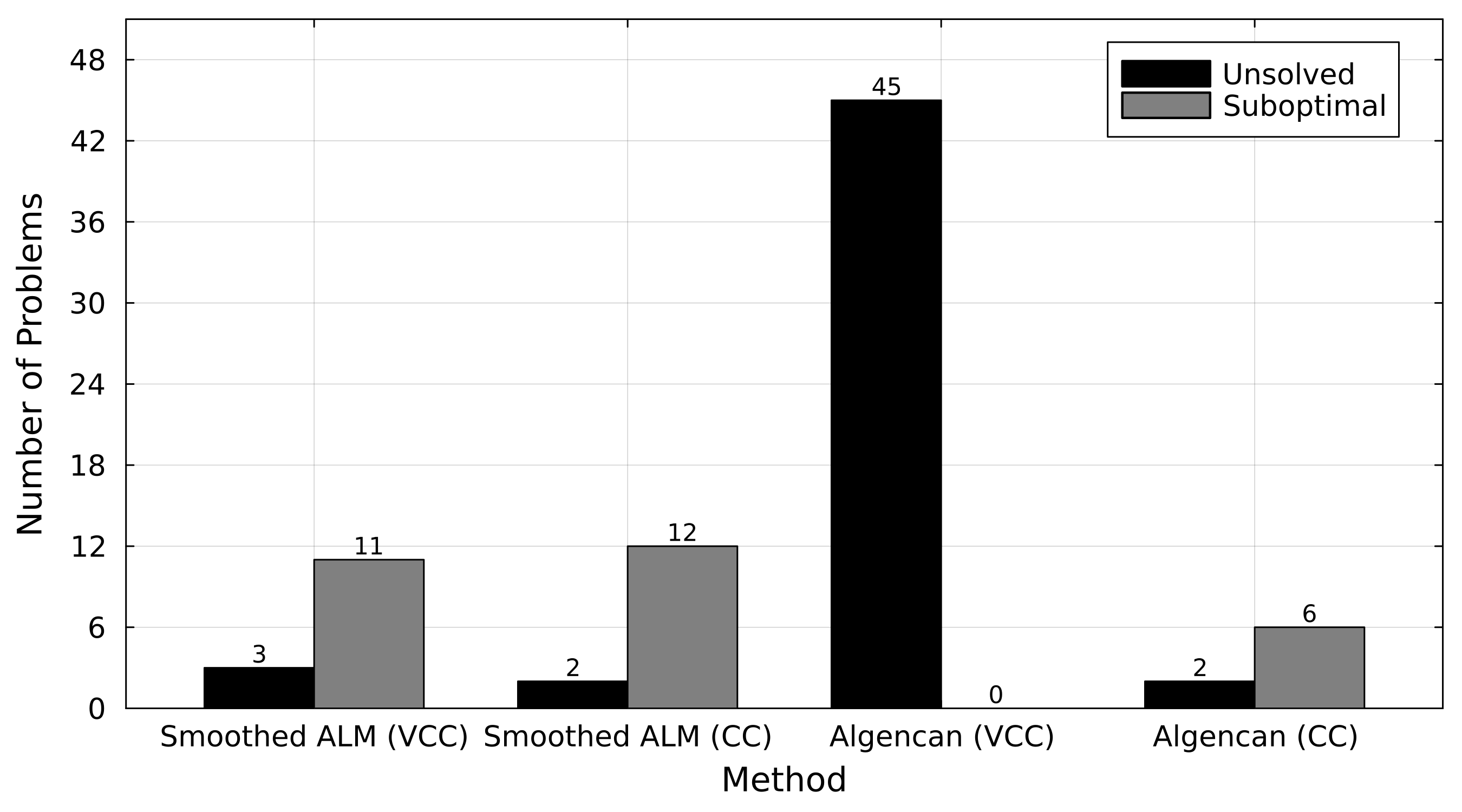}
	\caption{Number of unsolved problems and instances of convergence to suboptimal values on the VLCP benchmark.}\label{fig_unsolved_subopt_vlcp}
	\end{figure}

	The data profile is shown in \cref{fig_data_profile_vlcp}, indicating that the smoothed ALM can solve \MPVCC{} significantly faster than its MPCC reformulation.
	\texttt{Algencan} exhibits poor robustness when applied to~\eqref{VLCP_original}.
	In contrast, it successfully solves more instances when applied to the reformulation~\eqref{VLCP_reformulation}.
	This observation is also supported by the results in \cref{fig_unsolved_subopt_vlcp}.
	Moreover, \texttt{Algencan} (CC) converges to suboptimal solutions on 6 instances,
	whereas the smoothed ALM (VCC) and (CC) converge to suboptimal solutions on 11 and 12 instances, respectively.
	Nevertheless, the smoothed ALM (VCC) remains considerably faster than the competing methods.

	\section{Conclusion}
		We introduce a smoothed ALM for solving \MPVCC{}.
The AL function involving the VCCs is nonsmooth and nonconvex;
it is therefore smoothed using the Lasry--Lions double envelope,
for which we derive a closed-form expression,
to obtain a smooth unconstrained subproblem.
We show that every feasible cluster point of the proposed smoothed ALM (see \cref{algorithm_ALM}) is AM- or AC-stationary and,
consequently, M- or C-stationary under \CCPM{} or \CCPC{},
which are implied by \RCPLD{} and \RCRCQ, respectively.
Under \SSC{}, \LICQ{} and \SOSC{}, we establish the Q-linear convergence for a sufficiently large penalty parameter.
We further derive worst-case iteration complexity bounds for obtaining an approximate stationary point of either the original problem or an associated feasibility problem.
Numerical experiments on illustrative examples, MPCC benchmarks, and vertical linear complementarity problems indicate that the proposed method is competitive in terms of computational effort.
As future work, the following two directions will be pursued:
(i) investigating verifiable conditions that render \CCPM{} and \CCPC{} practically applicable; and
(ii) extending the smoothed ALM framework to nonsmooth functions,
thereby covering applications such as \(\ell_1\)-regularization and the hinge-loss minimization.

	\phantomsection
	\addcontentsline{toc}{section}{References}
	\bibliographystyle{plain}
	\bibliography{TeX/CCs.bib}

\end{document}